\numberwithin{equation}{section}
\theoremstyle{plain}
\newtheorem{theorem}{Theorem}[section]
\newtheorem{definition}{Definition}[section]
\newtheorem{proposition}[theorem]{Proposition}
\newtheorem{lemma}[theorem]{Lemma}
\newtheorem{assumption}[theorem]{Assumption}
\newtheorem{corollary}[theorem]{Corollary}
\newtheorem{example}[theorem]{Example}
\newtheorem{remark}[theorem]{Remark}
\renewcommand{\vec}[1]{\boldsymbol{#1}}
\newcommand{\vecs}[1]{\boldsymbol{#1}}
\newcommand{\paren}[1]{\left(#1\right)}
\newcommand{\brac}[1]{\left[#1\right]}
\newcommand{\E}{\mathbb{E}}
\newcommand{\avg}[1]{\E[#1]}
\newcommand{\D}[2]{\frac{d#1}{d#2}}
\newcommand{\PD}[2]{\frac{\partial#1}{\partial#2}}
\newcommand{\lap}[1]{\Delta#1}
\newcommand{\abs}[1]{\left|#1\right|}
\DeclareMathOperator{\prob}{Pr}
\newcommand{\DA}{D^{\textrm{A}}}
\newcommand{\DB}{D^{\textrm{B}}}
\newcommand{\DC}{D^{\textrm{C}}}
\newcommand{\veta}{\vecs{\eta}}
\newcommand{\vP}{\vec{P}}
\newcommand{\vG}{\vec{G}}
\newcommand{\vx}{\vec{x}}
\newcommand{\vy}{\vec{y}}
\newcommand{\vz}{\vec{z}}
\newcommand{\vw}{\vec{w}}
\newcommand{\vxi}{\vec{\xi}}
\newcommand{\vq}{\vec{q}}
\newcommand{\vqa}{\vec{q}^{a}}
\newcommand{\vqb}{\vec{q}^{b}}
\newcommand{\vqc}{\vec{q}^{c}}
\newcommand{\vQ}{\vec{Q}}
\newcommand{\vQa}{\vec{Q}^{a}}
\newcommand{\vQb}{\vec{Q}^{b}}
\newcommand{\vQc}{\vec{Q}^{c}}
\def\N{\mathbb{N}}
\def\R{\mathbb{R}}
\renewcommand{\epsilon}{\varepsilon}
\newcommand{\Rp}{\mathcal{R}^{+}}
\newcommand{\Rm}{\mathcal{R}^{-}}
\newcommand{\diffop}{\mathcal{L}}
\newcommand{\pabc}{p^{(a,b,c)}}
\newcommand{\gabc}{g^{(a,b,c)}}
\newcommand{\Kwm}{K_{\textrm{wm}}}
\newcommand{\Def}{\overset{\text{def}}{=}}
\newcommand{\BP}{\mathbb{P}}
\newcommand{\BE}{\mathbb{E}}
\newcommand{\filt}{\mathscr{F}}
\newcommand{\genA}{\mathcal{A}}
\newcommand{\genL}{\mathcal{L}}
\newcommand{\lb}{\left\{}
\newcommand{\rb}{\right\}}
\newcommand{\la}{\left \langle}
\newcommand{\ra}{\right\rangle}
\newcommand{\RN}[1]{
  \textup{\uppercase\expandafter{\romannumeral#1}}%
}
\def\mathcolor#1#{\@mathcolor{#1}}
\def\@mathcolor#1#2#3{%
  \protect\leavevmode
  \begingroup
    \color#1{#2}#3%
  \endgroup
}
\begin{document}
\title{Mean Field Limits of Particle-Based Stochastic Reaction-Diffusion Models}
\author{S. Isaacson, J. Ma and K. Spiliopoulos}
\footnote{Boston University, Department of Mathematics and Statistics, 111 Cummington Mall, Boston, 02215.
E-mails: isaacson@math.bu.edu, majw@bu.edu, kspiliop@math.bu.edu}
\thanks{K.S. was partially supported by the National Science Foundation DMS-1550918, DMS-2107856, SIMONS Foundation Award 672441 and ARO W911NF-20-1-0244. J. M. and S. A. I. were partially supported by National Science Foundation DMS-1902854,  DMS-0920886  and ARO W911NF-20-1-0244.}

\begin{abstract}
Particle-based stochastic reaction-diffusion (PBSRD) models are a popular approach for studying biological systems involving both noise in the reaction process and diffusive transport. In this work we derive coarse-grained deterministic partial integro-differential equation (PIDE) models that provide a mean field approximation to the volume reactivity PBSRD model, a model commonly used for studying cellular processes.  We formulate a weak measure-valued stochastic process (MVSP) representation for the volume reactivity PBSRD model, demonstrating for a simplified but representative system that it is consistent with the commonly used Doi Fock Space representation of the corresponding forward equation. We then prove the convergence of the general volume reactivity model MVSP to the mean field PIDEs in the large-population (i.e. thermodynamic) limit.
\end{abstract}
\date{\today}
\maketitle

\section{Introduction}

The dynamics of many biological processes rely on an interplay between spatial transport and chemical reaction. At the scale of a single cell, experiments have demonstrated that many such processes have stochastic dynamics. Particle-based stochastic reaction-diffusion (PBSRD) models are a widely used approach for studying such processes, explicitly modeling the diffusion of, and reactions between, individual molecules. PBSRD models are appropriate for studying chemical systems in cells containing up to hundreds of thousands to millions of molecules, over timescales of days.  They are more macroscopic descriptions than millisecond-timescale quantum mechanical or molecular dynamics models of a few molecules~\cite{ShawAntonMS2009}, but more microscopic descriptions than deterministic 3D reaction-diffusion PDEs for the average concentration of each species of molecule. One PBSRD model that has been widely used to study biological processes is the volume reactivity (VR) model of Doi~\cite{TeramotoDoiModel1967,DoiSecondQuantA,DoiSecondQuantB}. In this model positions of \emph{individual molecules} are typically represented as points undergoing Brownian motion.  Bimolecular reactions between two reactant molecules occur with a probability per unit time based on their current positions~\cite{DoiSecondQuantA,DoiSecondQuantB}. Unimolecular reactions are typically assumed to represent internal processes, and as such are modeled as occurring with exponentially distributed times based on a specified reaction-rate constant.

Due to their mathematical complexity and high dimensionality, PBSRD models are almost entirely studied by Monte Carlo simulation  approximating the underlying stochastic process of molecules diffusing and reacting. The computational expense of such methods can greatly limit the size of chemical systems (in each of number of molecules, number of reactions, or physical domain size) that can be studied. One approach to overcoming this challenge is to use more coarse-grained mathematical models that accurately capture the dynamics of the underlying PBSRD model in appropriate physical regimes. Deterministic and stochastic partial differential equation (PDE/SPDE) models are often postulated as coarse-grainings of PBSRD models in certain large-population or thermodynamic limits where the population size becomes unbounded but species concentrations are held fixed. However, for the PBSRD models commonly used in biological modeling, e.g. the VR model and the contact reactivity model, there is limited \emph{rigorous} work identifying and proving the existence of such deterministic coarse-grained limits (i.e. law of large numbers).

To facilitate the development of rigorous coarse-grainings of PBSRD models, our work begins with formulating the dynamics of the diffusing and reacting molecules as measure-valued stochastic processes (MVSPs). These processes describe the evolution of the concentration fields of each chemical species as a sum of $\delta$ functions in each molecule's position. A weak formulation of the dynamics of these processes is then derived, giving the action of the processes on an arbitrary test function. The subsequent equations for the time-evolution of the pairing between a test function and the MVSP then involve both continuous noise processes that account for the diffusion of individual molecules, and state-dependent Poisson-random measures that encode the timing and occurrence of chemical reactions between molecules. We establish in a simplified, but representative, case that the MVSP is equivalent to the commonly used Doi Fock Space representation for the forward equation of the VR model.

We then investigate the large population limit of the MVSP dynamics in which the initial number of molecules of each chemical species becomes unbounded, but the concentrations of each species are held fixed. The latter can be achieved by considering molar concentrations, and treating Avogadro's number and/or the domain volume as a large ``system size'' parameter. As we work in free space, here we consider the limit where Avogadro's number can be considered a large parameter. Such limits are typically considered one of the primary physical regimes in which PDE or SPDE models for biological systems arise as physical approximations to the underlying process of molecules diffusing and reacting~\cite{Arnold1980dl,Nolen2019,Schutte2019}.

To rigorously determine the limit of the VR model, we will generalize the martingale problem approach for studying solutions to stochastic differential equations developed by Stroock and Varadhan~\cite{EthierKurtz, StroockVaradhan} to our weak MVSP representation. Adaptations of this method have been successfully used to study large-population limits in stochastic models for population dynamics, evolutionary dynamics, interacting particle systems, and financial models \cite{TypicalDefaults,LargePortfolio,DaiPra2,DaiPra3,Delarue,Inglis,Moynot,Sompolinsky}. We first identify a macroscopic system of partial integro-differential equations (PIDEs) whose solution corresponds to the large population limit of the MVSP, and then rigorously prove the convergence (in a weak sense) of the MVSP to this solution.

Our approach is unique in using a bottom-up hierarchy to rigorously derive from spatial PBSRD models \emph{new} macroscopic PIDEs that correspond to the true large-population limit, and which correctly account for chemical interactions between particles. This is in contrast to the standard macroscopic reaction-diffusion PDE models of chemical reaction systems used at the cellular scale~\cite{IyengarCellShape08,MunozGarcia:2009hd}. The latter are typically obtained by formally modifying reaction-rate equation ODE models for \emph{non-spatial} chemical reaction systems by simply adding Laplacian terms to model molecular diffusion.

To illustrate our main result, consider the special case of the reversible reaction $\textrm{A} + \textrm{B} \leftrightarrows \textrm{C}$ reaction. Let $\gamma$ denote a system size parameter (i.e. Avogadro's number, or in bounded domains the product of Avogadro's number and the domain volume). We assume all molecules move by Brownian Motion in $\R^d$, with species-dependent diffusivities, $\DA$, $\DB$ and $\DC$ respectively. Let $K_1^{\gamma}(x,y) = K_{1}(x,y)/\gamma$ denote the probability per time an individual \textrm{A} molecule at $x$ and \textrm{B} molecule at $y$ can react, with $m_{1}(z|x,y)$ giving the probability density that when the \textrm{A} and \textrm{B} molecules react they produce a \textrm{C} molecule at $z$. We define $K_2^{\gamma}(z) = K_2(z)$ and $m_2(x,y|z)$ similarly for the reverse reaction. Finally, denote by $A(t)$ the stochastic process for the number of species \textrm{A} molecules at time $t$, and label the position of $i$th molecule of species \textrm{A} at time $t$ by the stochastic process $\vQ_i^{A(t)} \subset \R^d$. The random measure
\begin{equation*}
A^{\gamma}(x,t) = \frac{1}{\gamma} \sum_{i=1}^{A(t)} \delta\paren{x - \vQ_i^{A(t)}}
\end{equation*}
corresponds to the stochastic process for the the molar concentration of species \textrm{A} at $x$ at time $t$. We can similarly define $B^{\gamma}(x,t)$ and $C^{\gamma}(x,t)$. In this work we study the large population (thermodynamic) limit where $\gamma \to \infty$ and $A^{\gamma}(x,0)$ converges to a well defined limiting molar concentration field (with similar limits for the molar concentrations of species \textrm{B} and \textrm{C}). We prove, in a weak sense, that as $\gamma \to \infty$,
\begin{align*}
 \paren{A^{\gamma}(x,t),B^{\gamma}(x,t),C^{\gamma}(x,t)} \to \paren{\bar{A}(x,t), \bar{B}(x,t), \bar{C}(x,t)},
\end{align*}
where
\begin{align*}
  \partial_t \bar{A}(x,t) &= \DA \lap \bar{A}(x,t) - \int_{\R^d} K_1(x,y) \bar{A}(x,t) \bar{B}(y,t) \, dy
  + \int_{\R^{2d}} K_2(z) m_2(x,y|z) C(z,t) \,dy \, dz.\\
  \partial_t \bar{B}(x,t) &= \DB \lap \bar{B}(x,t) - \int_{\R^d} K_1(x,y) \bar{A}(y,t) \bar{B}(x,t) \, dy
  + \int_{\R^{2d}} K_2(z) m_2(x,y|z) C(z,t) \,dy \, dz.\\
  \partial_t \bar{C}(x,t) &= \DC \lap \bar{C}(x,t) - K_2(z) C(z,t) + \int_{\R^{2d}} K_1(x,y) m_1(z|x,y) \bar{A}(x,t) \bar{B}(y,t) \, dx \, dy.
\end{align*}

Our main result, Theorem~\ref{thm:convergence}, establishes this rigorous limit for the VR PBSRD model of general chemical reaction systems involving first and second order reactions. To simplify the (already detailed) exposition, we impose one constraint, assuming that the reaction network structure is such that the total concentration of molecules in the system has a strict upper bound. Theorem~\ref{thm:convergence} therefore does not cover reaction systems containing reactions that can lead to unbounded population growth, ruling out creation reactions like $\varnothing \to \textrm{A}$ and $\textrm{A} \to 2\textrm{A}$. We note, however, that we expect our basic approach should be adaptable to such systems, but would require the introduction \and analysis of a stopping time for when the total population of molecules reaches some threshold (see the discussion of Remark~\ref{R:StoppingTime}). This is similar to one approach used for proving the classical large-population limit of \emph{non-spatial} stochastic chemical kinetic systems~\cite{DK:2015}.

Upon completion of this work we became aware of the recent publication \cite{Nolen2019}. In~\cite{Nolen2019} the authors study the rigorous large-population limit for a subset of the reactions we allow in our VR PBSRD model, restricting to reactions of the form $\textrm{A} + \textrm{B} \rightarrow \textrm{C} + \textrm{D}$. This ensures that the total number of particles is preserved for all time in their system, allowing~\cite{Nolen2019} to formulate a strong-form pathwise representation for the evolution of the stochastic processes for particle positions and types. Particle diffusion and particle-particle reactions are then represented through separate but coupled equations. This formulation enabled the authors to use relative entropy methods to prove propagation of chaos. In contrast, we consider general reaction networks as are needed to model many biological processes, in which the number of particles in the system changes over time. We therefore work with a weak formulation, studying the empirical distribution of particle position and type directly. As such, while~\cite{Nolen2019} could leverage relative entropy methods and large deviations estimates to establish their results and even get quantitative estimates, we will work with the general martingale problem formulation.

The large-population limit was also studied in~\cite{Oelschlager89a} for PBSRD systems involving only birth reactions ($\textrm{A} \to 2\textrm{A}$), death reactions ($\textrm{A} \to \varnothing$), and first-order conversion reactions ($\textrm{A} \to \textrm{B}$). For such linear reaction systems the resulting mean-field, large-population limit is a system of linear, local reaction-diffusion PDEs. Our high level formulation for the system dynamics more closely follows~\cite{Oelschlager89a}, which also studied a weak MVSP representation for the reaction and diffusion of particles. We stress, however, that a key difference in our work is in allowing for general second order reactions, i.e. reactions of the form $\textrm{A} + \textrm{B} \to \cdots$, where the reaction dynamics critically depend on spatial interactions between two individual reactant particles. Such reactions are prevalent in most cellular signaling processes, and common in many chemical network models for biological systems. By including bimolecular reactions, formulation of the underlying equations describing the particle dynamics is complicated by the need to model the two-body interactions between particles, and to model the placement of reaction products in space given the positioning of reactants. The resulting large-population limit becomes a system of nonlinear and non-local partial integral differential equations. Allowing for both changing particle numbers and bimolecular reactions results in the mathematical formulation of the problem, as presented in Sections \ref{S:GeneralSetup} and \ref{S:GeneratorPathLevelDescription}, being more involved than in~\cite{Oelschlager89a}, necessitating a number of technical estimates (given in Appendix \ref{A:appendixA}).

An interesting future research direction is to obtain quantitative convergence results for general reaction networks with uneven inputs and outputs, i.e. for reaction networks (like the ones studied in this paper) where the total number of particles are not conserved in time. To do so, generalizations of the propagation of chaos techniques used in \cite{Mischlet2015} and \cite{Nolen2019} seem likely to be necessary, as such techniques largely rely on conservation of the total number of particles
in time, see also the related discussion in \cite{Nolen2019}. We leave this question for future work.

The paper is organized as follows. In Section~\ref{S:GeneralSetup} we describe the problem in more mathematical terms, introducing basic notation for specifying chemical reaction systems, for describing system state as a MVSP for the (number) density of particles in the system, and for representing reactant (product) configuration spaces that encode possible positions of individual reactant (product) particles involved in a reaction. In Section~\ref{S:GeneratorPathLevelDescription} we define reaction kernels specifying the probability per time a reaction involving specific reactants can occur. For each reaction type we also specify a placement density. These give the probability density that product particles of a reaction between one or more reactants are placed at specific positions. We then introduce the stochastic equation describing the evolution of the empirical measure (MVSP) of the chemical species in path space. In Section~\ref{S:assumptions} we summarize the basic assumptions we make about the form of the reaction rate functions and product placement densities. In Section~\ref{S:MainResult} we present our main result, Theorem~\ref{thm:convergence}, describing for general reaction networks the evolution equation satisfied in the large-population limit by the empirical measures for the molar concentration of each species. We also present a number of illustrative examples showing the derived large-population limit for specific chemical systems. In Section~\ref{S:EquivalentDerivations} we prove that the MVSP formulation we study is equivalent to the more commonly used Fock-space (i.e. Kolmogorov forward equation) representation popularized by Doi~\cite{DoiSecondQuantA,DoiSecondQuantB}, focusing on the simplified case of the reversible $\textrm{A} + \textrm{B} \leftrightarrows \textrm{C}$ reaction. Finally, in Section~\ref{S:ProofMainResult} we give the proof of Theorem~\ref{thm:convergence}. The appendix includes proofs of a number of technical estimates as well as the existence, uniqueness and regularity statement for the forward Kolmogorov equation of the $\textrm{A} + \textrm{B} \leftrightarrows \textrm{C}$ reaction system studied in Section~\ref{S:EquivalentDerivations}.

\section{Notation and preliminary definitions}\label{S:GeneralSetup}

We consider a collection of particles with $J$ possible different types. Note, in the following we will interchangeably use particle or molecule and type or species. Let $\mathscr{S} = \{ S_{1},\cdots, S_{J}\}$ denote the set of different possible particle types, with $p_i \in \mathscr{S} $ the value of the type of the $i$-th particle. In the remainder, we also assume an underlying probability triple, $(\Omega,\filt,\BP)$, on which all random variables are defined.

The goal of this paper is to study the process that molecules diffuse in space $\R^d$ freely and undergo at most $L$ possible different type of reactions, denoted as $\mathscr{R}_1, \cdots , \mathscr{R}_L$. We describe the
$\mathscr{R}_\ell$th reaction,  $\ell \in \{1,\dots,L\}$, by
\begin{equation*}
\sum_{j = 1}^{J} \alpha_{\ell j}S_j \rightarrow \sum_{j = 1}^{J} \beta_{\ell j}S_j,
\end{equation*}
where we assume the stoichiometric coefficients $\{\alpha_{\ell j}\}_{j=1}^{J}$ and $\{\beta_{\ell j}\}_{j=1}^J$ are non-negative integers. Let $\vec{\alpha}^{(\ell)} = (\alpha_{\ell 1}, \alpha_{\ell 2}, \cdots, \alpha_{\ell J})$ and $\vec{\beta}^{(\ell)} = (\beta_{\ell 1}, \beta_{\ell 2}, \cdots, \beta_{\ell J})$ be multi-index vectors collecting the coefficients of the $\ell$th reaction. We denote the reactant and product orders of the reaction by $|\vec{\alpha}^{(\ell)}|\doteq\sum_{i = 1}^{J} \alpha_{\ell i} \leq 2$ and $|\vec{\beta}^{(\ell)}|\doteq\sum_{j = 1}^{J} \beta_{\ell j}\leq 2$, assuming that at most two reactants and two products participate in any reaction. We therefore implicitly assume all reactions are at most second order. This is motivated by the observation that the probability three reactants in a dilute system simultaneously have the proper configuration and energy levels to react is very small, so that trimolecular reactions are very rare~\cite{DD:2003}. In biological models, such reactions are often considered to be approximations to sequences of bimolecular reactions. For subsequent notational purposes, we order the reactions such that the first $\tilde{L}$ reactions correspond to those that have no products, i.e. annihilation reactions of the form
\begin{equation*}
\sum_{j = 1}^{J} \alpha_{\ell j}S_j \rightarrow \emptyset,
\end{equation*}
for $\ell \in \{1,\dots,\tilde{L}\}$. We assume the remaining $L - \tilde{L}$ reactions have one or more product particles.

Let $D^i$ label the diffusion coefficient for the $i$th molecule, taking values in $\{D_1, \dots, D_J\}$, where $D_j$ is the diffusion coefficient for species $S_j$, $ j = 1,\cdots, J$. We denote by $Q^{i}_{t}\in \R^d$ the position of the $i$th molecule, $i \in \N_+$, at time $t$. A particle's state can be represented as a vector in $\hat{P}= \mathbb{R}^{d}\times \mathscr{S}$, the combined space encoding particle position and type. This state vector is subsequently denoted by $\hat Q^i_t \Def (Q^{i}_{t},p_i)$.

We now formulate our representation for the (number) concentration, equivalently number density, fields of each species. Let $E$ be a complete metric space and $M(E)$ the collection of measures on $E$. For  $f: E\mapsto \mathbb{R}$ and $\mu\in M(E)$, define
\begin{equation*}
\la f,\mu\ra_{E} = \int_{x \in E}f(x)\mu(d x).
\end{equation*}
We will frequently have $E=\mathbb{R}^{d}$. In this case we omit the subscript $E$ and simply write $\la f,\mu\ra$. For each $t\ge 0$, we define the concentration of particles in the system at time $t$ by
the distribution
\begin{equation} \label{eq:densitymeasdef}
\nu_t = \sum_{i=1}^{N(t)}\delta_{\hat Q^i_t} = \sum_{i=1}^{N(t)}\delta_{Q^i_t} \delta_{p_i},
 \end{equation}
where, borrowing notation from \cite{BM:2015}, $N(t) = \la 1, \nu_t\ra_{\hat{P}}$ represents the stochastic process for the total number of particles at time $t$. To investigate the behavior of different type of particles, we denote the marginal distribution on the $j$th type, i.e. the concentration field for species $j$, by
\begin{equation*}
\nu^{j}_{t}(\cdot)= \nu^{}_{t}(\cdot\times \{S_j\}),
\end{equation*}
a distribution on $\R^d$. $N_j(t) = \la 1, \nu^j_t\ra$ will similarly label the total number of particles of type $S_j$ at time $t$. For $\nu$ any fixed particle distribution of the form~\eqref{eq:densitymeasdef}, we will also use an alternative representation in terms of the marginal distributions $\nu^j\in M(\R^d)$ for particles of type $j$,
\begin{equation} \label{eq:densitymeasdefmargrep}
\nu = \sum_{j = 1}^{J} \nu^j\delta_{S_j}  \in M(\hat{P}).
\end{equation}

In addition to having notations for representing particle concentration fields, we will also often make use of  state vectors for \emph{all} particles in the system. With some abuse of notation, for $\nu_{t}$ given by~\eqref{eq:densitymeasdef} denote by
\begin{align}
H(\nu_t)=\left((Q^{\sigma_1(1)}_{t},S_{1}),\cdots, (Q^{\sigma_1(N_1(t))}_{t},S_{1}), \cdots \cdots, (Q^{\sigma_J(1)}_{t},S_{r}),\cdots, (Q^{\sigma_J(N_J(t))}_{t},S_{J}),0,0,\cdots\right)\label{Eq:H_definition}
\end{align}
a state vector of the full particle system. Here, for each type $j=1,\dots,J$, the particle index maps $\{\sigma_j(k)\}_{k=1}^{N_j(t)}$ encode a fixed ordering for particles of species $j$, $Q^{\sigma_j(1)}\preceq\cdots\preceq Q^{\sigma_j(N_j(t))}$, arising from an (assumed) fixed underlying ordering on $\mathbb{R}^{d}$. In $H(\nu_{t})$, we order all particles of type 1 by the ordering on $\R^{d}$ first, followed by particles of type 2, then type 3, etc. As particles of the same type are assumed indistinguishable, there is no ambiguity in the value of $H(\nu_{t})$ in the case that two particles of the same type have the same position. $H^i(\nu_t)\in \hat{P}$ will label the $i$th entry of the vector $H(\nu_t)$.
We denote by
\begin{align}
H_Q(\nu^{j}_t)=\left(Q^{\sigma_j(1)}_{t},\cdots, Q^{\sigma_j(N_j(t))}_{t}, 0,0,\cdots\right)
\end{align}
an analogous \emph{position}-only state vector for type $j$ particles, using the same ordering on $\mathbb{R}^{d}$, with $H^i_Q(\nu^j_t)\in \R^d$ labeling the $i$th entry in $H_Q(\nu^j_t)$.

With the preceding definitions, we last introduce a system of notation to encode reactant and particle positions and configurations that are needed to later specify reaction processes.

\begin{definition}\label{def:reacIndSpace}
For reaction $\mathscr{R}_{\ell}$, define the reactant index space
\begin{equation*}
  \mathbb{I}^{(\ell)} = \left(\N\setminus\{0\}\right)^{|\vec{\alpha}^{(\ell)}|},
\end{equation*}
with the convention that if $\abs{\vec{\alpha}^{(\ell)}}=0$ then $\mathbb{I}^{(\ell)}=\varnothing$ is the empty set. In describing the dynamics of $\nu_t$, we will sample vectors containing the indices of the specific reactant particles participating in a single $\ell$-type reaction from $\mathbb{I}^{(\ell)}$. For $\vec{i}\in\mathbb{I}^{(\ell)}$ a particular sampled set of reactant indices, we write
\begin{equation*}
\vec{i}=(i^{(1)}_1, \cdots, i^{(1)}_{\alpha_{\ell 1}}, \cdots, i^{(J)}_1, \cdots, i^{(J)}_{\alpha_{\ell J}}),
\end{equation*}
where $i^{(j)}_r \in \vec{i}$ labels the $r$th sampled index of species type $j$. Here we use the convention that if $\alpha_{\ell j}=0$ no indices are included in $\vec{i}$ for particles of type $j$ (as they do not participate in the $\ell$th reaction as a reactant). Note, as we assumed that $|\vec{\alpha}^{(\ell)}|\doteq\sum_{i = 1}^{J} \alpha_{\ell i} \leq 2$, in practice $\alpha_{\ell i}\in \{0,1,2\}$ and $\vec{i}$ correspondingly identifies zero, one or two reactant particles.
\end{definition}

\begin{definition}\label{def:reacPosSpace}
For reaction $\mathscr{R}_{\ell}$, analogous to our definition of $\mathbb{I}^{(\ell)}$, we define the reactant position space
\begin{equation*}
\mathbb{X}^{(\ell)} = \{ \vec{x} = (x^{(1)}_1, \cdots, x^{(1)}_{\alpha_{\ell 1}}, \cdots, x^{(J)}_1, \cdots, x^{(J)}_{\alpha_{\ell J}}) \, | \,  x_r^{(j)} \in \R^d, \text{ for all } 1\leq j\leq J, 1\leq r\leq \alpha_{\ell j} \}  = \left(\R^d\right)^{|\vec{\alpha}^{(\ell)}|}.
\end{equation*}
Similar to the last definition, when $\alpha_{\ell j} = 0$ particles of species/type $j$ are not involved in reaction $\ell$, and hence not included within possible reactant position vectors. For $\vec{x}\in \mathbb{X}^{(\ell)}$ a sampled reactant position configuration for one individual $\mathscr{R}_{\ell}$ reaction, $x_r^{(j)}$ then labels the sampled position for the $r$th reactant particle of species $j$ involved in the reaction. Let $d\vec{x} = \left( \bigwedge_{j = 1}^J (\bigwedge_{r = 1}^{ \alpha_{\ell j}} d x_r^{(j)}) \right) $ be the corresponding volume form on $\mathbb{X}^{(\ell)}$, which also naturally defines an associated Lebesgue measure. \end{definition}

\begin{definition}\label{def:prodPosSpace}
For reaction $\mathscr{R}_{\ell}$ with $\tilde{L} + 1\leq \ell\leq L$, i.e. having at least one product particle, define the product position space
\begin{equation*}
\mathbb{Y}^{(\ell)} = \{ \vec{y} = (y^{(1)}_1, \cdots, y^{(1)}_{\beta_{\ell 1}}, \cdots, y^{(J)}_1, \cdots, y^{(J)}_{\beta_{\ell J}}) \, | \,  y_r^{(j)} \in \R^d, \text{ for all } 1\leq j\leq J, 1\leq r\leq \beta_{\ell j} \} = \left(\R^d\right)^{|\vec{\beta}^{(\ell)}|}.
\end{equation*}
Analogous to Definition~\ref{def:reacIndSpace}, when $\beta_{\ell j} = 0$ species $j$ is not a product for the $\ell$th reaction, and hence there will be no indices for particles of species $j$ within the product position space. For $\vec{y}\in \mathbb{Y}^{(\ell)}$ a sampled product position configuration for one individual $\mathscr{R}_{\ell}$ reaction, $y_r^{(j)}$ then labels the sampled position for the $r$th product particle of species $j$ involved in the reaction. Let $d\vec{y} = \left( \bigwedge_{j = 1}^J (\bigwedge_{r = 1}^{ \beta_{\ell j}} d y_r^{(j)}) \right) $ be the corresponding volume form on $\mathbb{Y}^{(\ell)}$, which also naturally defines an associated Lebesgue measure.\end{definition}

\begin{definition}\label{def:projMapping}
Consider a fixed reaction $\mathscr{R}_{\ell}$, with $\vec{i}\in \mathbb{I}^{(\ell)}$ and $\nu$ corresponding to a fixed particle distribution given by~\eqref{eq:densitymeasdef} with representation~\eqref{eq:densitymeasdefmargrep}. We define the $\ell$th projection mapping $\mathcal{P}^{(\ell)} :   M(\hat{P})\times  \mathbb{I}^{(\ell)} \rightarrow  \mathbb{X}^{(\ell)}$ as
\begin{equation*}
\mathcal{P}^{(\ell)}(\nu,  \vec{i}) = (H_Q^{i^{(1)}_1}(\nu^1), \cdots, H_Q^{i^{(1)}_{\alpha_{\ell 1}}}(\nu^1) , \cdots , H_Q^{i^{(J)}_1}(\nu^{J}), \cdots, H_Q^{i^{(J)}_{\alpha_{\ell J}}}(\nu^{J})).
\end{equation*}
When reactants with indices $\vec{i}$ in particle distribution $\nu$ are chosen to undergo a reaction of type $\mathscr{\ell}$, $\mathcal{P}^{(\ell)}(\nu,\vec{i})$ then gives the vector of the corresponding reactant particles' positions. For simplicity of notation, in the remainder we will sometimes evaluate $\mathcal{P}^{(\ell)}$ with inconsistent particle distributions and index vectors. In all of these cases the inconsistency will occur in terms that are zero, and hence not matter in any practical way.
\end{definition}

\begin{definition}\label{def:effSampllingSpace}
Consider a fixed reaction $\mathscr{R}_{\ell}$, with $\nu$ a fixed particle distribution given by~\eqref{eq:densitymeasdef} with representation~\eqref{eq:densitymeasdefmargrep}. Using the notation of Def.~\ref{def:reacIndSpace}, we define the allowable reactant index sampling space $\Omega^{(\ell)}(\nu)\subset \mathbb{I}^{(\ell)}$ as
\begin{equation*}
\Omega^{(\ell)}(\nu) = \begin{cases}
\varnothing, & \abs{\vec{\alpha}^{(\ell)}}=0,\\
\{ \vec{i} = i_{1}^{(j)} \in \mathbb{I}^{(\ell)} \, | \, i_1^{(j)} \leq \la 1, \nu^j \ra \}, & \abs{\vec{\alpha}^{(\ell)}}=\alpha_{\ell j} = 1,\\
\{ \vec{i} = (i_{1}^{(j)},i_{2}^{(j)}) \in \mathbb{I}^{(\ell)} \, | \,   i_1^{(j)} < i_{2}^{(j)} \leq \la 1, \nu^j \ra \}, & \abs{\vec{\alpha}^{(\ell)}}=\alpha_{\ell j} = 2,\\
\{ \vec{i} = (i_{1}^{(j)},i_{1}^{(k)})\in \mathbb{I}^{(\ell)} \, | \, i_1^{(j)} \leq \la 1, \nu^j \ra, i_1^{(k)} \leq \la 1, \nu^k \ra \}, & \abs{\vec{\alpha}^{(\ell)}}=2, \quad \alpha_{\ell j} = \alpha_{\ell k} = 1, \quad j < k.\\

\end{cases}
\end{equation*}
Note that in the calculations that follow $\Omega^{(\ell)}(\nu)$ will change over time due to the fact that $\nu=\nu_{t}$ changes over time, but this will not be explicitly denoted for notational convenience.
\end{definition}

\begin{definition}\label{def:lambda}
Consider a fixed reaction $\mathscr{R}_{\ell}$, with $\nu$ any element of $M(\hat{P})$ having the representation~\eqref{eq:densitymeasdefmargrep}. We define the $\ell$th reactant measure mapping $\lambda^{(\ell)} [\, \cdot \,] : M(\hat{P}) \to M( \mathbb{X}^{(\ell)})$ evaluated at $\vx \in \mathbb{X}^{(\ell)}$ via $\lambda^{(\ell)}[\nu](d\vx) = \otimes_{j = 1}^J(\otimes_{ r = 1}^{\alpha_{\ell j }} \nu^j(dx_r^{(j)})) $.
\end{definition}

\begin{definition}\label{def:offDiagSpace}
For blue reaction $\mathscr{R}_{\ell}$, define a subspace $\tilde{\mathbb{X}}^{(\ell)}\subset\mathbb{X}^{(\ell)}$ by removing all particle reactant position vectors in $\mathbb{X}^{(\ell)}$ for which two particles of the same species have the same position. That is
\begin{equation*}
\tilde{\mathbb{X}}^{(\ell)} = \mathbb{X}^{(\ell)}\setminus\{ \vec{x} \in\mathbb{X}^{(\ell)} \, | \,  x_r^{(j)}  =  x_{k}^{(j)} \text{ for some } 1\leq j\leq J, 1\leq k \neq r \leq \alpha_{\ell j} \}.
\end{equation*}
\end{definition}

\section{Generator and process level description}\label{S:GeneratorPathLevelDescription}

Let us consider the time evolution of the process $\nu^{\vec{{\vec{\zeta}}}}_t = \sum_{i=1}^{N^{\vec{\zeta}}(t)}\delta_{\hat Q^i_t}$ which gives the spatial distribution of all particles (i.e. number density or concentration). Here $N^{\vec{\zeta}}(t) = \la 1,\nu^{{\vec{\zeta}}}_{t}\ra_{\hat{P}}$ denotes the total number of particles at time $t$ and ${\vec{\zeta}}= (\frac{1}{\gamma}, \eta)$ is a two-vector consisting of a scaling parameter, $\gamma$, and a displacement range parameter, $\eta$.  In the large population limit we consider $\gamma$ plays the role of a system size, and is considered to be large (e.g. Avogadro's number, or in bounded domains the product of Avogadro's number and the domain volume)~\cite{DK:2015}. On the other hand, $\eta$ is a regularizing parameter allowing us to be able to consider and rigorously handle delta-function placement densities for reaction products (a common choice in many PBSRD simulation methods). We will further clarify these parameters later on,  focusing on the (large-population) limit that $\gamma\to \infty$ and $\eta\to 0$ jointly, denoted as $\vec{\zeta}\to 0$.

To formulate the process-level model, it is necessary to specify more concretely the reaction process between individual particles. For reaction $\mathscr{R}_{\ell}$, denote by $K_\ell^{\gamma}(\vec{x})$ the rate (i.e. probability per time) that reactant particles with positions $\vec{x}\in \mathbb{X}^{(\ell)}$ react. As described in the next section, we assume this rate function has a specific scaling dependence on $\gamma$. Let $m^\eta_\ell( \vec{y} \, | \,  \vec{x})$ be the placement density when the reactants at positions $\vec{x}\in \mathbb{X}^{(\ell)}$ react and generate products at positions $\vec{y}\in \mathbb{Y}^{(\ell)}$. We assume this placement density depends on the displacement range parameter $\eta$, and that for each $\vec{x}$ and fixed $\eta>0$, $m^\eta_\ell( \cdot \, | \,  \vec{x})$ is bounded.

Stochastic particle dynamics involve both diffusive motion and chemical reactions. In describing particle motion we will make use of $\{W^n_t\}_{n\in \N_+}$, a countable collection of standard independent Brownian motions in $\R^d$. To describe a reaction $\mathscr{R}_\ell$ with no products,  i.e. $1\leq \ell \leq \tilde{L}$,  we associate with it a Poisson point measure $dN_\ell(s, \vec{i}, \theta)$  on $\R_+\times \mathbb{I}^{(\ell)} \times\R_+$. Here $\vec{i}\in \mathbb{I}^{(\ell)}$ gives the sampled reactant configuration, with $i^{(j)}_r$ labeling the $r$th sampled index of species $j$. The corresponding intensity measure of $dN_{\ell}$ is given by $d\bar{N}_\ell(s, \vec{i}, \theta)= ds \, \left( \bigwedge_{ j = 1}^J \left( \bigwedge_{r = 1}^{\alpha_{\ell j}}\left(\sum_{k\geq 0} \delta_k( i_{r}^{(j)})\right) \right)\right)\, d\theta$. Analogously, for each reaction $\mathscr{R}_\ell$ with products,  i.e. $\tilde{L} + 1 \leq \ell \leq L$,  we associate with it a Poisson point measure $dN_\ell(s, \vec{i}, \vec{y}, \theta_1, \theta_2)$ on $\R_+\times \mathbb{I}^{(\ell)} \times \mathbb{Y}^{(\ell)} \times\R_+\times\R_+$. Here $\vec{i}\in \mathbb{I}^{(\ell)}$ gives the sampled reactant configuration, with $i^{(j)}_r$ labeling the $r$th sampled index of species $j$. $\vec{y}\in \mathbb{Y}^{(\ell)}$ gives the sampled product configuration, with $y_r^{(j)}$ labeling the sampled position for the $r$th newly created particle of species $j$. The corresponding intensity measure is given by $d\bar{N}_\ell(s, \vec{i}, \vec{y}, \theta_1, \theta_2)= ds \, \left( \bigwedge_{ j = 1}^J \left( \bigwedge_{r = 1}^{\alpha_{\ell j}}\left(\sum_{k\geq 0} \delta_k( i_{r}^{(j)})\right) \right)\right)\,d\vec{y}\, d\theta_1 \, d\theta_2$.

The existence of the Poisson point measure follows as the intensity measure is $\sigma$-finite (see Chapter I - Theorem 8.1 in \cite{NW:2014} or Corollary 9.7 in \cite{K:2001}). Let $d\tilde{N}_{\ell}(s, \vec{i}, \vec{y}, \theta_1, \theta_2) = dN_{\ell}(s, \vec{i}, \vec{y}, \theta_1, \theta_2) - d\bar{N}_{\ell}(s, \vec{i}, \vec{y}, \theta_1, \theta_2)$ be the compensated Poisson measure, for $\tilde{L}+1 \leq \ell \leq L$. For any measurable set $A\in  \mathbb{I}^{(\ell)} \times \mathbb{Y}^{(\ell)} \times\R_+\times\R_+$, $N_\ell(\, \cdot \, , A)$ is a Poisson process and $\tilde{N}_\ell(\, \cdot \,, A)$ is a martingale (see Proposition 9.18 in \cite{K:2001}). Similarly, we can define $d\tilde{N}_{\ell}(s, \vec{i}, \theta) = dN_{\ell}(s, \vec{i}, \theta) - d\bar{N}_{\ell}(s, \vec{i}, \theta)$, for $1\leq \ell \leq \tilde{L}$. In this case, given any measurable set $A\in  \mathbb{I}^{(\ell)} \times\R_+$, we then have that $N_\ell(\, \cdot \, , A)$ is a Poisson process and $\tilde{N}_\ell(\, \cdot \,, A)$ is a martingale.

With the preceding definitions, we now define the dynamics of $\nu_t^{\vec{\zeta}}$ via a weak representation. We consider test functions denoted by $f\in C_b^2(\hat P)$, which we define to mean $f(\cdot,S_j)\in C_{b}^{2}(\mathbb{R}^{d})$ for each $j$. The time evolution for the process $\la f,\nu^{{\vec{\zeta}}}_{t}\ra_{\hat{P}}$ can then be represented by
\begin{align}
\la f,\nu^{{\vec{\zeta}}}_{t}\ra_{\hat{P}}&=\la f,\nu^{{\vec{\zeta}}}_{0}\ra_{\hat{P}}+\sum_{i\geq 1}\int_{0}^{t}1_{\{i\leq \la 1, \nu_{s-}^{\vec{\zeta}}\ra_{\hat{P}}\}}\sqrt{2D^{i}}\frac{\partial f}{\partial Q}(H^i(\nu_{s-}^{\vec{\zeta}}))dW^{i}_{s}+\int_{0}^{t}\sum_{i=1}^{\la 1, \nu_{s-}^{\vec{\zeta}}\ra_{\hat{P}}} D^{i}\frac{\partial^{2} f}{\partial Q^{2}}(H^i(\nu_{s-}^{\vec{\zeta}}))ds\notag\\
&
\quad+\sum_{\ell = 1}^{\tilde{L}} \int_{0}^{t}\int_{\mathbb{I}^{(\ell)}} \int_{\mathbb{R}_{+}}\left(\la f, \nu^{{\vec{\zeta}}}_{s-}- \sum_{j = 1}^J\sum_{r = 1}^{\alpha_{\ell j}} \delta_{(H_Q^{i_r^{(j)}}(\nu^{{\vec{\zeta}}, j}_{s-}),  S_j)} \ra_{\hat{P}} - \la f,\nu^{{\vec{\zeta}}}_{s-}\ra_{\hat{P}}\right) \notag\\
&\qquad
\times 1_{\{\vec{i} \in \Omega^{(\ell)}(\nu^{\vec{\zeta}}_{s-})\}} \times 1_{\{ \theta \leq K_\ell^{\gamma}\left(\mathcal{P}^{(\ell)}(\nu_{s-}^{\vec{\zeta}}, \vec{i}) \right)\}}  dN_{\ell}(s,\vec{i}, \theta)\notag\\
&
\quad+\sum_{\ell = \tilde{L} + 1}^L \int_{0}^{t}\int_{\mathbb{I}^{(\ell)}} \int_{\mathbb{Y}^{(\ell)}}\int_{\mathbb{R}_{+}^2}\left(\la f, \nu^{{\vec{\zeta}}}_{s-}- \sum_{j = 1}^J\sum_{r = 1}^{\alpha_{\ell j}} \delta_{(H_Q^{i_r^{(j)}}(\nu^{{\vec{\zeta}}, j}_{s-}),  S_j)} + \sum_{j = 1}^J \sum_{r = 1}^{\beta_{\ell j }} \delta_{(y_r^{(j)}, S_j)} \ra_{\hat{P}} -\la f,\nu^{{\vec{\zeta}}}_{s-}\ra_{\hat{P}}\right) \label{Eq:EM_formula}\\
&\qquad
\times 1_{\{\vec{i} \in \Omega^{(\ell)}(\nu^{\vec{\zeta}}_{s-})\}} \times 1_{\{ \theta_1 \leq K_\ell^{\gamma}\left(\mathcal{P}^{(\ell)}(\nu_{s-}^{\vec{\zeta}}, \vec{i}) \right)\}}\times 1_{\{\theta_2\leq m^{\eta}_\ell\left(\vec{y} \,  | \, \mathcal{P}^{(\ell)}(\nu_{s-}^{\vec{\zeta}}, \vec{i}) \right)\}}  dN_{\ell}(s,\vec{i}, \vec{y},\theta_1, \theta_2). \notag
\end{align}

Formula~\eqref{Eq:EM_formula} captures the dynamics of our particle system.Recall that $N_j(s) =\la 1, \nu_{s}^{\vec{\zeta}}\ra_{\hat{P}}$ denotes the total number of molecules at time $s$, and $D^i$ labels the diffusion coefficient for the $i$th molecule, taking values in $\{D_1, \dots, D_J\}$, where $D_j$ is the diffusion coefficient for species $S_j$, $ j = 1,\cdots, J$. The diffusion of each particle is modeled by the two integrals on the first line of~\eqref{Eq:EM_formula}.  The second and third lines model reactions with no products, while the fourth and fifth lines model reactions with products. The integrals involving the Poisson measures $N_{\ell}$ model the different components of the reaction processes, and correspond to sampling the times at which reactions occur, which reactant particles react, and where reaction products are placed. When the $\ell$th reaction happens for $\ell=\tilde{L}+1,\cdots, L$ (and analogously for $\ell=1,\cdots,\tilde{L}$), with probability per time given by the kernel $K_{\ell}^{\gamma}$, the system loses reactant particles and gains product particles. Sampling of possible reaction occurrences according to $K_{\ell}^{\gamma}$ occurs through the corresponding indicator functions on the third and fifth lines. The corresponding loss and gain of particles is encoded by the sums of delta functions on the second and fourth lines of~\eqref{Eq:EM_formula}. Product positions are sampled according to the placement density $m^{\eta}_\ell\left(\vec{y} \,  | \, \mathcal{P}^{(\ell)}(\nu_{s-}^{\vec{\zeta}}, \vec{i}) \right)$ through the indicator function on the fifth line. The indicators over elements of the sets $\Omega^{(\ell)}(\nu_{s-}^{\vec{\zeta}})$ ensure that reactions can only occur between particles that correspond to a possible set of reactants. Note, the particle labeled by $i$ in~\eqref{Eq:EM_formula} will change dynamically as reactions occur. For this reason, particle positions are accessed through the use of the state vectors, $H^i$ and $H^i_Q$, as is also done in structured population models~\cite{BM:2015}. Well-posedness properties of the model equation (\ref{Eq:EM_formula}) are further discussed in Section \ref{S:MainResult}, see also Chapter 6 of \cite{BM:2015} for related results in regards to the formulation and well-posedness.

We will subsequently assume that $N_j(t) =\la 1, \nu_s^{\vec{\zeta}}\ra$ is uniformly bounded in time in Assumption \ref{Assume:kernalBdd}. The stochastic integral with respect to Brownian motion in~\eqref{Eq:EM_formula} is then a martingale (for a fixed ${\vec{\zeta}}$). Taking the expectation, we obtain for the mean that
\begin{align}
\avg{\la f,\nu^{{\vec{\zeta}}}_{t}\ra_{\hat{P}}}&=\avg{\la f,\nu^{{\vec{\zeta}}}_{0}\ra_{\hat{P}}}+ \avg{\int_{0}^{t}\sum_{i=1}^{\la 1, \nu_{s-}^{\vec{\zeta}}\ra_{\hat{P}}} D^{i}\frac{\partial^{2} f}{\partial Q^{2}}(H^i(\nu_{s-}^{\vec{\zeta}}))ds}\nonumber\\
&
\quad+\sum_{\ell = 1}^{\tilde{L}} \avg{\int_{0}^{t}\sum_{\vec{i}\in\Omega^{(\ell)}(\nu_{s-}^{\vec{\zeta}})} \left(\la f, \nu^{{\vec{\zeta}}}_{s-}- \sum_{j = 1}^J\sum_{r = 1}^{\alpha_{\ell j}} \delta_{(H_Q^{i_r^{(j)}}(\nu^{{\vec{\zeta}}, j}_{s-}),  S_j)} \ra-\la f,\nu^{{\vec{\zeta}}}_{s-}\ra_{\hat{P}}\right) \times K_\ell^{\gamma}\left(\mathcal{P}^{(\ell)}(\nu_{s-}^{\vec{\zeta}}, \vec{i}) \right)  \, ds}\nonumber\\
&
\quad+\sum_{\ell = \tilde{L}+1}^L \avg{\int_{0}^{t}\sum_{\vec{i}\in\Omega^{(\ell)}(\nu_{s-}^{\vec{\zeta}})} \int_{\mathbb{Y}^{\ell}} \left(\la f, \nu^{{\vec{\zeta}}}_{s-}- \sum_{j = 1}^J\sum_{r = 1}^{\alpha_{\ell j}} \delta_{(H_Q^{i_r^{(j)}}(\nu^{{\vec{\zeta}}, j}_{s-}),  S_j)} + \sum_{j = 1}^J \sum_{r = 1}^{\beta_{\ell j}} \delta_{(y_r^{(j)}, S_j)} \ra_{\hat{P}} \right. \nonumber\\
&\qquad
-\left. \la f,\nu^{{\vec{\zeta}}}_{s-}\ra_{\hat{P}}\right) \times K_\ell^{\gamma}\left(\mathcal{P}^{(\ell)}(\nu_{s-}^{\vec{\zeta}}, \vec{i}) \right)  \times m_\ell^{\eta}\left(\vec{y} \, | \, \mathcal{P}^{(\ell)}(\nu_{s-}^{\vec{\zeta}}, \vec{i}) \right)  \,  d\vec{y} \, ds}\label{Eq:MEAN_EM_formula}.
\end{align}

\section{Assumptions on reaction functions and placement densities} \label{S:assumptions}
In studying the large population limit that $\gamma \to \infty$, we will constrain our choices of reaction kernels and placement densities through the following assumptions. Special cases of our choices include a variety of kernels and placement densities that are commonly used in modeling and simulation~\cite{DoiSecondQuantB,ErbanChapman2009,LipkovaErban2011,Isaacson2013,IZ:2018,DonevJCP2018}.
\begin{assumption}\label{Assume:kernalBdd0}
We assume that for all $1\leq \ell \leq L$, the reaction rate kernel $K_\ell(\vec{x})$ is uniformly bounded for all $\vec{x}\in \mathbb{X}^{(\ell)}$. We denote generic constants that depend on this bound by $C(K)$.
\end{assumption}

\begin{assumption}\label{Assume:measureP}
We assume that for any $\eta \geq 0$, $\tilde{L} + 1\leq \ell \leq L$, $\vec{y}\in \mathbb{Y}^{(\ell)}$ and $\vec{x}\in \mathbb{X}^{(\ell)}$,  the placement density $m^\eta_{\ell}(\vec{y} \, | \, \vec{x})$ is a bounded probability density, i.e. $\int_{\mathbb{Y}^{(\ell)}} m^\eta_{\ell}(\vec{y} \, | \, \vec{x})\, d\vec{y}= 1$.
\end{assumption}

As previously mentioned, we want to allow for placement densities involving delta-functions. To do so in a mathematically rigorous way we introduced the smoothing parameter $\eta$, through which we can define a corresponding mollifier in a standard way, as given by Definition~\eqref{def:molifier}. This is needed for~\eqref{Eq:EM_formula} to be well-defined, since expressions like $\{\theta_2\leq m^{\eta}_\ell\left(\vec{y} \,  | \, \mathcal{P}^{(\ell)}(\nu_{s-}^{\vec{\zeta}}, \vec{i}) \right)\}$ are non-sensical when $\eta=0$ and the placement density is a Dirac delta function.
\begin{definition}\label{def:molifier}
For $x\in\R^d$, let $G(x)$ denote a standard positive mollifier and $G_\eta(x) = \eta^{-d}G(x/\eta)$. That is, $G(x)$ is a smooth function on $\R^d$ satisfying the following four requirements
\begin{enumerate}
\item $G(x)\geq 0$,
\item $G(x)$ is compactly supported in $B(0, 1)$, the unit ball in $\R^d$,
\item $\int_{\R^d} G(x)\, dx = 1$,
\item $\lim_{\eta\to 0} G_\eta(x) = \lim_{\eta\to 0} \eta^{-d}G(x/\eta) = \delta_0(x) $, where $\delta_0(x)$ is the Dirac delta function and the limit is taken in the space of Schwartz distributions.
\end{enumerate}
\end{definition}

The allowable forms of the placement density for each possible reaction are given by Assumptions \ref{Assume:measureOne2One}-\ref{Assume:measureOne2Two}:



\begin{assumption}\label{Assume:measureOne2One}
If $\mathscr{R}_{\ell}$ is a first order reaction of the form $S_i \rightarrow S_j$, we assume that the placement density  $m^\eta_{\ell}(y\,|\, x)$ takes the mollified form of
$$m^\eta_{\ell}(y\, |\, x) =G_\eta(y-x),$$
with the distributional limit as $\eta \to 0$ given by
$$m_{\ell}(y\,|\, x) = \delta_x(y).$$
This describes that the newly created $S_j$ particle is placed  at the position of the reactant $S_i$ particle.
\end{assumption}

\begin{assumption}\label{Assume:measureTwo2One}
If $\mathscr{R}_{\ell}$ is a second order reaction of the form $S_i + S_k \rightarrow S_j$, we assume that the binding placement density  $m_{\ell}(z\, |\, x,y)$ takes the mollified form of
$$m^\eta_{\ell}(z\,|\,x,y) =\sum_{i=1}^{I}p_i \times G_\eta\left(z-(\alpha_i x +(1-\alpha_i)y)\right),$$
 with the distributional limit as $\eta \to 0$ given by
 $$m_{\ell}(z\,|\,x,y) = \sum_{i=1}^{I}p_i \times \delta\left(z-(\alpha_i x +(1-\alpha_i)y)\right),$$
  where $I$ is a fixed finite integer and $\sum_i p_i = 1$. This describes that the creation of particle $S_j$ is always on the segment connecting the reactant $S_i$ and reactant $S_k$ particles, but allows some random choice of position. A special case would be $I = 2$, $p_i = \tfrac{1}{2}$, $\alpha_1 = 0$ and $\alpha_2 = 1$, which corresponds to placing the particle randomly at the position of one of the two reactants. One common choice is taking $I = 1$, $p_1 = 1$ and choosing $\alpha_1$ to be the diffusion weighted center of mass~\cite{IZ:2018}.
\end{assumption}

\begin{assumption}\label{Assume:measureTwo2Two}
If $\mathscr{R}_{\ell}$ is a second order reaction of the form $S_i + S_k \rightarrow S_j + S_r$, we assume that the placement density  $m_\ell(z, w \, | \, x, y)$ takes the mollified form of
$$m^\eta_\ell(z, w \, | \, x, y) = p\times G_\eta\left(x-z\right)\otimes G_\eta\left(y-w\right)   + (1-p)\times G_\eta\left(x-w\right)\otimes G_\eta\left(y-z\right),$$
with the distributional limit as $\eta \to 0$ given by
$$m_\ell(z, w \, | \, x, y) = p\times\delta_{(x, y)}\left((z, w)\right)  + (1-p)\times\delta_{(x, y)}\left((w, z)\right).$$
This describes that newly created product $S_j$ and $S_r$ particles are always at the positions of the reactant $S_i$ and $S_k$ particles. $p$ is typically either $0$ or $1$, depending on the underlying physics of the reaction.
\end{assumption}

\begin{assumption}\label{Assume:measureOne2Two}
If $\mathscr{R}_{\ell}$ is a first order reaction of the form $S_i \rightarrow S_j + S_k$, we assume the unbinding displacement density is in the mollified form of $$m^\eta_{\ell}(x,y\,|\,z) = \rho(|x-y|) \sum_{i=1}^{I}p_i\times G_\eta\left(z-(\alpha_i x +(1-\alpha_i)y)\right),$$
with the distributional limit as $\eta \to 0$ given by
 $$m_{\ell}(x,y\,|\,z) = \rho(|x-y|) \sum_{i=1}^{I}p_i\times\delta\left(z-(\alpha_i x +(1-\alpha_i)y)\right),$$
with $\sum_i p_i = 1$. Here we assume the relative separation of the product $S_j$ and $S_k$ particles, $\abs{x -y}$, is sampled from the probability density $\rho(|x-y|)$. Their (weighted) center of mass is sampled from the density encoded by the sum of $\delta$ functions. Such forms are common for detailed balance preserving reversible bimolecular reactions~\cite{IZ:2018}.
\end{assumption}
We further assume some regularity of the separation placement density, $\rho(r)$, introduced in Assumption~\ref{Assume:measureOne2Two}:
\begin{assumption}\label{Assume:measureMrho}
For Assumption~\ref{Assume:measureP} to be true, we'll need
$$\int_{\R^d} \rho(|w|)\, dw = 1.$$
Since $\rho$ is a probability density and non-negative, this implies the tail estimate
 $$\int_{r>R}r^{d-1}\rho(r) \, dr \leq \epsilon,$$
which we will use in subsequent calculations.
\end{assumption}

Finally, to study the large-population limit of the population density measures, we must specify how the reaction kernels depend on the scaling parameter (i.e. system size parameter) $\gamma$. Motivated by the classical spatially homogeneous reaction network large-population limit~\cite{DK:2015}, we choose
\begin{assumption}\label{Assume:rescaling}
The reaction kernel is assumed to have the explicit $\gamma$ dependence that
\begin{equation*}
K_\ell^{\gamma}(\vec{x}) = \gamma^{1 - |\vec{\alpha}^{(\ell)}|}K_\ell(\vec{x})
\end{equation*}
for any $\vec{x}\in \mathbb{X}^{(\ell)}$, $1\leq \ell \leq L$.
\end{assumption}
When interpreting the scaling parameter $\gamma$ as Avogadro's number, or in bounded domains as the product of Avogadro's number and the domain volume~\cite{DK:2015}, such scalings can be derived by requiring the formal well-mixed (i.e. infinitely fast diffusion) limit of the volume reactivity PBSRD model to match the corresponding classical spatially homogeneous stochastic chemical kinetics model. See Appendix~\ref{A:gammascaling} for an illustrative example of how the chosen scalings arise in this case.

Recall that $|\vec{\alpha}^{(\ell)}|$ represents the number of reactant particles needed for the $\ell$-th reaction.
As we assume $\abs{\vec{\alpha}}^{\ell} \leq 2$, we obtain three scalings for the three allowable reaction orders:
\begin{itemize}
\item $|\vec{\alpha}^{(\ell)}| = 0$ corresponds to a pure birth reaction. By Assumption \ref{Assume:rescaling}, the scaling is $\gamma$; i.e. a larger system size implies more births. In a well-mixed model this would imply that as  $\gamma$ and the initial number of molecules are increased, we maintain a fixed rate \emph{with units of molar concentration per time} for the birth reaction to occur.
\item $|\vec{\alpha}^{(\ell)}| = 1$ corresponds to a unimolecular reaction. By Assumption \ref{Assume:rescaling}, there's no rescaling as it's linear. We assume the rates of first order reactions are internal processes to particles, and as such independent of the system size.
\item $|\vec{\alpha}^{(\ell)}| = 2$ corresponds to a bimolecular reaction. By Assumption \ref{Assume:rescaling}, the scaling of reaction kernel is $\gamma^{-1}$. As the system size increases it is harder for two \emph{individual} reactant particles to encounter each other.
\end{itemize}


\section{Main result and examples}\label{S:MainResult}

We now formulate a weak representation for the time evolution of scaled empirical measures $\mu^{{\vec{\zeta}}, j}_{t}=\frac{1}{\gamma}\nu^{{\vec{\zeta}}, j}_{t}$ with $j =1, \cdots, J$ and $\mu^{{\vec{\zeta}}}_{t}=\frac{1}{\gamma}\nu^{{\vec{\zeta}}}_{t}= \sum_{j = 1}^J\mu^{{\vec{\zeta}}, j}_{t}\delta_{S_j} $. $\mu_t^{{\vec{\zeta}}, j}$ physically corresponds to the molar concentration field for species $j$ at time $t$.

For a test function $f\in C^2_b(\R^d)$ and for each species $j = 1, \cdots, J$, let us define the generator
\begin{align*}
(\mathcal{L}_{j}f)(x)= D_j \Delta_x f(x).
\end{align*}
We'll focus on proving the convergence as $\vec{\zeta} \to 0$ of the marginal distribution vector
\begin{align*}
\left(\mu^{{\vec{\zeta}},1}_{t},\mu^{{\vec{\zeta}},2}_{t}, \cdots,  \mu^{{\vec{\zeta}},J}_{t}\right).
\end{align*}
We make two final assumptions before stating our main result. First, to simplify the analysis we assume
the total molar concentration is bounded as $\vec{\zeta} \to 0$:
\begin{assumption}\label{Assume:kernalBdd}
We assume that the total (molar) population concentration satisfies $\sum_{j = 1}^J \la 1, \mu_t^{{\vec{\zeta}}, j} \ra \leq C(\mu)$ for all $t<\infty$, i.e. is uniformly in time bounded by some constant $C(\mu)$. In the remainder we abuse notation and also denote generic constants that depend on this bound by $C(\mu)$.
\end{assumption}

\begin{remark}\label{R:StoppingTime}
If we define the stopping time
\begin{equation} \label{eq:stoppingtime}
\tau^{\vec{\zeta}}=\inf\left\{t\geq 0, \sum_{j = 1}^J \la 1, \mu_t^{{\vec{\zeta}}, j} \ra > C(\mu)\right\},
\end{equation}
Assumption~\ref{Assume:kernalBdd} essentially requires that $\mathbb{P}\left(\tau^{\vec{\zeta}}=\infty\right)=1$ for all $\vec{\zeta}$. We have chosen to use the condition of Assumption \ref{Assume:kernalBdd} instead of introducing the stopping time $\tau^{\vec{\zeta}}$ in order to simplify some of the arguments, notation, and presentation. Because of Assumption \ref{Assume:kernalBdd}, our main result, Theorem~\ref{thm:convergence}, does not apply to reaction networks that include zeroth order birth reactions (i.e. reactions of the form $\varnothing \to S_i$). Similarly, reactions of the form $S_i \to S_i + S_k$ would be excluded since they also allow the possibility of unbounded population growth. In order to include such reactions, one would need to introduce a stopping time like (\ref{eq:stoppingtime}) for when the total molar population concentration first exceeds $C(\mu)$, and study its limiting behavior as $\vec{\eta} \to 0$. Though we do not show it here, we conjecture that in these cases the large-population limit of Theorem~\ref{thm:convergence} will hold till time $t\wedge T_{0}$, where $T_{0}$ is any finite time over which the solution to the limiting mean-field equations is well defined (see \cite{IMS21a}). For large-population limits of \emph{non-spatial} stochastic chemical kinetic systems a stopping time-based approach is carried out in~\cite{DK:2015}, while for related structured population models a stopping time-based approach is used in \cite{BM:2015}.

Note, assuming a fixed, finite number of each particle at $t=0$, Assumption~\ref{Assume:kernalBdd} would, for example, always hold in systems with fully reversible reactions that do not create particles from nothing. These include reactions of the form $S_{i}+ S_{j} \leftrightarrow S_{k}+ S_{l}$, $S_{i}+S_{j}\leftrightarrow S_{i}+S_{k}$, $S_{i}+S_{j}\leftrightarrow S_{k}$, $2 S_{i} \leftrightarrow S_{j}$, $S_{i} \leftrightarrow S_{j}$, etc. Reversible reactions like $\varnothing \leftrightarrow S_{i}$, $S_{i} \leftrightarrow S_{i} + S_{j}$, or $S_{i} \leftrightarrow 2S_{i}$ would again be excluded since they involve the creation of new particles from nothing.
\end{remark}

\begin{remark}\label{R:WellPosednessPrelimit}
Let us now discuss the well-definiteness of the process $\{\nu^{\vec{\zeta}}_{t}\}_{t\geq 0}$ (equivalently $\{\mu^{\vec{\zeta}}_{t}\}_{t\geq 0}$) as defined via (\ref{Eq:EM_formula}). For general reaction-networks, one cannot expect (\ref{Eq:EM_formula}) to be well-posed for all times as, for instance, one could have finite time blow-up (consider, for example, the standard ODE model for the reaction $2 S_{i} \to 3 S_{i}$). On the other hand, for most biologically-motivated systems one does not expect almost sure blow-up in finite time. In fact, for systems with the pure conversion reactions
$S_{i}+ S_{j} \leftrightarrow S_{k}+ S_{l}$, one can check that our formulation is analogous to the formulation of \cite{Nolen2019}, where well-posedness of the pre-limit Markov system is indeed established. Instead of trying to prove for which combinations of reaction kernels and networks one has well-posedness, an open problem even for deterministic reaction-diffusion PDE models, we have made Assumption~\ref{Assume:kernalBdd}.

While we do not prove well-posedness of $\{\nu^{\vec{\zeta}}_{t}\}_{t\geq 0}$ here, a basic approach one could take to try to establish it is as follows. We first note that between the times that two consecutive reactions take place, the number of particles in the system is fixed, and each particle moves independently by Brownian motion. When a reaction occurs the number of particles changes, with the positions and types of reactants and substrates updated based on the sampling, reaction and placements rules of (\ref{Eq:EM_formula}) (see also the description after equation (\ref{Eq:EM_formula})). Assumption~\ref{Assume:kernalBdd} guarantees that the total population  concentration stays bounded. The Markov property holds because the sampling and placement rules at the next reaction time, say $\tau$, depend only on the state of the system at time $\tau-$. Hence, Assumption~\ref{Assume:kernalBdd}, together with the boundedness and regularity Assumptions \ref{Assume:kernalBdd0} and \ref{Assume:measureP}, is expected to lead to a well-defined process $\{\nu^{\vec{\zeta}}_{t}\}_{t\geq 0}$ (equivalently $\{\mu^{\vec{\zeta}}_{t}\}_{t\geq 0}$).

As also indicated in Chapter 6 in \cite{BM:2015}, instead of Assumption~\ref{Assume:kernalBdd} it should be sufficient to know that for every $T<\infty$ we have the integrability condition   $\mathbb{E}\left[\sup_{t\in[0,T]}\sum_{j = 1}^J \la 1, \nu_t^{{\vec{\zeta}}, j} \ra^{p}\right] < \infty$ for an appropriate $p\geq 1$ (together with appropriate boundedness and regularity of reaction kernels and placement densities). A potential method for establishing this would be to build the process step by step. An outline for this process is indicated in the related results of Chapter 6 in \cite{BM:2015} (see Theorem 6.4 there), where one builds the solution up to the time that the total population concentration reaches a certain threshold,  and then proves that the sequence of jump times goes almost surely to infinity as the aforementioned threshold tends to infinity. We have chosen to make the stronger Assumption~\ref{Assume:kernalBdd} in part to simplify some of the arguments for the a-priori bounds that are needed in order to prove our main convergence result, Theorem \ref{thm:convergence}.

We reiterate though; it is an open question to characterize all the possible general spatial reaction-networks for which (\ref{Eq:EM_formula}) is well-posed.
In the remainder we assume that we have a reaction-network for which this is the case, and, with that assumed, our goal is to establish the limit of $\{\mu^{\vec{\zeta}}_{t}\}_{t\geq 0}$ as  $\vec{\zeta}\to 0$.
\end{remark}

Finally, we assume convergence of the initial molar concentrations of each type at $t=0$ as $\gamma \to \infty$:
\begin{assumption}\label{Assume:initial}
We assume that the initial distribution $\mu_0^{{\vec{\zeta}}, j}\to \xi_0^j$ weakly as $\vec{\zeta}\to 0$, where $\xi_0^j$ is a compactly supported measure, for all $1\leq j\leq J$.
\end{assumption}

We are now ready to state our main result.  Let $M_F(\mathbb{R}^d)$ be the space of finite measures endowed with the weak topology and $\mathbb{D}_{M_F(\mathbb{R}^d)}[0, T]$ be the space of cadlag paths with values in $M_F(\mathbb{R}^d)$ endowed with Skorokhod topology.

\begin{theorem}{(Main result)} \label{thm:convergence}
Recall that $\vec{\zeta}=(1/\gamma,\eta)$ and assume that in the prelimit $\gamma<\infty$ and $\eta>0$. Let $T<T_{0}\leq \infty$ be given with $T_{0}$ to be specified later on.  Assume Assumptions~\ref{Assume:kernalBdd0}-\ref{Assume:measureMrho} for the reaction kernels and placement densities,  scaling Assumption~\ref{Assume:rescaling}, Assumption~\ref{Assume:kernalBdd}  (hence the initial total population concentration is assumed to be bounded), Assumption~\ref{Assume:initial}, and that the reaction-network is such that $\{(\mu^{{\vec{\zeta}}, 1}_t,\cdots,\mu^{{\vec{\zeta}}, J}_t)\}_{t\in [0, T]}\in \mathbb{D}_{\otimes_{j=1}^{J}M_F(\mathbb{R}^d)}([0, T])$ is well-defined (see Remark \ref{R:WellPosednessPrelimit}). Then, the sequence of measure-valued processes $\{(\mu^{{\vec{\zeta}}, 1}_t,\cdots,\mu^{{\vec{\zeta}}, J}_t)\}_{t\in [0, T]}\in \mathbb{D}_{\otimes_{j=1}^{J}M_F(\mathbb{R}^d)}([0, T])$ is relatively compact in $\mathbb{D}_{\otimes_{j=1}^{J}M_F(\mathbb{R}^d)}([0, T])$ for each $j= 1, 2,\cdots, J$. It converges in distribution to $\{(\xi^1_t,\cdots,\xi^{J}_{t})\}_{t\in [0, T]} \in C_{\otimes_{j=1}^{J} M_F(\mathbb{R}^d)}([0, T])$ as $\vec{\zeta}\to 0$, where the limit is taken such that $\eta > 0$ for each finite $\gamma$. Each $\xi^{j}_{t}$ is respectively the unique solution to
\begin{align}
\la f,\xi^j_{t}\ra
&
=\la f,\xi^{ j}_{0}\ra + \int_{0}^{t} \la (\mathcal{L}_j f)(x), \xi_{s}^{ j}(dx) \ra ds\nonumber\\
&
\quad-\sum_{\ell = 1}^{\tilde{L}} \int_{0}^{t} \int_{\tilde{\mathbb{X}}^{(\ell)}}     \frac{1}{\vec{\alpha}^{(\ell)}!}   K_\ell\left(\vec{x}\right)  \left(  \sum_{r = 1}^{\alpha_{\ell j}} f(x_r^{(j)}) \right)\,\lambda^{(\ell)}[\xi_{s}](d\vec{x}) \, ds\nonumber\\
&
\quad+\sum_{\ell = \tilde{L} + 1}^L \int_{0}^{t} \int_{\tilde{\mathbb{X}}^{(\ell)}}     \frac{1}{\vec{\alpha}^{(\ell)}!}   K_\ell\left(\vec{x}\right) \left(\int_{\mathbb{Y}^{(\ell)}}    \left( \sum_{r = 1}^{\beta_{\ell j}} f(y_r^{(j)}) \right) m_\ell\left(\vec{y} \, |\, \vec{x} \right)\, d \vec{y} - \sum_{r = 1}^{\alpha_{\ell j}} f(x_r^{(j)}) \right)\,\lambda^{(\ell)}[\xi_{s}](d\vec{x}) \, ds.\label{Eq:Limit_EM_formula2}
\end{align}
Here $T_{0}$ is the maximal time up to which the deterministic system (\ref{Eq:Limit_EM_formula2}) has a well defined solution.
\end{theorem}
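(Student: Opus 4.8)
The plan is to establish the law of large numbers by the classical four-step martingale-problem route for measure-valued processes (in the spirit of Stroock--Varadhan~\cite{StroockVaradhan,EthierKurtz} and the structured-population arguments of~\cite{BM:2015}): (i) uniform-in-$\vec{\zeta}$ a priori estimates on the processes $\{(\mu^{\vec{\zeta},1}_t,\dots,\mu^{\vec{\zeta},J}_t)\}$; (ii) tightness/relative compactness in $\mathbb{D}_{\otimes_{j}M_F(\R^d)}([0,T])$; (iii) identification of every subsequential limit point as a solution of~\eqref{Eq:Limit_EM_formula2}; and (iv) uniqueness for~\eqref{Eq:Limit_EM_formula2} on $[0,T]$ with $T<T_0$, which upgrades subsequential convergence to convergence of the full sequence toward a \emph{deterministic} limit, necessarily lying in $C_{\otimes_j M_F(\R^d)}([0,T])$ since~\eqref{Eq:Limit_EM_formula2} carries no jump terms.

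For (i), Assumption~\ref{Assume:kernalBdd} already supplies $\sup_{t}\sum_j\la 1,\mu^{\vec{\zeta},j}_t\ra\le C(\mu)$; what remains is to rule out escape of mass to infinity. Testing~\eqref{Eq:MEAN_EM_formula} against a smooth truncation of $1+|x|^2$ and using that $\mathcal{L}_j$ acts boundedly on it, that reaction kernels are bounded (Assumption~\ref{Assume:kernalBdd0}), that the placement densities only move product particles onto the segment spanned by the reactants up to an $\eta\le 1$ mollification (Assumptions~\ref{Assume:measureOne2One}--\ref{Assume:measureOne2Two}) with separation tail controlled by Assumption~\ref{Assume:measureMrho}, and that the initial data are compactly supported (Assumption~\ref{Assume:initial}), one obtains $\sup_{\vec{\zeta}}\sup_{t\le T}\avg{\la 1+|x|^2,\mu^{\vec{\zeta},j}_t\ra}\le C$, together with Burkholder--Davis--Gundy bounds on the continuous and compensated-Poisson martingale parts of~\eqref{Eq:EM_formula}; these estimates are deferred to Appendix~\ref{A:appendixA}. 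For (ii), fix a countable set of test functions dense in $C_b^2(\R^d)$ and split $\la f,\mu^{\vec{\zeta},j}_t\ra$ into its predictable finite-variation part and its martingale part. The a priori bounds together with Assumptions~\ref{Assume:kernalBdd0}--\ref{Assume:kernalBdd} give a uniform modulus $C\|f\|_{C_b^2}|t-s|$ for the finite-variation part, while the scaling Assumption~\ref{Assume:rescaling} makes the predictable quadratic variation of the martingale part $O(1/\gamma)$: the Brownian bracket is $\gamma^{-2}\sum_i(\cdot)=O(\gamma^{-1})$, and for the jump martingales the jump sizes are $O(1/\gamma)$ while the compensators scale as $\gamma^{1-|\vec{\alpha}^{(\ell)}|}$ times the number of allowed index configurations $=O(\gamma)$, so the bracket is $O(\gamma\cdot\gamma^{-2})=O(\gamma^{-1})$. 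The Aldous--Rebolledo criterion then yields tightness of each $\{\la f,\mu^{\vec{\zeta},j}_\cdot\ra\}$, and, together with the compact containment condition furnished by the $1+|x|^2$-moment bound, a standard tightness criterion for $M_F(\R^d)$-valued càdlàg processes gives tightness of $\{(\mu^{\vec{\zeta},1},\dots,\mu^{\vec{\zeta},J})\}$ in $\mathbb{D}_{\otimes_j M_F(\R^d)}([0,T])$.

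Step (iii) is the core. Pass to a subsequence with $(\mu^{\vec{\zeta},1},\dots,\mu^{\vec{\zeta},J})\Rightarrow(\xi^1,\dots,\xi^J)$. Since the martingale bracket is $O(1/\gamma)\to0$, the martingale part of~\eqref{Eq:EM_formula} vanishes in $L^2$. The diffusion term passes to the limit because $\mathcal{L}_j f\in C_b(\R^d)$ whenever $f\in C_b^2(\R^d)$. The reaction terms require three ingredients: (a) the combinatorial normalizations match, i.e. $\gamma^{-|\vec{\alpha}^{(\ell)}|}$ times the number of index tuples in $\Omega^{(\ell)}(\nu^{\vec{\zeta}}_{s-})$ converges to $\tfrac{1}{\vec{\alpha}^{(\ell)}!}\lambda^{(\ell)}[\xi_s]$, the diagonal correction (the $i_1<i_2$ constraint for same-species bimolecular reactions) being $O(1/\gamma)$, which is why the limiting integral runs over $\tilde{\mathbb{X}}^{(\ell)}$; (b) the $\eta$-mollified placement densities converge to their Dirac limits, so $\int_{\mathbb{Y}^{(\ell)}}\big(\sum_r f(y^{(j)}_r)\big)\,m^\eta_\ell(\vec{y}\mid\vec{x})\,d\vec{y}\to\int_{\mathbb{Y}^{(\ell)}}\big(\sum_r f(y^{(j)}_r)\big)\,m_\ell(\vec{y}\mid\vec{x})\,d\vec{y}$ uniformly in $\vec{x}$, by continuity of $f$ and Assumption~\ref{Assume:measureP}; and (c) the resulting integrands are bounded and, off the diagonal, continuous functionals of the measures, so weak convergence and dominated convergence permit passing to the limit inside $\int_0^t\!\int_{\tilde{\mathbb{X}}^{(\ell)}}$. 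To handle evaluation at random times one tests against bounded continuous path functionals, e.g. of the form $\mu\mapsto G\big(\la f_1,\mu_{t_1}\ra,\dots,\la f_m,\mu_{t_m}\ra\big)$ built from the chosen countable family, using a.s. continuity of fixed-time evaluation under the limit law (valid since the limit will be shown continuous). This identifies $(\xi^1,\dots,\xi^J)$ as a solution of~\eqref{Eq:Limit_EM_formula2}.

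For (iv), one shows~\eqref{Eq:Limit_EM_formula2} has at most one solution in $C_{\otimes_j M_F(\R^d)}([0,T])$ for $T<T_0$: given two solutions with total mass bounded by $C(\mu)$, control $\sum_j\sup_{s\le t}\|\xi^j_s-\tilde{\xi}^j_s\|$ in a norm dual to a determining class, using that the diffusion semigroup is a contraction and that the reaction functionals are Lipschitz on the mass-$C(\mu)$ ball (the bimolecular terms being bilinear), and close the estimate by Gronwall; here $T_0$ is the maximal existence time for~\eqref{Eq:Limit_EM_formula2} developed in the companion analysis~\cite{IMS21a}. Uniqueness then forces the whole family to converge to this deterministic limit. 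The \textbf{main obstacle} is Step (iii), and within it the bimolecular reaction terms: one must simultaneously send $\gamma\to\infty$ --- turning a sum over pairs of particle \emph{indices}, with the $i_1<i_2$ and cross-species bookkeeping of $\Omega^{(\ell)}$, into the product measure $\lambda^{(\ell)}[\xi_s]$ while discarding the $O(1/\gamma)$ diagonal --- and $\eta\to0$ --- replacing mollified placement densities by Dirac masses inside the spatial integrals --- uniformly enough to commute with the weak convergence $\mu^{\vec{\zeta}}\Rightarrow\xi$. The a priori estimates of Step (i) and the continuity/approximation lemmas justifying these exchanges constitute the bulk of the technical work and are collected in Appendix~\ref{A:appendixA}.
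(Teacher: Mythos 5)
Your overall architecture --- Aldous--Rebolledo tightness for the real-valued projections $\la f,\mu^{\vec{\zeta},j}_\cdot\ra$, identification of limit points via It\^{o}'s formula applied to cylinder functionals and the vanishing $O(1/\gamma)$ martingale brackets, and uniqueness via the heat semigroup, product-measure Lipschitz estimates on the mass-$C(\mu)$ ball, and Gronwall --- is the same as the paper's, and your bookkeeping of the scalings (jump sizes $O(1/\gamma)$, compensators $O(\gamma)$, brackets $O(1/\gamma)$; the $\tfrac{1}{\vec{\alpha}^{(\ell)}!}$ and the restriction to $\tilde{\mathbb{X}}^{(\ell)}$ coming from the ordered index sets $\Omega^{(\ell)}$; the $O(\eta)$ error from the mollified placement densities) matches Sections~\ref{SS:LimitIdentification}--\ref{S:Uniqueness}.

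The one step where your argument does not go through under the stated hypotheses is the compact containment / mass-control estimate in your Step (i). You propose to prevent escape of mass to infinity by propagating a second-moment bound $\avg{\la 1+|x|^2,\mu^{\vec{\zeta},j}_t\ra}\le C$. For the unbinding reactions $S_i\to S_j+S_k$ of Assumption~\ref{Assume:measureOne2Two}, the two products are placed at relative separation $|x-y|$ drawn from the density $\rho$, so the jump in $\la |x|^2,\mu^{\vec{\zeta},j}\ra$ is controlled only if $\int_{\R^d}|w|^2\rho(|w|)\,dw<\infty$. Assumption~\ref{Assume:measureMrho} asserts only that $\rho$ is a probability density (the tail bound $\int_{r>R}r^{d-1}\rho(r)\,dr\le\epsilon$ is just integrability), so $\rho$ may have infinite second moment and your moment bound can fail. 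This is precisely why the paper replaces the moment argument with Lemma~\ref{lem:MassControl}: one tests against the cutoff functions $f_m$ (equal to $1$ outside the ball of radius $m$), observes that each reaction transports mass by at most distance $R$ up to an $\epsilon$-tail and an $O(\eta)$ mollification error, and closes an iterated integral inequality relating $Y^{m,\vec{\zeta}}_T$ to $Y^{m-\ell(R+1),\vec{\zeta}}_0$, which after summing the exponential series and invoking the compact support of $\xi^j_0$ yields $\lim_{m\to\infty}\limsup_{\vec{\zeta}\to 0}\E\bigl[\sup_{t\le T}\la f_m,\mu^{\vec{\zeta},j}_t\ra\bigr]=0$. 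That estimate (not a moment bound) is what feeds both the upgrade from vague to weak tightness via the M\'{e}l\'{e}ard--Roelly criterion and the continuity of the limit in $M_F(\R^d)$. Either adopt that cutoff argument, or add the explicit hypothesis $\int|w|^2\rho(|w|)\,dw<\infty$, in which case your moment route is a legitimate (slightly less general) alternative.
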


\begin{remark}\label{R:WellPosednessLimitSystem}
In the companion paper \cite{IMS21a} we study well-posedness and regularity of the limiting system (\ref{Eq:Limit_EM_formula2}) and we present numerical studies related to the behavior of the pre-limit and limit systems. It is proven in \cite{IMS21a} that (\ref{Eq:Limit_EM_formula2}) always has an appropriate local in-time solution (i.e. there exists some $0<T_{0}< \infty$) and global in-time solution (i.e. $T_{0}=\infty$) is established for specific reaction networks. We refer the interested reader to
\cite{IMS21a} for details.
\end{remark}

\begin{remark}\label{R:ConvergenceInProbability}
Given that weak convergence to a constant implies convergence in probability, we get that Theorem \ref{thm:convergence} actually implies  convergence in probability. Namely, for any $\delta>0$:
\begin{equation*}
\lim_{\vec{\zeta}\rightarrow 0}\mathbb{P}\left[d_{\otimes_{j=1}^{J}M_F(\mathbb{R}^d)}\left((\mu^{{\vec{\zeta}}, 1},\cdots,\mu^{{\vec{\zeta}}, J}),(\xi^1,\cdots,\xi^{J})\right)\geq \delta\right]=0,
\end{equation*}
where $d_{\otimes_{j=1}^{J} M_F(\mathbb{R}^d)}$ is the metric for $\mathbb{D}_{\otimes_{j=1}^{J} M_F(\mathbb{R}^d)}[0, T]$, see for example Section 3.2 of \cite{Capponi} for an exposition in an analogous situation.
\end{remark}
\begin{proof}[Proof of Theorem \ref{thm:convergence}]
Let $\pi^{{\vec{\zeta}}}$ be the $\mathbb{P}-$law of $(\mu^{{\vec{\zeta}}, 1},\cdots,\mu^{{\vec{\zeta}}, J})$, i.e.
\[
\pi^{{\vec{\zeta}}}(A)=\mathbb{P}\left[(\mu^{{\vec{\zeta}}, 1},\cdots,\mu^{{\vec{\zeta}}, J}) \in A\right],
\]
for all $A\in\mathcal{B}(\mathbb{D}_{\otimes_{j=1}^{J}M_F(\mathbb{R}^d)}([0, T]))$. This means that for all $\zeta\in(0,1)^{2}$  we have that $\pi^{{\vec{\zeta}}}\in M(\mathbb{D}_{\otimes_{j=1}^{J}M_F(\mathbb{R}^d)}([0, T]))$.

By the relative compactness of Theorem  \ref{thm:tightness} we get that every subsequence $\pi^{{\vec{\zeta}_{k}}}$ has a further sub-sequence $\pi^{{\vec{\zeta}_{k_{m}}}}$ which converges weakly.  Lemma \ref{L:wconv} says that any limit point $\pi$ of $\pi^{{\vec{\zeta}_{k_{m}}}}$ is such that $\pi=\delta_{(\xi^{1},\cdots,\xi^{J})}$ where $\xi^{j}$ satisfies the evolution equation (\ref{Eq:Limit_EM_formula2}). Lemma \ref{lem:uniqueness} proves uniqueness of solutions to Eq (\ref{Eq:Limit_EM_formula2}). Therefore, by Prokhorov's theorem $\pi^{{\vec{\zeta}}}$ converges weakly to $\pi$, where $\pi$ is the distribution of $(\xi^{1},\cdots,\xi^{J})$, the unique solution to (\ref{Eq:Limit_EM_formula2}). That is to say that $(\mu^{{\vec{\zeta}}, 1},\cdots,\mu^{{\vec{\zeta}}, J})$ converges in distribution to $(\xi^1,\cdots,\xi^{J})$. Lemma \ref{lem:Continuity} proves that for each $j=1,\cdots,J$, $\{\xi^j_t\}_{t\in [0, T]} \in C_{M_F(\mathbb{R}^d)}([0, T])$. This concludes the proof of the theorem.

%
\end{proof}

\begin{remark}\label{Rm:density}
If the limiting measures $(\xi_t^1(dx), \cdots, \xi_t^J(dx))$ have marginal densities (i.e. molar concentrations) $(\rho_1(x, t),  \cdots,  \rho_J(x, t))$,  then the marginals are expected to solve the following reaction-diffusion partial integro-differential equations (PIDEs) in a weak sense.
\begin{align}
\partial_t\rho_j(x, t)
&
= D_{j} \lap_x \rho_j(x, t) - \sum_{\ell = 1}^L \paren{
\frac{1}{\vec{\alpha}^{(\ell)}!}  \sum_{r = 1}^{\alpha_{\ell j}} \int_{\tilde{\vec{x}} \in \mathbb{X}^{(\ell)}}   \delta_{x}(\tilde{x}_r^{(j)})  K_\ell(\tilde{\vec{x}}) \, \left( \Pi_{k = 1}^{J} \Pi_{s = 1}^{\alpha_{\ell k}}  \rho_{k}(\tilde{x}_{s}^{(k)}, t)\right) \, d\tilde{\vec{x}}
}  \nonumber\\
&
\quad+\sum_{\ell = \tilde{L} + 1}^L \paren{  \frac{1}{\vec{\alpha}^{(\ell)}!} \sum_{r = 1}^{\beta_{\ell j}}  \int_{\tilde{\vec{x}} \in\mathbb{X}^{(\ell)}}  K_\ell(\tilde{\vec{x}}) \left( \int_{\vy \in \mathbb{Y}^{(\ell)}}   \delta_{x}(y_r^{(j)}) m_\ell(\vec{y}\, | \,\tilde{\vec{x}}) \,d \vec{y} \right) \left( \Pi_{k = 1}^J \Pi_{s = 1}^{\alpha_{\ell k}}  \rho_{k}(\tilde{x}_s^{(k)}, t)\right) \, d\tilde{\vec{x}}
}.
\label{Eq:density_formula}
\end{align}
\end{remark}
\begin{remark}
We note that we have slightly abused notation in \cref{Eq:density_formula}. In particular,  the expressions $\int  \delta_{x}(\tilde{x}_r^{(j)}) \cdots \, d\tilde{x}_r^{(j)}$ and $\int  \delta_{x}(y_r^{(j)}) \cdots \, dy_r^{(j)}$ are used to represent replacing $\tilde{x}_r^{(j)}$ and $y_r^{(j)}$ with $x$ through the formal action of the $\delta$-function. That is to say for a given function $f$,
\begin{equation*}
\int_{{\R^{d}}}  \delta_{x}(\tilde{x}_r^{(j)}) f(\tilde{x}_r^{(j)}) \, d\tilde{x}_r^{(j)} \doteq f(x).
\end{equation*}
Similarly, for simplicity of notation we have written
\begin{equation*}
\int_{\vy \in \mathbb{Y}^{(\ell)}}   \delta_{x}(y_r^{(j)}) m_\ell(\vec{y}\, | \,\tilde{\vec{x}}) \,d \vec{y},
\end{equation*}
where the integrand formally contains products of shifted $\delta$-functions when using the specific choices of placement densities given by Assumptions~\ref{Assume:measureOne2One} through~\ref{Assume:measureOne2Two}. To write the precise and rigorous version of such expressions, we use the substitution
\begin{equation*}
\int_{\R^{d}} \delta(x-y) \delta(y-\tilde{x}) \, dy := \delta(x-\tilde{x}).
\end{equation*}

In Appendix~\ref{A:PlacementInts} we show how the nested integrals on the second line of~\eqref{Eq:density_formula} simplify using this identity for several choices of $m_{\ell}$ that appear in the following examples.
\end{remark}

To illustrate our main result, we now present a few examples to illustrate the limiting PIDEs for basic reaction types:
\begin{example}\label{Example:birthDeath}
A system with birth and death reactions for one species, $A$. Let $\mathcal{R}_1$ be the death reaction $A \rightarrow \emptyset$ with probability per time $K_1^\gamma(x)$ to happen for a particle at $x$. Since $\mathcal{R}_1$ involves only one reactant and one species, $\alpha_{11} = 1$. As there are no products, $\beta_{11} = 0$.  Let $\mathcal{R}_2$ be the birth reaction $\emptyset \rightarrow A$ with constant probability per time $K_2^\gamma$ to happen. When one birth event occurs, the position of the new A particle is sampled from the placement density $m_2(x)$. For $\mathcal{R}_2$ there are no reactants, so $\alpha_{21} = 0$. One product $A$ particle is generated, so $\beta_{21} = 1$. There are two types of reactions in total, $L = 2$, but reaction $\mathcal{R}_1$ has no products so $\tilde{L} = 1$.

Let the spatial number distribution for particle A at time $t$ be $\nu^{{\vec{\zeta}}, 1}_t\in M(\R^d)$, with $\nu^{{\vec{\zeta}}}_t = \nu^{{\vec{\zeta}}, 1}_t\delta_{S_1}\in M(\hat{P})$.  In this example, we would have $\lambda^{(1)}[\nu^{{\vec{\zeta}}}_t](dx) = \nu^{{\vec{\zeta}}, 1}_t(dx)$. By~\eqref{Eq:EM_formula} $\nu^{\vec{\zeta}}_t$ satisfies
\begin{align}
\la f,\nu^{{\vec{\zeta}}}_{t}\ra_{\hat{P}}&=\la f,\nu^{{\vec{\zeta}}}_{0}\ra_{\hat{P}}+\sum_{i\geq 1}\int_{0}^{t}1_{\{i\leq \la 1, \nu_{s-}^{\vec{\zeta}}\ra_{\hat{P}}\}}\sqrt{2D^{i}}\frac{\partial f}{\partial Q}(H^i(\nu_{s-}^{\vec{\zeta}}))dW^{i}_{s}+\int_{0}^{t}\sum_{i=1}^{\la 1, \nu_{s-}^{\vec{\zeta}}\ra_{\hat{P}}} D^{i}\frac{\partial^{2} f}{\partial Q^{2}}(H^i(\nu_{s-}^{\vec{\zeta}}))ds\nonumber\\
&\quad-\int_{0}^{t}\int_{\left(\N\setminus\{0\}\right)} \int_{\mathbb{R}_{+}} \la f, \delta_{(H_Q^{i}(\nu^{{\vec{\zeta}},1}_{s-}), S_1)} \ra_{\hat{P}}
\times 1_{\{i\leq \la 1,\nu^{{\vec{\zeta}},1}_{s-}\ra\}}\times 1_{\{ \theta\leq K_{1}^{\gamma}(H_Q^{i}(\nu^{{\vec{\zeta}},1}_{s-}))\}} dN_{1}(s, i, \theta)\nonumber\\
&\quad+\int_{0}^{t} \int_{\R^d}\int_{\mathbb{R}_{+}}\int_{\mathbb{R}_{+}}\la f, \delta_{(x, S_1)} \ra_{\hat{P}}
\times  1_{\{ \theta_1\leq K_{2}^{\gamma}\}}\times  1_{\{ \theta_2\leq  m^\eta_{2}(x)\}}
dN_{2}(s,x, \theta_1, \theta_2).
\label{Eq:EM_formula_Aempty}
\end{align}

If the limiting spatial distributed measure for species A has marginal density $\rho(x, t)$, by Remark~\ref{Rm:density} it must solve the following reaction-diffusion equation in a weak sense:
\begin{align}
\partial_t\rho(x, t) &=  D_{1} \lap_x \rho(x, t) -  K_{1}(x)\rho(x, t) + K_2m_{2}(x).\label{Eq:density_formula_birthDeath}
\end{align}
\end{example}

\begin{remark}\label{Remark:measureBirth}
Note that Theorem~\ref{thm:convergence} does not apply to reaction networks including zero order birth reactions. We conjecture it can be extended to allow zero order reactions on some deterministic time interval $[0,T_{*}]$ by introducing and analyzing the limiting behavior of the stopping time~\eqref{eq:stoppingtime}, assuming that the placement density for birth $m_2^{\eta}(x)$ is in $L^1(\R^d)$, and assuming that $m_2^{\eta}$ does not depend on $\eta$, i.e. $m^\eta_2(x) = m_2(x)$. The latter conditions ensure the product particle is most probably placed within a compact subset of $\R^d$.
\end{remark}

\begin{example}\label{Ex:ReversibleCase}
A system with three species, $A$, $B$ and $C$ that can undergo the reversible bimolecular reaction $A+B\rightleftarrows C$. Let $\mathcal{R}_1$ be the reaction $A+B\rightarrow C$, with $K_1^\gamma(x, y)$  the probability per time one $A$ particle at position $x$ and one $B$ particle at position $y$ bind. Once reaction $\mathcal{R}_1$ fires, we generate a new particle C at position $z$ following the placement density $m_1(z \vert x, y)$.  For $\mathcal{R}_1$, the reactants are particle A and B, so $\alpha_{11} =\alpha_{12} = 1$ and $\alpha_{13} = 0$. The product is particle C, so that $\beta_{11} = \beta_{12} = 0$, while $\beta_{13} = 1$.

Let $\mathcal{R}_2$ be the reaction $C \rightarrow A+B$, with $K_2^\gamma(z)$ the probability per time one $C$ particle at position $z$ unbinds. Once reaction $\mathcal{R}_2$ fires, we generate a new $A$ particle at position $x$ and $B$ particle at position $y$ following the placement density $m_2(x, y \vert z)$. For $\mathcal{R}_2$, the reactant is a $C$ particle, so $\alpha_{21} = \alpha_{22} = 0$ and $\alpha_{23} = 1$. The products are $A$ and $B$ particles, so that $\beta_{21} = \beta_{22} = 1$, while $\beta_{23} = 0$.

Let the spatial number distribution for $A$ particles at time $t$ be $\nu^{{\vec{\zeta}}, 1}_t\in M(\R^d)$, the spatial number distribution for $B$ particles at time $t$ be $\nu^{{\vec{\zeta}}, 2}_t\in M(\R^d)$ and the spatial number distribution for $C$ particles at time $t$ be $\nu^{{\vec{\zeta}}, 3}_t\in M(\R^d)$. Then $\nu^{{\vec{\zeta}}}_t = \nu^{{\vec{\zeta}}, 1}_t\delta_{S_1} + \nu^{{\vec{\zeta}}, 2}_t\delta_{S2}+ \nu^{{\vec{\zeta}}, 3}_t\delta_{S3}\in M(\hat{P})$. We have that $\lambda^{(1)}[\nu^{{\vec{\zeta}}}_t](d\vec{x}) = \nu^{{\vec{\zeta}}, 1}_t(dx_1^{(1)})\,\nu^{{\vec{\zeta}}, 2}_t(dx_1^{(2)})$ and $\lambda^{(2)}[\nu^{{\vec{\zeta}}}_t](d\vec{x}) = \nu^{{\vec{\zeta}}, 3}_t(dx_1^{(3)})$. $\nu_t^{\vec{\zeta}}$ then satisfies
\begin{align}
\la f,\nu^{{\vec{\zeta}}}_{t}\ra_{\hat{P}}&=\la f,\nu^{{\vec{\zeta}}}_{0}\ra_{\hat{P}}+\sum_{i\geq 1}\int_{0}^{t}1_{\{i\leq \la 1, \nu_{s-}^{\vec{\zeta}}\ra_{\hat{P}}\}}\sqrt{2D^{i}}\frac{\partial f}{\partial Q}(H^i(\nu_{s-}^{\vec{\zeta}}))dW^{i}_{s}+\int_{0}^{t}\sum_{i=1}^{\la 1, \nu_{s-}^{\vec{\zeta}}\ra_{\hat{P}}} D^{i}\frac{\partial^{2} f}{\partial Q^{2}}(H^i(\nu_{s-}^{\vec{\zeta}}))ds\nonumber\\
&
\quad+\int_{0}^{t}\int_{\left(\N\setminus\{0\}\right)^{2}} \int_{\R^d}\int_{\mathbb{R}_{+}}\int_{\mathbb{R}_{+}}
\la f, -\delta_{(H_Q^{i}(\nu^{{\vec{\zeta}},1}_{s-}), S_1)}-\delta_{(H_Q^{j}(\nu^{{\vec{\zeta}},2}_{s-}), S_2)}+\delta_{(z, S_3)} \ra_{\hat{P}} \times 1_{\{i\leq \la 1,\nu^{{\vec{\zeta}},1}_{s-}\ra\}} \nonumber\\
&\qquad \times 1_{\{j\leq \la 1,\nu^{{\vec{\zeta}},2}_{s-}\ra\}}\times 1_{\{ \theta_1\leq K_{1}^{\gamma}\left(H_Q^{i}(\nu^{{\vec{\zeta}},1}_{s-}), H_Q^{j}(\nu^{{\vec{\zeta}},2}_{s-})\right)\}}
\times 1_{\{ \theta_2\leq  m^\eta_{1}\left(z|H_Q^{i}(\nu^{{\vec{\zeta}},1}_{s-}),H_Q^{j}(\nu^{{\vec{\zeta}},2}_{s-})\right)\}}  dN_{1}(s,i,j,z,\theta_1, \theta_2)\nonumber\\
&\quad
+\int_{0}^{t}\int_{\N\setminus\{0\}} \int_{\R^d\times\R^d}\int_{\mathbb{R}_{+}}\int_{\mathbb{R}_{+}}\la f, \delta_{(x, S_1)}+\delta_{(y, S_2)}-\delta_{(H_Q^{k}(\nu^{{\vec{\zeta}},3}_{s-}), S_3)} \ra_{\hat{P}} \times 1_{\{k\leq \la 1,\nu^{{\vec{\zeta}},3}_{s-}\ra\}} \nonumber\\
&\qquad \times  1_{\{ \theta_1 \leq K_{2}^{\gamma}\left(H_Q^{k}(\nu^{{\vec{\zeta}},3}_{s-})\right)\}}\times  1_{\{ \theta_2 \leq \times  m^\eta_{2}\left(x,y|H_Q^{k}(\nu^{{\vec{\zeta}},3}_{s-})\right)\}}dN_{2}(s,k,x,y,\theta_1, \theta_2).\label{Eq:EM_formula_ABC}
\end{align}

If the limiting spatially distributed measures for species A, B and C have marginal densities $(\rho_1(x, t), \rho_2(x, t),  \rho_3(x, t))$ respectively, by Remark \ref{Rm:density} they must solve the following reaction-diffusion equations in a weak sense:
\begin{align}
\partial_t\rho_1(x, t) &=  D_1 \lap_x \rho_1(x, t) -  \left(\int_{\R^d} K_{1}(x, y) \rho_2(y, t) \, dy\right) \rho_1(x, t) + \int_{\R^d}K_{2}(z)\left( \int_{\R^d} m_{2}(x,y|z) dy \right)\rho_3(z, t)\, dz \nonumber\\
\partial_t \rho_2(y, t)&=  D_2 \lap_y \rho_2(y, t) - \left(\int_{\R^d} K_{1}(x, y) \rho_1(x, t) \, dx\right) \rho_2(y, t)+ \int_{\R^d}K_{2}(z)\left( \int_{\R^d} m_{2}(x,y|z) dx \right)\rho_3(z, t)\, dz \nonumber\\
\partial_t \rho_3(z, t)&=  D_3 \lap_z \rho_3(z, t) -  K_{2}(z)\rho_3(z, t) + \int_{\R^{2d}} K_{1}(x, y)m_1(z | x, y) \rho_1(x, t) \rho_2(y, t) \, dx \, dy.\nonumber\\
\label{Eq:density_formula_reversible}
\end{align}
\end{example}

\begin{example}
A system with two species, $A$ and $B$, that can undergo the reversible dimerization reaction $A+A\rightleftarrows B$. Let $\mathcal{R}_1$ be the reaction $A+A\rightarrow B$ with $K_1^\gamma(x, y)$ the probability per time one $A$ particle at position $x$ and another $A$ particle at position $y$ bind. Once reaction $\mathcal{R}_1$ fires, we generate a new $B$ particle at position $z$ by sampling from the placement density $m_1(z \vert x, y)$.  For $\mathcal{R}_1$, the reactants are two $A$ particles, so $\alpha_{11} = 2$ and $\alpha_{12} = 0$. The product is one $B$ particle, so that $\beta_{11}  = 0$ and $\beta_{12} = 1$.

Let $\mathcal{R}_2$ be the reaction $B \rightarrow A+A$, with $K_2^\gamma(z)$ the probability per time one $B$ particle at position $z$ unbinds. Once reaction $\mathcal{R}_2$ fires, we generate two new $A$ particles at positions $x$ and $y$ by sampling from the placement density $m_2(x, y \vert z)$. For $\mathcal{R}_2$, the reactant is one $B$ particle, so $\alpha_{21} = 0$ and $\alpha_{22} = 1$. The products are two $A$ particles, so that $\beta_{21}  = 2$ and $\beta_{22} = 0$.

Let the spatial number distribution for $A$ particles at time $t$ be $\nu^{{\vec{\zeta}}, 1}_t\in M(\R^d)$ and the spatial number distribution for $B$ particles at time $t$ be $\nu^{{\vec{\zeta}}, 2}_t\in M(\R^d)$. Then $\nu^{{\vec{\zeta}}}_t = \nu^{{\vec{\zeta}}, 1}_t\delta_{S_1} + \nu^{{\vec{\zeta}}, 2}_t\delta_{S2}\in M(\hat{P})$.  We have that $\lambda^{(1)}[\nu^{{\vec{\zeta}}}_t](d\vec{x}) = \nu^{{\vec{\zeta}}, 1}_t(dx_1^{(1)})\nu^{{\vec{\zeta}}, 1}_t(dx_2^{(1)})$ and $\lambda^{(2)}[\nu^{{\vec{\zeta}}}_t](d\vec{x}) = \nu^{{\vec{\zeta}}, 2}_t(dx_1^{(2)})$. $\nu_t^{\vec{\zeta}}$ then satisfies
\begin{align}
\la f,\nu^{{\vec{\zeta}}}_{t}\ra_{\hat{P}}&=\la f,\nu^{{\vec{\zeta}}}_{0}\ra_{\hat{P}}+\sum_{i\geq 1}\int_{0}^{t}1_{\{i\leq \la 1, \nu_{s-}^{\vec{\zeta}}\ra_{\hat{P}}\}}\sqrt{2D^{i}}\frac{\partial f}{\partial Q}(H^i(\nu_{s-}^{\vec{\zeta}}))dW^{i}_{s}+\int_{0}^{t}\sum_{i=1}^{\la 1, \nu_{s-}^{\vec{\zeta}}\ra_{\hat{P}}} D^{i}\frac{\partial^{2} f}{\partial Q^{2}}(H^i(\nu_{s-}^{\vec{\zeta}}))ds\nonumber\\
&
\quad+\int_{0}^{t}\int_{\left(\N\setminus\{0\}\right)^{2}} \int_{\R^d}\int_{\mathbb{R}_{+}}\int_{\mathbb{R}_{+}}
\la f, -\delta_{(H_Q^{i}(\nu^{{\vec{\zeta}},1}_{s-}), S_1)}-\delta_{(H_Q^{j}(\nu^{{\vec{\zeta}},1}_{s-}), S_1)}+\delta_{(z, S_2)} \ra_{\hat{P}} \times 1_{\{i <  j\leq \la 1,\nu^{{\vec{\zeta}},1}_{s-}\ra\}}\nonumber\\
&\qquad
\times 1_{\{ \theta_1 \leq K_{1}^{\gamma}\left(H_Q^{i}(\nu^{{\vec{\zeta}},1}_{s-}), H_Q^{j}(\nu^{{\vec{\zeta}},1}_{s-})\right)\}}\times 1_{\{ \theta_2 \leq  m^\eta_{1}\left(z|H_Q^{i}(\nu^{{\vec{\zeta}},1}_{s-}),H_Q^{j}(\nu^{{\vec{\zeta}},1}_{s-})\right)\}} dN_{1}(s,i,j,z,\theta_1, \theta_2)\nonumber\\
&\quad+\int_{0}^{t}\int_{\N\setminus\{0\}} \int_{\R^d\times\R^d}\int_{\mathbb{R}_{+}}\int_{\mathbb{R}_{+}}
\la f, \delta_{(x, S_1)}+\delta_{(y, S_1)}-\delta_{(H_Q^{k}(\nu^{{\vec{\zeta}},2}_{s-}), S_2)} \ra_{\hat{P}}
\times 1_{\{k\leq \la 1,\nu^{{\vec{\zeta}},2}_{s-}\ra\}}
\nonumber\\
&\qquad
\times 1_{\{ \theta_{1}\leq K_{2}^{\gamma}\left(H_Q^{k}(\nu^{{\vec{\zeta}},2}_{s-})\right)\}}
\times 1_{\{ \theta_{2}\leq  m^\eta_{2}\left(x,y|H_Q^{k}(\nu^{{\vec{\zeta}},2}_{s-})\right)\}}dN_{2}(s,k,x,y,\theta_1, \theta_2).
\label{Eq:EM_formula_AAB}
\end{align}

If the spatially distributed measures for species $A$ and $B$ have marginal densities $(\rho_1(x, t), \rho_2(z, t))$ respectively, then from Remark \ref{Rm:density} they must solve the following reaction-diffusion equations in a weak sense:
\begin{align}
\partial_t\rho_1(x, t) &=  D_1 \lap_x \rho_1(x, t) -  \left(\int_{\R^d} K_{1}(x, y) \rho_1(y, t)\, dy \right) \rho_1(x, t) + 2\int_{\R^d}K_{2}(z)\left( \int_{\R^d} m_{2}(x,y|z) dy \right)\rho_2(z, t)\, dz \nonumber\\
\partial_t \rho_2(z, t)&=  D_2 \lap_z \rho_2(z, t) -  K_{2}(z)\rho_2(z, t) + \frac{1}{2}\int_{\R^{2d}} K_{1}(x, y)m_1(z | x, y) \rho_1(x, t) \rho_1(y, t) \, dx \, dy.\nonumber\\
\label{Eq:density_formula_twoAreversible}
\end{align}
\end{example}

\section{Equivalence between measure valued formulation and forward Kolmogorov equation}\label{S:EquivalentDerivations}
In this section, we demonstrate equivalence of the measure-valued stochastic process formulation~\eqref{Eq:EM_formula} to the forward Kolmogorov equation representation of the volume-reactivity model popularized by Doi~\cite{DoiSecondQuantA,DoiSecondQuantB}. For ease of notation, and brevity of presentation, we restrict attention to the special case of the reversible $A+B\rightleftarrows C$ reaction, i.e. Example~\ref{Ex:ReversibleCase}. Though we do not show here the general case, we note that this reversible example includes the key complicating components; two-body particle interactions and changing (total) numbers of particles. It is therefore illustrative of other reactions that may only involve particle creation (e.g. $\varnothing \to A$), or involve interactions but preserve particle numbers (e.g. $A + B \to C + D$ or $A \to B$).

Denote by $K_1^{\gamma}(x, y)$ the probability per time a particle of type $A$ at $x$ and a particle of type $B$ at $y$ react, and by $K_2^{\gamma}( z )$ the probability per time a particle of type C at $z$ dissociates. We let $m^\eta_1(z\, |\, x, y)$ be the corresponding placement density for the $A+B\rightarrow C$ reaction,  producing a particle of type C at $z$, given a particle of type A at $x$ and a particle of type B at $y$. Similarly, $m^\eta_2(x, y \, | \, z)$ is the placement density for the $C\rightarrow A+B$ reaction, producing a particle of type A at $x$ and a particle of type B at $y$ given a particle of type C  at $z$ respectively.

\subsection{Weak MVSP  Formulation for the $\textrm{A} + \textrm{B} \leftrightarrows \textrm{C}$ Reaction}\label{sec:MFF_RR}
The weak MVSP representation is given by~\eqref{Eq:EM_formula_ABC}. Taking expectation we obtain for the mean
\begin{align}
\avg{\la f,\nu^{{\vec{\zeta}}}_{t}\ra_{\hat{P}}}
&
=\avg{\la f,\nu^{{\vec{\zeta}}}_{0}\ra_{\hat{P}}}+\avg{\int_{0}^{t}\sum_{i=1}^{\la 1, \nu_{s-}^{\vec{\zeta}}\ra_{\hat{P}}} D^{i}\frac{\partial^{2} f}{\partial Q^{2}}(H^i(\nu_{s-}^{\vec{\zeta}}))ds}\nonumber\\
&
\quad+\avg{\int_{0}^{t}\sum_{i=1}^{\la1,\nu^{{\vec{\zeta}},1}_{s-}\ra}\sum_{j=1}^{\la1,\nu^{{\vec{\zeta}},2}_{s-}\ra} \int_{\R^d}\la f,-\delta_{(H_Q^{i}(\nu^{{\vec{\zeta}},1}_{s-}), S_1)}-\delta_{(H_Q^{j}(\nu^{{\vec{\zeta}},2}_{s-}), S_2)}+\delta_{(z, S_3)} \ra_{\hat{P}} \nonumber\\
&
\quad\quad \times K_{1}^{\gamma}\paren{H_Q^{i}(\nu^{{\vec{\zeta}},1}_{s-}), \,H_Q^{j}(\nu^{{\vec{\zeta}},2}_{s-})} \, m^\eta_{1}\paren{z|H_Q^{i}(\nu^{{\vec{\zeta}},1}_{s-}),H_Q^{j}(\nu^{{\vec{\zeta}},2}_{s-})} dz \,ds}\nonumber\\
&
\quad+\avg{\int_{0}^{t} \sum_{k=1}^{\la1,\nu^{{\vec{\zeta}},3}_{s-}\ra} \int_{\R^{2d}}
\la f,\delta_{(x, S_1)}+\delta_{(y, S_2)}-\delta_{(H_Q^{k}(\nu^{{\vec{\zeta}},3}_{s-}), S_3)} \ra_{\hat{P}}\nonumber \\
&\quad\quad
\times K_{2}^{\gamma}\paren{H_Q^{k}(\nu^{{\vec{\zeta}},3}_{s-})} m^\eta_{2}\paren{x,y|H_Q^{k}(\nu^{{\vec{\zeta}},3}_{s-})}  dx\, dy\, ds}\nonumber \\
&
=\avg{\la f,\nu^{{\vec{\zeta}}}_{0}\ra_{\hat{P}}}+\avg{\int_{0}^{t} \la \mathcal{L}f, \nu^{{\vec{\zeta}}}_{s-}\ra_{\hat{P}} ds}\nonumber\\
&
\quad+\avg{\int_{0}^{t}\int_{\R^{3d}}\left(- f(x, S_1) - f(y, S_2)+ f(z, S_3) \right) K_{1}^{\gamma}(x, y) \, m^\eta_{1}(z|x,y) \,\nu^{{\vec{\zeta}}, 1}_{s-}(dx)\, \nu^{{\vec{\zeta}}, 2}_{s-}(dy)\,dz \,ds}\nonumber\\
&
\quad+\avg{\int_{0}^{t}  \int_{\R^{3d}} \left( f(x, S_1) + f(y, S_2)-f(z, S_3) \right) K_{2}^{\gamma}(z) \, m^\eta_{2}(x,y|z) \, dx\, dy\, \nu^{{\vec{\zeta}}, 3}_{s-}(dz)\,ds}.
\label{Eq:MEAN_EM_formula_ABC}
\end{align}
In Eq (\ref{Eq:MEAN_EM_formula_ABC}), we denote $\mathcal{L}f(Q, S_j) = D_{j}\Delta_{Q}f$, for all $j = 1, 2, 3$. As we shall demonstrate soon, this is consistent with what we expect from the forward equation~\eqref{eq:multipartABtoCEqs}.

\subsection{Doi Forward Kolmogorov Equation  for the $\textrm{A} + \textrm{B} \leftrightarrows \textrm{C}$ Reaction}\label{sec:forwardEq}
We use a notation consistent with that introduced by Doi~\cite{DoiSecondQuantA,DoiSecondQuantB}. Suppose $A(t)$ is the stochastic process for the number of species \textrm{A} particles in the system at time $t$, with $B(t)$ and $C(t)$ defined similarly. Values of $A(t)$, $B(t)$ and $C(t)$ will be given by $a$, $b$ and $c$ (i.e. $A(t)=a$). When $A(t)=a$, we will let $\vQa_l(t) \in \R^d$ label the stochastic process for the position of the $l$th molecule of species \textrm{A}. $\vqa_l$ will denote a possible value of $\vQa_l(t)$.  The species \textrm{A} position vector when $A(t)=a$ is then given by
\begin{equation*}
  \vQa(t) = (\vQa_1(t), \dots, \vQa_a(t)) \in \R^{da}.
\end{equation*}

Similarly, $\vqa$ will denote a possible value of $\vQa(t)$,
\begin{equation*}
  \vQa(t) = \vqa = (\vqa_1,\dots,\vqa_a).
\end{equation*}
$\vQb(t)$, $\vQb_m(t)$, $\vQc(t)$,
$\vQc_n(t)$, $\vqb_m$, $\vqc_n$, $\vqb$ and $\vqc$ will all be defined analogously. The state of the system is then a hybrid discrete--continuous state stochastic process given by
$\paren{A(t), B(t), C(t),\vQ^{A(t)}, \vQ^{B(t)}, \vQ^{C(t)}}$.

With this notation, denote by $\pabc(\vqa,\vqb,\vqc,t)$ the probability density that $A(t)=a$, $B(t)=b$ and $C(t)=c$ with $\vQa(t)=\vqa$, $\vQb(t)=\vqb$ and $\vQc(t) = \vqc$. We assume that particles of the same species are indistinguishable, that is for $1 \leq l< l' \leq a$ fixed
\begin{multline*}
\pabc \paren{\vqa_1,\dots,\vqa_{l-1},\vqa_l,\vqa_{l+1},\dots,\vqa_{l'-1},\vqa_{l'},\vqa_{l'+1},
  \dots,\vqa_a,\vqb,\vqc,t} \\
= \pabc \paren{\vqa_1,\dots,\vqa_{l-1},\vqa_{l'},\vqa_{l+1},\dots,\vqa_{l'-1},\vqa_{l},\vqa_{l'+1},
  \dots,\vqa_a,\vqb,\vqc,t},
\end{multline*}
with similar relations holding for permutations of the molecule orderings within $\vqb$ and $\vqc$.  With this assumption the $p^{(a,b,c)}$ are chosen to be normalized so that
\begin{equation*}
  \sum_{a=0}^{\infty} \sum_{b=0}^{\infty} \sum_{c=0}^{\infty} \brac{ \frac{1}{a!\, b!\, c!}
  \int_{\R^{da}} \int_{\R^{db}} \int_{\R^{dc}} \pabc\paren{\vqa,\vqb,\vqc,t} \, d\vqc \, d\vqb \, d\vqa
  } = 1.
\end{equation*}
Here the bracketed term corresponds to the probability of having a given number of each species, i.e.
\begin{equation*}
  \prob \brac{A(t) = a, B(t) = b, C(t) = c}  = \frac{1}{a!\, b!\, c!}
  \int_{\R^{da}} \int_{\R^{db}} \int_{\R^{dc}} \pabc\paren{\vqa,\vqb,\vqc,t} \, d\vqc \, d\vqb \, d\vqa.
\end{equation*}

Let $\vP(t) = \{p^{(a,b,t)}(\vqa,\vqb,\vqc,t)\}_{a,b,c=0}^{\infty}$ denote the vector of all the probabilities. The forward equation (see~\cite{IZ:2018}) is given by the coupled system of PIDEs that
\begin{equation}
  \label{eq:multipartABtoCEqs}
   \partial_t \vP(t) = (\diffop + \Rp + \Rm) \vP(t).
\end{equation}
Here the linear operators $\diffop$, $\Rp$ and $\Rm$ correspond to diffusion, the forward association reaction and the reverse dissociation reaction respectively. The diffusion operator in the $(a,b,c)$ equation is given by
\begin{equation}
  \label{eq:multipartDiffop}
  (\diffop \vP(t))_{(a,b,c)} (\vq^a,\vq^b,\vq^c. t) = \paren{D_1 \sum_{l=1}^{a} \lap_{\vqa_l}
    + D_2 \sum_{m=1}^{b} \lap_{\vqb_m}
    + D_3 \sum_{n=1}^{c} \lap_{\vqc_n}} \pabc(\vqa,\vqb,\vqc,t),
\end{equation}
where $\lap_{\vqa_l}$ denotes the $d$-dimensional Laplacian acting on the $\vqa_l$ coordinate, and $\lap_{\vqb_m}$ and $\lap_{\vqc_n}$ are defined similarly. (Recall $D_1, D_2, D_3$ are the diffusivity of species \textrm{A},  \textrm{B}.  and \textrm{C} respectively.) To define the reaction operators, $\Rp$ and $\Rm$, we introduce notations for adding or removing a particle from a given state, $\vqa$. Let
\begin{align*}
  \vqa \cup \vx &= \paren{ \vqa_1, \dots, \vqa_a, \vx}, &
  \vqa \setminus \vqa_l &= \paren{ \vqa_1,\dots,\vqa_{l-1}, \vqa_{l+1}, \dots, \vqa_a},
\end{align*}
which correspond to adding a particle to species \textrm{A} at $\vx$, and removing the $l$th particle of species \textrm{A} respectively. With these definitions, the reaction operator for the $\textrm{A} + \textrm{B} \to \textrm{C}$ association reaction in the $(a,b,c)$ equation is
\begin{equation} \label{eq:multipartRp}
  \begin{aligned}
    (\Rp \vP(t))_{(a,b,c)} (\vq^a,\vq^b,\vq^c. t) =
    &-\paren{\sum_{l=1}^a \sum_{m=1}^bK_1^{\gamma} \paren{\vqa_{l}, \vqb_{m}}} \pabc(\vqa,\vqb,\vqc,t) \\
    &+ \sum_{n=1}^{c} \!\brac{\int_{\R^{2d}} m^\eta_1(\vqc_n \vert \vx, \vy) K_1^{\gamma} \paren{\vx, \vy}
      p^{(a+1,b+1,c-1)}(\vqa \cup \vx, \vqb \cup \vy, \vqc \setminus \vqc_n, t) d\vx d\vy}\!\!,
  \end{aligned}
\end{equation}
while the reaction operator for the dissociation reaction $\textrm{C} \to \textrm{A} + \textrm{B}$ in the $(a,b,c)$ equation is
\begin{multline} \label{eq:multipartRm}
  (\Rm \vP(t))_{(a,b,c)} (\vq^a,\vq^b,\vq^c. t)= - \paren{ \sum_{n=1}^{c} K_2^{\gamma}(\vqc_n)} \pabc(\vqa, \vqb, \vqc, t) \\
  + \sum_{l=1}^a \sum_{m=1}^b \brac{\int_{\R^d} m^\eta_2\paren{\vqa_l,\vqb_m\vert\vz}K_2^{\gamma}(\vz)
    p^{(a-1,b-1,c+1)}\paren{\vqa\setminus \vqa_l, \vqb\setminus \vqb_m, \vqc \cup \vz, t} d\vz}.
\end{multline}
This representation is consistent with the classical second quantization representation of Doi~\cite{DoiSecondQuantA,DoiSecondQuantB}.

Suppose the initial condition $\vP(0) = \vP_0 = \{p_0^{(a,b,c)}\}_{a,b,c=0}^{\infty}$ is fixed, and we have $(a_0, b_0, c_0)$ particles of A, B and C respectively at time zero. We consider the evolution of $\vP(t)$ as a vector in a $L^2$ Fock Space $F=L^{2}(X)$ equipped with inner product defined by~\eqref{eq:FockInnerProduct}, where
\begin{equation*}
X=\bigoplus_{\substack{a,b,c\geq0\\a+b+2c = a_0+b_0+2c_0}}^{a_0 \vee b_0 + c_0} \mathbb{R}^{d(a+b+c)}.
\end{equation*}

\begin{remark}\label{rem:finiteSum}
For the $\textrm{A} + \textrm{B} \leftrightarrows \textrm{C}$ reaction, the quantity $A(t)+B(t)+ 2 C(t) = a_0+b_0+2c_0$ is always conserved. For our example, $X$ is therefore a finite sum of Euclidean spaces over $a, b, c\in\N_+$  such that $a+b+2c = a_0+b_0+2c_0$.
\end{remark}

To simplify the calculation of regularity results for~\eqref{eq:multipartABtoCEqs} \emph{for comparison to the forward equation}, in this section we make
\begin{assumption}\label{assumption:twoToOneRate}
We assume the reaction kernel function $K(x, y)$ for the $A + B \rightarrow C$ reaction only depends on the separation of two reactant particles, $|x-y|$, denoted as $K(x, y) = \tilde{K}(|x-y|)$. Furthermore, we assume $\tilde{K}(|w|)\in L^2(\R^d)$, $w\in\R^d$.
\end{assumption}
\begin{assumption}\label{assumption:oneToTwoRate}
We assume the function $\rho(|w|)$, $w\in\R^d$, defined in Assumption~\ref{Assume:measureOne2Two} for the $C \rightarrow A+B$ reaction, is in $L^2(\R^d)$.
\end{assumption}

Under these two assumptions the following regularity theorem holds, for which the proof is given in Appendix~\ref{A:ProofPDE}.
\begin{theorem}\label{thm:regularityABC}
Given Assumptions~\ref{assumption:twoToOneRate} and~\ref{assumption:oneToTwoRate}, there exists a unique global mild solution to~\eqref{eq:multipartABtoCEqs},  $\vP(\cdot)\in \mathcal{C}([0,\infty);H^2(X))$. That is, $\vP(t)$ satisfies
\begin{equation*}
\vP(t)=e^{t\diffop} \vP_{0} + \int_{0}^{t} e^{(t-s)\diffop}(\Rp \vP(s) + \Rm \vP(s) ) \, ds,
\end{equation*}
with the initial condition $\vP(0) = \vP_0\in H^1(X)$. Further, if $p_0^{(a,b,c)} \geq 0$ for each $(a,b,c)$, and satisfies the normalization condition
\begin{equation*}
  \sum_{a=0}^{\infty} \sum_{b=0}^{\infty} \sum_{c=0}^{\infty} \brac{ \frac{1}{a!\, b!\, c!}
  \int_{\R^{da}} \int_{\R^{db}} \int_{\R^{dc}} \pabc_0\paren{\vqa,\vqb,\vqc} \, d\vqc \, d\vqb \, d\vqa
  } = 1,
\end{equation*}
then $\vP(t)$ is always non-negative for all $t\geq 0$ and the same normalization condition holds,
\begin{equation*}
  \sum_{a=0}^{\infty} \sum_{b=0}^{\infty} \sum_{c=0}^{\infty} \brac{ \frac{1}{a!\, b!\, c!}
  \int_{\R^{da}} \int_{\R^{db}} \int_{\R^{dc}} \pabc\paren{\vqa,\vqb,\vqc,t} \, d\vqc \, d\vqb \, d\vqa
  } = 1.
\end{equation*}
Note, as $a + b + 2 c$ is conserved, see Remark~\ref{rem:finiteSum}, the above summation is only over a finite set of indices.
\end{theorem}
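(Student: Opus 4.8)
The plan is to recast the forward equation~\eqref{eq:multipartABtoCEqs} as an abstract linear evolution equation $\partial_t\vP=(\diffop+\Rp+\Rm)\vP$ on the Hilbert space $L^2(X)$, solve it by bounded-perturbation semigroup theory, and then read off the nonnegativity and conservation of normalization from the structure of the three operators. Because $a+b+2c$ is conserved (Remark~\ref{rem:finiteSum}), $X$ is a \emph{finite} direct sum of Euclidean spaces, so $L^2(X)$ and $H^k(X)$ split as finite direct sums and the operators decompose into finitely many blocks. On each summand the operator $\diffop$ of~\eqref{eq:multipartDiffop} is a constant-coefficient sum of Laplacians with positive coefficients; hence $\diffop$ with domain $D(\diffop)=H^2(X)$ is self-adjoint and nonpositive, generating the analytic, positivity-preserving heat semigroup $e^{t\diffop}$ on $L^2(X)$, which restricts to a $C_0$-semigroup on each $H^k(X)$ and obeys the standard parabolic smoothing bounds.

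The crux of the proof — and the only place Assumptions~\ref{assumption:twoToOneRate} and~\ref{assumption:oneToTwoRate} are used — is that $\Rp$ and $\Rm$ are \emph{bounded} operators on $L^2(X)$ (and, for fixed $\eta>0$, on $H^2(X)$). The loss (diagonal) terms in~\eqref{eq:multipartRp}--\eqref{eq:multipartRm} are multiplication by the bounded functions $\sum_{l,m}K_1^\gamma(\vqa_l,\vqb_m)$ and $\sum_nK_2^\gamma(\vqc_n)$ (Assumption~\ref{Assume:kernalBdd0}), hence bounded. The gain (off-diagonal integral) terms are \emph{not} Hilbert--Schmidt, and a Schur test on the modulus of their kernels would require $\tilde K,\rho\in L^1$, which is not available. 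Instead I would exploit the convolution structure of the mollified placement densities: changing the $\R^{2d}$ variables from $(x,y)$ to (separation, weighted center of mass) $=(w,v)$ turns the gain operator in $\Rp$ into the schematic form $g\mapsto\sum_i p_i\,G_\eta\ast_v\!\big[\tilde K(|w|)\,g\big]$, with the newly created product coordinate in the role of $v$; Cauchy--Schwarz in $w$, where $\tilde K(|w|)$ is square integrable, together with Young's inequality $\|G_\eta\ast h\|_{L^2}\le\|G_\eta\|_{L^1}\|h\|_{L^2}=\|h\|_{L^2}$ in $v$, then gives $\|\mathrm{gain}\|_{L^2}\lesssim\|\tilde K\|_{L^2}$. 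The analogous computation for $\Rm$, using the unbinding density of Assumption~\ref{Assume:measureOne2Two}, produces the factor $\|\rho\|_{L^2}$. Since $\eta>0$ is fixed, $G_\eta$ and each of its derivatives lie in $L^1$, so the same argument bounds $\Rp,\Rm$ on $H^2(X)$ as well (with an $\eta$-dependent constant). I expect this boundedness step to be the main obstacle; the rest is routine.

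Given the boundedness, the bounded-perturbation (Phillips) theorem shows that $\diffop+\Rp+\Rm$ with domain $H^2(X)$ generates a $C_0$-semigroup $T(t)$ on $L^2(X)$ — defined for all $t\ge0$, which is why the solution is global — and $\vP(t):=T(t)\vP_0$ is the unique mild solution of~\eqref{eq:multipartABtoCEqs}, lying in $C([0,\infty);H^2(X))$ for $\vP_0\in H^2(X)$ (for less regular data one upgrades to $H^2$ regularity on $t>0$ via the smoothing bounds and the Duhamel formula, and uniqueness in the mild class follows from Gronwall). For nonnegativity, write $\diffop+\Rp+\Rm=\mathcal{A}+\mathcal{B}$, where $\mathcal{B}$ collects the two nonnegative gain integral operators and $\mathcal{A}=\diffop$ minus the nonnegative bounded total-loss multiplier; $\mathcal{A}$ generates a positivity-preserving semigroup (a Schr\"odinger-type operator with bounded nonnegative potential, e.g.\ via the Trotter product of the positivity-preserving $e^{t\diffop}$ with multiplication by $e^{-t(\cdot)}$), and the Dyson series $T(t)=\sum_{n\ge0}\int_{0\le s_1\le\cdots\le s_n\le t}e^{(t-s_n)\mathcal{A}}\mathcal{B}\cdots\mathcal{B}\,e^{s_1\mathcal{A}}\,ds_1\cdots ds_n$ exhibits $T(t)$ as a norm-convergent sum of compositions of positivity-preserving maps with the positive operator $\mathcal{B}$; hence $\vP_0\ge0$ forces $\vP(t)\ge0$.

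Finally, for conservation of the normalization it suffices to show that $(\diffop+\Rp+\Rm)\vP$ integrates to zero against the weights $\tfrac{1}{a!\,b!\,c!}$ — i.e.\ $\sum_{a,b,c}\tfrac{1}{a!\,b!\,c!}\int_{\R^{d(a+b+c)}}\big((\diffop+\Rp+\Rm)\vP\big)^{(a,b,c)}=0$ for $\vP$ in the domain — since then $\tfrac{d}{dt}$ of the (finite) sum $\sum_{a,b,c}\tfrac{1}{a!\,b!\,c!}\int\pabc(t)$ vanishes. The diffusion contribution is zero by the divergence theorem, the Gaussian-type spatial decay of $H^2$ solutions killing the boundary terms at infinity. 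For the reaction terms one checks, term by term, that the gain contribution in component $(a,b,c)$ cancels the loss contribution in component $(a+1,b+1,c-1)$ for $\Rp$ (respectively $(a-1,b-1,c+1)$ for $\Rm$): after integrating out the placement density, which has total mass $1$ by Assumption~\ref{Assume:measureP}, and relabeling the summation indices, the factorial weights are precisely those that equate the two sums, so $\Rp\vP$ and $\Rm\vP$ each integrate to zero. Combined with the nonnegativity, this shows the normalization is preserved, completing the proof.
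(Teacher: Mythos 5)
Your proposal is correct and follows essentially the same strategy as the paper: realize $\diffop+\Rp+\Rm$ as a bounded perturbation of the analytic heat semigroup on the finite direct sum $L^2(X)$, with the whole weight of the argument resting on the $L^2$-boundedness of the gain operators, and then obtain positivity and conservation of normalization from the sign structure and the index-shifting cancellation. Your boundedness argument is the same in substance as the paper's Lemma~\ref{lem:opBddLip}: a change to separation/center-of-mass variables, Cauchy--Schwarz in the separation variable where $\tilde K$ (resp.\ $\rho$) is square integrable by Assumptions~\ref{assumption:twoToOneRate} and~\ref{assumption:oneToTwoRate}, and the unit mass of the placement density in the remaining variable (the paper collapses the $\delta$ directly where you invoke Young's inequality for $G_\eta$ --- an immaterial difference). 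Your global existence via the Phillips bounded-perturbation theorem replaces the paper's Picard--Lindel\"of plus Gronwall continuation, and your positivity proof via the Dyson series of positivity-preserving factors replaces the paper's comparison-principle argument (Lemma~\ref{lem:soluNonNegative}); both substitutions are standard and arguably cleaner, and neither changes what the proof needs. The normalization computation is identical to Lemma~\ref{lem:Normalization}.

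One soft spot: your parenthetical claim that $\Rp,\Rm$ are bounded on $H^2(X)$ because $G_\eta$ and its derivatives are in $L^1$ only covers the gain terms; the loss terms are multiplication by $\sum_{l,m}K_1^{\gamma}(\vqa_l,\vqb_m)$ and $\sum_n K_2^{\gamma}(\vqc_n)$, and since $K_1,K_2$ are only assumed bounded with $\tilde K\in L^2$ (no differentiability), these multiplications need not preserve $H^2$. Moreover the theorem only assumes $\vP_0\in H^1(X)$, so one cannot simply run a semigroup on $H^2$ in any case. The correct route --- which you do mention as a fallback and which is what the paper uses --- is to solve in $L^2(X)$ and then upgrade to $\mathcal{C}([0,\infty);H^2(X))$ through the Duhamel formula and parabolic smoothing estimates applied to the $L^2$-valued forcing $\Rp\vP+\Rm\vP$. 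With that route made primary, the argument is complete.
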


\subsection{Equivalence of the two approaches}\label{SS:EquivalentApproaches}

Now, we are in position to compare the two approaches as described in Sections \ref{sec:MFF_RR} and \ref{sec:forwardEq}. For this purpose we have Proposition \ref{P:EquivalentGenerator} and Proposition \ref{P:ParticleSpatialDensity} whose proofs are deferred to Appendix~\ref{A:ProofsEquivalentDerivations}.

\begin{proposition}\label{P:EquivalentGenerator}
For any $M\in \N$, any function $\varphi\in C^\infty(\R^M)$ and any set of functions $\{f_m\}_{m = 1}^M\in C_b^2(\hat{P})$, the evolution equations satisfied by $\avg{\varphi\left(\la f_1, \nu^{\vec{\zeta}}_t \ra_{\hat{P}}, \la f_2, \nu^{\vec{\zeta}}_t \ra_{\hat{P}}, \cdots, \la f_M, \nu^{\vec{\zeta}}_t \ra_{\hat{P}} \right)}$ are the same when derived using either the formulation of the weak measure-valued process representation, or the formulation based on the forward Kolmogorov equation. This implies that these two approaches produce the same statistics.
\end{proposition}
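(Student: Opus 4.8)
The plan is to produce, starting from each of the two representations, an explicit scalar integral equation for
$t \mapsto \mathbb{E}\big[\varphi\big(\langle f_1,\nu^{\vec\zeta}_t\rangle_{\hat P},\dots,\langle f_M,\nu^{\vec\zeta}_t\rangle_{\hat P}\big)\big]$, and then to verify termwise that the two equations have identical right-hand sides. Write $\vec f=(f_1,\dots,f_M)$ and $\langle\vec f,\nu\rangle_{\hat P}=(\langle f_1,\nu\rangle_{\hat P},\dots,\langle f_M,\nu\rangle_{\hat P})$. For a fixed configuration of $a$, $b$, $c$ particles of species $A$, $B$, $C$ at positions $(\vec q^a,\vec q^b,\vec q^c)$, let $\Phi^{(a,b,c)}(\vec q^a,\vec q^b,\vec q^c)\in\R^M$ denote the value of $\langle\vec f,\nu\rangle_{\hat P}$ at that configuration, so that $\varphi(\langle\vec f,\nu^{\vec\zeta}_t\rangle_{\hat P})=(\varphi\circ\Phi)(\nu^{\vec\zeta}_t)$; note that for fixed $(a,b,c)$ the map $\varphi\circ\Phi^{(a,b,c)}$ is $C^2$ on $\R^{d(a+b+c)}$ with bounded derivatives since each $f_m\in C^2_b(\hat P)$ and $\varphi\in C^\infty$.

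On the MVSP side I would apply the It\^o formula for jump–diffusion semimartingales to $\varphi\circ\Phi$ along the path described by \eqref{Eq:EM_formula_ABC}. The continuous martingale part of \eqref{Eq:EM_formula_ABC} contributes, via the chain rule and the quadratic variation (with $d[Q^i]=2D^i\,ds$), the diffusive drift $\sum_m\partial_m\varphi\sum_i D^i\partial_Q^2 f_m(H^i(\nu^{\vec\zeta}_{s-}))+\tfrac12\sum_{m,m'}\partial^2_{mm'}\varphi\sum_i 2D^i\partial_Q f_m(H^i)\partial_Q f_{m'}(H^i)$, while each Poisson-measure term contributes a finite-difference increment $\varphi(\langle\vec f,\nu^{\vec\zeta}_{s-}+\Delta\rangle_{\hat P})-\varphi(\langle\vec f,\nu^{\vec\zeta}_{s-}\rangle_{\hat P})$ weighted by the appropriate reaction intensity ($K_1^\gamma\,m_1^\eta$ for association, $K_2^\gamma\,m_2^\eta$ for dissociation) and restricted to admissible reactant indices. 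Under Assumption~\ref{Assume:kernalBdd} the population and rates are bounded, so the stochastic integrals are mean-zero martingales; taking expectations leaves an explicit integral equation, which is precisely the generalization of \eqref{Eq:MEAN_EM_formula_ABC} from linear $f$ to cylinder functions $\varphi\circ\Phi$.

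On the forward-equation side I would write $\mathbb{E}[(\varphi\circ\Phi)(\nu^{\vec\zeta}_t)]=\sum_{a,b,c}\tfrac{1}{a!\,b!\,c!}\int \varphi\big(\Phi^{(a,b,c)}(\vec q^a,\vec q^b,\vec q^c)\big)\,p^{(a,b,c)}(\vec q^a,\vec q^b,\vec q^c,t)\,d\vec q^a\,d\vec q^b\,d\vec q^c$, differentiate in $t$, and substitute $\partial_t\vec P=(\diffop+\Rp+\Rm)\vec P$ from \eqref{eq:multipartABtoCEqs}. For the $\diffop$ contribution, using the $H^2$-regularity of Theorem~\ref{thm:regularityABC} I integrate by parts twice in each particle coordinate to move the Laplacians in \eqref{eq:multipartDiffop} onto $\varphi\circ\Phi^{(a,b,c)}$; the chain rule then reproduces exactly the diffusive drift obtained on the MVSP side. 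For the $\Rp$ contribution, in the gain term of \eqref{eq:multipartRp} I relabel $(a,b,c)\mapsto(a-1,b-1,c+1)$ so that $p^{(a+1,b+1,c-1)}$ becomes $p^{(a,b,c)}$, rename $\vec x,\vec y$ as the newly created $A$- and $B$-positions, and use the indistinguishability/symmetry of $p^{(a,b,c)}$ together with the $\tfrac{1}{a!\,b!\,c!}$ weights to rewrite the ordered sums with the combinatorial prefactors appearing in the MVSP jump term; combining with the loss term yields precisely the $K_1^\gamma$–$m_1^\eta$ reaction term of the It\^o expansion. The $\Rm$ contribution is treated identically with the roles of association and dissociation (and of $K_2^\gamma$, $m_2^\eta$) interchanged, using \eqref{eq:multipartRm}. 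Matching the diffusion, $\Rp$ and $\Rm$ pieces shows the two integral equations coincide, which is the assertion of the proposition (and of the equality of generators it entails).

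The main obstacle is the combinatorial bookkeeping in the $\Rp$ and $\Rm$ steps: one must carefully track the $1/(a!\,b!\,c!)$ normalizations, the index shifts $p^{(a\pm1,b\pm1,c\mp1)}$, and the passage between sums over distinct labeled reactant particles (as in the MVSP, where indices are sampled from $\Omega^{(\ell)}$) and symmetrized integrals against $p^{(a,b,c)}$ (as in the Fock-space formulation), so that every combinatorial factor ultimately lands in exactly the right place. Once these symmetry reductions are set up, the remaining ingredients — the double integration by parts for the diffusion operator and the mean-zero property of the martingale terms under Assumption~\ref{Assume:kernalBdd} — are routine.
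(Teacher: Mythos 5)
Your proposal is correct and follows essentially the same route as the paper: It\^o's formula for jump--diffusions applied to the cylinder function on the MVSP side, and on the Fock-space side the duality pairing $\paren{\vG,(\diffop+\Rp+\Rm)\vP}_F=\paren{\mathcal{T}\vG,\vP}_F$, where your termwise integration by parts for $\diffop$ and index relabeling/symmetrization for $\Rp,\Rm$ is exactly the computation of the adjoint generator $\mathcal{T}$ that the paper writes down in closed form before applying it to $\vG$. The only cosmetic difference is that you treat general $M$ directly (correctly including the cross terms $\partial^2_{mm'}\varphi$ in the quadratic-variation contribution), whereas the paper carries out the details for $M=1$ and notes the general case is analogous.
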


By Proposition \ref{P:EquivalentGenerator} we know that the measure-valued formulation and the forward Kolmogorov equation yield the same statistics, at least when the statistics involve smooth test functions. We next derive equations for the mean particle spatial field density of different species at time $t$ in Proposition~\ref{P:ParticleSpatialDensity}. Note that Proposition~\ref{P:ParticleSpatialDensity} can be viewed as a special case of the equations derived in Proposition~\ref{P:EquivalentGenerator} by formally setting $M = 1$, $\varphi = 1$ and $f_1 = \delta_{(x, S_1)}$, $f_1 = \delta_{(y, S_2)}$ or $f_1 = \delta_{(z, S_3)}$ respectively for representing the spatial field  density for species $A$, $B$, and $C$. Using the weak measure-valued process representation, we do not show here but expect that one can make this rigorous by introducing appropriate mollifiers and taking the mollification to zero afterwards. For simplicity, we instead use the formulation via the forward Kolmogorov equation, which for deriving the mean density fields in Proposition~\eqref{P:ParticleSpatialDensity} does not require the introduction of mollifiers.

\begin{proposition}\label{P:ParticleSpatialDensity}
Let $A(\vx, t)$, $B(\vy, t)$ and $C(\vz, t)$ denote the spatial field number density in the particle model at time $t$ for species A at position $x$, B at position $y$ and C at position $z$ respectively.  Under the assumptions of Theorem~\ref{thm:regularityABC}, the evolution equations for their expectations at time $t$ satisfy
\begin{align}\label{Eq:Mean_Field_Model_From_Foward_Eqs0}
    \partial_t \avg{A(\vx,t)} &= D_1 \lap_{\vx} \avg{A(\vx,t)} - \int_{\R^{d}} K_1^{\gamma} \paren{\vx,\vy}\avg{A(\vx,t) B(\vy,t)} \, d\vy \nonumber\\
    &\hspace{6cm}+ \int_{\R^{d}} \brac{\int_{\R^d}m_2\paren{\vx,\vy\vert\vz}K_2^{\gamma}(\vz) d\vy } \avg{C(\vz,t)} d\vz,\nonumber\\
    \partial_t \avg{B(\vy,t)} &= D_2\lap_{\vy} \avg{B(\vy,t)} - \int_{\R^{d}} K_1^{\gamma}\paren{\vx,\vy}\avg{A(\vx,t) B(\vy,t)} \, d\vx \nonumber\\
   &\hspace{6cm} + \int_{\R^{d}} \brac{\int_{\R^d}m_2\paren{\vx,\vy\vert\vz}K_2^{\gamma}(\vz) d\vx } \avg{C(\vz,t)} d\vz,\nonumber\\
    \partial_t \avg{C(\vz,t)} &= D_3 \lap_{\vz} \avg{C(\vz,t)} -  K_2^{\gamma} \paren{\vz}\avg{ C(\vz,t)}  +  \int_{\R^{2d}}m_1 \paren{\vz\vert \vx,\vy}K_1^{\gamma}( \vx,\vy)\avg{A(\vx,t) B(\vy,t)} \, d\vx\, d\vy .\nonumber\\
\end{align}
\end{proposition}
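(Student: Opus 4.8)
The plan is to obtain \eqref{Eq:Mean_Field_Model_From_Foward_Eqs0} directly from the Doi forward equation \eqref{eq:multipartABtoCEqs} by a BBGKY-type computation: express each mean density field as a weighted sum of the Fock-space components $\pabc$, differentiate in time, substitute $\partial_t\vP=(\diffop+\Rp+\Rm)\vP$, and identify the resulting terms. To set up, note that by definition of the spatial field number density and the normalization of $\vP$,
\[
\avg{A(\vx,t)}=\sum_{a,b,c\geq 0}\frac{1}{a!\,b!\,c!}\int_{\R^{da}}\int_{\R^{db}}\int_{\R^{dc}}\Big(\sum_{l=1}^{a}\delta(\vx-\vqa_l)\Big)\pabc(\vqa,\vqb,\vqc,t)\,d\vqc\,d\vqb\,d\vqa ,
\]
and, by indistinguishability of like particles, each of the $a$ summands in the inner sum contributes equally, so $\avg{A(\vx,t)}=\sum_{a,b,c}\frac{1}{(a-1)!\,b!\,c!}\int\pabc(\vx,\vqa_2,\dots,\vqa_a,\vqb,\vqc,t)\,d\vqa_2\cdots d\vqc$, with analogous identities for $\avg{B(\vy,t)}$ and $\avg{C(\vz,t)}$, and with the pair density $\avg{A(\vx,t)B(\vy,t)}=\sum_{a,b,c}\frac{1}{a!\,b!\,c!}\int\big(\sum_{l,m}\delta(\vx-\vqa_l)\delta(\vy-\vqb_m)\big)\pabc$. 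By Theorem~\ref{thm:regularityABC}, $\vP(\cdot)\in\mathcal{C}([0,\infty);H^2(X))$, and by Remark~\ref{rem:finiteSum} only finitely many $\pabc$ are nonzero since $a+b+2c$ is conserved; hence these sums are finite, and this regularity---together with Assumptions~\ref{assumption:twoToOneRate}--\ref{assumption:oneToTwoRate}---justifies differentiating under the integral, substituting the forward equation, and performing the integrations by parts and Fubini interchanges used below.

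First I would dispatch the diffusion term. Inserting $(\diffop\vP)_{(a,b,c)}$ from \eqref{eq:multipartDiffop} and integrating by parts twice in each coordinate, every Laplacian acting on a coordinate \emph{not} pinned by the factor $\delta(\vx-\vqa_l)$ integrates to zero (using the decay supplied by $H^2$), while the Laplacian in the pinned A-coordinate can be transferred onto $\delta(\vx-\vqa_l)$ and then read off as $\Delta_{\vx}$. This gives $D_1\Delta_{\vx}\avg{A(\vx,t)}$, and, similarly, $D_2\Delta_{\vy}\avg{B(\vy,t)}$ and $D_3\Delta_{\vz}\avg{C(\vz,t)}$.

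The reactive contributions are the combinatorial core. Consider $\Rp$ of \eqref{eq:multipartRp} tested against $\sum_{l}\delta(\vx-\vqa_l)$. In the in-flux (``gain'') term the state $(a,b,c)$ is reached from $(a+1,b+1,c-1)$ by consuming one A and one B and creating a C; writing $\sum_{l=1}^{a}\delta(\vx-\vqa_l)$ over the $a$ surviving A-particles as the sum over \emph{all} $a+1$ A-particles of the source state minus the term pinning the consumed A-particle, then reindexing $(a,b,c)\mapsto(a+1,b+1,c-1)$, using indistinguishability, and using $\int_{\R^d}m^\eta_1(\vw|\vx',\vy')\,d\vw=1$ (Assumption~\ref{Assume:measureP}), one checks that the ``all $a+1$'' piece exactly cancels the entire out-flux (``loss'') term $-(\sum_{l'}\sum_m K_1^\gamma(\vqa_{l'},\vqb_m))\pabc$ tested against the same $\delta$-sum, while the remaining ``consumed-particle'' piece reassembles---upon recognizing the relevant Fock-space sum as the pair density---into $-\int_{\R^d}K_1^\gamma(\vx,\vy)\avg{A(\vx,t)B(\vy,t)}\,d\vy$. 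An identical manipulation for $\Rm$ of \eqref{eq:multipartRm} (a C dissociating into an A and a B, with placement density $m^\eta_2$) shows its loss term cancels the spectator part of its gain term, leaving $\int_{\R^d}\big[\int_{\R^d}m^\eta_2(\vx,\vy|\vz)K_2^\gamma(\vz)\,d\vy\big]\avg{C(\vz,t)}\,d\vz$; adding the three groups gives the $\avg{A(\vx,t)}$ equation, and the $\avg{B(\vy,t)}$ equation follows by the A--B symmetry of the network. Testing instead against $\sum_{n}\delta(\vz-\vqc_n)$, the mirror-image computation produces $-K_2^\gamma(\vz)\avg{C(\vz,t)}$ from the $\Rm$ loss/gain pair and $\int_{\R^{2d}}m^\eta_1(\vz|\vx,\vy)K_1^\gamma(\vx,\vy)\avg{A(\vx,t)B(\vy,t)}\,d\vx\,d\vy$ from the $\Rp$ gain term, yielding the $\avg{C(\vz,t)}$ equation.

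I expect the main obstacle to be the bookkeeping in the reactive step: tracking which Fock-space index each partial sum lives over after the shifts $(a,b,c)\mapsto(a\pm1,b\pm1,c\mp1)$, and checking that the symmetrization constants ($1/(a-1)!$ versus $a/a!$, and their $b$ and $c$ analogues) line up so that the spectator terms cancel exactly rather than leaving a residue. To make the formal $\delta$-function manipulations rigorous, the cleanest route is to run the whole computation with each $\delta(\vx-\cdot)$ replaced by a smooth mollifier $\varphi_\epsilon(\vx-\cdot)$, obtain the identities in weak (tested-against-a-smooth-function) form, and then pass to the limit $\epsilon\to 0$ using the $H^2$-regularity of $\vP$ from Theorem~\ref{thm:regularityABC}; the same regularity guarantees that the pair density $\avg{A(\vx,t)B(\vy,t)}$ appearing on the right-hand side is itself well defined.
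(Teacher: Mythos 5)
Your proposal is correct and follows essentially the same route as the paper: write each mean field as the Fock-space pairing of $\vP(t)$ with a pinned coordinate, substitute the forward equation $\partial_t\vP=(\diffop+\Rp+\Rm)\vP$, integrate the diffusion term by parts, and observe that after reindexing $(a,b,c)\mapsto(a\pm1,b\pm1,c\mp1)$ and using $\int m^\eta_\ell\,d\vec y=1$ the spectator contributions of the loss and gain terms cancel, leaving exactly the pair-density reaction terms. The only (immaterial) difference is that you organize the cancellation by splitting the gain term into ``all source particles minus the consumed one,'' whereas the paper splits the loss term into ``pinned-reacting plus spectator''; these are algebraically equivalent and yield the same surviving terms.
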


\section{Details of the Proof of Theorem~\ref{thm:convergence}}\label{S:ProofMainResult}
The purpose of this section is to prove the various lemmas and theorems cited in the proof of our main result, Theorem~\ref{thm:convergence}. Without loss of generality we assume that $\tilde{L} = 0$ in this section. The case when $\tilde{L} > 0$ follows by similar arguments as we now give in the $\tilde{L}=0$ case.

To rigorously determine the large-population limit of the MVSP, we use the martingale problem approach for studying solutions to stochastic differential equations developed by Stroock and Varadhan~\cite{EthierKurtz, StroockVaradhan}. The proof is organized as follows.  In Subsection~\ref{SS:PathLevelDescription} we provide the path level description of $\mu^{{\vec{\zeta}}, j}_{t}$, analogous to~\eqref{Eq:EM_formula} for $\nu_t^{\vec{\zeta}}$, and in Subsection~\ref{SS:TakingExpectations} we derive equations for its expectation. Assuming that the large-population limit exists, its identification is presented in Subsection~\ref{SS:LimitIdentification}. Then, in Subsection~\ref{S:Tightness} we prove that the limit exists by proving that the sequence of measures is appropriately tight. We conclude in Subsection~\ref{S:Uniqueness} by proving that the limit equation has a unique solution. Collectively, these results imply Theorem~\ref{thm:convergence}.

\subsection{Path level description}\label{SS:PathLevelDescription}

Using that we can write the marginal distribution (molar concentration) of species $j$ as
\begin{equation*}
\mu_t^{{\vec{\zeta}}, j}(dx) = \frac{1}{{\gamma}}\sum_{i = 1}^{{\gamma}\la 1, \mu^{{\vec{\zeta}}, j}_t\ra} \delta_{H_Q^i({\gamma}\mu_t^{{\vec{\zeta}}, j})}(dx), \quad j \in \{1,\dots,J\},
\end{equation*}
we have, analogously to~\eqref{Eq:EM_formula}, the coupled system
\begin{align}
\la f,\mu^{{\vec{\zeta}}, j}_{t}\ra
&
=\la f,\mu^{{\vec{\zeta}}, j}_{0}\ra + \frac{1}{{\gamma}}\sum_{i\geq 1}\int_{0}^{t}1_{\{i\leq \gamma \la 1, \mu_{s-}^{{\vec{\zeta}}, j}\ra\}}\sqrt{2D_{j}}\frac{\partial f}{\partial Q}(H^i(\gamma\mu_{s-}^{{\vec{\zeta}}, j}))dW^{i}_{s}+\frac{1}{\gamma}\int_{0}^{t}\sum_{i=1}^{\gamma\la 1, \mu_{s-}^{{\vec{\zeta}}, j}\ra} D_{j}\frac{\partial^{2} f}{\partial Q^{2}}(H^i(\gamma\mu_{s-}^{{\vec{\zeta}}, j}))ds\nonumber\\
&
\quad+\sum_{\ell = 1}^L \int_{0}^{t}\int_{\mathbb{I}^{(\ell)}} \int_{\mathbb{Y}^{(\ell)}}\int_{\mathbb{R}_{+}^2}\left(\la f, \mu^{{\vec{\zeta}}, j}_{s-}-\frac{1}{\gamma} \sum_{r = 1}^{\alpha_{\ell j}} \delta_{H_Q^{i_r^{(j)}}(\gamma\mu^{{\vec{\zeta}},j}_{s-})} + \frac{1}{\gamma}\sum_{r = 1}^{\beta_{\ell j}} \delta_{y_r^{(j)}} \ra  - \la f,\mu^{{\vec{\zeta}}, j}_{s-}\ra\right)\nonumber\\
&
\qquad
\times 1_{\{\vec{i} \in \Omega^{(\ell)}(\gamma\mu^{\vec{\zeta}}_{s-})\}}   \times 1_{ \{ \theta_1 \leq K_\ell^{\gamma}\left(\mathcal{P}^{(\ell)}(\gamma\mu_{s-}^{\vec{\zeta}}, \vec{i}) \right) \}}  \times 1_{ \{ \theta_2 \leq  m^\eta_\ell\left(\vec{y} \,  | \, \mathcal{P}^{(\ell)}(\gamma\mu_{s-}^{\vec{\zeta}}, \vec{i}) \right) \}}  dN_{\ell}(s,\vec{i}, \vec{y},\theta_1, \theta_2),
\label{Eq:EM_j_formula}
\end{align}
for $j \in \{1,\dots,J\}$.

In this formulation, one important fact is that for fixed ${\vec{\zeta}}$, $\gamma\la 1,\mu^{{\vec{\zeta}}, j}_{s-}\ra$ is finite by assumption, which provides exchangeability of the sum and Lebesgue integral.

\subsection{Taking expectations}\label{SS:TakingExpectations}

By taking expectation on (\ref{Eq:EM_j_formula}) we obtain
\begin{align}
&\avg{\la f,\mu^{{\vec{\zeta}}, j}_{t}\ra}
=\avg{\la f,\mu^{{\vec{\zeta}}, j}_{0}\ra} + \avg{\frac{1}{\gamma}\int_{0}^{t}\sum_{i=1}^{\gamma\la 1, \mu_{s-}^{{\vec{\zeta}}, j}\ra} D_{j}\frac{\partial^{2} f}{\partial Q^{2}}(H^i(\gamma\mu_{s-}^{{\vec{\zeta}}, j}))ds}+\sum_{\ell = 1}^L \avg{ \frac{1}{\gamma}\int_{0}^{t} \int_{\mathbb{Y}^{(\ell)}} \sum_{\vec{i}\in\Omega^{(\ell)}({\gamma\mu^{\vec{\zeta}}_{s-}})}   \nonumber\\
&
\quad \left( - \sum_{r = 1}^{\alpha_{\ell j}} f(H_Q^{i_r^{(j)}}(\gamma\mu^{{\vec{\zeta}},j}_{s-})) + \sum_{r = 1}^{\beta_{\ell j}} f(y_r^{(j)}) \right)\times K_\ell^{\gamma}\left(\mathcal{P}^{(\ell)}(\gamma\mu_{s-}^{\vec{\zeta}}, \vec{i}) \right)   \times  m_\ell\left(\vec{y} |\mathcal{P}^{(\ell)}(\gamma\mu_{s-}^{\vec{\zeta}}, \vec{i}) \right)\, d \vec{y}\, ds } ,\nonumber\\
&
=\avg{\la f,\mu^{{\vec{\zeta}}, j}_{0}\ra} + \avg{\int_{0}^{t} \int_{\R^d} \frac{1}{\gamma} \sum_{i=1}^{\gamma\la 1, \mu_{s-}^{{\vec{\zeta}}, j}\ra} D_{j}\frac{\partial^{2} f}{\partial Q^{2}}(x)\delta_{H_Q^i(\gamma\mu_{s-}^{{\vec{\zeta}}, j})}(dx)ds}\nonumber\\
&
\quad- \sum_{\ell = 1}^L \avg{\int_{0}^{t} \frac{1}{\gamma} \sum_{\vec{i}\in\Omega^{(\ell)}({\gamma\mu^{\vec{\zeta}}_{s-}})} \left(\sum_{r = 1}^{\alpha_{\ell j}} f(H_Q^{i_r^{(j)}}(\gamma\mu^{{\vec{\zeta}}, j}_{s-})) \right) K_\ell^{\gamma}\left(\mathcal{P}^{(\ell)}(\gamma\mu_{s-}^{\vec{\zeta}}, \vec{i})\right) \left(\int_{\mathbb{Y}^{(\ell)}}   m^\eta_\ell\left(\vec{y} |\mathcal{P}^{(\ell)}(\gamma\mu_{s-}^{\vec{\zeta}}, \vec{i})\right)\, d \vec{y} \right)\, ds } ,\nonumber\\
&
\quad+\sum_{\ell = 1}^L \avg{\int_{0}^{t} \frac{1}{\gamma}  \sum_{\vec{i}\in\Omega^{(\ell)}(\gamma\mu^{\vec{\zeta}}_{s-})}   K_\ell^{\gamma}\left(\mathcal{P}^{(\ell)}(\gamma\mu_{s-}^{\vec{\zeta}}, \vec{i})\right)  \left(\int_{\mathbb{Y}^{(\ell)}}     \left( \sum_{r = 1}^{\beta_{\ell j}} f(y_r^{(j)}) \right) m^\eta_\ell\left(\vec{y} |\mathcal{P}^{(\ell)}(\gamma\mu_{s-}^{\vec{\zeta}}, \vec{i})\right)\, d \vec{y} \right)\, ds }  ,\nonumber\\
&
=\avg{\la f,\mu^{{\vec{\zeta}}, j}_{0}\ra} + \avg{\int_{0}^{t} \la (\mathcal{L}_j f)(x), \mu_{s-}^{{\vec{\zeta}}, j}(dx) \ra ds}\nonumber\\
&
\quad- \sum_{\ell = 1}^L \avg{\int_{0}^{t} \frac{1}{\gamma} \int_{\mathbb{X}^{(\ell)}} \sum_{\vec{i}\in\Omega^{(\ell)}({\gamma\mu^{\vec{\zeta}}_{s-}})}   \left(\sum_{r = 1}^{\alpha_{\ell j}} f(x_r^{(j)}) \right)  K_\ell^{\gamma}\left(\vec{x}\right)\, \delta_{\mathcal{P}^{(\ell)}(\gamma\mu_{s-}^{\vec{\zeta}}, \vec{i})}(d \vec{x}) \, ds } ,\nonumber\\
&
\quad+\sum_{\ell = 1}^L \avg{\int_{0}^{t} \frac{1}{\gamma} \int_{\mathbb{X}^{(\ell)}}   \sum_{\vec{i}\in\Omega^{(\ell)}({\gamma\mu^{\vec{\zeta}}_{s-}})}    K_\ell^{\gamma}\left(\vec{x}\right) \left(\int_{\mathbb{Y}^{(\ell)}}    \left( \sum_{r = 1}^{\beta_{\ell j}} f(y_r^{(j)}) \right) m^\eta_\ell\left(\vec{y} \, |\, \vec{x} \right)\, d \vec{y} \right)\,\delta_{\mathcal{P}^{(\ell)}(\gamma\mu_{s-}^{\vec{\zeta}}, \vec{i})}(d \vec{x}) \, ds } ,\nonumber\\
&
=\avg{\la f,\mu^{{\vec{\zeta}}, j}_{0}\ra} + \avg{\int_{0}^{t} \la (\mathcal{L}_j f)(x), \mu_{s-}^{{\vec{\zeta}}, j}(dx) \ra ds}\nonumber\\
&
\quad- \sum_{\ell = 1}^L \avg{\int_{0}^{t} \int_{\tilde{\mathbb{X}}^{(\ell)}}    \frac{1}{\vec{\alpha}^{(\ell)}!}  \left(\sum_{r = 1}^{\alpha_{\ell j}} f(x_r^{(j)}) \right)  K_\ell\left(\vec{x}\right) \, \lambda^{(\ell)}[\mu_{s-}^{{\vec{\zeta}}}](d\vec{x}) \, ds } ,\nonumber\\
&
\quad+\sum_{\ell = 1}^L \avg{\int_{0}^{t} \int_{\tilde{\mathbb{X}}^{(\ell)}}     \frac{1}{\vec{\alpha}^{(\ell)}!}   K_\ell\left(\vec{x}\right) \left(\int_{\mathbb{Y}^{(\ell)}}    \left( \sum_{r = 1}^{\beta_{\ell j}} f(y_r^{(j)}) \right) m^\eta_\ell\left(\vec{y} \, |\, \vec{x} \right)\, d \vec{y} \right)\,\lambda^{(\ell)}[\mu_{s-}^{{\vec{\zeta}}}](d\vec{x}) \, ds } ,\nonumber\\
&
=\avg{\la f,\mu^{{\vec{\zeta}}, j}_{0}\ra} + \avg{\int_{0}^{t} \la (\mathcal{L}_j f)(x), \mu_{s-}^{{\vec{\zeta}}, j}(dx) \ra ds}\nonumber\\
&
\quad+\sum_{\ell = 1}^L \avg{\int_{0}^{t} \int_{\tilde{\mathbb{X}}^{(\ell)}}     \frac{1}{\vec{\alpha}^{(\ell)}!}   K_\ell\left(\vec{x}\right) \left(\int_{\mathbb{Y}^{(\ell)}}    \left( \sum_{r = 1}^{\beta_{\ell j}} f(y_r^{(j)}) \right) m^\eta_\ell\left(\vec{y} \, |\, \vec{x} \right)\, d \vec{y} - \sum_{r = 1}^{\alpha_{\ell j}} f(x_r^{(j)}) \right)\,\lambda^{(\ell)}[\mu_{s-}^{{\vec{\zeta}}}](d\vec{x}) \, ds } ,\nonumber\\
\label{Eq:EM_EXP_j_formula}
\end{align}
where in the third equality we use the assumption that $ \int_{\mathbb{Y}^{(\ell)}}   m^\eta_\ell(\vec{y} |\vec{x} )\, d \vec{y} = 1$. For the second to last equality we switch integrals of the form $\int_{\mathbb{X}^{(\ell)}} \sum_{\vec{i}\in\Omega^{(\ell)}({\gamma\mu^{\vec{\zeta}}_{s-}})} \cdots \delta_{\mathcal{P}^{(\ell)}(\gamma\mu_{s-}^{\vec{\zeta}}, \vec{i})}(d \vec{x})$ to $ \int_{\tilde{\mathbb{X}}^{(\ell)}}     \frac{1}{\vec{\alpha}^{(\ell)}!}  \cdots \lambda^{(\ell)}[\mu_{s-}^{{\vec{\zeta}}}](d\vec{x})$ using the definition of $\mu^{{\vec{\zeta}}, j}_s(dx)$ and $\lambda^{(\ell)}[\,\cdot\,]$ (see Definition~\ref{def:lambda}), and removing probability zero sets where two particles with the same type are simultaneously located at the same spatial location (see Definition~\ref{def:offDiagSpace}). Note, by definition the allowable reactant index sampling space $\Omega^{(\ell)}$ (see Definition \ref{def:effSampllingSpace}) orders indices for particles of the same species. In converting from integrals involving the positions of individual particles (i.e. $\delta_{\mathcal{P}^{(\ell)}(\gamma\mu_{s-}^{\vec{\zeta}}, \vec{i})}(d \vec{x})$) to integrals involving product measures ($\lambda^{(\ell)}[\mu_{s-}^{{\vec{\zeta}}}](d\vec{x})$) we need to remove the "diagonal" indices by means of integrating on $\tilde{\mathbb{X}}^{(\ell)}$ (see Definition~\ref{def:offDiagSpace}) and normalizing by the total number of index orderings, $(\vec{\alpha}^{(\ell)}!)$.


\subsection{Identification of the Limit}\label{SS:LimitIdentification}
Inspired by Eq (\ref{Eq:EM_EXP_j_formula}), we expect that if the weak limit, as ${\vec{\zeta}}$ goes to zero, of the marginal distribution vector $\vec{\mu}^{\vec{\zeta}}_t :=( \mu_t^{{\vec{\zeta}}, 1} , \mu_t^{{\vec{\zeta}}, 2}  \cdots, \mu_t^{{\vec{\zeta}}, J})$  exists and is unique, then it will satisfy the analogous equation. For this purpose, denote $\vec{\xi}_t := (\xi_t^1, \xi_t^2, \cdots, \xi_t^J)$ to be  the corresponding limiting particle distribution on $\R^{J\times d}$ and $\xi_t = \sum_{j = 1}^J \xi_t^j\delta_{S_j}$ to be  the corresponding limiting particle distribution on $\hat{P}$. Then  for each $1\leq j \leq J$, the following must be satisfied,
\begin{align}
\la f,\xi^j_{t}\ra
&
=\la f,\xi^{ j}_{0}\ra + \int_{0}^{t} \la (\mathcal{L}_j f)(x), \xi_{s}^{ j}(dx) \ra ds\nonumber\\
&
\quad+\sum_{\ell = 1}^L \int_{0}^{t} \int_{\tilde{\mathbb{X}}^{(\ell)}}     \frac{1}{\vec{\alpha}^{(\ell)}!}   K_\ell\left(\vec{x}\right) \left(\int_{\mathbb{Y}^{(\ell)}}    \left( \sum_{r = 1}^{\beta_{\ell j}} f(y_r^{(j)}) \right) m_\ell\left(\vec{y} \, |\, \vec{x} \right)\, d \vec{y} - \sum_{r = 1}^{\alpha_{\ell j}} f(x_r^{(j)}) \right)\,\lambda^{(\ell)}[\xi_{s}](d\vec{x}) \, ds.
\label{Eq:Limit_EM_formula}
\end{align}
Existence of the limit is shown in the tightness Section \ref{S:Tightness}, while uniqueness is shown in Section \ref{S:Uniqueness}.

Let $\mathcal{S}$ be the collection of elements $\Phi$ in $B(\otimes_{j=1}^{J}M_{F}(\R^d))$  i.e., bounded functionals) of the form
\begin{equation}\label{E:form}
\Phi(\vec{\mu}) = \varphi\left(\la f_1, \vec{\mu}\ra ,\la f_2, \vec{\mu}\ra \dots \la f_M,\vec{\mu}\ra \right)
 \end{equation}
for some $M\in \N$, some $\varphi\in C^\infty(\R^{J \times M})$, $\la f_m,\vec{\mu}\ra=\left(\la f_{1,m},\mu^{1}\ra,\cdots,\la f_{J,m},\mu^{J}\ra\right)$ where each $\{f_{j,m}\}\in  C^2_b(\R^d)$ for $j=1,\cdots, J$ and $m=1,\cdots, M$.  Then $\mathcal{S}$ separates points in $\otimes_{j=1}^{J} M_{F}(\R^d)$ (see Chapter 3.4 of \cite{EthierKurtz} and Proposition 3.3 of \cite{Capponi}).  As long as the limiting process exists and is unique, to identify the limit, it thus suffices to show convergence of the martingale
problem for functions of the form \eqref{E:form}.

For $\Phi\in \mathcal{S}$ of the form \eqref{E:form}, $\vec{\mu} :=( \mu^1 , \mu^2 \cdots, \mu^J)\in \otimes_{j=1}^{J}M_{F}(\R^d)$  and $\mu = \sum_{j=1}^J \mu^j\delta_{S_j} \in M_{F}(\hat{P})$ with each $\mu^j\in M_{F}(\R^d)$,  define
\begin{align}
(\genA\Phi)(\vec{\mu}) \Def & \sum_{m=1}^M \sum_{j=1}^{J} \frac{\partial \varphi}{\partial x_{(m-1)*J + j}}\left(\la f_1,\vec{\mu} \ra,\la f_2, \vec{\mu} \ra\dots \la f_M, \vec{\mu}\ra \right) \Bigg\{ \la \genL_\ell f_{j,m},\mu^j \ra   \nonumber\\
& \quad
+ \sum_{\ell = 1}^L \int_{\tilde{\mathbb{X}}^{(\ell)}}     \frac{1}{\vec{\alpha}^{(\ell)}!}   K_\ell\left(\vec{x}\right) \left(\int_{\mathbb{Y}^{(\ell)}}    \left( \sum_{r = 1}^{\beta_{\ell j}} f_{j,m}(y_r^{(j)}) \right) m_\ell\left(\vec{y} \, |\, \vec{x} \right)\, d \vec{y} - \sum_{r = 1}^{\alpha_{\ell j}} f_{j,m}(x_r^{(j)}) \right)\,\lambda^{(\ell)}[\mu](d\vec{x}) \,  \Bigg\} \nonumber \\
 \label{E:limgenA}
 \end{align}
We claim that $\genA$, which is the generator of the system described by (\ref{Eq:Limit_EM_formula}) for $1\leq j \leq J$, will be the generator of the limiting martingale problem.

\begin{lemma}[Weak Convergence]\label{L:wconv} For any $\Phi\in \mathcal{S}$ and $0\le r_1\le r_2\dots \leq r_W=s<t<T$ and $\{\psi_w\}_{w=1}^W\subset B(\otimes_{j=1}^{J}M_{F}(\R^d))$, we have that
\begin{equation}
\lim_{{\vec{\zeta}}\to 0}\avg{\lb \Phi(\vec{\mu}^{{\vec{\zeta}}}_t)-\Phi(\vec{\mu}^{{\vec{\zeta}}}_s)-\int_{s}^t (\genA\Phi)(\vec{\mu}^{{\vec{\zeta}}}_r)dr\rb \prod_{w=1}^W \psi_w(\vec{\mu}^{{\vec{\zeta}}}_{r_w})}=0.\label{Eq:Identification}
\end{equation}
\end{lemma}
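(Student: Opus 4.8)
The plan is to run the Stroock--Varadhan martingale argument: write $\Phi(\vec{\mu}^{\vec{\zeta}}_t)$ as a martingale plus an integrated ``prelimit generator'' $\genA^{\vec{\zeta}}$, use the martingale property against the weights $\psi_w(\vec{\mu}^{\vec{\zeta}}_{r_w})$ (which are $\mathcal{F}_s$-measurable since $r_W=s$), and then show $\genA^{\vec{\zeta}}\Phi\to\genA\Phi$ uniformly over states of total mass at most $C(\mu)$. First I would apply the It\^o formula for jump-diffusions to $\varphi(\la f_1,\vec{\mu}^{\vec{\zeta}}_t\ra,\dots,\la f_M,\vec{\mu}^{\vec{\zeta}}_t\ra)$ along the path-level equation \eqref{Eq:EM_j_formula}. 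For $\vec{\zeta}$ fixed, Assumption~\ref{Assume:kernalBdd} bounds the particle number by $\gamma C(\mu)$, so all sums over particles and reactant tuples are finite, the arguments of $\varphi$ stay in a fixed compact set (bounded mass, $f_{j,m}\in C_b^2$) on which $\varphi,\nabla\varphi,\nabla^2\varphi$ are bounded, and the Brownian integrals together with the compensated Poisson integrals (martingales, since the $\tilde{N}_\ell$ are martingale measures) assemble into a true martingale $M^{\vec{\zeta}}_t$, so that $\Phi(\vec{\mu}^{\vec{\zeta}}_t)=\Phi(\vec{\mu}^{\vec{\zeta}}_0)+M^{\vec{\zeta}}_t+\int_0^t(\genA^{\vec{\zeta}}\Phi)(\vec{\mu}^{\vec{\zeta}}_r)\,dr$. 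Since each $\psi_w(\vec{\mu}^{\vec{\zeta}}_{r_w})$ is bounded and $\mathcal{F}_s$-measurable, $\mathbb{E}\!\big[(M^{\vec{\zeta}}_t-M^{\vec{\zeta}}_s)\prod_w\psi_w(\vec{\mu}^{\vec{\zeta}}_{r_w})\big]=0$, so the quantity in \eqref{Eq:Identification} equals $\mathbb{E}\!\big[\big(\int_s^t(\genA^{\vec{\zeta}}\Phi-\genA\Phi)(\vec{\mu}^{\vec{\zeta}}_r)\,dr\big)\prod_w\psi_w(\vec{\mu}^{\vec{\zeta}}_{r_w})\big]$, whose absolute value is at most $(t-s)\big(\prod_w\|\psi_w\|_\infty\big)\sup_{\mu}\big|\genA^{\vec{\zeta}}\Phi(\mu)-\genA\Phi(\mu)\big|$, the supremum taken over measures $\mu$ of total mass $\le C(\mu)$ (the range of $\vec{\mu}^{\vec{\zeta}}_r$ by Assumption~\ref{Assume:kernalBdd}). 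It thus suffices to prove that this supremum tends to $0$ as $\vec{\zeta}=(1/\gamma,\eta)\to0$.

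Next I would identify and estimate $\genA^{\vec{\zeta}}\Phi$. The continuous part of the It\^o formula gives $\sum_{m,j}\partial_{m,j}\varphi\,\la\mathcal{L}_j f_{j,m},\mu^j\ra$ plus a quadratic-variation correction; because the Brownian integral in \eqref{Eq:EM_j_formula} carries a $1/\gamma$ prefactor, its quadratic variation is $O(1/\gamma)$ and this correction is $O(1/\gamma)$, uniformly. At a reaction event the coordinate $\la f_{j,m},\mu^{\vec{\zeta},j}\ra$ jumps by $\tfrac1\gamma\big(\sum_{r}f_{j,m}(y_r^{(j)})-\sum_{r}f_{j,m}(x_r^{(j)})\big)=O(1/\gamma)$; expanding $\varphi$ to first order and taking the Poisson compensator, the linear term --- after the combinatorial identity $\gamma^{-|\vec{\alpha}^{(\ell)}|}\sum_{\vec{i}\in\Omega^{(\ell)}(\gamma\mu)}\delta_{\mathcal{P}^{(\ell)}(\gamma\mu,\vec{i})}=\tfrac{1}{\vec{\alpha}^{(\ell)}!}\lambda^{(\ell)}[\mu]$ on $\tilde{\mathbb{X}}^{(\ell)}$ (used already in deriving \eqref{Eq:EM_EXP_j_formula}) and the scaling $K_\ell^{\gamma}=\gamma^{1-|\vec{\alpha}^{(\ell)}|}K_\ell$ of Assumption~\ref{Assume:rescaling} --- reproduces precisely $\genA\Phi$ as in \eqref{E:limgenA}, except with $m^\eta_\ell$ in place of $m_\ell$. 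The quadratic Taylor remainder is $O(1/\gamma^2)$ per event, while the total reaction intensity in state $\mu$ is $\le\sum_\ell\gamma^{1-|\vec{\alpha}^{(\ell)}|}C(K)\,|\Omega^{(\ell)}(\gamma\mu)|=O(\gamma)$ by Assumptions~\ref{Assume:kernalBdd0} and~\ref{Assume:kernalBdd}, so the compensated remainder is $O(1/\gamma)$, uniformly over admissible $\mu$. Hence $\genA^{\vec{\zeta}}\Phi(\mu)-\genA\Phi(\mu)$ is, up to a term bounded by $C/\gamma$ uniformly, equal to $\|\nabla\varphi\|_\infty$ times the placement-density discrepancy $\sum_{\ell,m,j}\int_{\tilde{\mathbb{X}}^{(\ell)}}\tfrac{1}{\vec{\alpha}^{(\ell)}!}K_\ell(\vec{x})\big(\int_{\mathbb{Y}^{(\ell)}}(\sum_{r}f_{j,m}(y_r^{(j)}))(m^\eta_\ell-m_\ell)(\vec{y}\,|\,\vec{x})\,d\vec{y}\big)\lambda^{(\ell)}[\mu](d\vec{x})$.

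To control that discrepancy I would use the explicit mollified forms of Assumptions~\ref{Assume:measureOne2One}--\ref{Assume:measureOne2Two}. For a first-order $S_i\to S_j$, a second-order $S_i+S_k\to S_j$, or a second-order $S_i+S_k\to S_j+S_r$ reaction, the inner integral $\int_{\mathbb{Y}^{(\ell)}}(\sum_{r}f_{j,m}(y_r^{(j)}))\,m^\eta_\ell(\vec{y}\,|\,\vec{x})\,d\vec{y}$ is a finite combination of evaluations of $f_{j,m}\ast G_\eta$ at affine functions of $\vec{x}$, and for an unbinding reaction $S_i\to S_j+S_k$ it is (after the change of variables to relative separation and weighted center of mass) a finite combination of evaluations of $h\ast G_\eta$, where $h(z)=\int(f_{j,m}(z+(1-\alpha_i)w)+f_{j,m}(z-\alpha_i w))\rho(|w|)\,dw$ is bounded and uniformly continuous (using $\int\rho(|w|)\,dw=1$ from Assumption~\ref{Assume:measureMrho} and that $f_{j,m}\in C_b^2$ has bounded gradient, hence is Lipschitz). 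In every case the $m_\ell$ version is obtained by dropping the convolution with $G_\eta$, and since $\|g\ast G_\eta-g\|_\infty\to0$ for bounded uniformly continuous $g$, the inner integral converges to its $m_\ell$ counterpart uniformly in $\vec{x}$ as $\eta\to0$. Together with $K_\ell$ bounded (Assumption~\ref{Assume:kernalBdd0}) and $\lambda^{(\ell)}[\mu](\tilde{\mathbb{X}}^{(\ell)})\le C(\mu)^{|\vec{\alpha}^{(\ell)}|}$ (Assumption~\ref{Assume:kernalBdd}), the placement-density discrepancy is $o(1)$ as $\eta\to0$, uniformly over admissible $\mu$. Combining with the $O(1/\gamma)$ bounds from the previous step, $\sup_{\mu}|\genA^{\vec{\zeta}}\Phi(\mu)-\genA\Phi(\mu)|\to0$, and \eqref{Eq:Identification} follows.

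The main obstacle is organizing these last two steps: one must carry out the jump-diffusion It\^o expansion carefully and verify that the $O(1/\gamma)$-type remainders are genuinely uniform over all states permitted by Assumption~\ref{Assume:kernalBdd} --- this is exactly where bounded total concentration and the scaling Assumption~\ref{Assume:rescaling} are indispensable, since together they keep the per-event jump size $O(1/\gamma)$ while the total intensity stays $O(\gamma)$ --- and then check, case by case, that the mollified placement densities converge against bounded test functions uniformly in the reactant configuration. These uniform bounds are where the technical estimates of Appendix~\ref{A:appendixA} are used.
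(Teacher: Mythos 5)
Your proposal is correct and follows essentially the same route as the paper: the same It\^o decomposition into a continuous-martingale part, a quadratic-variation correction of order $1/\gamma$, a compensated-jump martingale, a second-order Taylor remainder against the Poisson compensator of order $1/\gamma$ (jump size $O(1/\gamma)$ times total intensity $O(\gamma)$), and the $m^\eta_\ell-m_\ell$ placement discrepancy of order $\eta$ handled by the paper's Lemma~\ref{lem:etaConv}. The only difference is minor: you annihilate the martingale contribution exactly by conditioning on $\filt_s$ (using that $r_W=s$ makes $\prod_w\psi_w(\vec{\mu}^{\vec{\zeta}}_{r_w})$ $\filt_s$-measurable), whereas the paper shows $\E\,\vert\Lambda_{1}^{\vec{\zeta}}(t)+\Lambda_{3}^{\vec{\zeta}}(t)\vert^{2}\to0$ via the $O(1/\gamma)$ quadratic-variation bound (and invokes the Skorokhod representation theorem, which your argument does not need); both are valid.
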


\begin{proof}
For each $j=1,\cdots J$, we can rewrite Eq (\ref{Eq:EM_j_formula}) as
\begin{align*}
\la f,\mu^{{\vec{\zeta}}, j }_{t}\ra &=\la f,\mu^{{\vec{\zeta}}, j}_{0}\ra + M_t^{f, j} + A_t^{f, j},
\end{align*}
where
\begin{align}
A_t^{f, j}
&
=  \int_{0}^{t} \la (\mathcal{L}_j f)(x), \mu_{s-}^{{\vec{\zeta}}, j}(dx) \ra ds\nonumber\\
&
\quad+\sum_{\ell = 1}^L \int_{0}^{t} \int_{\tilde{\mathbb{X}}^{(\ell)}}     \frac{1}{\vec{\alpha}^{(\ell)}!}   K_\ell\left(\vec{x}\right) \left(\int_{\mathbb{Y}^{(\ell)}}    \left( \sum_{r = 1}^{\beta_{\ell j}} f(y_r^{(j)}) \right) m^\eta_\ell\left(\vec{y} \, |\, \vec{x} \right)\, d \vec{y} - \sum_{r = 1}^{\alpha_{\ell j}} f(x_r^{(j)}) \right)\,\lambda^{(\ell)}[\mu^{\vec{\zeta}}_{s-}](d\vec{x}) \, ds.
\label{Eq:Mean_f_j}
\end{align}
and
\begin{align}
M_t^{f, j} &=   \frac{1}{\gamma}\sum_{i\geq 1}\int_{0}^{t}1_{\{i\leq \gamma \la 1, \mu_{s-}^{{\vec{\zeta}}, j}\ra\}}\sqrt{2D_{j}}\frac{\partial f}{\partial Q}(H^i(\gamma\mu_{s-}^{{\vec{\zeta}}, j}))dW^{i}_{s}\nonumber\\
&
\quad+\sum_{\ell = 1}^L \int_{0}^{t}\int_{\mathbb{I}^{(\ell)}} \int_{\mathbb{Y}^{(\ell)}}\int_{\mathbb{R}^{2}_{+}}\left(\la f, \mu^{{\vec{\zeta}}, j}_{s-}-\frac{1}{\gamma} \sum_{r = 1}^{\alpha_{\ell j}} \delta_{H_Q^{i_r^{(j)}}(\gamma\mu^{{\vec{\zeta}},j}_{s-})} + \frac{1}{\gamma}\sum_{r = 1}^{\beta_{\ell j}} \delta_{y_r^{(j)}} \ra  - \la f,\mu^{{\vec{\zeta}}, j}_{s-}\ra\right)\nonumber\\
&
\qquad
\times 1_{\{\vec{i} \in \Omega^{(\ell)}(\gamma\mu^{\vec{\zeta}}_{s-})\}} \times 1_{ \{ \theta_1 \leq K_\ell^{\gamma}\left(\mathcal{P}^{(\ell)}(\gamma\mu_{s-}^{\vec{\zeta}}, \vec{i}) \right) \}}  \times 1_{ \{ \theta_2\leq m^\eta_\ell\left(\vec{y} \,  | \, \mathcal{P}^{(\ell)}(\gamma\mu_{s-}^{\vec{\zeta}}, \vec{i}) \right)\}}   d\tilde{N}_{\ell}(s,\vec{i}, \vec{y},\theta_1, \theta_2),
\label{Eq:Martingale_f_j}
\end{align}
is a square integrable martingale (See Proposition 2.4 in \cite{NW:2014}) with quadratic variation
\begin{align}
\la M^{f, j}\ra_t &=  \frac{1}{\gamma^2}\int_{0}^{t}\sum_{i = 1}^{\gamma\la 1,\mu^{{\vec{\zeta}}, j}_{s-}\ra}\left(\sqrt{2D_{j}}\frac{\partial f}{\partial Q}(H^{i}(\gamma\mu^{{\vec{\zeta}},j}_{s-}))\right)^2ds \nonumber\\
&\quad
+\sum_{\ell = 1}^L \int_{0}^{t}\int_{\mathbb{Y}^{(\ell)}} \frac{1}{\gamma^2} \sum_{\{\vec{i} \in \Omega^{(\ell)}(\gamma\mu^{\vec{\zeta}}_{s-})\}}   \left( - \sum_{r = 1}^{\alpha_{\ell j}} f(H_Q^{i_r^{j}}(\gamma\mu^{{\vec{\zeta}},j}_{s-})) + \sum_{r = 1}^{\beta_{\ell j}} f(y_r^{j}) \right)^2\nonumber\\
&\qquad\quad
\times  K_\ell^{\gamma}\left(\mathcal{P}^{(\ell)}(\gamma\mu_{s-}^{\vec{\zeta}}, \vec{i}) \right)  \times  m^\eta_\ell\left(\vec{y} \,  | \, \mathcal{P}^{(\ell)}(\gamma\mu_{s-}^{\vec{\zeta}}, \vec{i}) \right)\, d \vec{y} \, ds  ,\nonumber\\
&
\leq  \frac{1}{\gamma}\int_{0}^{t} \la 2D_{j} \left(\frac{\partial f}{\partial x}\right)^2, \mu_{s-}^{{\vec{\zeta}}, j}\ra ds \nonumber\\
& \quad
+\sum_{\ell = 1}^L \int_{0}^{t}\int_{\mathbb{Y}^{(\ell)}} \frac{1}{\gamma^2} \sum_{\{\vec{i} \in \Omega^{(\ell)}(\gamma\mu^{\vec{\zeta}}_{s-})\}}   ||f||_{C_b(\R^d)}^2(\alpha_{\ell j} + \beta_{\ell j})^2 K_\ell^{\gamma}\left(\mathcal{P}^{(\ell)}(\gamma\mu_{s-}^{\vec{\zeta}}, \vec{i}) \right)  m^\eta_\ell\left(\vec{y} \,  | \, \mathcal{P}^{(\ell)}(\gamma\mu_{s-}^{\vec{\zeta}}, \vec{i}) \right)\, d \vec{y} \, ds  ,\nonumber\\
&
\leq \frac{C(D_j)t||f||^2_{C_b^1(\mathbb{R}^d)}}{\gamma}  C(\mu) +  \sum_{\ell = 1}^L  ||f||_{C_b(\R^d)}^2(\alpha_{\ell j} + \beta_{\ell j})^2  \int_{0}^{t}\frac{1}{\gamma^2} \sum_{\{\vec{i} \in \Omega^{(\ell)}(\gamma\mu^{\vec{\zeta}}_{s-})\}}  K_\ell^{\gamma}\left(\mathcal{P}^{(\ell)}(\gamma\mu_{s-}^{\vec{\zeta}}, \vec{i}) \right)   \, ds  ,\nonumber\\
&
\leq \frac{C(D_j)t||f||^2_{C_b^1(\mathbb{R}^d)}}{\gamma}  C(\mu) +  C(K)4^2||f||^2_{C_b(\R^d)}   \sum_{\ell = 1}^L \int_{0}^{t}\frac{1}{\gamma^2} \sum_{\{\vec{i} \in \Omega^{(\ell)}(\gamma\mu^{\vec{\zeta}}_{s-})\}}  \gamma^{1-|\alpha^{(\ell)}|}   \, ds  ,\nonumber\\
&
\leq \frac{C(D_j)t||f||^2_{C_b^1(\mathbb{R}^d)}}{\gamma}  C(\mu) +  \frac{C(K)tL||f||^2_{C_b(\R^d)}}{\gamma}  C(\mu).
\label{Eq:Quadratic_Variation_f_j}
\end{align}
The quadratic variation is therefore uniformly bounded and goes to 0 as ${\vec{\zeta}}\to 0$ ($\gamma\rightarrow\infty$) since $f$ and its partial derivatives are uniformly bounded.

Now define $M_t^{f, j} = \mathscr{C}_t^{f, j} + \mathscr{D}_t^{f, j} $, where
\begin{align}
\mathscr{C}_t^{f, j} &=   \frac{1}{\gamma}\sum_{i\geq 1}\int_{0}^{t}1_{\{i\leq \gamma \la 1, \mu_{s-}^{{\vec{\zeta}}, j}\ra\}}\sqrt{2D_{j}}\frac{\partial f}{\partial Q}(H^i(\gamma\mu_{s-}^{{\vec{\zeta}}, j}))dW^{i}_{s}\nonumber\\
\label{Eq:MartingaleC_f_j}
\end{align}
is the continuous martingale part and
\begin{align}
\mathscr{D}_t^{f, j} &=  \sum_{\ell = 1}^L \int_{0}^{t}\int_{\mathbb{I}^{(\ell)}} \int_{\mathbb{Y}^{(\ell)}}\int_{\mathbb{R}^{2}_{+}}\left(\la f, \mu^{{\vec{\zeta}}, j}_{s-}-\frac{1}{\gamma} \sum_{r = 1}^{\alpha_{\ell j}} \delta_{H_Q^{i_r^{(j)}}(\gamma\mu^{{\vec{\zeta}},j}_{s-})} + \frac{1}{\gamma}\sum_{r = 1}^{\beta_{\ell j}} \delta_{y_r^{(j)}} \ra  - \la f,\mu^{{\vec{\zeta}}, j}_{s-}\ra\right)\nonumber\\
&
\qquad
\times 1_{\{\vec{i} \in \Omega^{(\ell)}(\gamma\mu^{\vec{\zeta}}_{s-})\}} \times 1_{ \{ \theta_1\leq K_\ell^{\gamma}\left(\mathcal{P}^{(\ell)}(\gamma\mu_{s-}^{\vec{\zeta}}, \vec{i}) \right)\}} \times 1_{ \{ \theta_2\leq  m^\eta_\ell\left(\vec{y} \,  | \, \mathcal{P}^{(\ell)}(\gamma\mu_{s-}^{\vec{\zeta}}, \vec{i}) \right)\}} d\tilde{N}_{\ell}(s,\vec{i}, \vec{y},\theta_1, \theta_2),
\label{Eq:MartingaleD_f_j}
\end{align}
is the martingale part coming from the stochastic integral with respect to the Poisson point processes. Here, for simplicity of notation, we let
\begin{align}
g^{\ell, f, \mu^{{\vec{\zeta}},j}}(s, \vec{i}, \vec{y},\theta_1, \theta_2)&= \left(\la f, \mu^{{\vec{\zeta}}, j}_{s-}-\frac{1}{\gamma} \sum_{r = 1}^{\alpha_{\ell j}} \delta_{H_Q^{i_r^{(j)}}(\gamma\mu^{{\vec{\zeta}},j}_{s-})} + \frac{1}{\gamma}\sum_{r = 1}^{\beta_{\ell j}} \delta_{y_r^{(j)}} \ra  - \la f,\mu^{{\vec{\zeta}}, j}_{s-}\ra\right)\nonumber\\
&
\qquad
\times 1_{\{\vec{i} \in \Omega^{(\ell)}(\gamma\mu^{\vec{\zeta}}_{s-})\}} \times 1_{ \{ \theta_1 \leq K_\ell^{\gamma}\left(\mathcal{P}^{(\ell)}(\gamma\mu_{s-}^{\vec{\zeta}}, \vec{i}) \right) \}} \times 1_{ \{ \theta_2 \leq  m^\eta_\ell\left(\vec{y} \,  | \, \mathcal{P}^{(\ell)}(\gamma\mu_{s-}^{\vec{\zeta}}, \vec{i}) \right)\}} \nonumber\\
&
 = \frac{1}{\gamma}\left(- \sum_{r = 1}^{\alpha_{\ell j}} f\left(H_Q^{i_r^{(j)}}(\gamma\mu^{{\vec{\zeta}},j}_{s-})\right) + \sum_{r = 1}^{\beta_{\ell j}} f\left(y_r^{(j)}\right)\right) \nonumber\\
&\qquad \times 1_{\{\vec{i} \in \Omega^{(\ell)}(\gamma\mu^{\vec{\zeta}}_{s-})\}}\times 1_{ \{ \theta_1 \leq K_\ell^{\gamma}\left(\mathcal{P}^{(\ell)}(\gamma\mu_{s-}^{\vec{\zeta}}, \vec{i}) \right) \}}\times 1_{ \{ \theta_2 \leq  m^\eta_\ell\left(\vec{y} \,  | \, \mathcal{P}^{(\ell)}(\gamma\mu_{s-}^{\vec{\zeta}}, \vec{i}) \right)\}},
\label{Eq:MartingaleD_integrand}
\end{align}
which represents the jumps and is uniformly bounded by $\mathcal{O}(\frac{1}{\gamma})$.  With some abuse of notation we shall write $\vec{g}^{\ell, f, \vec{\mu}^{{\vec{\zeta}}}}$ for the vector $(g^{\ell, f, \mu^{{\vec{\zeta}},1}},\cdots,g^{\ell, f, \mu^{{\vec{\zeta}},J}})$. Then (\ref{Eq:MartingaleD_f_j}) becomes \
\[
\mathscr{D}_t^{f, j} =  \sum_{\ell = 1}^L \int_{0}^{t}\int_{\mathbb{I}^{(\ell)}}\int_{\mathbb{Y}^{(\ell)}}\int_{\mathbb{R}^{2}_{+}} g^{\ell, f, \mu^{{\vec{\zeta}},j}}(s, \vec{i}, \vec{y},\theta_1, \theta_2) d\tilde{N}_{\ell}(s,\vec{i}, \vec{y},\theta_1, \theta_2).
\]

Now we apply It\^{o}'s formula (See Theorem 5.1 in \cite{NW:2014}) to $\Phi(\vec{\mu}^{{\vec{\zeta}}}_t)$. We obtain for $\theta=(\theta_{1},\theta_{2})$,
\begin{align}
&  \Phi(\vec{\mu}^{{\vec{\zeta}}}_t)-\Phi(\vec{\mu}^{{\vec{\zeta}}}_s)-\int_{s}^t (\genA\Phi)(\vec{\mu}^{{\vec{\zeta}}}_r)dr
=  \int_{s}^t\sum_{m=1}^M  \sum_{j=1}^{J} \frac{\partial \varphi}{\partial x_{(m-1)*J + j}}\left(\la f_1,\vec{\mu}^{{\vec{\zeta}}}_r\ra , \la f_2,\vec{\mu}^{{\vec{\zeta}}}_r\ra \dots \la f_M,\vec{\mu}^{{\vec{\zeta}}}_r\ra \right) d\mathscr{C}_r^{f_{j,m}, j} \nonumber\\
&
\quad + \frac{1}{2} \int_{s}^t\sum_{m=1}^M \sum_{j=1}^{J} \frac{\partial^2 \varphi}{\partial x_{(m-1)*J + j}^2} \left(\la f_1, \vec{\mu}^{{\vec{\zeta}}}_r\ra,\la f_2, \vec{\mu}^{{\vec{\zeta}}}_r\ra \dots \la f_M, \vec{\mu}^{{\vec{\zeta}}}_r\ra \right) d\la \mathscr{C}^{f_{j,m}, j}\ra_r \nonumber\\
&
\quad +  \sum_{\ell = 1}^L\int_{s}^t  \int_{\mathbb{I}^{(\ell)}}\int_{\mathbb{Y}^{(\ell)}}\int_{\mathbb{R}^{2}_{+}}\left(\varphi\left(\la f_1, \vec{\mu}^{{\vec{\zeta}}}_r\ra + \vec{g}^{\ell, f_1, \vec{\mu}^{{\vec{\zeta}}}}(r, \vec{i}, \vec{y},\theta),  \dots , \la f_M,\vec{\mu}^{{\vec{\zeta}}}_r\ra+ \vec{g}^{\ell, f_M, \vec{\mu}^{{\vec{\zeta}}}}(r, \vec{i}, \vec{y},\theta) \right) \right. \nonumber\\
&
\qquad -\left.   \varphi\left(\la f_1,\vec{\mu}^{{\vec{\zeta}}}_r\ra , \la f_2, \vec{\mu}^{{\vec{\zeta}}}_r\ra \dots \la f_M, \vec{\mu}^{{\vec{\zeta}}}_r\ra \right) \right)d\tilde{N}_{\ell}(r,\vec{i}, \vec{y},\theta)\nonumber\\
&
\quad +  \sum_{\ell = 1}^L\int_{s}^t\int_{\mathbb{I}^{(\ell)}}\int_{\mathbb{Y}^{(\ell)}}\int_{\mathbb{R}^{2}_{+}}  \left(\varphi\left(\la f_1,\vec{\mu}^{{\vec{\zeta}}}_r\ra + \vec{g}^{\ell, f_1, \vec{\mu}^{{\vec{\zeta}}}}(s, \vec{i}, \vec{y},\theta),  \dots , \la f_M,\vec{\mu}^{{\vec{\zeta}}}_r\ra+ \vec{g}^{\ell, f_M, \vec{\mu}^{{\vec{\zeta}}}}(s, \vec{i}, \vec{y},\theta) \right) \right. \nonumber\\
&
\qquad -\left.  \varphi\left(\la f_1,\vec{\mu}^{{\vec{\zeta}}}_r\ra , \la f_2, \vec{\mu}^{{\vec{\zeta}}}_r\ra \dots \la f_M, \vec{\mu}^{{\vec{\zeta}}}_r\ra \right) - \sum_{m = 1}^M \sum_{j=1}^{J}g^{\ell, f_{j,m}, \mu^{{\vec{\zeta}},j}}(s, \vec{i}, \vec{y},\theta)  \frac{\partial \varphi}{\partial x_{(m-1)J + j}}\left(\la f_1,\vec{\mu}^{{\vec{\zeta}}}_r\ra , \dots, \la f_M,\vec{\mu}^{{\vec{\zeta}}}_r\ra \right)\right)\nonumber\\
&\qquad\qquad  d\bar{N}_{\ell}(s,\vec{i}, \vec{y},\theta)\nonumber\\
&
\quad +  \sum_{m=1}^M \sum_{j=1}^{J} \sum_{\ell = 1}^L \int_{0}^{t} \frac{\partial \varphi}{\partial x_{(m-1)J+j}}\left(\la f_1,\vec{\mu}^{{\vec{\zeta}}}_r\ra , \la f_2, \vec{\mu}^{{\vec{\zeta}}}_r\ra \dots \la f_M,\vec{\mu}^{{\vec{\zeta}}}_r\ra \right)\nonumber\\
&\qquad \times\int_{\tilde{\mathbb{X}}^{(\ell)}}     \frac{1}{\vec{\alpha}^{(\ell)}!}   K_\ell\left(\vec{x}\right) \left(\int_{\mathbb{Y}^{(\ell)}}    \left( \sum_{r = 1}^{\beta_{\ell j}} f_{j,m}(y_r^{(j)}) \right) \left( m^\eta_\ell\left(\vec{y} \, |\, \vec{x} \right) - m_\ell\left(\vec{y} \, |\, \vec{x} \right)\right)\, d \vec{y}  \right)\,\lambda^{(\ell)}[\mu^{\vec{\zeta}}_{s-}](d\vec{x}) ds\nonumber\\
&=\sum_{\kappa=1}^{5}\Lambda_{\kappa}^{\vec{\zeta}}(t).\nonumber\\\label{Eq:ItoFormulaIdentification}
 \end{align}

We now use the Skorokhod representation theorem (Theorem 1.8 in \cite{EthierKurtz}) which, for the purposes of identifying the limit and proving (\ref{Eq:Identification}), allows us to assume that the aforementioned claimed convergence of $\vec{\mu}^{\vec{\zeta}}_t = ( \mu_t^{{\vec{\zeta}}, 1} , \mu_t^{{\vec{\zeta}}, 2}  \cdots, \mu_t^{{\vec{\zeta}}, J})$ holds with probability one in the topology of weak convergence of measures. The Skorokhod representation theorem involves the introduction of another probability space, but we ignore this distinction in the notation.
To show~\eqref{Eq:Identification}, it is then sufficient to prove that the left hand side of (\ref{Eq:ItoFormulaIdentification}) goes to zero in probability. With this goal in mind we proceed with proving convergence in probability to zero for $\Lambda_{\kappa}^{\vec{\zeta}}(t)$ for $\kappa=1,\cdots,5$.

By Lemma \ref{lem:etaConv}, we immediately have that
\[
\lim_{\vec{\zeta}\rightarrow 0}\sup_{t\in[0,T]}\BE|\Lambda_{5}^{\vec{\zeta}}(t)|=0.
\]
In addition, notice that $\Lambda_{1}^{\vec{\zeta}}(t)$ and $\Lambda_{3}^{\vec{\zeta}}(t)$ are square integrable martingales. In fact, by (\ref{Eq:Quadratic_Variation_f_j}), (\ref{Eq:MartingaleD_integrand}) and the fact that the jump size is uniformly bounded by $\mathcal{O}(\frac{1}{\gamma})$, we have that
\[
\lim_{\vec{\zeta}\rightarrow 0}\sup_{t\in[0,T]}\BE|\Lambda_{1}^{\vec{\zeta}}(t)+\Lambda_{3}^{\vec{\zeta}}(t)|^{2}=0.
\]
For similar reasons, we also have by (\ref{Eq:MartingaleC_f_j}) that
\[
\lim_{\vec{\zeta}\rightarrow 0}\sup_{t\in[0,T]}\BE|\Lambda_{2}^{\vec{\zeta}}(t)|=0,
\]
and by (\ref{Eq:MartingaleD_integrand}) that
\[
\lim_{\vec{\zeta}\rightarrow 0}\sup_{t\in[0,T]}\BE|\Lambda_{4}^{\vec{\zeta}}(t)|=0.
\]
We then have that the left hand side of (\ref{Eq:ItoFormulaIdentification}) goes to zero in probability, concluding the proof of the lemma.

\end{proof}


\subsection{Tightness}\label{S:Tightness}

 Recall that $M_F(\mathbb{R}^d)$ denotes the space of finite measures endowed with the weak topology, and denote by $M_F'(\mathbb{R}^d)$ the space of finite measures endowed with the vague topology. In this section, we prove tightness of the measure-valued processes $\{\mu^{{\vec{\zeta}}, j}_t\}_{t\in [0, T]}, j = 1, 2, \cdots, J$  on $\mathbb{D}_{M_F(\mathbb{R}^d)}[0, T]$, the space of cadlag paths with values in $M_F(\mathbb{R}^d)$ endowed with Skorokhod topology. Towards this aim, we first show that the processes $\{\mu^{{\vec{\zeta}}, j}_t\}_{t\in [0, T]}, j = 1, \cdots, J, $ are tight on $\mathbb{D}_{M_F'(\mathbb{R}^d)}[0, T]$.


\subsubsection{Tightness in $\mathbb{D}_{M_F'(\mathbb{R}^d)}[0, T]$}
It suffices to show that the real-valued processes $\{\la f, \mu^{{\vec{\zeta}}, j}_{t}\ra\}$, $j = 1, \cdots, J$, for any test function $f(x) \in C^2_0(\mathbb{R}^d)$, which is dense in $C_0(\mathbb{R}^d)$, are tight in $\mathbb{D}_{\mathbb{R}}[0, T]$, see \cite{Roelly:1986}. In establishing this we use the Rebolledo Criterion \cite{AM:1986} (Lemma \ref{lem:RebolledoCriterion}) and the Aldous Condition \cite{Aldous:1978} (Lemma \ref{lem:AldousCondition}).
\begin{lemma}\label{lem:tightStoppingTimes}
For any $T > 0$ and $\delta > 0$ , there exists constants $C$ and $C'$ such that for any pair of stopping times $\left( \sigma, \tau\right)$ with $0\leq \sigma \leq \tau \leq  \sigma + \delta \leq T$, we have
\begin{equation*}
\mathbb{E}[\la M^{f, j}\ra_\tau - \la M^{f, j}\ra_\sigma] \leq C\delta.
\end{equation*}
and
\begin{equation*}
\mathbb{E}[|A^{f, j}_\tau - A^{f, j}_\sigma|^2] \leq C'\delta^2,
\end{equation*}
for $j = 1, \cdots, J$ and  $A^{f, j}$ , $M^{f, j}$ follow the definitions (\ref{Eq:Mean_f_j}), (\ref{Eq:Martingale_f_j}) respectively.
\end{lemma}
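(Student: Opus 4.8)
The plan is to read off both inequalities from a priori bounds that are essentially already established in~\eqref{Eq:Mean_f_j} and~\eqref{Eq:Quadratic_Variation_f_j}. For the bracket estimate, I would note that $\langle M^{f,j}\rangle_t$ is, by its explicit expression, an absolutely continuous increasing process whose time-density is bounded above by a \emph{deterministic} constant of order $1/\gamma$. Concretely, the diffusive contribution to the density is controlled by $2D_j\|f\|_{C_b^1}^2\,\langle 1,\mu_{s-}^{\vec\zeta,j}\rangle/\gamma\leq C(D_j)\|f\|_{C_b^1}^2C(\mu)/\gamma$ using Assumption~\ref{Assume:kernalBdd}, while each reaction contribution is controlled by $16\|f\|_{C_b}^2\gamma^{-2}\sum_{\vec i\in\Omega^{(\ell)}(\gamma\mu_{s-}^{\vec\zeta})}K_\ell^\gamma(\mathcal{P}^{(\ell)}(\gamma\mu_{s-}^{\vec\zeta},\vec i))$; bounding $K_\ell^\gamma=\gamma^{1-|\vec\alpha^{(\ell)}|}K_\ell$ via Assumptions~\ref{Assume:kernalBdd0} and~\ref{Assume:rescaling}, and counting $\#\Omega^{(\ell)}(\gamma\mu_{s-}^{\vec\zeta})\leq(\gamma C(\mu))^{|\vec\alpha^{(\ell)}|}$ via Assumption~\ref{Assume:kernalBdd}, each such term is $\mathcal{O}(C(K)C(\mu)^{|\vec\alpha^{(\ell)}|}/\gamma)$. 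Hence for any stopping times $0\leq\sigma\leq\tau\leq\sigma+\delta\leq T$ the bracket increment is deterministically at most $C\delta/\gamma\leq C\delta$ (taking $\gamma\geq1$), and the expectation bound follows trivially.

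For the drift term I would write $A_t^{f,j}=\int_0^t\Psi_s^{f,j}\,ds$, where $\Psi_s^{f,j}$ is the integrand in~\eqref{Eq:Mean_f_j}, and show $|\Psi_s^{f,j}|$ is bounded by a constant uniform in $s$ and $\vec\zeta$. The generator piece satisfies $|\langle(\mathcal{L}_jf)(x),\mu_{s-}^{\vec\zeta,j}(dx)\rangle|\leq D_j\|f\|_{C_b^2}\,\langle1,\mu_{s-}^{\vec\zeta,j}\rangle\leq C(D_j)\|f\|_{C_b^2}C(\mu)$ by Assumption~\ref{Assume:kernalBdd}. For each reaction piece, Assumption~\ref{Assume:measureP} gives $\bigl|\int_{\mathbb{Y}^{(\ell)}}\bigl(\sum_{r\leq\beta_{\ell j}}f(y_r^{(j)})\bigr)m_\ell^\eta(\vec y\,|\,\vec x)\,d\vec y-\sum_{r\leq\alpha_{\ell j}}f(x_r^{(j)})\bigr|\leq(\alpha_{\ell j}+\beta_{\ell j})\|f\|_{C_b}\leq4\|f\|_{C_b}$, Assumption~\ref{Assume:kernalBdd0} gives $K_\ell(\vec x)\leq C(K)$, and $\lambda^{(\ell)}[\mu_{s-}^{\vec\zeta}](\tilde{\mathbb{X}}^{(\ell)})\leq\prod_{j}\langle1,\mu_{s-}^{\vec\zeta,j}\rangle^{\alpha_{\ell j}}\leq C(\mu)^{|\vec\alpha^{(\ell)}|}$ by Assumption~\ref{Assume:kernalBdd}. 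Summing over the finitely many reactions yields $|\Psi_s^{f,j}|\leq C$ with $C$ depending only on $K$, $\mu$, $\|f\|_{C_b^2}$, the diffusivities, and $L$. Consequently $|A_\tau^{f,j}-A_\sigma^{f,j}|\leq\int_\sigma^\tau|\Psi_s^{f,j}|\,ds\leq C(\tau-\sigma)\leq C\delta$ pathwise, so $\mathbb{E}\bigl[|A_\tau^{f,j}-A_\sigma^{f,j}|^2\bigr]\leq C^2\delta^2$, giving $C'=C^2$.

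I do not anticipate a genuine obstacle here: the lemma is a routine consequence of the structure of~\eqref{Eq:Mean_f_j}--\eqref{Eq:Quadratic_Variation_f_j}, and its role is simply to feed the Rebolledo and Aldous criteria (Lemmas~\ref{lem:RebolledoCriterion} and~\ref{lem:AldousCondition}) used for tightness. The one point requiring care is the combinatorial bookkeeping — tracking $\#\Omega^{(\ell)}(\gamma\mu_{s-}^{\vec\zeta})$ against the $\gamma^{1-|\vec\alpha^{(\ell)}|}$ scaling of $K_\ell^\gamma$ so that the bracket density is genuinely $\mathcal{O}(1/\gamma)$ and, in particular, bounded uniformly in $\vec\zeta=(1/\gamma,\eta)$ — together with the observation that Assumption~\ref{Assume:kernalBdd} must be invoked everywhere the total concentration $\langle1,\mu_{s-}^{\vec\zeta,j}\rangle$ or the $\lambda^{(\ell)}[\mu_{s-}^{\vec\zeta}]$-mass of $\tilde{\mathbb{X}}^{(\ell)}$ appears. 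These are precisely the places where the standing hypothesis of bounded total population concentration enters.
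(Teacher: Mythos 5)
Your proof is correct and follows essentially the same route as the paper: both arguments bound the (random) density of $\la M^{f,j}\ra$ and the integrand of $A^{f,j}$ pointwise in $s$ by deterministic constants, using Assumption~\ref{Assume:kernalBdd} for the total concentration, Assumptions~\ref{Assume:kernalBdd0} and~\ref{Assume:rescaling} for the kernels (so that the reaction contribution to the bracket is $\mathcal{O}(1/\gamma)$), and Assumption~\ref{Assume:measureP} for the placement densities, then integrate over $[\sigma,\tau]$. The only cosmetic difference is that you obtain the pathwise bound $|A^{f,j}_\tau-A^{f,j}_\sigma|\leq C\delta$ before squaring, whereas the paper squares the integral first and then estimates; the two are equivalent.
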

\begin{proof}
Following (\ref{Eq:Quadratic_Variation_f_j}), we can obtain that
\begin{align*}
\avg{\la M^{f, j}\ra_\tau-\la M^{f, j}\ra_\sigma} &=  \frac{1}{\gamma^2}\avg{\int_{\sigma}^{\tau}\sum_{i = 1}^{\gamma\la 1,\mu^{{\vec{\zeta}}, j}_{s-}\ra}\left(\sqrt{2D_{j}}\frac{\partial f}{\partial Q}(H^{i}(\gamma\mu^{{\vec{\zeta}},j}_{s-}))\right)^2ds }\nonumber\\
& \quad
+\sum_{\ell = 1}^L\avg{ \int_{\sigma}^{\tau}\int_{\mathbb{Y}^{(\ell)}} \frac{1}{\gamma^2} \sum_{\{\vec{i} \in \Omega^{(\ell)}(\gamma\mu^{\vec{\zeta}}_{s-})\}}   \left( - \sum_{r = 1}^{\alpha_{\ell j}} f(H_Q^{i_r^{j}}(\gamma\mu^{{\vec{\zeta}},j}_{s-})) + \sum_{r = 1}^{\beta_{\ell j}} f(y_r^{j}) \right)^2\nonumber\\
&\qquad\quad
\times  K_\ell^{\gamma}\left(\mathcal{P}^{(\ell)}(\gamma\mu_{s-}^{\vec{\zeta}}, \vec{i}) \right)  \times  m^\eta_\ell\left(\vec{y} \,  | \, \mathcal{P}^{(\ell)}(\gamma\mu_{s-}^{\vec{\zeta}}, \vec{i}) \right)\, d \vec{y} \, ds } ,\nonumber\\
&
\leq \frac{C(\mu) \avg{\tau - \sigma}||f||^2_{C_0^1(\mathbb{R}^d)}}{\gamma}(C(D_j) + C(K)L) ,\nonumber\\
&\leq C\delta. \nonumber
\end{align*}
From (\ref{Eq:Mean_f_j}), we obtain
\begin{align*}
\avg{|A^{f, j}_\tau - A^{f, j}_\sigma|^2}&= \avg{| \int_{\sigma}^{\tau}\la (\mathcal{L}_j f)(x), \mu_{s-}^{{\vec{\zeta}}, j}(dx) \ra ds + \sum_{\ell = 1}^L \int_{\sigma}^{\tau} \int_{\tilde{\mathbb{X}}^{(\ell)}}     \frac{1}{\vec{\alpha}^{(\ell)}!}   K_\ell\left(\vec{x}\right) \nonumber\\
&
\qquad  \times\left(\int_{\mathbb{Y}^{(\ell)}}    \left( \sum_{r = 1}^{\beta_{\ell j}} f(y_r^{(j)}) \right) m^\eta_\ell\left(\vec{y} \, |\, \vec{x} \right)\, d \vec{y} - \sum_{r = 1}^{\alpha_{\ell j}} f(x_r^{(j)}) \right)\,\lambda^{(\ell)}[\mu^{\vec{\zeta}}_{s-}](d\vec{x}) \, ds |^2}.\nonumber\\
&
\leq 2  \avg{| \int_{\sigma}^{\tau} \la |(\mathcal{L}_j f)(x)|, \mu_{s-}^{{\vec{\zeta}}, j}(dx) \ra ds|^2} + 2\avg{| \sum_{\ell = 1}^L \int_{\sigma}^{\tau} \int_{\tilde{\mathbb{X}}^{(\ell)}}     \frac{1}{\vec{\alpha}^{(\ell)}!}   K_\ell\left(\vec{x}\right)  \nonumber\\
& \qquad
 \times\left(\int_{\mathbb{Y}^{(\ell)}}    \left( \sum_{r = 1}^{\beta_{\ell j}} |f(y_r^{(j)})| \right) m^\eta_\ell\left(\vec{y} \, |\, \vec{x} \right)\, d \vec{y} +  \sum_{r = 1}^{\alpha_{\ell j}} |f(x_r^{(j)}) |\right)\,\lambda^{(\ell)}[\mu^{\vec{\zeta}}_{s-}](d\vec{x}) \, ds |^2}\nonumber\\
&
\leq C(D_j)||f||^2_{C_0^1(\mathbb{R}^d)}C(\mu)^2 \avg{|\tau-\sigma|^2}+ C(K)||f||^2_{C^{2}_0(\mathbb{R}^d)}L^2C(\mu)^4 \avg{|\tau-\sigma|^2}  \nonumber\\
&\leq C'\delta^2.
\end{align*}
\end{proof}

\begin{lemma}[Aldous condition]\label{lem:AldousCondition}
For any $T > 0$, $\epsilon_1 > 0$, $\epsilon_2 > 0$, $j = 1, 2, \cdots,  J,$ there exists $\delta > 0$ and $n_0$ such that for any sequence $\left( \sigma_n, \tau_n\right)_{n\in \N}$ of pairs of stopping times with $\sigma_n\leq \tau_n\leq T$,
\begin{equation*}
\sup_{n\geq n_0}\mathbb{P}\{|\la f, \mu^{n, j}_{\sigma_n}\ra - \la f, \mu^{n, j}_{\tau_n}\ra | \geq \epsilon_2 , \tau_n\leq \sigma_n + \delta\}\leq \epsilon_1.
\end{equation*}
\end{lemma}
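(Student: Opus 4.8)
The proof will run along the standard route for verifying the Aldous condition, combining the semimartingale decomposition
\begin{equation*}
\la f,\mu^{n,j}_{t}\ra = \la f,\mu^{n,j}_{0}\ra + M^{f,j}_{t} + A^{f,j}_{t}
\end{equation*}
from the proof of Lemma~\ref{L:wconv} (here $n$ indexes a sequence $\vec{\zeta}_n\to 0$, so that $\mu^{n,j}=\mu^{\vec{\zeta}_n,j}$, $A^{f,j}$ and $M^{f,j}$ are as in~\eqref{Eq:Mean_f_j}--\eqref{Eq:Martingale_f_j}), with the increment estimates of Lemma~\ref{lem:tightStoppingTimes} and a Chebyshev bound. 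The first step is to reduce to stopping times that differ by at most $\delta$. Given a pair $(\sigma_n,\tau_n)$ with $\sigma_n\leq\tau_n\leq T$, set $\tilde\tau_n := \tau_n\wedge(\sigma_n+\delta)$, which is again a stopping time with $\sigma_n\leq\tilde\tau_n\leq T$ and $\tilde\tau_n-\sigma_n\leq\delta$, and which coincides with $\tau_n$ on the event $\{\tau_n\leq\sigma_n+\delta\}$. Hence
\begin{equation*}
\mathbb{P}\{|\la f, \mu^{n, j}_{\sigma_n}\ra - \la f, \mu^{n, j}_{\tau_n}\ra | \geq \epsilon_2,\ \tau_n\leq \sigma_n + \delta\} \leq \mathbb{P}\{|\la f, \mu^{n, j}_{\tilde\tau_n}\ra - \la f, \mu^{n, j}_{\sigma_n}\ra | \geq \epsilon_2\},
\end{equation*}
and it suffices to estimate the unconstrained right-hand side.

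Next I would split $\la f,\mu^{n,j}_{\tilde\tau_n}\ra-\la f,\mu^{n,j}_{\sigma_n}\ra = (M^{f,j}_{\tilde\tau_n}-M^{f,j}_{\sigma_n}) + (A^{f,j}_{\tilde\tau_n}-A^{f,j}_{\sigma_n})$ and treat the two pieces separately. For the martingale part, $M^{f,j}$ is a square-integrable martingale with predictable quadratic variation bounded uniformly in $n$ by~\eqref{Eq:Quadratic_Variation_f_j}, so optional sampling applied to $M^{f,j}$ and to $(M^{f,j})^2-\la M^{f,j}\ra$ gives $\mathbb{E}[(M^{f,j}_{\tilde\tau_n}-M^{f,j}_{\sigma_n})^2] = \mathbb{E}[\la M^{f,j}\ra_{\tilde\tau_n}-\la M^{f,j}\ra_{\sigma_n}]$, which is at most $C\delta$ by the first estimate of Lemma~\ref{lem:tightStoppingTimes} (whose proof uses only that the time increment is $\le\delta$, together with the uniform-in-time bounds of Assumptions~\ref{Assume:kernalBdd0} and~\ref{Assume:kernalBdd}); Chebyshev then yields $\mathbb{P}\{|M^{f,j}_{\tilde\tau_n}-M^{f,j}_{\sigma_n}|\geq\epsilon_2/2\}\leq 4C\delta/\epsilon_2^2$. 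For the finite-variation part, the second estimate of Lemma~\ref{lem:tightStoppingTimes} gives $\mathbb{E}[|A^{f,j}_{\tilde\tau_n}-A^{f,j}_{\sigma_n}|^2]\leq C'\delta^2$, whence $\mathbb{P}\{|A^{f,j}_{\tilde\tau_n}-A^{f,j}_{\sigma_n}|\geq\epsilon_2/2\}\leq 4C'\delta^2/\epsilon_2^2$.

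Combining the two bounds, the probability in the statement is at most $4(C\delta+C'\delta^2)/\epsilon_2^2$. Since $C$ and $C'$ depend only on $T$, $\|f\|_{C^2_0(\mathbb{R}^d)}$, the diffusivities, and the bounds $C(\mu)$, $C(K)$ — and in particular not on $n$ — it suffices to choose $\delta>0$ small enough that $4(C\delta+C'\delta^2)/\epsilon_2^2\leq\epsilon_1$ and then take $n_0=1$. I expect the only genuinely delicate points to be the bookkeeping around the truncation $\tilde\tau_n=\tau_n\wedge(\sigma_n+\delta)$ (which converts the event-constrained probability in the statement into an unconstrained increment estimate to which Chebyshev applies cleanly) and the justification of the optional-sampling identity $\mathbb{E}[(M^{f,j}_{\tilde\tau_n}-M^{f,j}_{\sigma_n})^2]=\mathbb{E}[\la M^{f,j}\ra_{\tilde\tau_n}-\la M^{f,j}\ra_{\sigma_n}]$, which requires the uniform integrability supplied by the uniform bound on $\la M^{f,j}\ra_T$; everything else is routine. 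Together with the estimate $\mathbb{E}[\la M^{f,j}\ra_\tau-\la M^{f,j}\ra_\sigma]\leq C\delta$, which verifies the corresponding Rebolledo condition (Lemma~\ref{lem:RebolledoCriterion}) for the quadratic-variation processes, this completes the verification of the Aldous and Rebolledo criteria, and hence yields tightness of $\{\la f,\mu^{n,j}_t\ra\}_n$ in $\mathbb{D}_{\mathbb{R}}[0,T]$ for every $f\in C^2_0(\mathbb{R}^d)$, and thus of $\{\mu^{n,j}_t\}_n$ in $\mathbb{D}_{M_F'(\mathbb{R}^d)}[0,T]$.
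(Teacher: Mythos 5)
Your proposal is correct and follows essentially the same route as the paper: decompose $\la f,\mu^{n,j}_t\ra$ into the martingale part $M^{f,j}$ and the finite-variation part $A^{f,j}$, apply Chebyshev/Markov together with the two increment estimates of Lemma~\ref{lem:tightStoppingTimes}, and choose $\delta$ small; the paper does this in one display (bounding $(|M|+|A|)^2\le 2(M^2+A^2)$ rather than union-bounding with $\epsilon_2/2$, which only changes the constant). Your explicit truncation $\tilde\tau_n=\tau_n\wedge(\sigma_n+\delta)$ and the explicit optional-sampling step are careful spellings-out of points the paper leaves implicit, not a different argument.
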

\begin{proof}
This follows as for $ \tau_n\leq \sigma_n + \delta$.
\begin{align*}
&  \sup_{n\geq n_0}\mathbb{P}\{ |M^{f, j}_{\sigma_n} - M^{f, j}_{\tau_n}| + |A^{f, j}_{\sigma_n} - A^{f, j}_{\tau_n}|  \geq \epsilon_2\}\nonumber \\
&\leq \sup_{n\geq n_0} 2\frac{1}{\epsilon_2^2} \mathbb{E}[\la M^{f, j}\ra_{\tau_n} - \la M^{f, j}\ra_{\sigma_n} + |A^{f, j}_{\tau_n} - A^{f, j}_{\sigma_n}|^2   ] \quad \text{ (by Markov inequality)}\nonumber\\
&\leq \frac{2}{\epsilon_2^2}(C\delta + C'\delta^2)\leq \epsilon_1 \quad \text{(by Lemma \ref{lem:tightStoppingTimes} and when $\delta$ sufficiently small)}
\end{align*}

\end{proof}

\begin{lemma}[Rebolledo Criterion] \label{lem:RebolledoCriterion}
For each $j = 1, \cdots, J$, the sequence of real-valued processes $\{\la f, \mu^{{\vec{\zeta}}, j}_{t}\ra\}_{\vec{\zeta}\in(0,1)^{2}}$ is tight in $\mathbb{D}_{\mathbb{R}}[0, T]$ .

\end{lemma}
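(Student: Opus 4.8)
The plan is to verify the two hypotheses of the Rebolledo criterion (Lemma~\ref{lem:RebolledoCriterion} as stated): namely (i) the quadratic variations of the martingale parts $\{M^{f,j}\}$ satisfy the Aldous-type condition, and (ii) the finite variation parts $\{A^{f,j}\}$ also satisfy it. Both of these have essentially already been established in the preceding lemmas. Concretely, I would first recall that by Roelly's criterion \cite{Roelly:1986}, tightness of the $M_F'(\R^d)$-valued processes $\{\mu^{\vec{\zeta},j}_t\}$ reduces to tightness of the real-valued processes $\{\la f, \mu^{\vec{\zeta},j}_t\ra\}$ in $\mathbb{D}_{\R}[0,T]$ for $f$ in a dense subset of $C_0(\R^d)$, for which we may take $f\in C^2_0(\R^d)$. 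So it suffices to prove this one-dimensional statement.

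\medskip

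The key steps, in order: First I would invoke the decomposition $\la f,\mu^{\vec{\zeta},j}_t\ra = \la f,\mu^{\vec{\zeta},j}_0\ra + M^{f,j}_t + A^{f,j}_t$ from~\eqref{Eq:Mean_f_j}--\eqref{Eq:Martingale_f_j}. Since $\mu_0^{\vec{\zeta},j}\to\xi_0^j$ weakly (Assumption~\ref{Assume:initial}), the initial values are bounded, so it remains to control the two path-dependent pieces. Second, for the martingale part $M^{f,j}$, I note $\{M^{f,j}_t\}$ is a square-integrable martingale with $\sup_{t\le T}\BE[\la M^{f,j}\ra_t]$ uniformly bounded by~\eqref{Eq:Quadratic_Variation_f_j}, and its quadratic variation increments are controlled by Lemma~\ref{lem:tightStoppingTimes}: $\BE[\la M^{f,j}\ra_\tau-\la M^{f,j}\ra_\sigma]\le C\delta$ for stopping times $\sigma\le\tau\le\sigma+\delta\le T$. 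Third, for the finite variation part $A^{f,j}$, Lemma~\ref{lem:tightStoppingTimes} also gives $\BE[|A^{f,j}_\tau-A^{f,j}_\sigma|^2]\le C'\delta^2$. Fourth, these two estimates, via the Markov inequality, yield the Aldous condition (Lemma~\ref{lem:AldousCondition}) for $\{\la f,\mu^{\vec{\zeta},j}_t\ra\}$. Fifth, combining the uniform bound on $\BE[\la M^{f,j}\ra_t]$ (which gives tightness of the laws of $\{M^{f,j}_t\}$ at each fixed time via Lenglart--Rebolledo, equivalently control of $\sup_{t\le T}|\la f,\mu^{\vec{\zeta},j}_t\ra|$ in probability using Assumption~\ref{Assume:kernalBdd} to bound $\|\mu^{\vec{\zeta},j}_t\|\le C(\mu)$, so $|\la f,\mu^{\vec{\zeta},j}_t\ra|\le\|f\|_{C_b}C(\mu)$) with the Aldous condition, the Rebolledo criterion~\cite{AM:1986} applies and gives tightness of $\{\la f,\mu^{\vec{\zeta},j}_t\ra\}$ in $\mathbb{D}_{\R}[0,T]$. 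Since $j$ was arbitrary, this proves the lemma.

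\medskip

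In fact the only genuinely new input here beyond the already-proven Lemma~\ref{lem:tightStoppingTimes} is the observation that the \emph{compact containment} / boundedness-in-probability hypothesis of the Rebolledo criterion holds: this is immediate because Assumption~\ref{Assume:kernalBdd} forces $\sup_{t\le T}\la 1,\mu^{\vec{\zeta},j}_t\ra\le C(\mu)$ deterministically, so $\sup_{t\le T}|\la f,\mu^{\vec{\zeta},j}_t\ra|\le\|f\|_{\infty}C(\mu)$ uniformly in $\vec{\zeta}$, which trivially gives tightness of $\{\la f,\mu^{\vec{\zeta},j}_t\ra\}$ at each fixed $t$. Thus the proof is essentially a bookkeeping assembly: restate the semimartingale decomposition, cite Lemma~\ref{lem:tightStoppingTimes} for the two moment bounds, cite Lemma~\ref{lem:AldousCondition} (Aldous) and the uniform sup-norm bound for compact containment, and conclude via the Rebolledo criterion. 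The main (minor) obstacle is simply making sure the correct form of the Rebolledo/Aldous hypotheses is matched — i.e. that $C^2_0(\R^d)$ is dense enough for Roelly's theorem and that the Aldous condition is verified for a full sequence $\vec{\zeta}_k\to 0$ rather than along the net, which is routine since the bounds in Lemma~\ref{lem:tightStoppingTimes} are uniform in $\vec{\zeta}$.
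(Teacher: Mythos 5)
Your proposal is correct and follows essentially the same route as the paper: the paper's proof likewise notes that Assumption~\ref{Assume:kernalBdd} gives the uniform bound $\sup_{t\in[0,T]}|\la f,\mu^{\vec{\zeta},j}_t\ra|\leq \|f\|_{\infty}C(\mu)$ (hence boundedness in probability), combines this with the Aldous condition already established in Lemma~\ref{lem:AldousCondition} (which in turn rests on the moment bounds of Lemma~\ref{lem:tightStoppingTimes}), and concludes by the Rebolledo criterion of \cite{AM:1986}. The extra detail you supply on the semimartingale decomposition is just an unpacking of those earlier lemmas, so there is no substantive difference.
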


\begin{proof}
By assumption, we always have $\la 1, \mu^{{\vec{\zeta}}, j}_{t}\ra$, $j = 1, \cdots, J$ are uniformly bounded. Since $f\in C_0^2(\R^d)$, we have
\begin{equation*}
\sup_{\|\vec{\zeta}\|\leq 1}\mathbb{E}[\sup_{t\in[0, T]}|\la f, \mu^{{\vec{\zeta}}, j}_{t}\ra|] < \infty
\end{equation*}
holds.
Combined with the Aldous condition from Lemma \ref{lem:AldousCondition}, we obtain that, for each $j = 1, 2, \cdots, J$, the sequence of real-valued processes $\{\la f, \mu^{{\vec{\zeta}}, j}_{t}\ra\}_{\vec{\zeta}\in(0,1)^{2}}$ is tight in $\mathbb{D}_{\mathbb{R}}[0, T]$ by the Rebolledo Criterion \cite{AM:1986}.

\end{proof}

\subsubsection{Tightness in $\mathbb{D}_{M_F(\mathbb{R}^d)}[0, T]$}

By the next Lemma \ref{lem:MassControl}, we're able to control the mass of measures outside of compact sets so that we can go from tightness in $\mathbb{D}_{M_F'(\mathbb{R}^d)}[0, T]$ to tightness in $\mathbb{D}_{M_F(\mathbb{R}^d)}[0, T]$.

\begin{lemma} \label{lem:MassControl}
There exists a sequence of $C^2_b(\mathbb{R}^d)$ functions $\{f_m(x)\}_{m\geq 0}$, in particular, $f_0\equiv 1$,  such that
\begin{align}
f_m(x)  = 0 & \text{ when } ||x|| \leq m-1 \nonumber\\
f_m(x)  = 1 & \text{ when } ||x|| > m \nonumber\\
0\leq f_m(x) \leq 1& \text{ when } m-1 < ||x|| \leq m.\label{Eq:Fm_def}
\end{align}
and furthermore, $\sup_{m\geq 0}||f_m(x)||_{C^2_b(\mathbb{R}^d)} := \sup_{m\geq 0}\sup_{x\in\mathbb{R}^d, |\alpha|\leq 2} |D^\alpha f_m(x)| <\infty$.
For such sequence of functions $\{f_m(x)\}_{m\geq 0}$,
\begin{equation}\label{eq:MassControl1}
\lim_{m\to \infty}\limsup_{{\vec{\zeta}}\to 0}\mathbb{E}\left(\sup_{t\in[0, T]} \la f_m, \mu_t^{{\vec{\zeta}}, j}\ra\right) = 0.
\end{equation}
for all $j = 1, 2, \cdots, J$.
\end{lemma}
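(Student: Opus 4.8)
The plan is to derive an evolution equation for $\la f_m, \mu_t^{{\vec{\zeta}}, j}\ra$ analogous to~\eqref{Eq:EM_j_formula}, take a supremum in time, and then estimate each term. Applying the path-level identity with test function $f_m$, we get $\la f_m,\mu_t^{{\vec{\zeta}}, j}\ra = \la f_m,\mu_0^{{\vec{\zeta}}, j}\ra + M_t^{f_m, j} + A_t^{f_m, j}$ where $M^{f_m,j}$ and $A^{f_m,j}$ are the martingale and drift parts given by~\eqref{Eq:Martingale_f_j} and~\eqref{Eq:Mean_f_j}. First I would handle the initial term: by Assumption~\ref{Assume:initial} the $\mu_0^{{\vec{\zeta}}, j}$ converge weakly to a compactly supported measure $\xi_0^j$, so for $m$ large enough $\supp(f_m)$ is disjoint from $\supp(\xi_0^j)$; hence $\limsup_{{\vec{\zeta}}\to 0}\la f_m,\mu_0^{{\vec{\zeta}}, j}\ra = 0$ for all $m$ past some threshold (using $0\le f_m\le 1$ and weak convergence together with a cutoff argument, since $f_m$ is continuous and bounded). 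This is the only place the compact support of the initial data is used and it is the mechanism that makes the right-hand side vanish as $m\to\infty$.

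Next I would bound the martingale term. By the Burkholder--Davis--Gundy inequality, $\mathbb{E}\sup_{t\le T}|M_t^{f_m,j}|^2 \le C\,\mathbb{E}\la M^{f_m,j}\ra_T$, and the quadratic variation estimate~\eqref{Eq:Quadratic_Variation_f_j} (with $f$ replaced by $f_m$) gives $\mathbb{E}\la M^{f_m,j}\ra_T \le \tfrac{C(D_j)T\|f_m\|^2_{C^1_b}}{\gamma}C(\mu) + \tfrac{C(K)TL\|f_m\|^2_{C_b}}{\gamma}C(\mu)$. Since $\sup_m\|f_m\|_{C^2_b}<\infty$ by construction, this is $O(1/\gamma)$ uniformly in $m$, so $\limsup_{{\vec{\zeta}}\to 0}\mathbb{E}\sup_{t\le T}|M_t^{f_m,j}| = 0$ for every fixed $m$. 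For the drift term $A_t^{f_m,j}$, I would use $|(\mathcal{L}_j f_m)(x)| = D_j|\Delta f_m(x)|$, which is supported in the shell $\{m-1<\|x\|\le m\}$ and bounded by $D_j\sup_m\|f_m\|_{C^2_b}$, together with the reaction terms, which involve $f_m$ evaluated at reactant and product positions. Bounding $K_\ell$ by $C(K)$, the total mass by $C(\mu)$ (Assumption~\ref{Assume:kernalBdd}), and $\int m_\ell^\eta\,d\vec y = 1$ (Assumption~\ref{Assume:measureP}), one obtains $\sup_{t\le T}|A_t^{f_m,j}| \le C(\mu)T\Big(D_j\la \ind_{\{m-1<\|\cdot\|\le m\}},\mu_s^{{\vec{\zeta}},j}\ra_{\sup} + \sum_\ell \big(\text{terms involving }\la f_m,\lambda^{(\ell)}[\mu^{{\vec{\zeta}}}_{s}]\ra\big)\Big)$; the point is that every surviving contribution contains a factor of $f_m$ or a localized indicator, hence is controlled by the measure of $\mu^{{\vec{\zeta}}}_s$ outside the ball of radius $m-1$.

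The main obstacle — and the step requiring the most care — is closing the estimate for the drift term, because the bound on $\sup_{t\le T}|A_t^{f_m,j}|$ is itself in terms of quantities like $\sup_{s\le T}\la f_m,\mu_s^{{\vec{\zeta}},j}\ra$ (or $\la f_{m'},\mu_s^{{\vec{\zeta}},k}\ra$ for nearby $m'$ and other species $k$), so one does not immediately get a closed bound. The resolution is a Gr\"onwall-type / self-improvement argument: define $u_m^{{\vec{\zeta}}} := \sum_{k=1}^J \mathbb{E}\sup_{t\le T}\la f_m,\mu_t^{{\vec{\zeta}},k}\ra$, derive from the above an inequality of the form $u_m^{{\vec{\zeta}}} \le \varepsilon_m^{{\vec{\zeta}}} + CT\sum_{m'\ge m-1} u_{m'}^{{\vec{\zeta}}}$ where $\varepsilon_m^{{\vec{\zeta}}} \to 0$ as first ${\vec{\zeta}}\to 0$ then $m\to\infty$ (coming from the initial and martingale terms plus the genuinely localized Laplacian shell term, which one handles by noting $\ind_{\{m-1<\|x\|\le m\}}\le f_{m-1}(x)$), and then iterate/sum to absorb the tail. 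Concretely I would first establish the weaker statement $\lim_{m\to\infty}\limsup_{{\vec{\zeta}}\to 0}\mathbb{E}\la f_m,\mu_T^{{\vec{\zeta}},j}\ra = 0$ at the fixed final time (no supremum, so the $A$ term is estimated by an honest time integral of $\la f_m + \sum f_{m'},\mu_s^{{\vec{\zeta}}}\ra$ and one can Gr\"onwall in $m$ after summing over the finite shell overlaps), and then upgrade to the supremum version~\eqref{eq:MassControl1} using the martingale bound and the monotonicity of the shells. The uniform-in-$m$ control $\sup_m\|f_m\|_{C^2_b}<\infty$ is what makes all constants independent of $m$, and the finiteness of $J$ and $L$ keeps the sums finite.
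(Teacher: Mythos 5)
Your overall architecture matches the paper's proof: decompose $\la f_m,\mu_t^{{\vec{\zeta}},j}\ra$ into initial, martingale and drift parts; kill the initial term for large $m$ via Assumption~\ref{Assume:initial} (compact support of $\xi_0^j$); kill the martingale term via the $\mathcal{O}(1/\gamma)$ quadratic variation, uniform in $m$ because $\sup_m\|f_m\|_{C_b^2}<\infty$; and close the drift term with a recursion in $m$ whose iteration produces a $\tfrac{(CT)^l}{l!}$ factor from nested time integrals. The genuine gap is in your key localization claim for the drift. You assert that every surviving reaction contribution ``is controlled by the measure of $\mu^{{\vec{\zeta}}}_s$ outside the ball of radius $m-1$.'' This is false for dissociation reactions $S_i\to S_j+S_k$ (Assumption~\ref{Assume:measureOne2Two}): there the gain term involves $\int f_m(y_1)\,m_\ell(y_1,y_2\,|\,z)\,dy_1\,dy_2$, and the product separation $|y_1-y_2|$ is sampled from $\rho$, which need not have compact support, so a product can be placed arbitrarily far from the reactant at $z$; consequently $\int f_m(y_1)m_\ell(y_1,y_2|z)dy_1dy_2$ is \emph{not} dominated by $f_{m'}(z)$ for any fixed shift $m'=m-c$. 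The paper resolves this with Assumption~\ref{Assume:measureMrho}: fix $\epsilon>0$, choose $R$ with $\int_{r>R}\rho(r)r^{d-1}dr<\epsilon$, and split the placement integral into $\{|y_1-y_2|\le R\}$ (controlled by $f_{m-1-R}(z)$) and the tail (contributing an error of order $C(K)C(\mu)T\epsilon$ that does \emph{not} vanish as $m\to\infty$ or ${\vec{\zeta}}\to 0$); see Lemmas~\ref{lem:MassControlMollifier} and~\ref{lem:MassControlOne2Two}. That $\epsilon$ then propagates through the entire iteration, the final bound is $C_2Te^{C_1T}\epsilon$, and one concludes only because $\epsilon>0$ was arbitrary. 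Your $\varepsilon_m^{{\vec{\zeta}}}$, described as coming only from the initial, martingale and Laplacian-shell terms, omits this contribution, so the recursion you claim cannot actually be derived for general reaction networks. A smaller related omission: the prelimit dynamics use the mollified $m_\ell^\eta$, so you must either track the extra $\eta$-spreading of mass or replace $m_\ell^\eta$ by $m_\ell$ at an $\mathcal{O}(\eta)$ cost as in Lemma~\ref{lem:etaConv}.

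A secondary issue is the schematic recursion $u_m\le\varepsilon_m+CT\sum_{m'\ge m-1}u_{m'}$: as written the sum is infinite, and even read as $u_m\le\varepsilon_m+CT\,u_{m-1-R}$ the iteration yields $(CT)^{m/(R+1)}u_0$ with no factorial suppression, which does not tend to zero when $CT\ge 1$. Your ``concrete'' fixed-time version, with the bound kept as an honest time integral $C\int_0^T v_{m-1-R}(s)\,ds$, is the correct one --- iterating \emph{that} is what produces the $\tfrac{(C_1T)^l}{l!}$ factors that make the tail of the iteration negligible (this is exactly the computation in~\eqref{Eq:Control2}) --- and the supremum-in-time version retains the same structure because the drift integrand is nonnegative after the bounding, so no separate ``upgrade'' step is really needed.
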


\begin{proof}
Following~\cite{BMW:2012}, consider the function $\psi(s) = 6s^5 - 15s^4 + 10s^3\in C^2([0, 1])$. One can check that $\psi(0) = 1-\psi(1) = \psi'(0) = \psi'(1) = \psi''(0) = \psi''(1) = 0$. Now we define our functions $\{f_m(x)$, $x\in \mathbb{R}^d$, $m\geq 1\}$ as $f_m(x) = \psi(0\vee (||x|| - (m-1)) \wedge 1)$.
The derivatives of $f_m's$ are uniformly controlled by the derivatives of $\psi$, thus this choice satisfies our conditions. For any $\epsilon > 0$, by Assumption \ref{Assume:measureMrho}, there exists a large enough integer-valued radius $R$ such that $\int_{r > R}\rho(r) r^{d-1} dr < \epsilon$. As a consequence, for $m$ sufficiently large
\begin{align}\label{Eq:Control0}
\la f_m, \mu_t^{{\vec{\zeta}}, j} \ra
&
 = \la f_m, \mu_0^{{\vec{\zeta}}, j} \ra + M^{f_m, j}_t + \int_{0}^{t} \la (\mathcal{L}_j f_m)(x), \mu_{s-}^{{\vec{\zeta}}, j}(dx) \ra \nonumber\\
 &
\quad+\sum_{\ell = 1}^L \int_{0}^{t} \int_{\tilde{\mathbb{X}}^{(\ell)}}     \frac{1}{\vec{\alpha}^{(\ell)}!}   K_\ell\left(\vec{x}\right) \left(\int_{\mathbb{Y}^{(\ell)}}    \left( \sum_{r = 1}^{\beta_{\ell j}} f_m(y_r^{(j)}) \right) m^\eta_\ell\left(\vec{y} \, |\, \vec{x} \right)\, d \vec{y} - \sum_{r = 1}^{\alpha_{\ell j}} f_m(x_r^{(j)}) \right)\,\lambda^{(\ell)}[\mu^{\vec{\zeta}}_{s-}](d\vec{x}) \, ds ,\nonumber\\
&
\leq \la f_m, \mu_0^{{\vec{\zeta}}, j} \ra + M^{f_m, j}_t + ||f_m||_{C^2_b(\R^d)} \int_{0}^{t} \la 1_{\{m-1\leq||x|| < m\}} , \mu_{s-}^{{\vec{\zeta}}, j}(dx) \ra \nonumber\\
 &
\quad+\sum_{\ell = 1}^L \int_{0}^{t} \int_{\tilde{\mathbb{X}}^{(\ell)}}     \frac{1}{\vec{\alpha}^{(\ell)}!}   K_\ell\left(\vec{x}\right) \left(\int_{\mathbb{Y}^{(\ell)}}    \left( \sum_{r = 1}^{\beta_{\ell j}} f_m(y_r^{(j)}) \right) m^\eta_\ell\left(\vec{y} \, |\, \vec{x} \right)\, d \vec{y}\right)\,\lambda^{(\ell)}[\mu^{\vec{\zeta}}_{s-}](d\vec{x}) \, ds ,\nonumber\\
&
\leq \la f_m, \mu_0^{{\vec{\zeta}}, j} \ra + M^{f_m, j}_t + ||f_m||_{C^2_b(\R^d)} \int_{0}^{t} \la f_{m-1}(x) , \mu_{s-}^{{\vec{\zeta}}, j}(dx) \ra \nonumber\\
 &
\quad  +\sum_{\ell = 1}^L \int_{0}^{t} \int_{\tilde{\mathbb{X}}^{(\ell)}}     \frac{1}{\vec{\alpha}^{(\ell)}!}   K_\ell\left(\vec{x}\right) \left(\int_{\mathbb{Y}^{(\ell)}}    \left( \sum_{r = 1}^{\beta_{\ell j}} f_m(y_r^{(j)}) \right) m^\eta_\ell\left(\vec{y} \, |\, \vec{x} \right)\, d \vec{y}\right)\,\lambda^{(\ell)}[\mu^{\vec{\zeta}}_{s-}](d\vec{x}) \, ds .\nonumber\\
\end{align}

Taking supremum over time and then expectation on Eq \ref{Eq:Control0}, we  get
\begin{align}\label{Eq:Control}
\avg{\sup_{t\in[0, T]}  \la f_m, \mu_t^{{\vec{\zeta}}, j}\ra}
&\leq \avg{  \la f_m, \mu_0^{{\vec{\zeta}}, j} \ra }+ \avg{\sup_{t\in[0, T]}  \left|M^{f_m, j}_{t}\right|} + ||f_m||_{C^2_b(\R^d)} \avg{\sup_{t\in[0, T]}   \int_{0}^{t} \la f_{m-1}(x) , \mu_{s-}^{{\vec{\zeta}}, j}(dx) \ra} \nonumber\\
 &
\quad  +\avg{\sup_{t\in[0, T]}  \sum_{\ell = 1}^L \int_{0}^{t} \int_{\tilde{\mathbb{X}}^{(\ell)}}     \frac{1}{\vec{\alpha}^{(\ell)}!}   K_\ell\left(\vec{x}\right) \left(\int_{\mathbb{Y}^{(\ell)}}    \left( \sum_{r = 1}^{\beta_{\ell j}} f_m(y_r^{(j)}) \right) m^\eta_\ell\left(\vec{y} \, |\, \vec{x} \right)\, d \vec{y}\right)\,\lambda^{(\ell)}[\mu^{\vec{\zeta}}_{s-}](d\vec{x}) \, ds} .\nonumber\\
&\leq \avg{  \la f_m, \mu_0^{{\vec{\zeta}}, j} \ra }+ \avg{\sup_{t\in[0, T]} \left| M^{f_m, j}_{t}\right|} + ||f_m||_{C^2_b(\R^d)} \avg{\sup_{t\in[0, T]}   \int_{0}^{t} \la f_{m-1}(x) , \mu_{s-}^{{\vec{\zeta}}, j}(dx) \ra} \nonumber\\
 &
\quad  +\sup_{1\leq i \leq J}\avg{\sup_{t\in[0, T]}  C\int_{0}^{t} \la f_{m-1-R}(x), \mu^{{\vec{\zeta}}, i}_{s-}(dx) \ra \, ds} + C_2T(\epsilon + C\eta) ,\nonumber\\
&\qquad \text{( by Lemma \ref{lem:MassControlMollifier} studying the different cases of allowed reactions)}\nonumber\\
&
\leq  \avg{\sup_{t\in[0, T]}  \la f_m, \mu_0^{{\vec{\zeta}}, j} \ra }+ \avg{\sup_{t\in[0, T]} \left| M^{f_m, j}_{t}\right|}  \nonumber\\
&
\qquad+  C_1  \int_{0}^{T}\sup_{1\leq i \leq J} \avg{\sup_{s\in[0, t]} \la f_{m-1-R}(x), \mu^{{\vec{\zeta}}, i}_{s-}(dx) \ra }\, dt + C_2T(\epsilon + C\eta),\nonumber\\
\end{align}
where $C = 2L C(K)\left(C(\mu)\vee 1 \right)$, $C_1 = 2 (C\vee ||f_m||_{C^2_b(\R^d)} ) $ and $C_2 = 2LC(K)C(\mu)||f_m||_{C_b^2(\R^d)}$.

Let $Y_T^{m, {\vec{\zeta}}} := \sup_{1\leq j \leq J}\mathbb{E}\left[\sup_{t\in[0, T]} \left( \la f_m, \mu_t^{{\vec{\zeta}}, j}\ra  \right)\right]$. By construction, we always have $Y_T^{m, {\vec{\zeta}}} \leq Y_T^{m-1, {\vec{\zeta}}} \leq Y_T^{0, {\vec{\zeta}}}$. Due to the uniform boundedness of $||f_m||_{C^2_b(\mathbb{R}^d)}$, we have that $\mathbb{E}\la M^{f_m, j} \ra_T= \mathcal{O}(\frac{1}{\gamma})$, uniformly for each $1\leq j\leq J$ and for all $m$ based on Eq (\ref{Eq:Quadratic_Variation_f_j}). Without loss of generality, let's consider the subsequence where $m$ is divisible by $R+1$. Then Eq (\ref{Eq:Control}) gives
\begin{align}\label{Eq:Control2}
Y_T^{m, {\vec{\zeta}}}
&\leq Y_0^{m, {\vec{\zeta}}} + \sup_{1\leq j\leq J}\avg{\sup_{t\in[0, T]} |M^{f_m, j}_t|}  +  C_1\int_0^T Y_t^{m-(R+1), {\vec{\zeta}}} dt + C_2T(\epsilon + C\eta)\nonumber, \\
&\leq  Y_0^{m, {\vec{\zeta}}} + 2\sup_{1\leq j\leq J} \sqrt{\mathbb{E}\la M^{f_m, j}\ra_T}  +  C_1\int_0^TY_t^{m-(R+1), {\vec{\zeta}}} dt + C_2T(\epsilon + C\eta) \nonumber\\
& \qquad \qquad \text{ ( by Jensen's inequality and Doob's inequality ) }\nonumber\\
&
\leq  Y_0^{m, {\vec{\zeta}}} + C_{3}\frac{1}{\sqrt{\gamma}}+ C_2T(\epsilon + C\eta) + C_1\int_0^TY_t^{m-(R+1), {\vec{\zeta}}} dt \nonumber\\
&
\leq Y_0^{m, {\vec{\zeta}}} + C_{3}\frac{1}{\sqrt{\gamma}} + C_2T(\epsilon + C\eta)\nonumber\\
&
\qquad+ C_1\int_0^T (Y_0^{m-(R+1), {\vec{\zeta}}} + C_{3}\frac{1}{\sqrt{\gamma}} + C_2T(\epsilon + C\eta) +  C_1\int_0^tY_{t_1}^{m-2(R+1), {\vec{\zeta}}} dt_1) dt\nonumber\\
&
= Y_0^{m, {\vec{\zeta}}} + C_{3}\frac{1}{\sqrt{\gamma}} + C_2T(\epsilon + C\eta) \nonumber \\
&
\qquad + C_1T\left(Y_0^{m-(R+1), {\vec{\zeta}}} +  C_{3}\frac{1}{\sqrt{\gamma}} + C_2T(\epsilon + C\eta)\right)  + (C_1)^2\int_0^T\int_0^tY_{t_1}^{m-2(R+1), {\vec{\zeta}}} dt_1) dt\nonumber \\
&
\leq Y_0^{m, {\vec{\zeta}}} + C_{3}\frac{1}{\sqrt{\gamma}}+ C_2T(\epsilon + C\eta)+ C_1T\left(Y_0^{m-(R+1), {\vec{\zeta}}} +  C_{3}\frac{1}{\sqrt{\gamma}}+ C_2T(\epsilon + C\eta) \right) \nonumber \\
&\quad+ (C_1)^2\int_0^T\int_0^t (Y_0^{m-2(R+1), {\vec{\zeta}}} +  C_{3}\frac{1}{\sqrt{\gamma}} + C_2T(\epsilon + C\eta) + C_1\int_0^{t_{1}}Y_{t_2}^{m-3(R+1), {\vec{\zeta}}} dt_2) dt_1 dt \nonumber\\
&
= Y_0^{m, {\vec{\zeta}}} + C_{3}\frac{1}{\sqrt{\gamma}} + C_2T(\epsilon + C\eta)+ C_1T\left(Y_0^{m-(R+1), {\vec{\zeta}}} +  C_{3}\frac{1}{\sqrt{\gamma}}+ C_2T(\epsilon + C\eta)\right) \nonumber \\
&\quad+ \frac{(C_1T)^2}{2!}\left(Y_0^{m-2(R+1), {\vec{\zeta}}} +  C_{3}\frac{1}{\sqrt{\gamma}}+ C_2T(\epsilon + C\eta) \right) + (C_1)^3\int_0^T\int_0^t \int_0^{t_{1}}Y_{t_2}^{m-3(R+1), {\vec{\zeta}}} dt_2 dt_1 dt \nonumber\\
&
\leq \sum_{l=0}^{m/(R+1)-1} \frac{(C_1T)^{l}}{l!}\left(Y_0^{m-(R+1)l, {\vec{\zeta}}} + C_{3}\frac{1}{\sqrt{\gamma}}+ C_2T(\epsilon + C\eta) \right) + \frac{(C_1T)^{m/(R+1)}}{{(m/(R+1))}!}Y_T^{0, {\vec{\zeta}}}\nonumber\\
&
\leq  \sum_{l=0}^{\left\lfloor m/(2R+2) \right\rfloor} \frac{(C_1T)^l}{l!}\left(Y_0^{m-(R+1)l, {\vec{\zeta}}} +  C_{3}\frac{1}{\sqrt{\gamma}}+ C_2T(\epsilon + C\eta)\right) \nonumber\\
&\quad+ \sum_{l=\left\lfloor m/(2R+2) \right\rfloor +1 }^{m/(R+1)-1} \frac{(C_1T)^l}{l!}\left(Y_0^{m-(R+1)l, {\vec{\zeta}}} +  C_{3}\frac{1}{\sqrt{\gamma}}+ C_2T(\epsilon + C\eta) \right) + \frac{(C_1T)^{m/(R+1)}}{(m/(R+1))!}Y_T^{0, {\vec{\zeta}}}\nonumber\\
&
\leq \left(Y_0^{m-\left\lfloor m/2 \right\rfloor-(R+1), {\vec{\zeta}}} + C_{3}\frac{1}{\sqrt{\gamma}} + C_2T(\epsilon + C\eta) \right) \left(\sum_{l=0}^{\left\lfloor m/(2R+2) \right\rfloor} \frac{(C_1T)^l}{l!}\right) \nonumber\\
&\quad+ \left(Y_0^{0, {\vec{\zeta}}} + C_{3}\frac{1}{\sqrt{\gamma}}+ C_2T(\epsilon + C\eta)\right)\left( \sum_{l=\left\lfloor m/(2R+2) \right\rfloor +1 }^{\infty} \frac{(C_1T)^l}{l!} \right)+ \frac{(C_1T)^{m/(R+1)}}{(m/(R+1))!}Y_T^{0, {\vec{\zeta}}}\nonumber\\
&
\leq \left(Y_0^{m-\left\lfloor m/2\right\rfloor - (R+1), {\vec{\zeta}}} + C_{3}\frac{1}{\sqrt{\gamma}} + C_2T(\epsilon + C\eta)\right) e^{C_1T} \nonumber\\
&\quad+ \left(Y_0^{0, {\vec{\zeta}}} + C_{3}\frac{1}{\sqrt{\gamma}}+ C_2T(\epsilon + C\eta)\right)\left( \sum_{l=\left\lfloor m/(2R+2) \right\rfloor +1 }^{\infty} \frac{(C_1T)^l}{l!} \right)+ \frac{(C_1T)^{m/(R+1)}}{(m/(R+1))!}Y_T^{0, {\vec{\zeta}}}.\nonumber\\
\end{align}

From Eq (\ref{Eq:Control2}), we're able to obtain
\begin{align}\label{Eq:Control3}
&\lim_{m\to \infty}\limsup_{{\vec{\zeta}}\to 0} \sup_{1\leq j\leq J}\mathbb{E}\left[\sup_{t\in[0, T]} \left(\la f_m, \mu_t^{{\vec{\zeta}}, j}\ra \right)\right] \nonumber\\
&
 \leq \lim_{m\to \infty}\limsup_{{\vec{\zeta}}\to 0}\left[ \left(Y_0^{m-\left\lfloor m/2\right\rfloor - (R+1), {\vec{\zeta}}} + C_{3}\frac{1}{\sqrt{\gamma}} + C_2T(\epsilon + C\eta)\right) e^{C_1T} \right.\nonumber\\
&\quad+\left. \left(Y_0^{0, {\vec{\zeta}}} + C_{3}\frac{1}{\sqrt{\gamma}}+ C_2T(\epsilon + C\eta)\right)\left( \sum_{l=\left\lfloor m/(2R+2) \right\rfloor +1 }^{\infty} \frac{(C_1T)^l}{l!} \right)+ \frac{(C_1T)^{m/(R+1)}}{(m/(R+1))!}Y_T^{0, {\vec{\zeta}}}\right]\nonumber\\
&
 \quad =C_2T e^{C_1T}\epsilon + \lim_{m\to \infty}\left[\tilde{Y}_0^{m-\left\lfloor m/2 \right\rfloor - (R+1)} e^{CT} + \left(\tilde{Y}_0^{0}+ C_2T\epsilon  \right)\left( \sum_{l=\left\lfloor m/(2R+2) \right\rfloor +1 }^{\infty} \frac{(CT)^l}{l!} \right) +  \frac{(CT)^{m/(R+1)}}{(m/(R+1))!}\tilde{Y}_T^{0}\right]\nonumber\\
 &\qquad\qquad  \text{ ( where we denote $\tilde{Y}_t^{l} = \limsup_{{\vec{\zeta}}\to 0} Y_t^{l, {\vec{\zeta}}} $ ) }\nonumber\\
&\quad = C_2T e^{C_1T}\epsilon . \nonumber\\
\end{align}
Here, in the second last row of Eq (\ref{Eq:Control3}), inside the squared bracket, the first term vanishes as
\begin{align} \label{eq:ControlInitial}
\lim_{m\to \infty} \tilde{Y}_0^{m-\left\lfloor m/2 \right\rfloor - (R+1)}
&=  \lim_{m\to \infty} \limsup_{{\vec{\zeta}}\to 0}    \sup_{1\leq j \leq J}\mathbb{E}\left[\left( \la f_{m-\left\lfloor m/2 \right\rfloor-(R+1)}, \mu_0^{{\vec{\zeta}}, j}\ra  \right)\right] \nonumber\\
&=   \sup_{1\leq j \leq J} \lim_{m\to \infty}  \limsup_{{\vec{\zeta}}\to 0}  \mathbb{E}\left[\left( \la f_{m-\left\lfloor m/2 \right\rfloor-(R+1)}, \mu_0^{{\vec{\zeta}}, j}\ra  \right)\right] \nonumber\\
&=    \sup_{1\leq j \leq J} \lim_{m\to \infty} \mathbb{E} \la f_{m-\left\lfloor m/2 \right\rfloor-(R+1)}, \xi_0^{j}\ra  = 0.
\end{align}
The exchange of limits and supremum is allowed as the supremum is taken over the finite set $\{1, \cdots, J\}$. We can get the third line of Eq (\ref{eq:ControlInitial}) from the second line because of Assumption~\ref{Assume:initial} that the initial distribution $\mu_0^{{\vec{\zeta}}, j}$ converges weakly to $\xi_0^j$, for all $1\leq j \leq J$. Finally, we obtain the limit as zero using Assumption~\ref{Assume:initial} on the $\xi_0^j$'s, i.e. that they are compactly supported. For the second term, $ \tilde{Y}_0^{0}$, the initial concentration is bounded, whereas $\left( \sum_{l=\left\lfloor m/4 \right\rfloor +1 }^{\infty} \frac{(\tilde{C}T)^l}{l!} \right)$ is the remainder of exponential function expansion, which will go to 0 as $m\to \infty$. For the last term, as $\tilde{Y}_T^{0}  = \limsup_{{\vec{\zeta}}\to 0}\sup_{1\leq j\leq J}\mathbb{E}\left[\sup_{t\in[0, T]} \left(\la 1, \mu_t^{{\vec{\zeta}}, j}\ra  \right)\right] $ is bounded by $C(\mu)$,  the whole third term will vanish as $m\to\infty$.

Note, $C_1$ and $C_2$ do not depend on $\eta$, $R$ or $\epsilon$.  This implies $$\lim_{m\to \infty}\limsup_{{\vec{\zeta}}\to 0}\mathbb{E}\left[\sup_{t\in[0, T]} \left(\la f_m, \mu_t^{{\vec{\zeta}}, j}\ra  \right)\right] $$ is less than an arbitrary small number $C_2T e^{C_1T}\epsilon$, i.e. the limit is zero, for all $1\leq j\leq J$.
\end{proof}

Let $(\xi_t^1, \xi_t^2, \cdots,\xi_t^J)$ denote the weak limit of a subsequence of $(\mu_t^{{\vec{\zeta}}, 1}, \mu_t^{{\vec{\zeta}}, 2}, \cdots, \mu_t^{{\vec{\zeta}}, J})$ in $\mathbb{D}_{M_F'(\mathbb{R}^{3d})}([0, T])$ as $\vec{\zeta} \to 0$, where we abuse notation and let $(\mu_t^{{\vec{\zeta}}, 1}, \mu_t^{{\vec{\zeta}}, 2}, \cdots, \mu_t^{{\vec{\zeta}}, J})$ also denote the corresponding subsequence. Then

\begin{lemma}\label{lem:Continuity}
$\{\xi_t^j \}_{t\in [0, T]}$ is continuous process from $[0, T]$ to both $M_F'(\mathbb{R}^{d})$ and  $M_F(\mathbb{R}^{d})$ for each $j = 1, 2, \cdots,  J$.
\end{lemma}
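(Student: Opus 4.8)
The plan is to show first that $t\mapsto\xi^j_t$ is continuous as a map into $M_F'(\mathbb{R}^d)$ (the vague topology), exploiting that the prelimit processes $\mu^{{\vec{\zeta}},j}$ have jumps of size $\mathcal{O}(1/\gamma)$, and then to upgrade this to continuity into $M_F(\mathbb{R}^d)$ by ruling out, uniformly over $t\in[0,T]$, any escape of mass to infinity, which is exactly what Lemma~\ref{lem:MassControl} provides.

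\emph{Step 1: continuity in the vague topology.} Since $(\xi^1,\dots,\xi^J)$ is, by construction, a subsequential limit in distribution of $(\mu^{{\vec{\zeta}},1},\dots,\mu^{{\vec{\zeta}},J})$ in the Skorokhod space over $M_F'(\mathbb{R}^d)$ as ${\vec{\zeta}}\to 0$, I would invoke the Skorokhod representation theorem (Theorem~1.8 in \cite{EthierKurtz}), as in the proof of Lemma~\ref{L:wconv}, to realize this convergence almost surely on an auxiliary probability space. From the path-level description~\eqref{Eq:EM_j_formula}, at each jump time $\mu^{{\vec{\zeta}},j}_t$ differs from $\mu^{{\vec{\zeta}},j}_{t-}$ by deleting at most $\alpha_{\ell j}$ and inserting at most $\beta_{\ell j}$ atoms, each of mass $1/\gamma$; hence $|\la f,\mu^{{\vec{\zeta}},j}_t\ra-\la f,\mu^{{\vec{\zeta}},j}_{t-}\ra|\le 4\,\|f\|_{C_b(\mathbb{R}^d)}/\gamma$ for every $f$, so that $\sup_{t\in(0,T]}d_{M_F'(\mathbb{R}^d)}(\mu^{{\vec{\zeta}},j}_t,\mu^{{\vec{\zeta}},j}_{t-})\le C/\gamma\to 0$. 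Since a $J_1$-limit of cadlag paths (valued in a Polish space --- here the space of locally finite measures with the vague topology, of which $M_F'(\mathbb{R}^d)$ is a Borel subset) whose maximal jump tends to zero has continuous paths, almost surely $\{\xi^j_t\}_{t\in[0,T]}\in C_{M_F'(\mathbb{R}^d)}([0,T])$ for each $j$. (Alternatively one may avoid this general fact by using the decomposition $\la f,\mu^{{\vec{\zeta}},j}_t\ra=\la f,\mu^{{\vec{\zeta}},j}_0\ra+M^{f,j}_t+A^{f,j}_t$ from the proof of Lemma~\ref{L:wconv}: by Assumptions~\ref{Assume:kernalBdd0} and~\ref{Assume:kernalBdd} the finite-variation part $A^{f,j}$ is Lipschitz in $t$ uniformly in ${\vec{\zeta}}$, while $\mathbb{E}[\sup_{t\le T}|M^{f,j}_t|^2]\le 4\,\mathbb{E}\la M^{f,j}\ra_T=\mathcal{O}(1/\gamma)$ by~\eqref{Eq:Quadratic_Variation_f_j} and Doob's inequality, so any limit point $t\mapsto\la f,\xi^j_t\ra$ is Lipschitz, and density of $C_0^2(\mathbb{R}^d)$ in $C_0(\mathbb{R}^d)$ again yields continuity into $M_F'(\mathbb{R}^d)$.)

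\emph{Step 2: upgrade to the weak topology.} To pass from $M_F'(\mathbb{R}^d)$ to $M_F(\mathbb{R}^d)$ it suffices to control escape of mass at infinity uniformly in $t$. By Lemma~\ref{lem:MassControl}, with $\{f_m\}$ the cutoff functions defined there, for every $\epsilon>0$ there is $m$ with $\limsup_{{\vec{\zeta}}\to 0}\mathbb{E}[\sup_{t\in[0,T]}\la f_m,\mu^{{\vec{\zeta}},j}_t\ra]\le\epsilon$. As $f_m\ge 0$ is continuous, $\mu\mapsto\la f_m,\mu\ra$ is lower semicontinuous for the vague topology, so Fatou's lemma applied along the a.s.-convergent subsequence (which, by Step~1, converges uniformly in $t$, the limit being continuous) gives $\mathbb{E}[\sup_{t\in[0,T]}\la f_m,\xi^j_t\ra]\le\epsilon$; hence almost surely $\sup_{t\in[0,T]}\la f_m,\xi^j_t\ra\to 0$ as $m\to\infty$. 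Since $1-f_m$ is nonnegative, continuous and compactly supported, passing to the limit together with $\la 1,\mu^{{\vec{\zeta}},j}_t\ra\le C(\mu)$ (Assumption~\ref{Assume:kernalBdd}) yields $\la 1,\xi^j_t\ra\le C(\mu)$, so that $\xi^j_t\in M_F(\mathbb{R}^d)$ for every $t$. Finally, fixing $\omega$ in the almost-sure event and $t_n\to t$ in $[0,T]$, Step~1 gives $\xi^j_{t_n}\to\xi^j_t$ vaguely, while $\la 1,\xi^j_{t_n}\ra=\la 1-f_m,\xi^j_{t_n}\ra+\la f_m,\xi^j_{t_n}\ra$ with $\la 1-f_m,\xi^j_{t_n}\ra\to\la 1-f_m,\xi^j_t\ra$ by vague convergence and $\la f_m,\xi^j_{t_n}\ra\le\sup_s\la f_m,\xi^j_s\ra$, which is arbitrarily small for $m$ large; hence $\la 1,\xi^j_{t_n}\ra\to\la 1,\xi^j_t\ra$. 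For finite measures, vague convergence together with convergence of total masses is equivalent to weak convergence, so $\xi^j_{t_n}\to\xi^j_t$ in $M_F(\mathbb{R}^d)$; thus $t\mapsto\xi^j_t$ is continuous into $M_F(\mathbb{R}^d)$, and a fortiori into $M_F'(\mathbb{R}^d)$, for each $j=1,\dots,J$.

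\emph{Main obstacle.} Step~1 is the routine half --- a $J_1$-limit of paths with vanishing jumps is continuous. The genuine content is Step~2: the uniform-in-$t$ tail estimate of Lemma~\ref{lem:MassControl}. Without it the limit could a priori be only vaguely continuous, or fail to take values in $M_F(\mathbb{R}^d)$; everything hinges on that estimate passing to the limit, which it does via vague lower semicontinuity and Fatou's lemma.
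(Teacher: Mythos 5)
Your proof is correct and follows essentially the same route as the paper: continuity into $M_F'(\mathbb{R}^d)$ from the $\mathcal{O}(1/\gamma)$ jump bound via the Ethier--Kurtz results, and the upgrade to $M_F(\mathbb{R}^d)$ by passing the tail estimate of Lemma~\ref{lem:MassControl} to the limit. Your use of vague lower semicontinuity of $\mu\mapsto\la f_m,\mu\ra$ plus Fatou is just a repackaging of the paper's truncation $f_{m,r}=f_m(1-f_r)$ with monotone convergence, and your explicit ``vague convergence plus convergence of total masses implies weak convergence'' step spells out what the paper leaves implicit.
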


\begin{proof}
By construction, see for example the proof of Lemma \ref{L:wconv}
 and in particular
 (\ref{Eq:MartingaleD_f_j})-(\ref{Eq:MartingaleD_integrand}), we have that
\[
 \sup_{t\in[0, T]}\sup_{f\in C_0^2(\mathbb{R}^n), ||f||_{L^\infty}\leq 1}|\la f, \mu_t^{{\vec{\zeta}}, j}\ra - \la f, \mu_{t-}^{{\vec{\zeta}}, j}\ra|\leq \frac{C}{\gamma}
\]
holds for some constant $C$ independent of $\gamma$. In addition, by Proposition 5.3 in Chapter 3 of \cite{EthierKurtz},  the mapping $\nu\mapsto \sup_{t\in [0, T]}|\la f, \nu_t \ra - \la f, \nu_{t-}\ra|$ is continuous on $\mathbb{D}_{M_F'(\mathbb{R}^d)}([0, T])$ for each $f\in C^2_0(\R^d)$. Then, by Theorem 10.2 in Chapter 3 of \cite{EthierKurtz},   we obtain as we take $\vec{\zeta}\to 0$,
that $\{\xi_t^j\}_{t\in [0, T]}$ is continuous process from $[0, T]$ to $M_F'(\mathbb{R}^{n})$. Next, we'll show that $\{\xi_t^j\}_{t\in [0, T]} \in \mathbb{D}_{M_F(\mathbb{R}^d)}([0, T])$, is a continuous process from $[0, T]$ to $M_F(\mathbb{R}^{d})$. To this end, we need to be able to control what happens to the total mass of the measures (see also \cite{BMW:2012}).

Adapting the notations in Lemma \ref{lem:MassControl}, let's define compactly supported functions $f_{m, r} = f_m(1-f_r)$. Notice that $f_{m,r}$ will converge monotonically to $f_{m}$ as $r\rightarrow\infty$. Then \begin{equation}\label{Eq:Continuity:DC}
\mathbb{E}\left(\sup_{t\in[0, T]}\la f_{m, r}, \xi^j_t\ra\right) = \lim_{{\vec{\zeta}}\to 0} \mathbb{E}\left(\sup_{t\in[0, T]}\la f_{m, r}, \mu^{{\vec{\zeta}}, j}_t\ra\right)\leq  \liminf_{{\vec{\zeta}}\to 0} \mathbb{E}\left(\sup_{t\in[0, T]}\la f_{m}, \mu^{{\vec{\zeta}}, j}_t\ra\right) < \infty
\end{equation}
by the continuity of the mapping $\nu\mapsto \sup_{t\in [0, T]} \la f, \nu_t \ra$, giving the first equality, and monotonicity of $f_{m, r}\leq f_m$, providing the second inequality.

From Eq (\ref{Eq:Continuity:DC}), taking the limit $r\to\infty$ first and using monotone convergence theorem, we'll get
\begin{equation*}
\mathbb{E}\left(\sup_{t\in[0, T]}\la f_{m}, \xi^j_t\ra\right) \leq  \liminf_{{\vec{\zeta}}\to 0} \mathbb{E}\left(\sup_{t\in[0, T]}\la f_{m}, \mu^{{\vec{\zeta}}, j}_t\ra\right) < \infty
\end{equation*}
which implies
\begin{equation}\label{Eq:Continuity:DC3}
\mathbb{E}\left(\sup_{t\in[0, T]}\la 1, \xi^j_t\ra\right)  < \infty.
\end{equation}
Using Lemma \ref{lem:MassControl}, it follows that
\begin{equation}\label{Eq:Continuity:DC4}
\lim_{m\to\infty}\mathbb{E}\left(\sup_{t\in[0, T]}\la f_{m}, \xi^j_t\ra\right ) = 0.
\end{equation}

This gives that there exists subsequence of $\sup_{t\in[0, T]} \la f_{m}, \xi^j_t\ra$, $m\geq0$, such that $\sup_{t\in[0, T]}\la f_{m}, \xi^j_t\ra \to 0$ almost surely, $\{\xi^{j}_{t}\}_{t\leq T}$ is a tight sequence almost surely  and  $\{\xi_t^j\}_{t\in [0, T]}$ is in $\mathcal{C}_{M^{'}_F(\mathbb{R}^d)}([0, T])$. Due to the latter fact and due to (\ref{Eq:Continuity:DC4}), $\{\xi_t^j\}_{t\in [0, T]}$  is in $\mathcal{C}_{M_F(\mathbb{R}^d)}([0, T])$ as well.
\end{proof}

\begin{theorem}[Tightness]\label{thm:tightness}
The measure-valued process $\{\mu^{{\vec{\zeta}}, j}_t\}_{t\in [0, T]}$ is  tight in $\mathbb{D}_{M_F(\mathbb{R}^d)}[0, T]$, for each $j= 1, 2,\cdots, J$.
\end{theorem}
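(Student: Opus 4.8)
The plan is to obtain tightness in the target space $\mathbb{D}_{M_F(\mathbb{R}^d)}[0, T]$ in two stages: first prove tightness in the larger space $\mathbb{D}_{M_F'(\mathbb{R}^d)}[0, T]$ of cadlag paths valued in finite measures equipped with the \emph{vague} topology, and then upgrade to the weak topology by ruling out escape of mass to infinity. All the hard estimates have already been done; this theorem is the bookkeeping that assembles them.

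For the first stage I would invoke the criterion of Roelly~\cite{Roelly:1986}: a family of $M_F'(\mathbb{R}^d)$-valued cadlag processes is tight in $\mathbb{D}_{M_F'(\mathbb{R}^d)}[0,T]$ as soon as, for every $f$ in a dense subset of $C_0(\mathbb{R}^d)$, the real-valued processes $\{\la f,\mu^{{\vec{\zeta}}, j}_t\ra\}$ are tight in $\mathbb{D}_{\mathbb{R}}[0,T]$. Since $C_0^2(\mathbb{R}^d)$ is dense in $C_0(\mathbb{R}^d)$, this is exactly Lemma~\ref{lem:RebolledoCriterion}, which was established through the Rebolledo criterion together with the Aldous condition, Lemmas~\ref{lem:tightStoppingTimes} and~\ref{lem:AldousCondition}. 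Hence $\{\mu^{{\vec{\zeta}}, j}_t\}_{t\in[0,T]}$ is tight in $\mathbb{D}_{M_F'(\mathbb{R}^d)}[0,T]$ for each $j$.

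For the second stage I would verify the compact-containment/no-loss-of-mass condition: for every $\epsilon>0$ there is a compact $K\subset\mathbb{R}^d$ such that
\begin{equation*}
\limsup_{{\vec{\zeta}}\to 0}\mathbb{P}\Big(\sup_{t\in[0,T]}\mu^{{\vec{\zeta}}, j}_t(K^c)>\epsilon\Big)<\epsilon .
\end{equation*}
Taking $K=\{\|x\|\le m\}$ and using $\mu^{{\vec{\zeta}}, j}_t(K^c)\le\la f_m,\mu^{{\vec{\zeta}}, j}_t\ra$ for the cutoff functions $f_m$ of Lemma~\ref{lem:MassControl}, Markov's inequality gives
\begin{equation*}
\limsup_{{\vec{\zeta}}\to 0}\mathbb{P}\Big(\sup_{t\in[0,T]}\mu^{{\vec{\zeta}}, j}_t(K^c)>\epsilon\Big)\le\frac{1}{\epsilon}\,\limsup_{{\vec{\zeta}}\to 0}\mathbb{E}\Big[\sup_{t\in[0,T]}\la f_m,\mu^{{\vec{\zeta}}, j}_t\ra\Big],
\end{equation*}
and the right-hand side tends to $0$ as $m\to\infty$ by Lemma~\ref{lem:MassControl}, so $m$ large does the job. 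Together with the uniform total-mass bound $\sup_{t\in[0,T]}\la 1,\mu^{{\vec{\zeta}}, j}_t\ra\le C(\mu)$ of Assumption~\ref{Assume:kernalBdd}, this prevents mass from leaking to infinity, either pointwise in time or in the $\vec{\zeta}\to0$ limit.

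Finally I would combine the two stages via the standard fact (see again~\cite{Roelly:1986}, or the argument in~\cite{BMW:2012} already used in the proof of Lemma~\ref{lem:Continuity}) that tightness in $\mathbb{D}_{M_F'(\mathbb{R}^d)}[0,T]$ plus the compact-containment condition above plus the uniform total-mass bound imply tightness in $\mathbb{D}_{M_F(\mathbb{R}^d)}[0,T]$: on the high-probability event where $\sup_{t\in[0,T]}\mu^{{\vec{\zeta}}, j}_t(K^c)$ is uniformly small for a sufficiently large compact $K$, the vague and weak Skorokhod $J_1$ topologies agree, so vague $J_1$-tightness transfers to weak $J_1$-tightness. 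I expect the point requiring the most care to be exactly this transfer in the last step — reconciling the time-regularity estimates, which were proved only against $C_0^2$ test functions (Lemmas~\ref{lem:tightStoppingTimes}--\ref{lem:RebolledoCriterion}), with the tail-mass estimates of Lemma~\ref{lem:MassControl}, in order to conclude $J_1$-tightness in the strictly stronger weak topology. Everything else reduces to the lemmas already in place.
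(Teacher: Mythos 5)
Your proposal is correct in outline and coincides with the paper exactly in its first stage: vague tightness in $\mathbb{D}_{M_F'(\mathbb{R}^d)}[0,T]$ via Roelly's criterion, reduced to real-valued tightness of $\la f,\mu^{{\vec{\zeta}},j}_t\ra$ for $f\in C^2_0(\mathbb{R}^d)$ through Lemmas~\ref{lem:tightStoppingTimes}--\ref{lem:RebolledoCriterion}. Where you diverge is the upgrade to the weak topology. The paper invokes the M\'el\'eard--Roelly characterization \cite{MS:1993}: convergence in $\mathbb{D}_{M_F(\mathbb{R}^d)}[0,T]$ is equivalent to convergence in $\mathbb{D}_{M_F'(\mathbb{R}^d)}[0,T]$ together with convergence in law of the total-mass process $\la 1,\mu^{{\vec{\zeta}},j}_t\ra$ in $\mathbb{D}_{\mathbb{R}}[0,T]$; it then proves the latter by writing $1=(1-f_m)+f_m$, using continuity of $\nu\mapsto\la 1-f_m,\nu\ra$ in the vague path topology and Lemma~\ref{lem:MassControl} for the remainder. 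This route presupposes that vague limit points $\xi^j$ have already been extracted (and uses Lemma~\ref{lem:Continuity}), so it is really an ``every vaguely convergent subsequence converges weakly'' argument. Your route via compact containment is more self-contained as a tightness statement and does not need the limit points first, and your Markov-inequality step from Lemma~\ref{lem:MassControl} is exactly right. The one place you should tighten is the final claim that ``the vague and weak $J_1$ topologies agree'' on the event $\{\sup_{t\le T}\mu^{{\vec{\zeta}},j}_t(K^c)\le\epsilon\}$ for a single compact $K$: for one fixed $K$ and one fixed $\epsilon$ the two topologies do not coincide there, they are only $\epsilon$-close in the dual-Lipschitz sense. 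To get genuine coincidence you must confine the paths, with probability at least $1-\epsilon$, to a set of measures $\mathcal{K}=\{\nu:\la 1,\nu\ra\le C(\mu),\ \nu(B_{m_k}^c)\le\delta_k\ \forall k\}$ with $\delta_k\to 0$, obtained by applying your Markov bound along a sequence $m_k\to\infty$ with error budget $\epsilon 2^{-k}$ and taking a union bound; on such a Prokhorov-compact $\mathcal{K}$ vague and weak convergence do agree, and the transfer of $J_1$-tightness (including the time-regularity of the total mass, which Lemmas~\ref{lem:tightStoppingTimes}--\ref{lem:AldousCondition} only give against compactly supported test functions) then goes through. With that refinement your argument is complete and delivers the same conclusion as the paper's.
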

\begin{proof}
Referring to Meleard and Roelly \cite{MS:1993}, to prove tightness in $\mathbb{D}_{M_F(\mathbb{R}^d)}[0, T]$, it suffices to prove $\la 1, \mu_t^{{\vec{\zeta}}, j}\ra$ converges in law to $\la 1, \xi_t^j \ra$ in $\mathbb{D}_\mathbb{R}([0, T])$, where note that $\xi_t^j$ is the limit point of $ \mu_t^{{\vec{\zeta}}, j}$ in $\mathbb{D}_{M_F'(\mathbb{R}^d)}[0, T]$, which also lies in $\mathcal{C}_{M_F(\mathbb{R}^d)}([0, T])$ by Lemma \ref{lem:Continuity}.

Let $F$ be any globally Lipschitz continuous bounded function from $\mathbb{R}$ to $\mathbb{R}$. Then
\begin{align}\label{Eq:LipContConv}
&\limsup_{{\vec{\zeta}}\to 0} |\mathbb{E}[F(\la 1, \mu_t^{{\vec{\zeta}}, j}\ra) - F(\la 1, \xi_t^j \ra)]|\nonumber\\
&\leq \lim_{m\to\infty}\limsup_{{\vec{\zeta}}\to 0} |\mathbb{E}[F(\la 1, \mu_t^{{\vec{\zeta}}, j}\ra) - F(\la 1 - f_m, \mu_t^{{\vec{\zeta}}, j}\ra)]| + \lim_{m\to\infty}\limsup_{{\vec{\zeta}}\to 0} |\mathbb{E}[F(\la 1 - f_m, \mu_t^{{\vec{\zeta}}, j}\ra)- F(\la 1-f_m, \xi_t^j \ra)  ]|\nonumber\\
&\quad +\lim_{m\to\infty} |\mathbb{E}[F(\la 1-f_m, \xi_t^j \ra) - F(\la 1, \xi_t^j \ra)]|,\nonumber\\
& = 0,\nonumber\\
\end{align}
where on the righthand side of Eq (\ref{Eq:LipContConv}), the first and third term become 0 as a result of Lemma  \ref{lem:MassControl}, while the second term vanishes due to the continuity of the mapping $\nu\mapsto \sup_{t\in [0, T]} \la 1- f_m, \nu_t \ra$ by noting that $1-f_m$ is compactly supported.
\end{proof}


\subsection{Uniqueness of Limiting Solution}\label{S:Uniqueness}

We've established tightness of the measure-valued processes $\{\mu^{{\vec{\zeta}}, j}_t\}_{t\in [0, T]}$, for all $1\leq j \leq J$ (See Theorem~\ref{thm:tightness}). 
We now show that the limiting measure is unique.  

For a measurable complete metric space $E$, $\nu \in M_F(E)$, define the norm $||\cdot||_{M_F(E)}$ on $M_F(E)$ as
\[
||\nu||_{M_F(E)} = \sup_{f\in L^\infty(E), ||f||_{L^\infty}\leq 1} |\la f, \nu \ra_E|,
\]
which is the variation norm of finite measures. Using density argument, one can show that this is equivalent to  (See step 4 of Theorem 3.2. of \cite{BMW:2012})
$$||\nu||_{M_F(E)} = \sup_{f\in C^2_b(E), ||f||_{L^\infty}\leq 1} |\la f, \nu \ra_E|. $$
For our purpose, we'll use test function $f\in C^2_b(E)$. The following two results then imply uniqueness:
\begin{lemma}\label{lem:productMeasure}

Let $E = (\R^d)^n$ be a product space of $\R^d$, $n\geq 1$. Let $\mu^1, \cdots , \mu^n \in M_F(\R^d)$, $\nu^1, \cdots , \nu^n \in M_F(\R^d)$ and $\otimes_{i = 1}^n \mu^i$, $\otimes_{i = 1}^n \nu^i$ be product measures on $E$. Then
$$ ||\otimes_{i = 1}^n \mu^i - \otimes_{i = 1}^n \nu^i||_{M_F(E)} \leq \sum_{i = 1}^n \left( ||\mu^i - \nu^i||_{M_F(\R^d)}  \times \Pi_{j = 1}^{i-1} \la 1, \mu^j\ra \times \Pi_{j = i+1}^{n} \la 1, \nu^j\ra \right).$$

\end{lemma}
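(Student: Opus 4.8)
The plan is to prove the estimate via a telescoping decomposition of the difference of product measures, followed by a coordinate-by-coordinate bound.

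First I would write the telescoping identity
\[
\otimes_{i=1}^n \mu^i - \otimes_{i=1}^n \nu^i = \sum_{i=1}^n \left( \mu^1\otimes\cdots\otimes\mu^{i}\otimes\nu^{i+1}\otimes\cdots\otimes\nu^n - \mu^1\otimes\cdots\otimes\mu^{i-1}\otimes\nu^{i}\otimes\nu^{i+1}\otimes\cdots\otimes\nu^n \right),
\]
so that the $i$-th summand is the signed product measure $\mu^1\otimes\cdots\otimes\mu^{i-1}\otimes(\mu^i-\nu^i)\otimes\nu^{i+1}\otimes\cdots\otimes\nu^n$ on $E=(\R^d)^n$ (with the convention that empty tensor blocks are omitted, so the $i=1$ term subtracts $\otimes_{j=1}^n\nu^j$). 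By the triangle inequality for $\|\cdot\|_{M_F(E)}$ it then suffices to bound each such term by $\Pi_{j=1}^{i-1}\la 1,\mu^j\ra\cdot\|\mu^i-\nu^i\|_{M_F(\R^d)}\cdot\Pi_{j=i+1}^n\la 1,\nu^j\ra$.

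Next, I would fix $i$ and a test function $f\in C_b^2(E)$ (equivalently $f\in L^\infty(E)$, by the equivalence of norms recalled just before the statement) with $\|f\|_{L^\infty}\leq 1$, and integrate $f$ in the variables $x_j$ for $j\neq i$, against $\mu^j$ for $j<i$ and against $\nu^j$ for $j>i$. Since all measures involved are finite, this produces a bounded measurable function $g_i$ of the single variable $x_i$ with $\|g_i\|_{L^\infty}\leq \Pi_{j=1}^{i-1}\la 1,\mu^j\ra\,\Pi_{j=i+1}^n\la 1,\nu^j\ra$ (and $g_i$ inherits the required smoothness from $f$ by dominated convergence if one prefers to stay within the $C_b^2$ class). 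By Fubini's theorem the pairing of $f$ with the $i$-th summand equals $\la g_i,\mu^i-\nu^i\ra$, whose absolute value is at most $\|g_i\|_{L^\infty}\,\|\mu^i-\nu^i\|_{M_F(\R^d)}$ directly from the definition of the variation norm on $M_F(\R^d)$. Summing over $i$ and then taking the supremum over admissible $f$ gives the claimed inequality.

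An equivalent and slightly slicker route avoids test functions entirely: the total variation norm is multiplicative under tensor products of (signed) finite measures, so $\|\mu^1\otimes\cdots\otimes(\mu^i-\nu^i)\otimes\cdots\otimes\nu^n\|_{M_F(E)} = \Pi_{j<i}\la 1,\mu^j\ra\,\|\mu^i-\nu^i\|_{M_F(\R^d)}\,\Pi_{j>i}\la 1,\nu^j\ra$, and the triangle inequality on the telescoping sum finishes the proof at once. In either case the argument is essentially routine bookkeeping; there is no substantive obstacle, the only point deserving a line of care being the verification that partial integration against finite measures preserves membership in the admissible class of test functions (or, on the alternative route, invoking the multiplicativity of the total variation norm).
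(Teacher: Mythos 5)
Your proof is correct and follows essentially the same route as the paper: the same telescoping decomposition of $\otimes_{i=1}^n\mu^i-\otimes_{i=1}^n\nu^i$ into terms of the form $(\otimes_{j<i}\mu^j)\otimes(\mu^i-\nu^i)\otimes(\otimes_{j>i}\nu^j)$, followed by the triangle inequality and a term-by-term bound against test functions $f$ with $\|f\|_{L^\infty}\leq 1$ (your Fubini step just makes explicit what the paper leaves implicit in its final inequality). The alternative remark about multiplicativity of the total variation norm is also valid but not needed.
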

\begin{proof}
For any $f\in L^\infty(E)$, $||f||_{L^\infty}\leq 1$, we have
\begin{align}
|\la f, \otimes_{i = 1}^n \mu^i - \otimes_{i = 1}^n \nu^i\ra_E|
&
= |\la f, \sum_{i = 1}^n\left((\otimes_{j = 1}^{i-1} \mu^j) \otimes (\mu^i - \nu^i)  \otimes ( \otimes_{j = i+1}^{n} \nu^j)\right) \ra_E| ,\nonumber\\
&
\leq \sum_{i = 1}^n |\la f, \left( (\otimes_{j = 1}^{i-1} \mu^j) \otimes (\mu^i - \nu^i)  \otimes ( \otimes_{j = i+1}^{n} \nu^j)\right) \ra_E|,\nonumber\\
&
\leq \sum_{i = 1}^n \left(||\mu^i - \nu^i||_{M_F(\R^d)} \times\Pi_{j = 1}^{i-1} \la 1, \mu^j\ra\times \Pi_{j = i+1}^{n} \la 1, \nu^j\ra \right).
\label{Eq:productMeasureIneq}
\end{align}
The last inequality is due to the assumption that $||f||_{L^\infty(E)}\leq 1$ and using the definition of signed measure norms.

Since Eq (\ref{Eq:productMeasureIneq}) is true for all  $f\in L^\infty(E)$, $||f||_{L^\infty}\leq 1$, Lemma \ref{lem:productMeasure} is proved.
\end{proof}

\begin{corollary}\label{cor:productMeasure}
Let $E = (\R^d)^n$ be a product space of $\R^d$, $n\geq 1$. Let $\mu^1, \cdots , \mu^n \in M_F(\R^d)$, $\nu^1, \cdots , \nu^n \in M_F(\R^d)$ and $\otimes_{i = 1}^n \mu^i$, $\otimes_{i = 1}^n \nu^i$ be product measures on $E$. If there exists $M > 0$, such that $|\la 1, \mu^i\ra |\leq M$ and $|\la 1, \nu^i\ra |\leq M$ for all $1\leq i \leq n$, then
$$ ||\otimes_{i = 1}^n \mu^i - \otimes_{i = 1}^n \nu^i||_{M_F(E)} \leq M^{n-1}\sum_{i = 1}^n ||\mu^i - \nu^i||_{M_F(\R^d)} .$$
\end{corollary}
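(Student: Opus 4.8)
The corollary is an immediate quantitative consequence of Lemma~\ref{lem:productMeasure}, obtained by replacing every total-mass factor appearing on its right-hand side with the uniform bound $M$. The plan is as follows. Start from the inequality
\[
\|\otimes_{i = 1}^n \mu^i - \otimes_{i = 1}^n \nu^i\|_{M_F(E)} \leq \sum_{i = 1}^n \left( \|\mu^i - \nu^i\|_{M_F(\R^d)}  \, \Pi_{j = 1}^{i-1} \la 1, \mu^j\ra \, \Pi_{j = i+1}^{n} \la 1, \nu^j\ra \right)
\]
furnished by Lemma~\ref{lem:productMeasure}. Fix $1 \le i \le n$ and look at the $i$-th summand. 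The factor $\Pi_{j=1}^{i-1}\la 1,\mu^j\ra$ is a product of $i-1$ nonnegative reals, each at most $M$ by hypothesis (with the convention that this empty product equals $1$ when $i=1$), so it is bounded above by $M^{i-1}$; likewise $\Pi_{j=i+1}^{n}\la 1,\nu^j\ra \le M^{n-i}$. Multiplying, the combined mass factor is at most $M^{i-1}\cdot M^{n-i} = M^{n-1}$, hence the $i$-th summand is bounded by $M^{n-1}\,\|\mu^i-\nu^i\|_{M_F(\R^d)}$. Summing over $i=1,\dots,n$ gives exactly
\[
\|\otimes_{i = 1}^n \mu^i - \otimes_{i = 1}^n \nu^i\|_{M_F(E)} \leq M^{n-1}\sum_{i = 1}^n \|\mu^i - \nu^i\|_{M_F(\R^d)},
\]
which is the claim.

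There is no substantive obstacle here; the only points requiring a moment's care are purely bookkeeping: (i) the elementary estimate $\Pi_{j=1}^{k} a_j \le M^{k}$ for $0\le a_j \le M$ holds whether or not $M<1$, since all factors are nonnegative, and (ii) the empty products arising at $i=1$ and $i=n$ are correctly read as $1 = M^0$. I would remark, for context, that this is precisely the form needed later: it converts a difference of product (reactant) measures into a sum of differences of the individual marginals with a uniform multiplicative constant, which is what allows a Gr\"onwall-type comparison to close the uniqueness argument for the limiting system~\eqref{Eq:Limit_EM_formula2} in Section~\ref{S:Uniqueness}, using Assumption~\ref{Assume:kernalBdd} (which supplies such an $M$ via $C(\mu)$) together with Assumption~\ref{Assume:kernalBdd0} and the structure of the placement densities.
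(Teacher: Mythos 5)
Your proof is correct and follows exactly the paper's route: the paper likewise derives the corollary directly from Lemma~\ref{lem:productMeasure} by bounding each mass factor $\la 1,\mu^j\ra$, $\la 1,\nu^j\ra$ by $M$, yielding $M^{n-1}$ per summand. Your version simply spells out the bookkeeping (empty products, nonnegativity) more explicitly than the paper's one-line justification.
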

\begin{proof}
This is a consequence of Lemma \ref{lem:productMeasure} using the fact that $\la 1, \mu^i\ra$ or $\la 1, \nu^i\ra$ are uniform bounded by $M$.
\end{proof}

\begin{lemma}[Uniqueness]\label{lem:uniqueness}
The solution to (\ref{Eq:Limit_EM_formula2}) is unique in $\mathcal{C}_{M_F(\mathbb{R}^d)}([0, T])$.
\end{lemma}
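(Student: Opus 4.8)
The plan is to close a Grönwall estimate for the total-variation distance between two solutions. Suppose $\xi=(\xi^1,\dots,\xi^J)$ and $\tilde\xi=(\tilde\xi^1,\dots,\tilde\xi^J)$ both belong to $\mathcal{C}_{\otimes_{j=1}^J M_F(\R^d)}([0,T])$, satisfy (\ref{Eq:Limit_EM_formula2}) for all $f\in C_b^2(\R^d)$ and $t\in[0,T]$, and have the same initial data, $\xi^j_0=\tilde\xi^j_0$. Since the total mass $\mu\mapsto\la 1,\mu\ra$ is continuous on $M_F(\R^d)$, the maps $t\mapsto\la 1,\xi^j_t\ra$ and $t\mapsto\la 1,\tilde\xi^j_t\ra$ are continuous on the compact interval $[0,T]$, hence bounded by a finite constant $M$ for all $j$ and $t$. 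Using the variation norm recalled before the statement, for which $\|\nu\|_{M_F(\R^d)}=\sup\{|\la f,\nu\ra|:f\in C_b^2(\R^d),\ \|f\|_{L^\infty}\le1\}$, put $u(t):=\sum_{j=1}^J\|\xi^j_t-\tilde\xi^j_t\|_{M_F(\R^d)}$; then $u$ is bounded on $[0,T]$ with $u(0)=0$, and it suffices to show $u\equiv0$.

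The first, and crucial, step is to pass from the generator form (\ref{Eq:Limit_EM_formula2}) to a mild (Duhamel) form. Let $T^j_r:=e^{r\mathcal{L}_j}$ be the heat semigroup generated by $\mathcal{L}_j$, which is a contraction on $L^\infty(\R^d)$ and maps $C_b^2(\R^d)$ into itself with non-increasing $C_b^2$ norm. Fix $t\in(0,T]$ and $f\in C_b^2(\R^d)$; then $s\mapsto T^j_{t-s}f$ is a continuous $C_b^2(\R^d)$-valued path solving $\partial_s(T^j_{t-s}f)+\mathcal{L}_j T^j_{t-s}f=0$. A telescoping argument over a partition of $[0,t]$ — inserting the frozen test function $T^j_{t-s_{k+1}}f\in C_b^2(\R^d)$ into (\ref{Eq:Limit_EM_formula2}) on each subinterval $[s_k,s_{k+1}]$, then sending the mesh to zero using the uniform $C_b^2$ bound on $s\mapsto T^j_{t-s}f$ and the uniform mass bound $M$ — makes the diffusion terms cancel and yields
\begin{equation*}
\la f,\xi^j_t\ra-\la f,\tilde\xi^j_t\ra=\int_0^t\big(R_j[\xi_s]-R_j[\tilde\xi_s]\big)\big(T^j_{t-s}f\big)\,ds,
\end{equation*}
where for a measure $\mu=\sum_k\mu^k\delta_{S_k}$ and $g\in C_b^2(\R^d)$ we write $R_j[\mu](g)=\sum_{\ell=1}^L\frac{1}{\vec{\alpha}^{(\ell)}!}\int_{\tilde{\mathbb{X}}^{(\ell)}}K_\ell(\vec x)\,\Phi_{\ell,j}[g](\vec x)\,\lambda^{(\ell)}[\mu](d\vec x)$ for the reaction functional on the second and third lines of (\ref{Eq:Limit_EM_formula2}), with $\Phi_{\ell,j}[g](\vec x):=\int_{\mathbb{Y}^{(\ell)}}\big(\sum_{r=1}^{\beta_{\ell j}}g(y_r^{(j)})\big)m_\ell(\vec y\,|\,\vec x)\,d\vec y-\sum_{r=1}^{\alpha_{\ell j}}g(x_r^{(j)})$ (for $\ell\le\tilde{L}$ only the loss part is present).

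The second step is the quadratic Lipschitz estimate. By Assumptions~\ref{Assume:kernalBdd0} and~\ref{Assume:measureP}, $\|K_\ell\,\Phi_{\ell,j}[g]\|_{L^\infty}\le C(K)(\alpha_{\ell j}+\beta_{\ell j})\|g\|_{L^\infty}$, so for any $g$ with $\|g\|_{L^\infty}\le1$,
\begin{equation*}
\big|\big(R_j[\xi_s]-R_j[\tilde\xi_s]\big)(g)\big|\le C(K)\sum_{\ell=1}^L(\alpha_{\ell j}+\beta_{\ell j})\,\big\|\lambda^{(\ell)}[\xi_s]-\lambda^{(\ell)}[\tilde\xi_s]\big\|_{M_F(\mathbb{X}^{(\ell)})}.
\end{equation*}
Because $\lambda^{(\ell)}[\mu]$ is a product of $|\vec{\alpha}^{(\ell)}|\le2$ of the marginals $\mu^k$, Corollary~\ref{cor:productMeasure} together with the mass bound $M$ gives $\|\lambda^{(\ell)}[\xi_s]-\lambda^{(\ell)}[\tilde\xi_s]\|_{M_F(\mathbb{X}^{(\ell)})}\le M^{|\vec{\alpha}^{(\ell)}|-1}\sum_{k=1}^J\alpha_{\ell k}\,\|\xi^k_s-\tilde\xi^k_s\|_{M_F(\R^d)}$, which is a fixed multiple of $u(s)$. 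Since $\|T^j_{t-s}f\|_{L^\infty}\le\|f\|_{L^\infty}\le1$, the displayed identity of the first step then gives $|\la f,\xi^j_t-\tilde\xi^j_t\ra|\le C\int_0^t u(s)\,ds$ uniformly over $f\in C_b^2(\R^d)$ with $\|f\|_{L^\infty}\le1$, for some constant $C$ depending only on $C(K)$, $M$, $L$, $J$ and the stoichiometric coefficients. Taking the supremum over such $f$ and summing over $j$ yields $u(t)\le C'\int_0^t u(s)\,ds$ on $[0,T]$; since $u$ is bounded with $u(0)=0$, Grönwall's inequality forces $u\equiv0$, i.e. $\xi^j_t=\tilde\xi^j_t$ for all $t\in[0,T]$ and all $j$.

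I expect the main obstacle to be the first step: rigorously justifying that a $\mathcal{C}_{M_F(\R^d)}$-valued solution of the generator form (\ref{Eq:Limit_EM_formula2}), which is posited only for time-independent test functions in $C_b^2(\R^d)$, also obeys the version with the moving test functions $s\mapsto T^j_{t-s}f$, so that the diffusion contributions telescope away. Passing to this mild form is unavoidable: testing (\ref{Eq:Limit_EM_formula2}) directly against a fixed $f$ does not close, because $\|\mathcal{L}_j f\|_{L^\infty}$ is not controlled by $\|f\|_{L^\infty}$ and the diffusion term cannot be absorbed into a Grönwall estimate in variation norm. The analytic inputs for the fix — $L^\infty$-contractivity of $T^j_r$, invariance of $C_b^2(\R^d)$, continuity of $r\mapsto\mathcal{L}_j T^j_r f$ on compact sets, and the uniform mass/tightness of $\{\xi^j_s\}$ — are mild; the care required is purely in making the Riemann-sum approximation uniform in the partition. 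The remaining steps — the degree-$\le2$ polynomial Lipschitz bound via Corollary~\ref{cor:productMeasure} and Grönwall — are routine.
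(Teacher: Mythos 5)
Your proposal is correct and follows essentially the same route as the paper's proof: both pass to the mild form by testing against the moving functions $s\mapsto \mathcal{P}_{j,t-s}f$ so the diffusion terms cancel, bound the reaction functional using Assumptions~\ref{Assume:kernalBdd0}--\ref{Assume:measureP} and the $L^\infty$-contractivity of the semigroup, control $\|\lambda^{(\ell)}[\xi_s]-\lambda^{(\ell)}[\tilde\xi_s]\|$ via Corollary~\ref{cor:productMeasure} with the uniform mass bound $M$, and close with Gr\"onwall. The one point you flag as delicate --- rigorously justifying the time-dependent test function identity from the generator form --- is simply asserted in the paper (which plugs $\psi_t\in C^{1,2}_b$ into \eqref{Eq:Limit_EM_formula2} without further comment), so your telescoping argument is, if anything, a more careful treatment of the same step.
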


\begin{proof}
Suppose we have two different set of solutions to (\ref{Eq:Limit_EM_formula2}), $\{(\xi_t^1, \xi_t^2, \cdots, \xi_t^J)\}_{t\in [0, T]}$ and $\{(\bar{\xi}_t^1, \bar{\xi}_t^2, \cdots, \bar{\xi}_t^J)\}_{t\in [0, T]}$, with the same initial condition  $(\xi_0^1, \xi_0^2, \cdots, \xi_0^J) = (\bar{\xi}_0^1, \bar{\xi}_0^2, \cdots,  \bar{\xi}_0^J)$. In Eq~(\ref{Eq:Limit_EM_formula2}), if we use a test function of the form of $\psi_t(x) \in C^{1, 2}_b(\mathbb{R}_+\times\mathbb{R}^d)$, it becomes
\begin{align}
\la \psi_t,\xi^j_{t}\ra
&
=\la \psi_0,\xi^{ j}_{0}\ra + \int_{0}^{t} \la \partial_s\psi_s + (\mathcal{L}_j \psi_s)(x), \xi_{s}^{ j}(dx) \ra ds\nonumber\\
&
\quad+\sum_{\ell = 1}^L \int_{0}^{t} \int_{\tilde{\mathbb{X}}^{(\ell)}}     \frac{1}{\vec{\alpha}^{(\ell)}!}   K_\ell\left(\vec{x}\right) \left(\int_{\mathbb{Y}^{(\ell)}}    \left( \sum_{r = 1}^{\beta_{\ell j}} \psi_s(y_r^{(j)}) \right) m_\ell\left(\vec{y} \, |\, \vec{x} \right)\, d \vec{y} - \sum_{r = 1}^{\alpha_{\ell j}} \psi_s(x_r^{(j)}) \right)\,\lambda^{(\ell)}[\xi_{s}](d\vec{x}) \, ds.
\label{Eq:Limit_EM_formula_tDepTest}
\end{align}

Let $\mathcal{P}_{j, t}, t\geq 0$, be the semigroup generated by $\mathcal{L}_j$, $j = 1, 2, \cdots, J$. Choose $\psi_s(x) = \mathcal{P}_{j, t-s}f(x)$,  respectively for each $1\leq j \leq J$ , where $f\in C_b^2(\mathbb{R}^d)$, $||f||_{L^\infty}\leq 1$, then Eq (\ref{Eq:Limit_EM_formula_tDepTest})  becomes

\begin{align}
\la f,\xi^j_{t}\ra
&
=\la \mathcal{P}_{j, t}f, \xi^{ j}_{0}\ra +\sum_{\ell = 1}^L \int_{0}^{t} \int_{\tilde{\mathbb{X}}^{(\ell)}}     \frac{1}{\vec{\alpha}^{(\ell)}!}  \left(\int_{\mathbb{Y}^{(\ell)}}    \left( \sum_{r = 1}^{\beta_{\ell j}}  \mathcal{P}_{\ell, t-s}f(y_r^{(j)}) \right) m_\ell\left(\vec{y} \, |\, \vec{x} \right)\, d \vec{y} - \sum_{r = 1}^{\alpha_{\ell j}}  \mathcal{P}_{\ell, t-s}f(x_r^{(j)}) \right)\, \nonumber\\
&\qquad \times   K_\ell\left(\vec{x}\right) \lambda^{(\ell)}[\xi_{s}](d\vec{x}) \, ds.
\label{Eq:Limit_EM_formula_tDepTest2}
\end{align}

From Eq (\ref{Eq:Limit_EM_formula_tDepTest2}), we obtain the following estimates for $\la f,\xi^{j}_{t} - \bar{\xi}^{j}_{t}\ra$. By (\ref{Eq:Continuity:DC3}) we have that
\[
M = 1\vee \sup_{\{t\in [0, T], j = 1, 2, \cdots, J\}}|\la 1, \xi_t^j\ra|  \vee |\la 1, \bar{\xi}_t^j\ra| < \infty,
\]
which then gives
\begin{align}
|\la f,\xi^j_{t} - \bar{\xi}^{j}_t\ra|
&
\leq \sum_{\ell = 1}^L \int_{0}^{t} |\int_{\tilde{\mathbb{X}}^{(\ell)}}     \frac{1}{\vec{\alpha}^{(\ell)}!}  \left(\int_{\mathbb{Y}^{(\ell)}}    \left( \sum_{r = 1}^{\beta_{\ell j}}  \mathcal{P}_{\ell, t-s}f(y_r^{(j)}) \right) m_\ell\left(\vec{y} \, |\, \vec{x} \right)\, d \vec{y} - \sum_{r = 1}^{\alpha_{\ell j}}  \mathcal{P}_{\ell, t-s}f(x_r^{(j)}) \right)\, \nonumber\\
&\qquad \times   K_\ell\left(\vec{x}\right) \left( \lambda^{(\ell)}[\xi_{s}](d\vec{x}) -  \lambda^{(\ell)}[\bar{\xi}_{s}](d\vec{x})\right) |\, ds, \nonumber\\
&
\leq C(K) \sum_{\ell = 1}^L  \frac{\alpha_{\ell j} + \beta_{\ell j}}{\vec{\alpha}^{(\ell)}!} \int_{0}^{t} || \lambda^{(\ell)}[\xi_{s}] -  \lambda^{(\ell)}[\bar{\xi}_{s}] ||_{M_F(\mathbb{X}^{(\ell)})}\, ds,  \nonumber\\
&
\leq C(K) \sum_{\ell = 1}^L  \frac{\alpha_{\ell j} + \beta_{\ell j}}{\vec{\alpha}^{(\ell)}!} \int_{0}^{t} || \otimes_{i = 1}^J ( \otimes_{r = 1}^{\alpha_{\ell i }} \xi^i_s ) - \otimes_{i = 1}^J ( \otimes_{r = 1}^{\alpha_{\ell i }}  \bar{\xi}^i_s)||_{M_F(\mathbb{X}^{(\ell)})}\, ds,  \nonumber\\
&
\leq C(K) \sum_{\ell = 1}^L  \frac{\alpha_{\ell j} + \beta_{\ell j}}{\vec{\alpha}^{(\ell)}!} \int_{0}^{t} M^{|\alpha^{(\ell)}|-1}\sum_{i = 1, \cdots, J} \alpha_{\ell i }  || \xi^i_{s} - \bar{\xi}^i_{s} ||_{M_F(\R^d)}\, ds,
\label{Eq:Limit_EM_formula_tDepTest_j}
\end{align}
where the last inequality is due to Corollary \ref{cor:productMeasure} since $\la 1, \xi_s^i\ra$ or $\la 1, \bar{\xi}_s^i\ra$ are uniformly bounded by $M$ for all $1\leq i\leq J$. In the second to the last equality of Eq (\ref{Eq:Limit_EM_formula_tDepTest_j}), we have used the following estimates.
\begin{align}
&|  \frac{1}{\vec{\alpha}^{(\ell)}!}  \left(\int_{\mathbb{Y}^{(\ell)}}    \left( \sum_{r = 1}^{\beta_{\ell j}}  \mathcal{P}_{\ell, t-s}f(y_r^{(j)}) \right) m_\ell\left(\vec{y} \, |\, \vec{x} \right)\, d \vec{y} - \sum_{r = 1}^{\alpha_{\ell j}}  \mathcal{P}_{\ell, t-s}f(x_r^{(j)}) \right)  K_\ell\left(\vec{x}\right) |\nonumber\\
&\leq |  \frac{1}{\vec{\alpha}^{(\ell)}!}  \left(\int_{\mathbb{Y}^{(\ell)}}    \left( \sum_{r = 1}^{\beta_{\ell j}}  |\mathcal{P}_{\ell, t-s}f(y_r^{(j)}) |\right) m_\ell\left(\vec{y} \, |\, \vec{x} \right)\, d \vec{y} + \sum_{r = 1}^{\alpha_{\ell j}} | \mathcal{P}_{\ell, t-s}f(x_r^{(j)}) |\right)  K_\ell\left(\vec{x}\right) |  \nonumber\\
& \leq  \frac{\alpha_{\ell j} + \beta_{\ell j}}{\vec{\alpha}^{(\ell)}!} C(K). \nonumber\\
&\quad\quad \text{ (here we use the fact that $||\mathcal{P}_tf||_{L\infty} \leq 1$ and $ \int_{\mathbb{Y}^{(\ell)}}    m_\ell\left(\vec{y} \, |\, \vec{x} \right)\, d \vec{y} = 1$)}
\end{align}

Based on Eq (\ref{Eq:Limit_EM_formula_tDepTest_j}), we obtain
\begin{align}
\sum_{j= 1}^J ||\xi_t^j - \bar{\xi}_t^j ||_{M_F(\R^d)}
&
\leq C(K) \sum_{\ell = 1}^L \sum_{j= 1}^J   \frac{\alpha_{\ell j} + \beta_{\ell j}}{\vec{\alpha}^{(\ell)}!} \int_{0}^{t} M^{|\alpha^{(\ell)}|-1}\sum_{i = 1}^{J} \alpha_{\ell i }  || \xi^i_{s} - \bar{\xi}^i_{s} ||_{M_F(\R^d)}\, ds,  \nonumber\\
&
\leq C(K)LJ   \max_{1\leq \ell \leq L, 1\leq j \leq J}\left\{\frac{\alpha_{\ell j} + \beta_{\ell j}}{\vec{\alpha}^{(\ell)}!} M^{|\alpha^{(\ell)}|-1}\alpha_{\ell j } \right\} \int_{0}^{t} \sum_{i = 1}^{J}  || \xi^i_{s} - \bar{\xi}^i_{s} ||_{M_F(\R^d)}\, ds,  \nonumber
\end{align}

Applying Gronwall's inequality, we get $\sum_{j = 1}^J ||\xi_t^j - \bar{\xi}_t^j ||_{M_F(\R^d)}  = 0 $ for all $t\in [0, T]$, which proves the uniqueness of solution, concluding the proof of the lemma.
\end{proof}

\section*{Acknowledgements}
SAI thanks Romain Yvinec for first introducing him to the MVSP representation for particle systems and the work of~\cite{BM:2015}.




\begin{appendices}
\section{$\gamma$ dependency of reaction kernels} \label{A:gammascaling}
In this section we demonstrate one way in which the claimed $\gamma$ scaling given in Assumption~\ref{Assume:rescaling} arises for bimolecular reactions. 
We ignore the zeroth order case as our main result, Theorem~\ref{thm:convergence}, does not allow for such reactions. In the first order case the reaction rate kernel, $K^{\gamma}(\vx)$, is usually interpreted as an internal property of molecules, giving the probability per time an individual reactant particle can undergo the reaction. As such, it would not be expected to depend on $\gamma$. In contrast, the reaction rate kernel for a bimolecular reaction is often calibrated to agree with a known well-mixed reaction rate constant in the limit that the system is forced to be well-mixed (i.e. the limit that particle diffusivities are taken to be infinite), which ultimately gives rise to the $\gamma$ dependence.

Consider an isolated system containing only two particles that can undergo a bimolecular annihilation reaction of the form $\textrm{A} + \textrm{B} \to \varnothing$. Assume we are considering the reaction within a bounded domain $\Omega \subset \R^d$ with hypervolume $\abs{\Omega}$. In modeling chemical reaction systems, one is often given a spatially-homogeneous, well-mixed, macroscopic reaction rate, $\Kwm$, with units of $(\text{molar concentration})^{1-\alpha} (\text{time})^{-1}$ for a reaction of order $\alpha$. The corresponding reaction rate used in a spatially-homogeneous well-mixed stochastic chemical kinetics model is then $\bar{K} = (\gamma \abs{\Omega})^{1-\alpha} \Kwm$, with units of $(\text{time})^{-1}$. For our second order reaction $\bar{K}$ gives the probability per time for the pair of \textrm{A} and \textrm{B} molecules to react and annihilate in the well-mixed stochastic model.

Consider the PBSRD model's dynamics until the two reactants annihilate. Let $p(\vx,\vy,t)$ denote the probability density the particle positions are $\vx$ and $\vy$ respectively at time $t$, and no reaction has yet occurred. Then
\begin{align*}
  \PD{p}{t}(\vx,\vy,t) = (\DA \lap_{\vx} + \DB \lap_{\vy})p(\vx,\vy,t) - K^{\gamma}(\vx,\vy) p(\vx,\vy,t), \quad \vx \in \Omega, \, \vy \in \Omega, \, t > 0,
\end{align*}
with reflecting no-flux boundary conditions for $\vx$ or $\vy$ in $\partial \Omega$. In the formal well-mixed limit that the particle diffusivities are taken to be infinite, we expect that $p(\vx,\vy,t) = p(t)$. Letting $P(t) = p(t) \abs{\Omega}^2$ denote the probability the reaction has not yet occurred, we then have
\begin{equation*}
  \D{P}{t} = - \paren{\frac{1}{\abs{\Omega}^2} \int_{\Omega^2} K^{\gamma}(\vx,\vy) d\vx \, d\vy} P(t).
\end{equation*}
To match the well-mixed stochastic model we would then require that
\begin{equation*}
  \frac{1}{\abs{\Omega}^2} \int_{\Omega^2} K^{\gamma}(\vx,\vy) d\vx \, d\vy = \bar{K} = \frac{\Kwm}{\gamma \abs{\Omega}}.
\end{equation*}
If we assume that $K^{\gamma}(\vx,\vy) = \gamma^{\beta} K(\vx,\vy)$, then we immediately obtain the scaling given in Assumption~\ref{Assume:rescaling}, i.e. $\beta = -1$.

More concretely, consider the widely used Doi interaction $K^{\gamma}(\vx,\vy) = \lambda 1_{\brac{0,\varepsilon}}(\abs{\vx-\vy})$~\cite{DoiSecondQuantA,DoiSecondQuantB}. We find that
\begin{equation*}
  \lambda = \frac{\Kwm \abs{\Omega}}{\gamma \abs{\mathcal{R} \cap \Omega^2}},
\end{equation*}
where $\mathcal{R} = \{(\vx,\vy) \in \R^{2d} | \abs{\vx-\vy} \leq \epsilon \}$. As $\Omega \to \R^d$ we formally expect
\begin{equation*}
  \lambda \to \frac{\Kwm}{\gamma \abs{B_{\varepsilon}}},
\end{equation*}
where $\abs{B_{\varepsilon}}$ denotes the hypervolume of the ball of radius $\varepsilon$. This  demonstrates that the scaling of Assumption~\ref{Assume:rescaling} persists in freespace.

\section{Mass Control Lemmas for Different Cases of Reactions}\label{A:appendixA}

The goal of this section is to prove the following key estimate.
\begin{lemma}\label{lem:MassControlMollifier}
Recall the definition~\eqref{Eq:Fm_def} of the functions $f_m$. For the $\ell$-th reaction, $1\leq \ell \leq L$, let $\eta$ be sufficiently small, $\epsilon>0$ and $R\in\mathbb{N}$ as in Assumption \ref{Assume:measureMrho}. Then, the following estimates hold for $m$ large enough,
\begin{align*}
&\avg{\sup_{t\in[0, T]}\int_{0}^{t} \int_{\tilde{\mathbb{X}}^{(\ell)}}     \frac{1}{\vec{\alpha}^{(\ell)}!}   K_\ell\left(\vec{x}\right) \left(\int_{\mathbb{Y}^{(\ell)}}    \left( \sum_{r = 1}^{\beta_{\ell j}} f_m(y_r^{(j)}) \right) m^\eta_\ell\left(\vec{y} \, |\, \vec{x} \right)\, d \vec{y}\right)\,\lambda^{(\ell)}[\mu^{\vec{\zeta}}_{s-}](d\vec{x}) \, ds} \nonumber\\
&
\leq 2C(K)(C(\mu)\vee 1)  \sup_{1\leq i\leq J}\avg{\sup_{t\in[0, T]}\int_{0}^{t}  \la f_{m-1-R}(x) , \mu_{s-}^{{\vec{\zeta}}, i}(dx) \ra  \, ds } +2C(K)C(\mu) ||f_m||_{C_b^2(\R^d)}T\left(C\eta + \epsilon\right).   \nonumber\\\end{align*}
\end{lemma}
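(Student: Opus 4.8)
The plan is to bound, case by case over the allowed reaction types of Assumptions~\ref{Assume:measureOne2One}--\ref{Assume:measureOne2Two}, the inner integral
$\int_{\mathbb{Y}^{(\ell)}} \big(\sum_{r=1}^{\beta_{\ell j}} f_m(y_r^{(j)})\big)\, m^\eta_\ell(\vec{y}\,|\,\vec{x})\, d\vec{y}$
by an expression of the form $\big(\sum_{k,s} f_{m-1-R}(x_s^{(k)})\big) + \|f_m\|_{C^2_b}(C\eta+\epsilon)$, i.e.\ transferring a product-placement mass estimate back onto the reactant positions. The key geometric observation is that every placement density in the listed assumptions is (a convex combination of) mollified point masses at convex combinations $\alpha_i x + (1-\alpha_i) y$ of reactant positions (for unbinding, also convolved with the separation density $\rho$). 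Since $G_\eta$ is supported in $B(0,\eta)$ and $f_m$ vanishes on $\{\|x\|\le m-1\}$ with $f_m\le f_{m-1}$ pointwise, we have, for $\eta<1$ and $m$ large, the pointwise bound $f_m\big(\alpha_i x+(1-\alpha_i)y + z\big)\le f_{m-1}(\alpha_i x+(1-\alpha_i)y)$ for $|z|\le\eta$, and then by convexity of the set $\{\|\cdot\|> m-1\}$'s complement, $f_{m-1}(\alpha_i x+(1-\alpha_i)y)\le f_{m-1}(x)+f_{m-1}(y)$ whenever $\alpha_i\in[0,1]$ (at least one of $\|x\|,\|y\|$ must exceed $m-1$ for the left side to be nonzero). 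Summing over $i$ with $\sum_i p_i=1$ controls the binding and one-to-one cases directly; the constants $\alpha_{\ell j}+\beta_{\ell j}\le 4$ and $1/\vec\alpha^{(\ell)}!\le 1$ are absorbed into $C(K)$.

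For the unbinding reaction $S_i\to S_j+S_k$ (Assumption~\ref{Assume:measureOne2Two}), the extra factor is $\rho(|x-y|)$ integrated over the product positions, and here is where the $R$-shift and the $\epsilon$ appear. Writing $y = x' \pm$ (separation), I would split the $\vec y$-integral into the region where the relative separation $|x^{(j)}-y^{(k)}|\le R$ and its complement. On the complement, Assumption~\ref{Assume:measureMrho} gives $\int_{r>R} r^{d-1}\rho(r)\,dr\le\epsilon$, so that contribution is bounded by $\|f_m\|_{C^2_b}\,\epsilon$ (times the reactant mass and $C(K)$). On the region $|x^{(j)}-y^{(k)}|\le R$, both product positions lie within distance $R$ (plus $\eta$) of the reactant position $z$, so $f_m(y_r^{(j)})\ne 0$ forces $\|z\|> m-1-R-\eta \ge m-1-R-1$, i.e.\ $f_m(y_r^{(j)})\le f_{m-1-R}(z)$ for $m$ large and $\eta<1$; this is the source of the $f_{m-1-R}$ in the statement. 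The $C\eta$ term collects the slack from the mollifier radius in those inequalities (uniform in the reaction type), using $\|f_m\|_{C^2_b}$-Lipschitz bounds to turn $\eta$-shifts of arguments into $C\eta$ errors where the cruder ``vanishing/$\le$'' argument is not quite enough.

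Having pointwise-bounded the integrand of $\int_{\tilde{\mathbb{X}}^{(\ell)}}\cdots\lambda^{(\ell)}[\mu^{\vec\zeta}_{s-}](d\vec x)$ by $C(K)\big(\sum_{k,s} f_{m-1-R}(x_s^{(k)})\big)+C(K)\|f_m\|_{C^2_b}(C\eta+\epsilon)$ times $\prod(\text{other marginal masses})$, I would integrate against $\lambda^{(\ell)}[\mu^{\vec\zeta}_{s-}]=\otimes_{k}\otimes_{s}\mu^{\vec\zeta,k}_{s-}$. Each term $\int f_{m-1-R}(x_s^{(k)})\,\lambda^{(\ell)}[\mu^{\vec\zeta}_{s-}](d\vec x)$ factors as $\la f_{m-1-R},\mu^{\vec\zeta,k}_{s-}\ra$ times a product of total masses $\la 1,\mu^{\vec\zeta,i}_{s-}\ra\le C(\mu)$ by Assumption~\ref{Assume:kernalBdd}; since $|\vec\alpha^{(\ell)}|\le 2$ there is at most one such leftover mass factor, giving the $(C(\mu)\vee 1)$. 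The constant-error term similarly picks up $\le C(\mu)$ from the masses and a $T$ from the time integral. Taking $\sup_{t\in[0,T]}$, replacing $\la f_{m-1-R},\mu^{\vec\zeta,k}_{s-}\ra$ by $\sup_{1\le i\le J}\la f_{m-1-R},\mu^{\vec\zeta,i}_{s-}\ra$ and pulling the $\sup_t$ inside the (monotone in $t$) time integral, then taking expectations, yields exactly the claimed bound with the stated constants $2C(K)(C(\mu)\vee 1)$ and $2C(K)C(\mu)\|f_m\|_{C^2_b}$; the factor $2$ absorbs the split into the ``reactant-shift'' and ``error'' pieces. The main obstacle is the unbinding case: getting the clean separation into an $f_{m-1-R}$ term plus an $\epsilon$-small tail requires carefully coordinating the $R$ from the $\rho$-tail estimate, the $\eta$ from the mollifier support, and the convexity argument for the center-of-mass; the other reaction types are comparatively routine applications of the same convexity-plus-support idea.
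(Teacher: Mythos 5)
Your proposal is correct in substance and rests on the same core ingredients as the paper's proof: a case analysis over the four admissible reaction types, the convexity/monotonicity bound $f_{m-1}(\alpha x+(1-\alpha)y)\le f_{m-1}(x)+f_{m-1}(y)$ for $\alpha\in[0,1]$ (which follows from $\|\alpha x+(1-\alpha)y\|\le\max(\|x\|,\|y\|)$ together with $f_{m-1}$ being a nondecreasing function of the norm, rather than from the parenthetical justification you give), the splitting of the unbinding integral at separation $R$ with the tail of $\rho$ from Assumption~\ref{Assume:measureMrho} supplying the $\epsilon$, and the factorization of $\lambda^{(\ell)}[\mu^{\vec{\zeta}}_{s-}]$ leaving at most one spare total-mass factor bounded by $C(\mu)$. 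Where you diverge from the paper is in the treatment of the mollifier. The paper first writes $m^\eta_\ell=m_\ell+(m^\eta_\ell-m_\ell)$ and disposes of the difference in one stroke via the estimate $\big|\int f\,(m^\eta_\ell-m_\ell)\big|\le C\|f\|_{C^1_b}\eta$ (Lemma~\ref{lem:etaConv}); the case analysis (Lemmas~\ref{lem:MassControlOne2One}--\ref{lem:MassControlTwo2Two}) is then carried out with the exact $\delta$-function placements, where implications such as ``$f_m(x)\neq 0$ and $|x-z|\le R$ force $f_{m-1-R}(z)=1$'' hold with no slack. You instead keep $m^\eta_\ell$ throughout and absorb the mollifier radius into the index shift. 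That can be made to work, but be careful with the one inequality you display: knowing only $\|z\|>m-1-R-\eta$ does \emph{not} give $f_m(y)\le f_{m-1-R}(z)$, because on the annulus $m-1-R-\eta<\|z\|\le m-1-R$ one has $f_{m-1-R}(z)<1$ while $f_m(y)$ may equal $1$; the defect is at most $\sup_m\|f_m\|_{C^1_b}\,\eta$ by the uniform Lipschitz bound on the $f_m$, and this is exactly the error that your $C\eta$ term must carry (alternatively, shift the index once more to $f_{m-2-R}$, which is harmless for the subsequent iteration in Lemma~\ref{lem:MassControl}). Once that step is made explicit your argument closes with the stated constants; the paper's two-step decomposition buys a cleaner separation of the two error sources ($\eta$ from mollification, $\epsilon$ from the $\rho$-tail) at the cost of proving the auxiliary mollifier estimate separately for each reaction type.
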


To do so we first prove some intermediate results and the proof of Lemma \ref{lem:MassControlMollifier} will then follow at the end of this section.

\begin{lemma}\label{lem:etaConv}
For any $\eta \geq 0$ small enough, $\tilde{L} + 1\leq \ell \leq L$, $\vec{y}\in \mathbb{Y}^{(\ell)}$, $\vec{x}\in \mathbb{X}^{(\ell)}$, and $f\in C^2_b(\mathbb{Y}^{(\ell)})$, there exists a constant $C$ such that
$$\abs{\int_{\mathbb{Y}^{(\ell)}} f(\vec{y})\left( m^\eta_{\ell}(\vec{y} \, | \, \vec{x}) - m_{\ell}(\vec{y} \, | \, \vec{x}) \right)\, d\vec{y}}\leq C ||f||_{C^2_b(\mathbb{Y}^{(\ell)})}\eta,$$
given Definition \ref{def:molifier} on the choice of positive mollifier and Assumptions \ref{Assume:measureOne2One} - \ref{Assume:measureMrho} on the placement densities.
\end{lemma}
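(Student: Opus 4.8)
The plan is to reduce, by linearity and the triangle inequality, to a single elementary mollifier estimate applied case by case to the forms of the placement densities prescribed by Assumptions~\ref{Assume:measureOne2One}--\ref{Assume:measureOne2Two}. The observation that makes this work is that in each of those cases $m^\eta_\ell(\cdot\,|\,\vec{x})$ is, in suitable linear coordinates on $\mathbb{Y}^{(\ell)}$, the convolution of the limiting measure $m_\ell(\cdot\,|\,\vec{x})$ with a width-$\eta$ mollifier built from $G_\eta$, while $m_\ell(\cdot\,|\,\vec{x})$ is a probability measure (by Assumption~\ref{Assume:measureP}, together with Assumption~\ref{Assume:measureMrho} in the unbinding case). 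Thus the whole argument rests on the standard fact that for $\phi\in C^1_b(\R^d)$ and $a\in\R^d$,
\[
\Bigl|\int_{\R^d}\phi(u)\,G_\eta(u-a)\,du-\phi(a)\Bigr|\;\le\; C_G\,\eta\,\|\nabla\phi\|_{L^\infty(\R^d)},\qquad C_G:=\int_{\R^d}|v|\,G(v)\,dv<\infty,
\]
which follows from the substitution $u=a+\eta v$, the normalization $\int G=1$, and the compact support of $G$ (Definition~\ref{def:molifier}); equivalently $\|\phi*G_\eta-\phi\|_{L^\infty}\le C_G\,\eta\,\|\nabla\phi\|_{L^\infty}$, and similarly with $G_\eta$ replaced by the reflected mollifier $v\mapsto G_\eta(-v)$.

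First I would handle the three cases whose limiting densities are finite convex combinations of Dirac masses with no un-mollified prefactor: $S_i\to S_j$ and $S_i+S_k\to S_j$, where $m^\eta_\ell(\cdot\,|\,\vec{x})=m_\ell(\cdot\,|\,\vec{x})*G_\eta$ with the convolution in the single $\R^d$ product coordinate; and $S_i+S_k\to S_j+S_r$, where $m^\eta_\ell(\cdot\,|\,\vec{x})$ is the convolution of $m_\ell(\cdot\,|\,\vec{x})$ with a product mollifier on $\R^{2d}$. Using the duality $\int f\,d(\mu*\psi)=\int(\psi^{\vee}*f)\,d\mu$ (with $\psi^{\vee}(v)=\psi(-v)$) and that $m_\ell(\cdot\,|\,\vec{x})$ has total mass $1$, the quantity to be bounded equals $\int(G_\eta^{\vee}*f-f)\,dm_\ell(\cdot\,|\,\vec{x})$ (or its analogue with a product mollifier), which is at most $\|G_\eta^{\vee}*f-f\|_{L^\infty}$, hence at most $2\,C_G\,\eta\,\|f\|_{C^2_b(\mathbb{Y}^{(\ell)})}$ after telescoping through a partial convolution in the tensor case.

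The hard part will be the unbinding reaction $S_i\to S_j+S_k$, where the factor $\rho(|x-y|)$ is not mollified and the maps $(x,y)\mapsto\alpha_i x+(1-\alpha_i)y$ are non-invertible. Here I would, term by term in the sum over $i$, change coordinates on $\mathbb{Y}^{(\ell)}=\R^{2d}$ by $u=\alpha_i x+(1-\alpha_i)y$, $w=x-y$ --- a linear bijection with unit Jacobian for every $\alpha_i$ --- so that $f(\vec{y})=\tilde f(u,w):=f\bigl(u+(1-\alpha_i)w,\,u-\alpha_i w\bigr)$, the $i$-th term of $m^\eta_\ell(\cdot\,|\,z)$ becomes $\rho(|w|)\,G_\eta(z-u)$, and the $i$-th term of $m_\ell(\cdot\,|\,z)$ becomes $\rho(|w|)\,\delta_z(du)$. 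Integrating first in $u$ and invoking the elementary estimate above gives, \emph{uniformly in} $w$,
\[
\Bigl|\int_{\R^d}\tilde f(u,w)\,G_\eta(z-u)\,du-\tilde f(z,w)\Bigr|\;\le\; C_G\,\eta\,\sup_{u\in\R^d}|\nabla_u\tilde f(u,w)|\;\le\; 2\,C_G\,\eta\,\|f\|_{C^1_b},
\]
and then integrating this bound against $\rho(|w|)\,dw$ and using $\int_{\R^d}\rho(|w|)\,dw=1$ (Assumption~\ref{Assume:measureMrho}) controls the $i$-th term by $2C_G\eta\|f\|_{C^1_b}$; summing the finitely many terms (with $\sum_i p_i=1$, and noting $\int_{\R^d}\tilde f(z,w)\rho(|w|)\,dw$ is exactly the contribution of the $i$-th term of $m_\ell$ after reversing the substitution) finishes this case.

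Taking $C$ to be the maximum of the finitely many $\eta$-independent constants produced above yields the claim. The genuine obstacle is the unbinding case: one must choose a change of coordinates that turns the mollified factor into a bona fide width-$\eta$ mollification of a Dirac mass in a single variable while leaving the separation density $\rho$ untouched as a probability density in the complementary variable, and then verify that the mollification error is uniform in the separation variable before integrating against $\rho$; the remaining three cases are immediate from the standard $C^1$ mollifier bound.
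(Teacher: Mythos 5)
Your proposal is correct and follows essentially the same route as the paper's proof: a case-by-case reduction to the standard $C^1_b$ mollifier estimate, with the unbinding reaction $S_i\to S_j+S_k$ handled by a unit-Jacobian linear change of variables that isolates the mollified variable, yields a bound uniform in the separation variable $w$, and then integrates against $\rho(|w|)\,dw=1$. The only cosmetic difference is that you phrase the three easy cases via convolution duality rather than direct estimation on the support ball, which changes nothing substantive.
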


\begin{proof}
This is essentially a result from the definition of mollifiers. We'll discuss this estimate for each of the following different cases of reactions. The upper bound is always some constant of the order of $\eta$ times $ ||f||_{C^1_b(\mathbb{Y}^{(\ell)})}$. 
\begin{description}

\item[Case 1] Reaction of the form $S_i \rightarrow S_j$.

Plugging in the definitions of $m^\eta_{\ell}(\vec{y} \, | \, \vec{x})$ and $m_{\ell}(\vec{y} \, | \, \vec{x})$ from Assumption \ref{Assume:measureOne2One}, we will get
\begin{align*}
|\int_{\mathbb{Y}^{(\ell)}} f(\vec{y})\left( m^\eta_{\ell}(\vec{y} \, | \, \vec{x}) - m_{\ell}(\vec{y} \, | \, \vec{x}) \right)\, d\vec{y}|
&= |\int_{\mathbb{R}^d} f(y)G_\eta(y - x) \, d y - f(x)|\\
& = |\int_{\mathbb{R}^d} \left(f(y) - f(x)\right)G_\eta(y - x) \, d y|\\
&\leq |\int_{B(x, \eta)} |f(y) - f(x)|G_\eta(y - x) \, d y|\\
&\leq |\int_{B(x, \eta)} ||f||_{C_b^1(\R^d)}\eta \times G_\eta(y - x) \, d y|\\
&\leq ||f||_{C_b^1(\R^d)}\eta.
\end{align*}

\item[Case 2] Reaction of the form $S_i \rightarrow S_j + S_k$.

Plugging in the definitions of $m^\eta_{\ell}(\vec{y} \, | \, \vec{x})$ and $m_{\ell}(\vec{y} \, | \, \vec{x})$ from Assumption \ref{Assume:measureOne2Two}, we will get
\begin{align*}
&|\int_{\mathbb{Y}^{(\ell)}} f(\vec{y})\left( m^\eta_{\ell}(\vec{y} \, | \, \vec{x}) - m_{\ell}(\vec{y} \, | \, \vec{x}) \right)\, d\vec{y}| \\
&= |\sum_{i = 1}^I p_i \times \left[\int_{\mathbb{R}^{2d}} f(y_1, y_2)\rho(|y_1 - y_2|)G_\eta\left(x - (\alpha_i y_1 + (1 - \alpha_i)y_2)\right)\, d y_1\,dy_2 \right.\\
&\quad - \left.\int_{\mathbb{R}^{2d}} f(y_1, y_2)\rho(|y_1 - y_2|)\delta\left(x - (\alpha_i y_1 + (1 - \alpha_i)y_2)\right)\, d y_1\,dy_2 \right]|\text{ (Let $w = y_1 - y_2$)}\\
&= |\sum_{i = 1}^I p_i \times \left[\int_{\mathbb{R}^{2d}} f(w + y_2, y_2)\rho(|w|)G_\eta\left(x - \alpha_i w  - y_2)\right)\, d w\,dy_2 \right.\\
&\quad - \left.\int_{\mathbb{R}^{2d}} f(w + y_2, y_2)\rho(|w|)\delta\left(x - \alpha_i w - y_2)\right)\, d w\,dy_2 \right]|\\
&= |\sum_{i = 1}^I p_i \times \left[\int_{\mathbb{R}^d} \rho(|w|) \left( \int_{\mathbb{R}^d}\left( f(w + y_2, y_2)  - f(w + x - \alpha_i w , x - \alpha_i w) \right)G_\eta\left(x - \alpha_i w  - y_2 \right) \,dy_2 \right)\, d w \right]\\
&\leq |\sum_{i = 1}^I p_i \times \left[\int_{\mathbb{R}^d} \rho(|w|) \left( \int_{B(x - \alpha_i w, \eta)}| f(w + y_2, y_2)  - f(w + x - \alpha_i w , x - \alpha_i w) | \times G_\eta\left(x - \alpha_i w  - y_2 \right) \,dy_2 \right)\, d w \right]\\
&\leq |\sum_{i = 1}^I p_i \times \left[\int_{\mathbb{R}^d} \rho(|w|) \left( \int_{B(x - \alpha_i w, \eta)}||f||_{C^1(\R^{2d})} \eta\times G_\eta\left(x - \alpha_i w  - y_2 \right) \,dy_2 \right)\, d w \right]\\
&\leq ||f||_{C^1(\R^{2d})}\eta \quad \text{( by noting that } \int_{\R^d} \rho(|w|)\, dw =1 \text{ in Assumption \ref{Assume:measureMrho} }.
\end{align*}

\item[Case 3] Reaction of the form $S_i + S_k \rightarrow S_j$.

Plugging in the definitions of $m^\eta_{\ell}(\vec{y} \, | \, \vec{x})$ and $m_{\ell}(\vec{y} \, | \, \vec{x})$ from Assumption \ref{Assume:measureTwo2One}, we will get
\begin{align*}
&|\int_{\mathbb{Y}^{(\ell)}} f(\vec{y})\left( m^\eta_{\ell}(\vec{y} \, | \, \vec{x}) - m_{\ell}(\vec{y} \, | \, \vec{x}) \right)\, d\vec{y}| \\
&= |\sum_{i = 1}^I p_i\times\left[\int_{\mathbb{R}^{d}} f(y)G_\eta\left(y - (\alpha_i x_1 + (1-\alpha_i) x_2) \right) \, d y - f(\alpha_i x_1 + (1-\alpha_i) x_2)\right]|\\
&= |\sum_{i = 1}^I p_i\times\int_{\mathbb{R}^{d}} \left( f(y) - f(\alpha_i x_1 + (1-\alpha_i) x_2) \right)G_\eta\left(y - (\alpha_i x_1 + (1-\alpha_i) x_2) \right) \, d y|\\
&\leq |\sum_{i = 1}^I p_i\times\int_{B(\alpha_i x_1 + (1-\alpha_i) x_2, \eta)} | f(y) - f(\alpha_i x_1 + (1-\alpha_i) x_2) |G_\eta\left(y - (\alpha_i x_1 + (1-\alpha_i) x_2) \right) \, d y|\\
&\leq |\sum_{i = 1}^I p_i\times\int_{B(\alpha_i x_1 + (1-\alpha_i) x_2, \eta)}  ||f||_{C_b^1(\R^d)}\eta\times G_\eta\left(y - (\alpha_i x_1 + (1-\alpha_i) x_2) \right) \, d y|\\
&\leq ||f||_{C_b^1(\R^d)}\eta
\end{align*}

\item[Case 4] Reaction of the form $S_i + S_k \rightarrow S_j + S_r$.

Plugging in the definitions of $m^\eta_{\ell}(\vec{y} \, | \, \vec{x})$ and $m_{\ell}(\vec{y} \, | \, \vec{x})$ from Assumption \ref{Assume:measureTwo2Two}, we will get
\begin{align*}
&|\int_{\mathbb{Y}^{(\ell)}} f(\vec{y})\left( m^\eta_{\ell}(\vec{y} \, | \, \vec{x}) - m_{\ell}(\vec{y} \, | \, \vec{x}) \right)\, d\vec{y}| \\
&= |p\times\left[\int_{\mathbb{R}^{2d}} f(y_1, y_2)G_\eta(y_1 - x_1) G_\eta(y_2 - x_2)\, d y_1 \, d y_2 - f(x_1, x_2)\right]\\
& \quad + (1-p)\times\left[\int_{\mathbb{R}^{2d}} f(y_1, y_2)G_\eta(y_2 - x_1) G_\eta(y_1 - x_2)\, d y_1 \, d y_2 - f(x_2, x_1)\right]|\\
&= |p\times\int_{\mathbb{R}^{2d}} \left(f(y_1, y_2) - f(x_1, x_2)\right)G_\eta(y_1 - x_1) G_\eta(y_2 - x_2)\, d y_1 \, d y_2 \\
& \quad + (1-p)\times\int_{\mathbb{R}^{2d}} \left(f(y_1, y_2) - f(x_2, x_1)\right)G_\eta(y_2 - x_1) G_\eta(y_1 - x_2)\, d y_1 \, d y_2|\\
&\leq |p\times\int_{B((x_1, x_2), \sqrt{2}\eta)} |f(y_1, y_2) - f(x_1, x_2)|G_\eta(y_1 - x_1) G_\eta(y_2 - x_2)\, d y_1 \, d y_2 \\
& \quad + (1-p)\times\int_{B((x_2, x_1), \sqrt{2}\eta)} | f(y_1, y_2) - f(x_2, x_1) | G_\eta(y_2 - x_1) G_\eta(y_1 - x_2)\, d y_1 \, d y_2|\\
&\leq |p\times\int_{B((x_1, x_2), \sqrt{2}\eta)} ||f||_{C_b^1(\R^{2d})}\times \sqrt{2}\eta\times G_\eta(y_1 - x_1) G_\eta(y_2 - x_2)\, d y_1 \, d y_2 \\
& \quad + (1-p)\times\int_{B((x_2, x_1), \sqrt{2}\eta)}||f||_{C_b^1(\R^{2d})}\times \sqrt{2}\eta\times G_\eta(y_2 - x_1) G_\eta(y_1 - x_2)\, d y_1 \, d y_2|\\
&\leq \sqrt{2}||f||_{C_b^1(\R^{2d})}\eta.
\end{align*}
\end{description}

\end{proof}


\begin{lemma}\label{lem:MassControlOne2One}
If the $\ell$-th reaction is a reaction of the form $S_i \rightarrow S_j$, then
\begin{equation}
\int_{0}^{t} \int_{\tilde{\mathbb{X}}^{(\ell)}}     \frac{1}{\vec{\alpha}^{(\ell)}!}   K_\ell\left(\vec{x}\right) \left(\int_{\mathbb{Y}^{(\ell)}}    \left( \sum_{r = 1}^{\beta_{\ell j}} f_m(y_r^{(j)}) \right) m_\ell\left(\vec{y} \, |\, \vec{x} \right)\, d \vec{y}\right)\,\lambda^{(\ell)}[\mu^{\vec{\zeta}}_{s-}](d\vec{x}) \, ds \leq C(K) \int_{0}^{t} \la f_m(x), \mu^{{\vec{\zeta}}, i}_{s-}(dx) \ra \, ds
\end{equation}
\end{lemma}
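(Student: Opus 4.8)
The plan is to directly unfold the definitions and bound the product placement density integral for a first order reaction $S_i \to S_j$. For such a reaction we have $|\vec{\alpha}^{(\ell)}| = 1$ with $\alpha_{\ell i} = 1$ and all other $\alpha_{\ell k} = 0$, so $\vec{\alpha}^{(\ell)}! = 1$ and the reactant measure mapping reduces to $\lambda^{(\ell)}[\mu^{\vec{\zeta}}_{s-}](d\vec{x}) = \mu^{\vec{\zeta},i}_{s-}(dx)$ on $\tilde{\mathbb{X}}^{(\ell)} = \R^d$ (the diagonal-removal is vacuous since only one reactant particle is involved). On the product side we have $\beta_{\ell j} = 1$ with all other $\beta_{\ell k}=0$, so $\mathbb{Y}^{(\ell)} = \R^d$ and, by Assumption~\ref{Assume:measureOne2One}, $m_\ell(y \mid x) = \delta_x(y)$, whence $\int_{\mathbb{Y}^{(\ell)}} \left( \sum_{r=1}^{\beta_{\ell j}} f_m(y_r^{(j)}) \right) m_\ell(\vec{y}\mid \vec{x})\, d\vec{y} = f_m(x)$.

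First I would substitute these simplifications into the left-hand side, reducing the iterated integral to
\begin{equation*}
\int_0^t \int_{\R^d} K_\ell(x) f_m(x)\, \mu^{\vec{\zeta},i}_{s-}(dx)\, ds.
\end{equation*}
Then I would apply Assumption~\ref{Assume:kernalBdd0}, which gives $K_\ell(x) \le C(K)$ uniformly in $x$, together with $f_m \ge 0$ (from the construction in Lemma~\ref{lem:MassControl}, property~\eqref{Eq:Fm_def}) and the non-negativity of the measure $\mu^{\vec{\zeta},i}_{s-}$, to pull the bound out:
\begin{equation*}
\int_0^t \int_{\R^d} K_\ell(x) f_m(x)\, \mu^{\vec{\zeta},i}_{s-}(dx)\, ds \le C(K) \int_0^t \la f_m(x), \mu^{\vec{\zeta},i}_{s-}(dx) \ra\, ds,
\end{equation*}
which is exactly the claimed estimate. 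A minor point to address carefully is that in $S_i \to S_j$ the index $i$ (reactant species) and $j$ (the species for which we are tracking the equation, which here must equal the product species for $\beta_{\ell j}$ to be nonzero) are distinct, so the resulting bound is in terms of $\mu^{\vec{\zeta},i}_{s-}$, the reactant marginal, as stated.

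There is essentially no hard part here: this is a bookkeeping lemma isolating the one-to-one conversion contribution, and the only care required is matching the notation for the configuration spaces, placement densities, and the combinatorial factor $1/\vec{\alpha}^{(\ell)}!$ correctly against Definitions~\ref{def:reacPosSpace}--\ref{def:offDiagSpace} and Assumption~\ref{Assume:measureOne2One}. If anything could be viewed as a subtlety, it is justifying that the $\delta$-function action $\int_{\R^d} f_m(y)\,\delta_x(y)\,dy = f_m(x)$ is legitimate in the limiting (unmollified) equation~\eqref{Eq:Limit_EM_formula2}; but since $f_m \in C^2_b(\R^d)$ this is immediate, and the analogous mollified statement is controlled separately by Lemma~\ref{lem:etaConv}. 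The lemma then feeds into the proof of Lemma~\ref{lem:MassControlMollifier} as the $S_i \to S_j$ case.
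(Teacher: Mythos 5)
Your proposal is correct and follows essentially the same route as the paper's proof: unfold $\vec{\alpha}^{(\ell)}!=1$, $\lambda^{(\ell)}[\mu^{\vec{\zeta}}_{s-}](d\vec{x})=\mu^{\vec{\zeta},i}_{s-}(dx)$, and $m_\ell(y\mid x)=\delta_x(y)$ to reduce the left-hand side to $\int_0^t\int_{\R^d}K_\ell(x)f_m(x)\,\mu^{\vec{\zeta},i}_{s-}(dx)\,ds$, then bound $K_\ell\leq C(K)$ using nonnegativity of $f_m$ and of the measure. The additional remarks on the well-definedness of the $\delta$-function pairing and on the role of the mollified version (handled by Lemma~\ref{lem:etaConv}) are accurate and consistent with how the paper organizes the argument.
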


\begin{proof}
By plugging in the specific form of the reaction rate and placement density as in Assumption \ref{Assume:measureOne2One}, we have
\begin{align}
&\int_{0}^{t} \int_{\tilde{\mathbb{X}}^{(\ell)}}     \frac{1}{\vec{\alpha}^{(\ell)}!}   K_\ell\left(\vec{x}\right) \left(\int_{\mathbb{Y}^{(\ell)}}    \left( \sum_{r = 1}^{\beta_{\ell j}} f_m(y_r^{(j)}) \right) m_\ell\left(\vec{y} \, |\, \vec{x} \right)\, d \vec{y}\right)\,\lambda^{(\ell)}[\mu^{\vec{\zeta}}_{s-}](d\vec{x}) \, ds \nonumber\\
&
= \int_{0}^{t} \int_{\R^d}   K_\ell(x) \left(\int_{\R^d}   f_m(y) \delta_x(y) \, dy  \right)\,\mu^{{\vec{\zeta}}, i}_{s-}(dx) \, ds \nonumber\\
&
= \int_{0}^{t} \int_{\R^d}   K_\ell(x)   f_m(x)\mu^{{\vec{\zeta}}, i}_{s-}(dx) \, ds \nonumber\\
&
\leq C(K) \int_{0}^{t} \la f_m(x), \mu^{{\vec{\zeta}}, i}_{s-}(dx) \ra \, ds. \nonumber
\end{align}
\end{proof}
\begin{lemma}\label{lem:MassControlOne2Two}
If the $\ell$-th reaction is a reaction of the form $S_i \rightarrow S_j + S_k$, where $i$ and $k$ could be $j$, then for the choice of $\epsilon \geq 0$ and $R \in \N$ in Assumption \ref{Assume:measureMrho}, we have for $m$ large enough,
\begin{align}
&\int_{0}^{t} \int_{\tilde{\mathbb{X}}^{(\ell)}}     \frac{1}{\vec{\alpha}^{(\ell)}!}   K_\ell\left(\vec{x}\right) \left(\int_{\mathbb{Y}^{(\ell)}}    \left( \sum_{r = 1}^{\beta_{\ell j}} f_m(y_r^{(j)}) \right) m_\ell\left(\vec{y} \, |\, \vec{x} \right)\, d \vec{y}\right)\,\lambda^{(\ell)}[\mu^{\vec{\zeta}}_{s-}](d\vec{x}) \, ds\nonumber\\
& \leq 2C(K) \int_{0}^{t}   \la f_{m-1-R}(x) , \mu_{s-}^{{\vec{\zeta}}, i}(dx) \ra  \, ds  + 2C(K)  ||f_m||_{C_b^2(\R^d)}C(\mu)t\epsilon.\nonumber
\end{align}
\end{lemma}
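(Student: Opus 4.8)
The plan is to substitute the explicit forms of the reaction rate kernel $K_\ell$ and the placement density $m_\ell$ for a reaction of type $S_i \to S_j + S_k$ into the left-hand side and then estimate the resulting integral. By Assumption~\ref{Assume:measureOne2Two}, the reactant space is one-dimensional ($\vec{\alpha}^{(\ell)}$ has $|\vec{\alpha}^{(\ell)}|=1$, so $\vec{\alpha}^{(\ell)}! = 1$ and $\lambda^{(\ell)}[\mu^{\vec{\zeta}}_{s-}](d\vec{x}) = \mu^{{\vec{\zeta}}, i}_{s-}(dz)$), while the product space $\mathbb{Y}^{(\ell)}$ is $\R^{2d}$ with placement density $m_\ell(x,y\,|\,z) = \rho(|x-y|)\sum_{p=1}^I q_p\,\delta(z - (\alpha_p x + (1-\alpha_p)y))$. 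Using the $\delta$-function identity described after Remark~\ref{Rm:density}, the inner integral $\int_{\mathbb{Y}^{(\ell)}} \big(\sum_{r=1}^{\beta_{\ell j}} f_m(y_r^{(j)})\big) m_\ell(\vec{y}\,|\,z)\,d\vec{y}$ reduces, after the change of variables $w = x-y$, to a sum over $p$ of terms of the form $\int_{\R^d} \rho(|w|)\,f_m(\text{(shift of }z\text{ by a multiple of }w))\,dw$, with one such contribution for each product particle of species $j$ among the two products (recalling $i$ or $k$ may equal $j$, giving a factor of at most $2$).

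The key step is then to control $\int_{\R^d}\rho(|w|) f_m(z + c_p w)\,dw$, where $c_p \in \{-\alpha_p, 1-\alpha_p\} \subset [-1,1]$, by splitting the $w$-integral into the region $|w| \le R$ and $|w| > R$. On $\{|w| \le R\}$ we bound $f_m(z + c_p w) \le f_{m-1-R}(z)$ for $m$ large enough, since $f_m$ is supported in $\{\|\cdot\| > m-1\}$ and $\|z + c_p w\| \ge \|z\| - R$, so $f_m(z+c_pw) \ne 0$ forces $\|z\| > m-1-R$; together with $\int_{\R^d}\rho(|w|)\,dw = 1$ (Assumption~\ref{Assume:measureMrho}) this gives a contribution bounded by $C(K)\int_0^t \la f_{m-1-R}, \mu^{{\vec{\zeta}},i}_{s-}\ra\,ds$ (with a factor of $2$ from the two potential product slots). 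On $\{|w| > R\}$ we use the tail estimate $\int_{r>R} r^{d-1}\rho(r)\,dr \le \epsilon$ from Assumption~\ref{Assume:measureMrho} and $\|f_m\|_{C^2_b(\R^d)}$ to bound $f_m(z+c_p w) \le \|f_m\|_{C^2_b(\R^d)}$, yielding a contribution of order $C(K)\|f_m\|_{C^2_b(\R^d)} C(\mu) t\,\epsilon$ after integrating the bound $\int_0^t \la 1, \mu^{{\vec{\zeta}},i}_{s-}\ra\,ds \le C(\mu)t$ from Assumption~\ref{Assume:kernalBdd}. Combining the two pieces and absorbing the factor of $2$ gives exactly the claimed estimate.

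I expect the main obstacle to be bookkeeping rather than any deep difficulty: one must carefully track which of the two product particles are of species $j$ (the cases $i=j$, $k=j$, or $i=k=j$), handle the change of variables and the formal $\delta$-function manipulation rigorously (this is where the identity $\int_{\R^d}\delta(x-y)\delta(y-\tilde{x})\,dy := \delta(x-\tilde{x})$ from the remark after Remark~\ref{Rm:density} is invoked), and verify that the constant $c_p$ appearing in the shift always lies in $[-1,1]$ so that the spatial-support argument $\|z + c_p w\| \ge \|z\| - |w|$ goes through. The supremum over $t\in[0,T]$ on the outer side poses no issue since the integrand is nonnegative and the bounds obtained are monotone in $t$, so $\sup_{t\in[0,T]}$ can be moved inside and the final bound stated with $T$ in place of $t$; then taking expectations preserves everything, producing the statement of Lemma~\ref{lem:MassControlOne2Two}.
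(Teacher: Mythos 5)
Your proposal is correct and follows essentially the same route as the paper: substitute the explicit placement density from Assumption~\ref{Assume:measureOne2Two}, split the product-separation integral into $\{|w|\le R\}$ and $\{|w|>R\}$, bound $f_m$ at the (shifted) product positions by $f_{m-1-R}$ at the reactant position on the near region via the support of $f_m$ and the convexity of the placement point, and use the tail estimate of Assumption~\ref{Assume:measureMrho} together with $\|f_m\|_{C_b^2}$ and the mass bound $C(\mu)$ on the far region. The only cosmetic difference is that the paper bounds $\sum_r f_m(y_r^{(j)})\le f_m(y)+f_m(z)$ up front rather than tracking which product slots carry species $j$ (and note the inequality you actually need in the near-region step is $\|z\|\ge\|z+c_pw\|-R$, the reverse triangle inequality, rather than the direction you quoted — the conclusion is unaffected).
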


\begin{proof}
Let $\epsilon \geq 0$ and $R \in \N$ be such that Assumption \ref{Assume:measureMrho} is satisfied. By plugging in the specific form of the reaction rate and placement density as in Assumption \ref{Assume:measureOne2Two}, we have
\begin{align}
&\int_{0}^{t} \int_{\tilde{\mathbb{X}}^{(\ell)}}     \frac{1}{\vec{\alpha}^{(\ell)}!}   K_\ell\left(\vec{x}\right) \left(\int_{\mathbb{Y}^{(\ell)}}    \left( \sum_{r = 1}^{\beta_{\ell j}} f_m(y_r^{(j)}) \right) m_\ell\left(\vec{y} \, |\, \vec{x} \right)\, d \vec{y}\right)\,\lambda^{(\ell)}[\mu^{\vec{\zeta}}_{s-}](d\vec{x}) \, ds \nonumber\\
&
\leq \int_{0}^{t} \int_{\R^d}   K_\ell(x) \left(\int_{\R^{2d}}   \left(f_m(y) + f_m(z)\right) m_\ell(y, z \, | \, x) \, dy \, dz \right)\,\mu^{{\vec{\zeta}}, i}_{s-}(dx) \, ds \nonumber\\
&
= \int_{0}^{t} \int_{\R^d}   K_\ell(x) \left(\int_{\R^{2d}}   \left(f_m(y) + f_m(z)\right) \rho(|y-z|)\sum_{i = 1}^I p_i\delta(x-(\alpha_i y+ (1-\alpha_i)z)) \, dy \, dz \right)\,\mu^{{\vec{\zeta}}, i}_{s-}(dx) \, ds \nonumber\\
&
\leq C(K)  \int_{0}^{t} \la \left( \int_{|y-z| \leq R} \left(f_m(y) + f_m(z)\right) \rho(|y-z|)\sum_{i = 1}^I p_i\delta(x-(\alpha_i y+ (1-\alpha_i)z)) \, dy \, dz \right) , \mu_{s-}^{{\vec{\zeta}}, i}(dx) \ra  \, ds.\nonumber\\
&
\quad  + C(K)  \int_{0}^{t} \la \left( \int_{|y-z| > R}  \left(f_m(y) + f_m(z)\right) \rho(|y-z|)\sum_{i = 1}^I p_i\delta(x-(\alpha_i y+ (1-\alpha_i)z)) \, dy \, dz\right) , \mu_{s-}^{{\vec{\zeta}}, i}(dx) \ra  \, ds.\nonumber\\
&
\leq  C(K)  \int_{0}^{t}  \la \left( \int_{|y-z| \leq R} 2 f_{m-1-R}(x) \rho(|y-z|) \sum_{i = 1}^I p_i \delta(x-(\alpha_i y+ (1-\alpha_i)z)) dy \, dz \right) , \mu_{s-}^{{\vec{\zeta}}, i}(dx) \ra  \, ds.\nonumber\\
&
 \quad  + 2C(K)  ||f_m||_{C_b^2(\R^d)}\int_{0}^{t} \la \sum_{i = 1}^I p_i \left( \int_{|\eta| > R} \int_{z\in\R^d} \rho(|\eta|)\delta(x-\alpha_i\eta -z)  \, dz \, d\eta \right) , \mu_{s-}^{{\vec{\zeta}}, i}(dx) \ra  \, ds.\nonumber\\
&
\leq 2C(K)  \int_{0}^{t}  \la  f_{m-1-R}(x) \left( \int_{|y-z| \leq R} m_\ell(y, z \, | \, x) dy \, dz \right) , \mu_{s-}^{{\vec{\zeta}}, i}(dx) \ra  \, ds.\nonumber\\
&
 \quad  + C(K)  ||f_m||_{C_b^2(\R^d)}\int_{0}^{t} \la \sum_{i = 1}^I p_i \left( \int_{|\eta| > R} \rho(|\eta|) \, d\eta \right) , \mu_{s-}^{{\vec{\zeta}}, i}(dx) \ra  \, ds.\nonumber\\
&
\leq 2C(K) \int_{0}^{t}   \la f_{m-1-R}(x) , \mu_{s-}^{{\vec{\zeta}}, i}(dx) \ra  \, ds  + 2C(K)  ||f_m||_{C_b^2(\R^d)}C(\mu)t\epsilon.\nonumber
\end{align}
\end{proof}


\begin{lemma}\label{lem:MassControlTwo2One}
If the $\ell$-th reaction is a reaction of the form $S_i + S_k \rightarrow S_j$, then
\begin{align}
&\int_{0}^{t} \int_{\tilde{\mathbb{X}}^{(\ell)}}     \frac{1}{\vec{\alpha}^{(\ell)}!}   K_\ell\left(\vec{x}\right) \left(\int_{\mathbb{Y}^{(\ell)}}    \left( \sum_{r = 1}^{\beta_{\ell j}} f_m(y_r^{(j)}) \right) m_\ell\left(\vec{y} \, |\, \vec{x} \right)\, d \vec{y}\right)\,\lambda^{(\ell)}[\mu^{\vec{\zeta}}_{s-}](d\vec{x}) \, ds\nonumber\\
& \leq C(K)C(\mu)\int_{0}^{t}  \left(\la f_{m-1}(x), \mu_{s-}^{{\vec{\zeta}}, i}(dx) \ra + \la f_{m-1}(y), \mu_{s-}^{{\vec{\zeta}}, k}(dy) \ra \right)ds.\nonumber
\end{align}
\end{lemma}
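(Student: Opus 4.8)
The plan is to argue as in Lemma~\ref{lem:MassControlOne2One}, but now tracking the two-body structure of the $S_i + S_k \to S_j$ reaction. First I would plug in the explicit reactant measure $\lambda^{(\ell)}[\mu^{\vec{\zeta}}_{s-}](d\vec{x}) = \mu^{{\vec{\zeta}},i}_{s-}(dx_1)\,\mu^{{\vec{\zeta}},k}_{s-}(dx_2)$ (here $\abs{\vec{\alpha}^{(\ell)}} = 2$ with $\alpha_{\ell i} = \alpha_{\ell k} = 1$, $i \neq k$, so the combinatorial prefactor $1/\vec{\alpha}^{(\ell)}!$ equals one; when $i = k$ the factor is $1/2$, which only improves the estimate), and substitute the placement density from Assumption~\ref{Assume:measureTwo2One}, $m_\ell(z \mid x_1,x_2) = \sum_{p=1}^{I} p_p\,\delta(z - (\alpha_p x_1 + (1-\alpha_p)x_2))$. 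Since there is a single product $S_j$ with $\beta_{\ell j} = 1$, the inner integral $\int_{\mathbb{Y}^{(\ell)}} f_m(y_1^{(j)})\, m_\ell(\vec{y}\mid\vec{x})\,d\vec{y}$ collapses by the action of the $\delta$-functions to $\sum_{p=1}^{I} p_p\, f_m(\alpha_p x_1 + (1-\alpha_p)x_2)$.

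The key step is then the pointwise bound $f_m(\alpha_p x_1 + (1-\alpha_p)x_2) \le f_{m-1}(x_1) + f_{m-1}(x_2)$, valid for all $p$ since the $\alpha_p \in [0,1]$ form a convex combination: if $\norm{\alpha_p x_1 + (1-\alpha_p)x_2} \le m-1$ then the left side is zero by~\eqref{Eq:Fm_def}, while if $\norm{\alpha_p x_1 + (1-\alpha_p)x_2} > m-1$ then by convexity $\max(\norm{x_1},\norm{x_2}) > m-1$, forcing either $f_{m-1}(x_1) = 1$ or $f_{m-1}(x_2) = 1$ (using $0 \le f_{m-1} \le 1$). Combining this with $\sum_p p_p = 1$ and Assumption~\ref{Assume:kernalBdd0} ($K_\ell \le C(K)$) gives
\begin{align*}
&\int_0^t \int_{\tilde{\mathbb{X}}^{(\ell)}} \frac{1}{\vec{\alpha}^{(\ell)}!} K_\ell(\vec{x}) \left(\int_{\mathbb{Y}^{(\ell)}} f_m(y_1^{(j)})\, m_\ell(\vec{y}\mid\vec{x})\, d\vec{y}\right) \lambda^{(\ell)}[\mu^{\vec{\zeta}}_{s-}](d\vec{x})\, ds \\
&\quad \le C(K) \int_0^t \int_{\R^{2d}} \left(f_{m-1}(x_1) + f_{m-1}(x_2)\right) \mu^{{\vec{\zeta}},i}_{s-}(dx_1)\,\mu^{{\vec{\zeta}},k}_{s-}(dx_2)\, ds.
\end{align*}
Finally I would integrate out the variable not appearing in each summand using $\la 1, \mu^{{\vec{\zeta}},k}_{s-}\ra \le C(\mu)$ and $\la 1, \mu^{{\vec{\zeta}},i}_{s-}\ra \le C(\mu)$ (Assumption~\ref{Assume:kernalBdd}), which yields exactly
\[
C(K)C(\mu) \int_0^t \left(\la f_{m-1}(x), \mu^{{\vec{\zeta}},i}_{s-}(dx)\ra + \la f_{m-1}(y), \mu^{{\vec{\zeta}},k}_{s-}(dy)\ra\right) ds.
\]

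I do not anticipate a serious obstacle here — unlike the $S_i \to S_j + S_k$ case (Lemma~\ref{lem:MassControlOne2Two}), there is no unbounded separation variable $\abs{y-z}$ to control, so no appeal to the tail estimate of Assumption~\ref{Assume:measureMrho} and no $\epsilon$-term is needed; the only mild point to get right is the convexity argument for the bound $f_m(\alpha_p x_1 + (1-\alpha_p) x_2) \le f_{m-1}(x_1) + f_{m-1}(x_2)$, which is where the index shift from $m$ to $m-1$ enters (as opposed to $m-1-R$ in the unbinding case). One should also note that restricting the integration from $\mathbb{X}^{(\ell)}$ to the off-diagonal $\tilde{\mathbb{X}}^{(\ell)}$ only removes a set of $\lambda^{(\ell)}[\mu^{\vec{\zeta}}_{s-}]$-measure zero when $i \neq k$, and is harmless when $i = k$, so it may be dropped freely in the upper bound.
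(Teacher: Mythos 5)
Your proposal is correct and follows essentially the same route as the paper: substitute the placement density from Assumption~\ref{Assume:measureTwo2One}, collapse the $\delta$-functions, apply the pointwise bound $f_m(\alpha_p x_1 + (1-\alpha_p)x_2) \le f_{m-1}(x_1) + f_{m-1}(x_2)$, bound $K_\ell$ by $C(K)$, and integrate out one variable using the mass bound $C(\mu)$. The paper states the key pointwise inequality without justification, whereas you supply the convexity argument explicitly, which is a worthwhile addition but not a difference in approach.
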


\begin{proof}

By plugging in the specific form of the reaction rate and placement density as in Assumption \ref{Assume:measureTwo2One}, we have
\begin{align}
&\int_{0}^{t} \int_{\tilde{\mathbb{X}}^{(\ell)}}     \frac{1}{\vec{\alpha}^{(\ell)}!}   K_\ell\left(\vec{x}\right) \left(\int_{\mathbb{Y}^{(\ell)}}    \left( \sum_{r = 1}^{\beta_{\ell j}} f_m(y_r^{(j)}) \right) m_\ell\left(\vec{y} \, |\, \vec{x} \right)\, d \vec{y}\right)\,\lambda^{(\ell)}[\mu^{\vec{\zeta}}_{s-}](d\vec{x}) \, ds \nonumber\\
&
\leq    \int_{0}^{t} \la \la K_\ell(x, y)\left( \int_{\R^{d}} f_m(z) m_\ell(z \, | \, x, y) dz \right) , \mu_{s-}^{{\vec{\zeta}}, i}(dx) \ra, \mu_{s-}^{{\vec{\zeta}}, k}(dy) \ra \, ds.\nonumber\\
&
 \leq C(K)  \int_{0}^{t} \la \la \left( \int_{\R^{d}} f_m(z) \sum_{i = 1}^I p_i\delta_{\alpha_i x + (1-\alpha_i)y}(z) dz \right) , \mu_{s-}^{{\vec{\zeta}}, i}(dx) \ra, \mu_{s-}^{{\vec{\zeta}}, k}(dy) \ra \, ds.\nonumber\\
&
 \leq  C(K)  \int_{0}^{t}\la \la \sum_{i = 1}^I p_i(f_{m-1}(x) + f_{m-1}(y)) , \mu_{s-}^{{\vec{\zeta}}, i}(dx) \ra, \mu_{s-}^{{\vec{\zeta}}, k}(dy) \ra \, ds.\nonumber\\
&
 \leq C(K)C(\mu)\int_{0}^{t} \left( \la f_{m-1}(x), \mu_{s-}^{{\vec{\zeta}}, i}(dx) \ra + \la f_{m-1}(y), \mu_{s-}^{{\vec{\zeta}}, k}(dy) \ra \right)ds. \nonumber
\end{align}
\end{proof}


\begin{lemma}\label{lem:MassControlTwo2Two}
If the $\ell$-th reaction is a reaction of the form $S_i + S_k \rightarrow S_j + S_r$, where $i, k, r$ could be $j$, then
\begin{align}
&\int_{0}^{t} \int_{\tilde{\mathbb{X}}^{(\ell)}}     \frac{1}{\vec{\alpha}^{(\ell)}!}   K_\ell\left(\vec{x}\right) \left(\int_{\mathbb{Y}^{(\ell)}}    \left( \sum_{r = 1}^{\beta_{\ell j}} f_m(y_r^{(j)}) \right) m_\ell\left(\vec{y} \, |\, \vec{x} \right)\, d \vec{y}\right)\,\lambda^{(\ell)}[\mu^{\vec{\zeta}}_{s-}](d\vec{x}) \, ds\nonumber\\
& \leq C(K)C(\mu)\int_{0}^{t}  \left(\la f_{m}(x), \mu_{s-}^{{\vec{\zeta}}, i}(dx) \ra + \la f_{m}(y), \mu_{s-}^{{\vec{\zeta}}, k}(dy) \ra \right)ds.\nonumber
\end{align}
\end{lemma}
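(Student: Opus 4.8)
The plan is to follow verbatim the template of Lemma~\ref{lem:MassControlTwo2One}, substituting the $2\to 2$ placement density for the $2\to 1$ one. First I would insert into the left-hand side the explicit form of $\lambda^{(\ell)}$ from Definition~\ref{def:lambda}, namely $\lambda^{(\ell)}[\mu^{\vec{\zeta}}_{s-}](d\vec{x}) = \mu^{\vec{\zeta}, i}_{s-}(dx)\,\mu^{\vec{\zeta}, k}_{s-}(dy)$ when $i \neq k$ (and the analogous product, restricted to the off-diagonal $\tilde{\mathbb{X}}^{(\ell)}$, when $i = k$, which only introduces the combinatorial factor $\vec{\alpha}^{(\ell)}! = 2$), together with the placement density from Assumption~\ref{Assume:measureTwo2Two},
\[
m_\ell(z, w \, | \, x, y) = p\,\delta_{(x, y)}\big((z, w)\big) + (1-p)\,\delta_{(x, y)}\big((w, z)\big).
\]

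Next I would carry out the inner $\vec{y}$-integration against this density. The weight-$p$ term collapses $(z,w)$ onto $(x,y)$ and the weight-$(1-p)$ term onto $(y,x)$, so each product coordinate $y_r^{(j)}$ is replaced by either $x$ or $y$. Since $f_m \geq 0$ and $p \in [0,1]$, this yields
\[
\int_{\mathbb{Y}^{(\ell)}} \Big( \sum_{r=1}^{\beta_{\ell j}} f_m(y_r^{(j)}) \Big) m_\ell(\vec{y} \, | \, \vec{x})\, d\vec{y} \;\leq\; \beta_{\ell j}\big(f_m(x) + f_m(y)\big) \;\leq\; 2\big(f_m(x) + f_m(y)\big),
\]
using $\beta_{\ell j} \leq |\vec{\beta}^{(\ell)}| \leq 2$. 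Note that, unlike in the $S_i \to S_j + S_k$ case of Lemma~\ref{lem:MassControlOne2Two}, there is no shift by a convex combination of reactant positions, so $f_m$ (rather than $f_{m-1}$) appears on the right, matching the statement. Bounding $\tfrac{1}{\vec{\alpha}^{(\ell)}!} K_\ell(\vec{x}) \leq C(K)$ by Assumption~\ref{Assume:kernalBdd0} and then integrating out one of the two spatial variables with the uniform total-mass bound $\la 1, \mu^{\vec{\zeta}, k}_{s-}\ra \leq C(\mu)$ (respectively $\la 1, \mu^{\vec{\zeta}, i}_{s-}\ra \leq C(\mu)$) from Assumption~\ref{Assume:kernalBdd} then gives the claimed estimate once the constants are renamed so that $C(K)C(\mu)$ absorbs the factor $2$ and the combinatorial constant.

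I do not anticipate any real obstacle here: the product particles are placed exactly at the reactant positions, so there is no separation density $\rho$ and hence no need for the radial cutoff $R$ of Assumption~\ref{Assume:measureMrho} that complicated Lemma~\ref{lem:MassControlOne2Two}; the integrand is simply a bounded kernel times a product measure of controlled total mass. The only point requiring a line of bookkeeping is the case $i = k$ (or when $j$ coincides with $i$ or $k$), where one must track the restriction to $\tilde{\mathbb{X}}^{(\ell)}$ and the factor $1/\vec{\alpha}^{(\ell)}!$; since these contribute only harmless finite constants, the final bound is unaffected, and the argument is otherwise a direct repetition of the computation in the proof of Lemma~\ref{lem:MassControlTwo2One}.
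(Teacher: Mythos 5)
Your proposal is correct and follows essentially the same route as the paper's proof: plug in the product form of $\lambda^{(\ell)}$, bound the inner integral against the placement density of Assumption~\ref{Assume:measureTwo2Two} by $f_m(x)+f_m(y)$ (your extra factor $\beta_{\ell j}\leq 2$ is harmlessly absorbed into the generic constant), bound $K_\ell$ by $C(K)$, and integrate out one marginal using the mass bound $C(\mu)$. Your observations that no radial cutoff $R$ or index shift $m\mapsto m-1$ is needed here, and that the $i=k$ case only changes constants, match the structure of the paper's argument.
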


\begin{proof}

By plugging in the specific form of the reaction rate and placement density as in Assumption \ref{Assume:measureTwo2Two}, we have
\begin{align}
&\int_{0}^{t} \int_{\tilde{\mathbb{X}}^{(\ell)}}     \frac{1}{\vec{\alpha}^{(\ell)}!}   K_\ell\left(\vec{x}\right) \left(\int_{\mathbb{Y}^{(\ell)}}    \left( \sum_{r = 1}^{\beta_{\ell j}} f_m(y_r^{(j)}) \right) m_\ell\left(\vec{y} \, |\, \vec{x} \right)\, d \vec{y}\right)\,\lambda^{(\ell)}[\mu^{\vec{\zeta}}_{s-}](d\vec{x}) \, ds \nonumber\\
&
\leq    \int_{0}^{t} \la \la K_\ell(x, y)\left( \int_{\R^{2d}} \left( f_m(z) + f_m(w)\right) m_\ell(z, w \, | \, x, y) dz\, dw \right) , \mu_{s-}^{{\vec{\zeta}}, i}(dx) \ra, \mu_{s-}^{{\vec{\zeta}}, k}(dy) \ra \, ds \nonumber\\
&
 \leq C(K)  \int_{0}^{t} \la \la \left( \int_{\R^{2d}}  \left( f_m(z) + f_m(w)\right) \left( p\times\delta_{(x, y)}\left((z, w)\right)  + (1-p)\times\delta_{(x, y)}\left((w, z)\right) \right) dz\, dw \right) , \mu_{s-}^{{\vec{\zeta}}, i}(dx) \ra, \mu_{s-}^{{\vec{\zeta}}, k}(dy) \ra \, ds \nonumber\\
&
 \leq  C(K)  \int_{0}^{t}\la \la  \left( f_m(x) + f_m(y)\right), \mu_{s-}^{{\vec{\zeta}}, i}(dx) \ra, \mu_{s-}^{{\vec{\zeta}}, k}(dy) \ra \, ds.\nonumber\\
&
 \leq C(K)C(\mu)\int_{0}^{t} \left( \la f_{m}(x), \mu_{s-}^{{\vec{\zeta}}, i}(dx) \ra + \la f_{m}(y), \mu_{s-}^{{\vec{\zeta}}, k}(dy) \ra \right)ds. \nonumber
\end{align}
\end{proof}


Now we are in position to give the proof of Lemma \ref{lem:MassControlMollifier}.
\begin{proof}[Proof of Lemma \ref{lem:MassControlMollifier}]
We have that
\begin{align*}
\avg{&\sup_{t\in[0, T]}\int_{0}^{t} \int_{\tilde{\mathbb{X}}^{(\ell)}}     \frac{1}{\vec{\alpha}^{(\ell)}!}   K_\ell\left(\vec{x}\right) \left(\int_{\mathbb{Y}^{(\ell)}}    \left( \sum_{r = 1}^{\beta_{\ell j}} f_m(y_r^{(j)}) \right) m^\eta_\ell\left(\vec{y} \, |\, \vec{x} \right)\, d \vec{y}\right)\,\lambda^{(\ell)}[\mu^{\vec{\zeta}}_{s-}](d\vec{x}) \, ds }\nonumber\\
&
= \avg{\sup_{t\in[0, T]} \int_{0}^{t} \int_{\tilde{\mathbb{X}}^{(\ell)}}     \frac{1}{\vec{\alpha}^{(\ell)}!}   K_\ell\left(\vec{x}\right) \left(\int_{\mathbb{Y}^{(\ell)}}    \left( \sum_{r = 1}^{\beta_{\ell j}} f_m(y_r^{(j)}) \right) m_\ell\left(\vec{y} \, |\, \vec{x} \right)\, d \vec{y}\right)\,\lambda^{(\ell)}[\mu^{\vec{\zeta}}_{s-}](d\vec{x}) \, ds} \nonumber\\
&
\quad + \avg{\sup_{t\in[0, T]} \int_{0}^{t} \int_{\tilde{\mathbb{X}}^{(\ell)}}     \frac{1}{\vec{\alpha}^{(\ell)}!}   K_\ell\left(\vec{x}\right) \left(\int_{\mathbb{Y}^{(\ell)}}    \left( \sum_{r = 1}^{\beta_{\ell j}} f_m(y_r^{(j)}) \right) \left(m^\eta_\ell\left(\vec{y} \, |\, \vec{x} \right) - m_\ell\left(\vec{y} \, |\, \vec{x} \right) \right)\, d \vec{y}\right)\,\lambda^{(\ell)}[\mu^{\vec{\zeta}}_{s-}](d\vec{x}) \, ds} \nonumber\\
&
\leq \avg{\sup_{t\in[0, T]}\int_{0}^{t} \int_{\tilde{\mathbb{X}}^{(\ell)}}     \frac{1}{\vec{\alpha}^{(\ell)}!}   K_\ell\left(\vec{x}\right) \left(\int_{\mathbb{Y}^{(\ell)}}    \left( \sum_{r = 1}^{\beta_{\ell j}} f_m(y_r^{(j)}) \right) m_\ell\left(\vec{y} \, |\, \vec{x} \right)\, d \vec{y}\right)\,\lambda^{(\ell)}[\mu^{\vec{\zeta}}_{s-}](d\vec{x}) \, ds} \nonumber\\
&
\quad + 2C(K)C(\mu) ||f_m||_{C_b^2(\R^d)} C\eta T \qquad \text{(by Lemma \ref{lem:etaConv})}\nonumber\\
&
\leq  2C(K)(C(\mu)\vee 1) \sup_{1\leq i\leq J}\avg{ \sup_{t\in[0, T]}\int_{0}^{t}   \la f_{m-1-R}(x) , \mu_{s-}^{{\vec{\zeta}}, i}(dx) \ra  \, ds} \nonumber\\
&
\quad  + 2C(K)  ||f_m||_{C_b^2(\R^d)}C(\mu)T\epsilon+ 2C(K)C(\mu) ||f_m||_{C_b^2(\R^d)} C\eta T  \qquad \text{(by Lemma \ref{lem:MassControlOne2One} - \ref{lem:MassControlTwo2Two})}\nonumber\\
&
=  2C(K)(C(\mu)\vee 1) \sup_{1\leq i\leq J} \avg{\sup_{t\in[0, T]}\int_{0}^{t}  \la f_{m-1-R}(x) , \mu_{s-}^{{\vec{\zeta}}, i}(dx) \ra  \, ds }  +2C(K)C(\mu) ||f_m||_{C_b^2(\R^d)}T\left(C\eta + \epsilon\right),   \nonumber
\end{align*}
concluding the proof of the lemma.
\end{proof}


\section{Proofs of Propositions \ref{P:EquivalentGenerator} and \ref{P:ParticleSpatialDensity}.}\label{A:ProofsEquivalentDerivations}

Let us recall the forward equation (\ref{eq:multipartABtoCEqs}).  For both proofs of Propositions \ref{P:EquivalentGenerator} and \ref{P:ParticleSpatialDensity} we need to define an appropriate $L^2$ space. In particular, define an appropriate $L^2$ (Fock) space, $F$,  with inner product for two functions, $\vG_1 = \{\gabc_{1}(\vqa,\vqb, \vqc)\}_{a,b,c=0}^{\infty}$ and $\vG_2 = \{\gabc_{2}(\vqa,\vqb,\vqc)\}_{a,b,c=0}^{\infty}$, as
\begin{equation}\label{eq:FockInnerProduct}
    \paren{\vG_{1}, \vG_{2}}_F = \sum_{a=0}^{\infty} \sum_{b=0}^{\infty}  \sum_{c=0}^{\infty} \frac{1}{a! \, b! \, c!} \int_{\R^{(a+b+c)d}} \gabc_{1}(\vqa,\vqb, \vqc) \gabc_{2}(\vqa,\vqb,\vqc) \, d\vqa \, d\vqb \, d\vqc,
\end{equation}
we can interpret $\mathcal{T}^* = \diffop + \Rp + \Rm$ as the adjoint of the generator, $\mathcal{T}$, for the process $$(\vQ^{A(t)}(t), \vQ^{B(t)}(t), \vQ^{C(t)}(t), A(t), B(t), C(t)).$$
Formally, we find
\begin{align}
    \paren{\mathcal{T} \vG}_{a,b,c}(\vqa,\vqb,\vqc) ={}&  \paren{\diffop \vG}_{a,b,c}(\vq^a,\vq^b,\vq^c)  \nonumber\\
    & \quad +\sum_{l=1}^a \sum_{m=1}^b \paren{ \int_{\R^d} m^\eta_1\paren{\vz \vert \vqa_{l}, \vqb_{m} } K_1^{\gamma}(\vqa_{l}, \vqb_{m} )g^{(a-1, b-1, c+1)}(\vqa  \setminus\vqa_{l}, \vqb  \setminus \vqb_{m} , \vqc \cup \vz) d\vz \right.\nonumber\\
    &\quad \quad \quad  - \left. K_1^{\gamma} \paren{\vqa_{l}, \vqb_{m}} \gabc(\vqa,\vqb,\vqc) }\nonumber\\
     & \quad + \sum_{n=1}^{c} \paren{ \int_{\R^{2d}} m^\eta_2\paren{\vx, \vy \vert \vqc_{n} }K_2^{\gamma}(\vqc_{n})g^{(a+1, b+1, c-1)}(\vqa \cup\vx, \vqb  \cup \vy, \vqc \setminus \vqc_n, t) d\vx d\vy \right.\nonumber\\
    &\quad \quad \quad  - \left.  K_2^{\gamma}(\vqc_n) \gabc(\vqa, \vqb, \vqc)}\label{eq:operT}
\end{align}
Note that here $\diffop^* = \diffop$.

Next we present the proofs of the two propositions.

\begin{proof}[Proof of Proposition \ref{P:EquivalentGenerator}]
For simplicity of notation, without loss of generality, we will show the equivalence for the evolution of  $\avg{\varphi\left(\la f, \nu^{\vec{\zeta}}_t \ra_{\hat{P}}\right)}$. The same procedure follows for the more general multi-dimensional case $\avg{\varphi\left(\la f_1, \nu^{\vec{\zeta}}_t \ra_{\hat{P}}, \la f_2, \nu^{\vec{\zeta}}_t \ra_{\hat{P}}, \cdots, \la f_M, \nu^{\vec{\zeta}}_t \ra_{\hat{P}} \right)}$. By the definition of $\nu^{\vec{\zeta}}_t$ and adopting the notation of this section,

\begin{align}\label{Eq:mean_phi_inner_product}
\avg{\varphi\left(\la f, \nu^{\vec{\zeta}}_t \ra_{\hat{P}} \right)} &= \avg{\varphi\left( \sum_{i = 1}^{A(t)} f(\vQ^{A(t)}_i(t), S_1) + \sum_{j = 1}^{B(t)} f(\vQ^{B(t)}_j(t), S_2) + \sum_{k = 1}^{C(t)} f(\vQ^{C(t)}_k(t), S_3)\right)} \nonumber\\
&
=\sum_{a=0}^{\infty} \sum_{b=0}^{\infty}  \sum_{c=0}^{\infty} \frac{1}{a! \, b! \, c!} \int_{\R^{(a+b+c)d}} \varphi \left(\sum_{i = 1}^{a} f(\vqa_i, S_1) + \sum_{j = 1}^{b} f(\vqb_j, S_2) + \sum_{k = 1}^{c} f(\vqc_k, S_3)  \right) \nonumber\\
&\qquad
\times\pabc(\vqa,\vqb, \vqc, t) \, d\vqa \, d\vqb \, d\vqc, \nonumber\\
&
=    \paren{\vG, \, \vP(t)}_F,
\end{align}
where we define $\gabc (\vqa,\vqb,\vqc)  = \varphi\left( \sum_{i = 1}^a f(\vqa_i, S_1) + \sum_{j = 1}^b f(\vqb_j, S_2) +\sum_{k = 1}^c f(\vqc_k, S_3)\right)$.
For such a form of  $\gabc (\vqa,\vqb,\vqc)$, plugging into Eq (\ref{eq:operT}), we have
\begin{align}
    &\paren{\mathcal{T} \vG}_{a,b,c} (\vqa,\vqb,\vqc) =  \paren{D_1 \sum_{l=1}^{a} \lap_{\vqa_l}
    + D_2 \sum_{m=1}^{b} \lap_{\vqb_m}
    + D_3 \sum_{n=1}^{c} \lap_{\vqc_n}} \varphi\paren{\sum_{i = 1}^a f(\vqa_i, S_1) + \sum_{j = 1}^b f(\vqb_j, S_2) +\sum_{k = 1}^c f(\vqc_k, S_3)} \nonumber\\
    &
     \quad +\sum_{l=1}^a \sum_{m=1}^b \paren{ \int_{\R^d} \left[g^{(a-1, b-1, c+1)}(\vqa  \setminus\vqa_{l}, \vqb  \setminus \vqb_{m} , \vqc \cup \vz) - \gabc(\vqa,\vqb,\vqc) \right]\right.\nonumber\\
    &
    \quad \quad \quad \left. \times m^\eta_1\paren{\vz \vert \vqa_{l}, \vqb_{m} } \times K_1^{\gamma}(\vqa_{l}, \vqb_{m} )d\vz  }\nonumber\\
     &
      \quad + \sum_{n=1}^{c} \paren{ \int_{\R^{2d}} \left[g^{(a+1, b+1, c-1)}(\vqa \cup\vx, \vqb  \cup \vy, \vqc \setminus \vqc_n, t) -  \gabc(\vqa, \vqb, \vqc) \right] \right.\nonumber\\
    &
    \quad \quad \left.\times m^\eta_2\paren{\vx, \vy \vert \vqc_{n} }\times K_2^{\gamma}(\vqc_{n}) d\vx d\vy}\nonumber\\
    &
    = D_1  \sum_{i = 1}^a  \varphi''\paren{\sum_{i = 1}^a f(\vqa_i, S_1) + \sum_{j = 1}^b f(\vqb_j, S_2) +\sum_{k = 1}^c f(\vqc_k, S_3)}|\nabla_{\vqa_i} f_1(\vqa_i)|^2 \nonumber\\
    &
    \quad+ D_1  \sum_{i = 1}^a  \varphi'\paren{\sum_{i = 1}^a f(\vqa_i, S_1) + \sum_{j = 1}^b f(\vqb_j, S_2) +\sum_{k = 1}^c f(\vqc_k, S_3)}\lap_{\vqa_i} f_1(\vqa_i) \nonumber\\
    &
    \quad +  D_2  \sum_{j = 1}^b  \varphi''\paren{\sum_{i= 1}^a f(\vqa_i, S_1) + \sum_{j = 1}^b f(\vqb_j, S_2) +\sum_{k = 1}^c f(\vqc_k, S_3)}|\nabla_{\vqb_j} f_2(\vqb_j)|^2 \nonumber\\
    &
    \quad+ D_2  \sum_{j = 1}^b  \varphi'\paren{\sum_{i = 1}^a f(\vqa_i, S_1) + \sum_{j = 1}^b f(\vqb_j, S_2) +\sum_{k = 1}^c f(\vqc_k, S_3)}\lap_{\vqb_j} f_2(\vqb_j) \nonumber\\
        &
    \quad +  D_3  \sum_{k = 1}^c  \varphi''\paren{\sum_{i= 1}^a f(\vqa_i, S_1) + \sum_{j = 1}^b f(\vqb_j, S_2) +\sum_{k = 1}^c f(\vqc_k, S_3)}|\nabla_{\vqc_k} f_3(\vqc_k)|^2 \nonumber\\
    &
    \quad+ D_3  \sum_{k= 1}^c  \varphi'\paren{\sum_{i = 1}^a f(\vqa_i, S_1) + \sum_{j = 1}^b f(\vqb_j, S_2) +\sum_{k = 1}^c f(\vqc_k, S_3)}\lap_{\vqc_k} f_3(\vqc_k) \nonumber\\
    &
     \quad +\sum_{l=1}^a \sum_{m=1}^b \paren{ \int_{\R^d} \left[\varphi \paren{\sum_{i = 1}^a f(\vqa_i, S_1) + \sum_{j = 1}^b f(\vqb_j, S_2) +\sum_{k = 1}^c f(\vqc_k, S_3) - f(\vqa_{l} , S_1) - f(\vqb_{m}, S_2) + f(\vz, S_3)}  \right. \right.\nonumber\\
    &
    \quad \quad \quad \left.  \left.  - \varphi\paren{\sum_{i = 1}^a f(\vqa_i, S_1) + \sum_{j = 1}^b f(\vqb_j, S_2) +\sum_{k = 1}^c f(\vqc_k, S_3)}\right]   \times m^\eta_1\paren{\vz \vert \vqa_{l}, \vqb_{m} } \times K_1^{\gamma}(\vqa_{l}, \vqb_{m} )d\vz  }\nonumber\\
     &
      \quad + \sum_{n=1}^{c} \paren{ \int_{\R^{2d}} \left[\varphi \paren{\sum_{i = 1}^a f(\vqa_i, S_1) + \sum_{j = 1}^b f(\vqb_j, S_2) +\sum_{k = 1}^c f(\vqc_k, S_3) + f(\vx, S_1) + f(\vy, S_2) - f(\vqc_n, S_3)}\right. \right.\nonumber\\
    &
    \quad \quad \left. \left. -   \varphi\paren{\sum_{i = 1}^a f(\vqa_i, S_1) + \sum_{j = 1}^b f(\vqb_j, S_2) +\sum_{k = 1}^c f(\vqc_k, S_3)}\right]  \times m^\eta_2\paren{\vx, \vy \vert \vqc_{n} }\times K_2^{\gamma}(\vqc_{n}) d\vx d\vy}\nonumber\\\label{eq:operatorT}
\end{align}

Rewriting $\avg{\varphi\paren{\la f, \nu^{\vec{\zeta}}_t \ra_{\hat{P}}}}$ in Eq (\ref{Eq:mean_phi_inner_product}) as an integral equation, we obtain
\begin{align}\label{Eq:generator_forward_eqABC}
&\avg{\varphi\paren{\la f, \nu^{\vec{\zeta}}_t \ra_{\hat{P}}}}  =   \paren{\vG, \, \vP(t)}_F
= \paren{\vG, \, \vP(0)}_F + \int_0^t \partial_s\paren{\vG, \, \vP(s)}_F\, ds \nonumber\\
&
= \avg{\varphi\paren{\la f, \nu^{\vec{\zeta}}_0 \ra_{\hat{P}}}} + \int_0^t \paren{\vG, \, \partial_s\vP(s)}_F\, ds \nonumber\\
&
= \avg{\varphi\paren{\la f, \nu^{\vec{\zeta}}_0 \ra_{\hat{P}}}}  +  \int_0^t \paren{\vG, \, \mathcal{T}^*\vP(s)}_F\, ds \quad \text{(by Eq (\ref{eq:multipartABtoCEqs}))} \nonumber\\
&
=\avg{\varphi\paren{\la f, \nu^{\vec{\zeta}}_0 \ra_{\hat{P}}}}  +  \int_0^t \paren{\mathcal{T}\vG, \, \vP(s)}_F\, ds \nonumber\\
&
    = \avg{\varphi\paren{\la f, \nu^{\vec{\zeta}}_0 \ra_{\hat{P}}}}  \nonumber\\
    &
    \quad + \avg{ \int_0^t \varphi'\paren{\la f, \nu^{\vec{\zeta}}_s \ra_{\hat{P}}}\times\left[ D_1  \sum_{i = 1}^{A(s)}  \lap_{\vQ_i} f(\vQ^{A(s)}_i(s), S_1) +D_2 \sum_{j = 1}^{B(s)} \lap_{\vQ_j} f(\vQ^{B(s)}_j(s), S_2)\right. \nonumber\\
    &
    \qquad + \left.D_3\sum_{k = 1}^{C(s)} \lap_{\vQ_k} f(\vQ^{C(s)}_k(s), S_3)\right]  + \varphi''\paren{\la f, \nu^{\vec{\zeta}}_s \ra_{\hat{P}}}\times \left[ D_1 \sum_{i = 1}^{A(s)}  |\nabla_{\vQ_i} f(\vQ^{A(s)}_i(s), S_1)|^2  \right.\nonumber\\
    &
     \qquad+ \left. D_2 \sum_{j = 1}^{B(s)} |\nabla_{\vQ_j} f(\vQ^{B(s)}_j(s), S_2)|^2  + D_3\sum_{k = 1}^{C(s)} |\nabla_{\vQ_k} f(\vQ^{C(s)}_k(s), S_3)|^2 \right] \, ds}\nonumber\\
    &
     \quad +\avg{ \int_0^t\sum_{l=1}^{A(s)} \sum_{m=1}^{B(s)} \paren{ \int_{\R^d} K_1^{\gamma}(\vQ^{A(s)}_{l}(s), \vQ^{B(s)}_{m}(s) )m^\eta_1\paren{\vz \vert \vQ^{A(s)}_{l}(s), \vQ^{B(s)}_{m}(s) } \right.\nonumber\\
&
\qquad\left.\times \left(\varphi\paren{\la f, \nu^{\vec{\zeta}}_s \ra_{\hat{P}}-f(\vQ^{A(s)}_{l}(s), S_1) - f(\vQ^{B(s)}_{m}(s) , S_2) + f(\vz, S_3)}  - \varphi\paren{\la f, \nu^{\vec{\zeta}}_s \ra_{\hat{P}}} \right)d\vz } \, ds}\nonumber\\
     &
      \quad + \avg{ \int_0^t\sum_{n=1}^{C(s)} \paren{ \int_{\R^{2d}} K_2^{\gamma}( \vQ^{C(s)}_{n} (s))m^\eta_2\paren{\vx, \vy \vert \vQ^{C(s)}_{n} (s) } \right.\nonumber\\
 &
\qquad \left.\times\left( \varphi\paren{\la f, \nu^{\vec{\zeta}}_s \ra_{\hat{P}} + f(\vx, S_1 ) + f(\vy, S_2) - f( \vQ^{C(s)}_{n} (s), S_3)}  - \varphi\paren{\la f, \nu^{\vec{\zeta}}_s \ra_{\hat{P}}}\right)d\vx d\vy } \, ds}. \end{align}

From the measure-valued formulation in Eq (\ref{Eq:EM_formula_ABC}), we obtain the following integral equation for $\avg{\varphi\paren{\la f, \nu^{\vec{\zeta}}}_t \ra_{\hat{P}}} $ by applying It\^{o}'s formula on Eq (\ref{Eq:EM_formula_ABC}),
\begin{align}\label{Eq:generator_measure_valuedABC}
&\avg{\varphi\paren{\la f, \nu^{\vec{\zeta}}_t \ra_{\hat{P}}}}  =   \avg{\varphi\paren{\la f, \nu^{\vec{\zeta}}_0 \ra_{\hat{P}}}}  + \avg{\int_0^t  \varphi'\paren{\la f, \nu^{\vec{\zeta}}_{s} \ra_{\hat{P}}} \sum_{i = 1}^{\la 1, \nu^{\vec{\zeta}}_{s-}\ra_{\hat{P}}} D^i \frac{\partial^2 f}{\partial Q^2}\left(H^i(\nu^{\vec{\zeta}}_{s-}) \right) \, ds} \nonumber\\
&
 \quad + \frac{1}{2} \avg{\int_0^t  \varphi''\paren{\la f, \nu^{\vec{\zeta}}_{s} \ra_{\hat{P}}} \sum_{i = 1}^{\la 1, \nu^{\vec{\zeta}}_{s-}\ra_{\hat{P}}} \left(\sqrt{2D^i} \frac{\partial f}{\partial Q}\left(H^i(\nu^{\vec{\zeta}}_{s-}) \right) \right)^2\, ds} \nonumber\\
    &
     \quad +\avg{ \int_0^t \int_{\left(\N\setminus\{0\}\right)^{2}} \int_{\R^d}\int_{\R_+^2} \varphi\paren{\la f, \nu^{\vec{\zeta}}_{s} \ra_{\hat{P}} + \paren{-f(H_Q^{i}(\nu^{{\vec{\zeta}},1}_{s-}), S_1) - f(H_Q^{j}(\nu^{{\vec{\zeta}},2}_{s-}), S_2) + f(z, S_3)} \right. \nonumber\\
     &
     \qquad\qquad \left. \times 1_{\{i\leq \la 1,\nu^{{\vec{\zeta}},1}_{s-}\ra\}}\times 1_{\{j\leq \la 1,\nu^{{\vec{\zeta}},2}_{s-}\ra\}}\times 1_{\{ \theta_1 \leq K_{1}^{\gamma}(H_Q^{i}(\nu^{{\vec{\zeta}},1}_{s-}), \, H_Q^{j}(\nu^{{\vec{\zeta}},2}_{s-})) \}} \times 1_{\{ \theta_2 \leq  m^\eta_{1}(z|H_Q^{i}(\nu^{{\vec{\zeta}},1}_{s-}),H_Q^{j}(\nu^{{\vec{\zeta}},2}_{s-}))\}} }  \nonumber\\
     &
     \qquad- \varphi\paren{\la f, \nu^{\vec{\zeta}}_{s} \ra_{\hat{P}}} dN_{1}(s,i,j,z,\theta_1, \theta_2) }\nonumber\\
     &
     \quad +\avg{ \int_0^t \int_{\N\setminus\{0\}} \int_{\R^{2d}}\int_{\R_+^2} \varphi\paren{\la f, \nu^{\vec{\zeta}}_{s} \ra_{\hat{P}} + \paren{f(x, S_1) + f(y, S_2) - f(H_Q^{k}(\nu^{{\vec{\zeta}},3}_{s-}), S_3)} \right. \nonumber\\
     &
     \qquad\qquad \left.\times 1_{\{k\leq \la 1,\nu^{{\vec{\zeta}},3}_{s-}\ra\}}\times  1_{\{ \theta_1 \leq K_{2}^{\gamma}(H_Q^{k}(\nu^{{\vec{\zeta}},3}_{s-}))\}}\times  1_{\{ \theta_2 \leq  m^\eta_{2}(x,y|H_Q^{k}(\nu^{{\vec{\zeta}},3}_{s-}))\}} }  \nonumber\\
     &
     \qquad- \varphi\paren{\la f, \nu^{\vec{\zeta}}_{s} \ra_{\hat{P}}} dN_{2}(s,k,x,y,\theta_1, \theta_2) }\nonumber\\
     &
      =   \avg{\varphi\paren{\la f, \nu^{\vec{\zeta}}_0 \ra_{\hat{P}}}}  + \avg{\int_0^t  \varphi'\paren{\la f, \nu^{\vec{\zeta}}_{s} \ra_{\hat{P}}} \sum_{i = 1}^{\la 1, \nu^{\vec{\zeta}}_{s-}\ra_{\hat{P}}} D^i \frac{\partial^2 f}{\partial Q^2}\left(H^i(\nu^{\vec{\zeta}}_{s-}) \right) \, ds} \nonumber\\
&
 \quad + \avg{\int_0^t  \varphi''\paren{\la f, \nu^{\vec{\zeta}}_{s} \ra_{\hat{P}}} \sum_{i = 1}^{\la 1, \nu^{\vec{\zeta}}_{s-}\ra_{\hat{P}}} D^i\left( \frac{\partial f}{\partial Q}\left(H^i(\nu^{\vec{\zeta}}_{s-}) \right) \right)^2\, ds} \nonumber\\
    &
     \quad +\avg{ \int_0^t \sum_{ i = 1}^{\la 1,\nu^{{\vec{\zeta}},1}_{s-}\ra} \sum_{j = 1}^{\la 1,\nu^{{\vec{\zeta}},2}_{s-}\ra} \int_{\R^d} K_{1}^{\gamma}(H_Q^{i}(\nu^{{\vec{\zeta}},1}_{s-}), \, H_Q^{j}(\nu^{{\vec{\zeta}},2}_{s-}))\times m^\eta_{1}(z|H_Q^{i}(\nu^{{\vec{\zeta}},1}_{s-}),H_Q^{j}(\nu^{{\vec{\zeta}},2}_{s-})) \nonumber\\
     &\qquad \times\left[ \varphi\paren{\la f, \nu^{\vec{\zeta}}_{s} \ra_{\hat{P}} + \paren{-f(H_Q^{i}(\nu^{{\vec{\zeta}},1}_{s-}), S_1) - f(H_Q^{j}(\nu^{{\vec{\zeta}},2}_{s-}), S_2) + f(z, S_3)} } -   \varphi\paren{\la f, \nu^{\vec{\zeta}}_{s} \ra_{\hat{P}}} \right]\, dz\, ds }\nonumber\\
     &
     \quad +\avg{ \int_0^t \sum_{k = 1}^{\la 1,\nu^{{\vec{\zeta}},3}_{s-}\ra} \int_{\R^{2d}} K_{2}^{\gamma}(H_Q^{k}(\nu^{{\vec{\zeta}},3}_{s-}))\times m^\eta_{2}(x,y|H_Q^{k}(\nu^{{\vec{\zeta}},3}_{s-}))\nonumber\\
     &
     \qquad\times\left[ \varphi\paren{\la f, \nu^{\vec{\zeta}}_{s} \ra_{\hat{P}} + \paren{f(x, S_1) + f(y, S_2) - f(H_Q^{k}(\nu^{{\vec{\zeta}},3}_{s-}), S_3)} } - \varphi\paren{\la f, \nu^{\vec{\zeta}}_{s} \ra_{\hat{P}}}\right] \, dx\, dy\, ds }.
\end{align}

We observe that the integral equation for $\avg{\varphi\paren{\la f, \nu^{\vec{\zeta}}}_t \ra_{\hat{P}}} $  is the same when derived from the forward equation (\ref{Eq:generator_forward_eqABC}) and from the measure-valued formulation (\ref{Eq:generator_measure_valuedABC}).
\end{proof}


\begin{proof}[Proof of Proposition \ref{P:ParticleSpatialDensity}]

We use the forward Kolmogorov equation to prove the proposition. We are interested in finding an equation for the average concentration field for \textrm{A}, \textrm{B}  and \textrm{C} molecules, i.e. $\avg{A(\vx,t)}$, $\avg{B(\vy,t)}$ and $\avg{C(\vz,t)}$ from forward equation. This is defined by
\begin{align*}
    \avg{A(\vx,t)} &= \avg{ \sum_{i=1}^{A(t)} \delta\paren{\vQ^{A(t)}_i(t)-\vx} } \\
    &= \sum_{a=0}^{\infty} \sum_{b=0}^{\infty} \sum_{c=0}^{\infty}  \frac{1}{a! \, b! \, c!} \sum_{i=1}^a \int_{\R^{(a+b+c)d}} \delta(\vqa_i - \vx) \, \pabc(\vqa,\vqb,\vqc, t) \, d\vqa \, d\vqb \, d\vqc \\
    &= \sum_{a=0}^{\infty} \sum_{b=0}^{\infty}  \sum_{c=0}^{\infty}  \frac{1}{(a-1)! \, b!\, c!} \int_{\R^{(a-1+b + c)d}} \pabc(\vq^{a-1}\cup\vx,\vqb,\vqc,t) \, d\vq^{a-1} \, d\vqb \, d\vqc.
\end{align*}
Similarly, for molecule B and C, we have
\begin{align*}
    \avg{B(\vy,t)} &= \sum_{a=0}^{\infty} \sum_{b=0}^{\infty}  \sum_{c=0}^{\infty}  \frac{1}{a! \, (b-1)!\, c!} \int_{\R^{(a+b-1 + c)d}} \pabc(\vqa, \vq^{b-1}\cup\vy,\vqc,t) \, d\vqa \, d\vq^{b-1} \, d\vqc.\\
        \avg{C(\vz,t)} &= \sum_{a=0}^{\infty} \sum_{b=0}^{\infty}  \sum_{c=0}^{\infty}  \frac{1}{a! \, b!\, (c-1)!} \int_{\R^{(a+b + c-1)d}} \pabc(\vqa, \vqb , \vq^{c-1}\cup\vz,t) \, d\vqa \, d\vqb \, d\vq^{c-1}.
\end{align*}
In deriving this equation we will need to use the correlation in the \textrm{A} and \textrm{B} fields, given by
\begin{align*}
    &\avg{A(\vx,t) B(\vy,t)} = \avg{ \sum_{i=1}^{A(t)} \sum_{j=1}^{B(t)} \delta\paren{\vQ^{A(t)}_i(t)-\vx} \delta\paren{\vQ^{B(t)}_j(t)-\vy} } \\
    &= \sum_{a=0}^{\infty} \sum_{b=0}^{\infty} \sum_{c=0}^{\infty}  \frac{1}{a! \, b! c!} \sum_{i=1}^a \sum_{j=1}^b \int_{\R^{(a+b)d}} \delta(\vqa_i - \vx) \delta(\vqb_j - \vy) \, \pabc(\vqa,\vqb,\vqc,t) \, d\vqa \, d\vqb \, d\vqc \\
    &= \sum_{a=0}^{\infty} \sum_{b=0}^{\infty} \sum_{c=0}^{\infty}  \frac{1}{(a-1)! \, (b-1)! c!} \int_{\R^{(a+b +c-2)d}} \pabc(\vq^{a-1}\cup\vx,\vq^{b-1}\cup\vy,\vqc,t) \, d\vq^{a-1} \, d\vq^{b-1} \, d\vqc.
\end{align*}
Using these definitions, and assuming the probability densities vanish at infinity, we find that
\begin{align*}
    \partial_t \avg{A(\vx,t)} &= \sum_{a=0}^{\infty} \sum_{b=0}^{\infty}  \sum_{c=0}^{\infty}  \frac{1}{(a-1)! \, b!\, c!} \int_{\R^{(a-1+b + c)d}} \paren{(\diffop + \Rp + \Rm)\vP}_{a,b,c}(\vq^{a-1}\cup\vx,\vqb,\vqc,t) \, d\vq^{a-1} \, d\vqb \, d\vqc.\\
    &= (\RN{1})+ (\RN{2})+ (\RN{3}),
 \end{align*}
where
\begin{align*}
    (\RN{1})&= \sum_{a=0}^{\infty} \sum_{b=0}^{\infty}  \sum_{c=0}^{\infty}  \frac{1}{(a-1)! \, b!\, c!} \int_{\R^{(a-1+b + c)d}} \paren{\diffop \vP}_{a,b,c}(\vq^{a-1}\cup\vx,\vqb,\vqc,t) \, d\vq^{a-1} \, d\vqb \, d\vqc\\
    &= \sum_{a=0}^{\infty} \sum_{b=0}^{\infty}  \sum_{c=0}^{\infty}  \frac{1}{(a-1)! \, b!\, c!} \int_{\R^{(a-1+b + c)d}} \Bigg[ \paren{D_1 \sum_{l=1}^{a-1} \lap_{\vqa_l}
    + D_2 \sum_{m=1}^{b} \lap_{\vqb_m}
    + D_3 \sum_{n=1}^{c} \lap_{\vqc_n}}\pabc(\vq^{a-1}\cup\vx,\vqb,\vqc,t) \\
   &\quad + D_1 \lap_{\vx}\pabc(\vq^{a-1}\cup\vx,\vqb,\vqc,t) \Bigg] \, d\vq^{a-1} \, d\vqb \, d\vqc\\
   &  = D_1 \lap_{\vx} \avg{A(\vx,t)}
\end{align*}
where on the second to last line, the first term becomes zero due to integration by parts and the fact that probability density vanishes at infinity (recall that by Theorem~\ref{thm:regularityABC} $p\in \mathcal{C}([0,\infty);H^2(X))$). Similarly,
\begin{align*}
    &(\RN{2})= \sum_{a=0}^{\infty} \sum_{b=0}^{\infty}  \sum_{c=0}^{\infty}  \frac{1}{(a-1)! \, b!\, c!} \int_{\R^{(a-1+b + c)d}}  \paren{\Rp \vP}_{a,b,c}(\vq^{a-1}\cup\vx,\vqb,\vqc,t) \, d\vq^{a-1} \, d\vqb \, d\vqc.\\
    &= \sum_{a=0}^{\infty} \sum_{b=0}^{\infty}  \sum_{c=0}^{\infty}  \frac{1}{(a-1)! \, b!\, c!} \int_{\R^{(a-1+b + c)d}}  -\sum_{m=1}^b K_1^{\gamma} \paren{\vx, \vqb_{m}}\pabc(\vq^{a-1}\cup \vx,\vqb,\vqc,t) \, d\vq^{a-1} \, d\vqb \, d\vqc.\\
    &\quad+\sum_{a=0}^{\infty} \sum_{b=0}^{\infty}  \sum_{c=0}^{\infty}  \frac{1}{(a-1)! \, b!\, c!} \int_{\R^{(a-1+b + c)d}} \left\{ -\sum_{l=1}^{a-1} \sum_{m=1}^b K_1^{\gamma} \paren{\vqa_{l}, \vqb_{m}} \pabc(\vq^{a-1}\cup \vx,\vqb,\vqc,t)\right. \\
    &\quad \quad+ \left. \sum_{n=1}^{c} \!\brac{\int_{\R^{2d}} m_1(\vqc_n \vert \vxi, \veta) K_1^{\gamma}( \vxi, \veta)
      p^{(a+1,b+1,c-1)}(\vq^{a-1}\cup \vx \cup \vxi, \vqb \cup \veta, \vqc \setminus \vqc_n, t) d\vxi d\veta}\!\! \right\} \, d\vq^{a-1} \, d\vqb \, d\vqc.\\
    &= -\sum_{a=0}^{\infty} \sum_{b=0}^{\infty}  \sum_{c=0}^{\infty}  \frac{1}{(a-1)! \, b!\, c!} \int_{\R^{(a+b + c-2)d}}  \brac{b\times \int_{\R^{d}} K_1^{\gamma} \paren{\vx,\vy}\pabc(\vq^{a-1}\cup \vx,\vq^{b-1}\cup\vy,\vqc,t) \, d\vy}\,d\vq^{a-1} \, d\vq^{b-1} \, d\vqc.\\
    &\quad  +\sum_{a=0}^{\infty} \sum_{b=0}^{\infty}  \sum_{c=0}^{\infty}  \frac{1}{(a-1)! \, b!\, c!} \left\{ - \int_{\R^{(a-1+b + c)d}}  \sum_{l=1}^{a-1} \sum_{m=1}^b K_1^{\gamma} \paren{\vqa_{l}, \vqb_{m}} \pabc(\vq^{a-1}\cup \vx,\vqb,\vqc,t)d\vq^{a-1} \, d\vqb \, d\vqc \right. \\
    &\quad\quad +  \left. \int_{\R^{(a +b+ c)d}}\int_{\R^d}\! \frac{c}{a(b+1)} \sum_{l=1}^{a} \sum_{m=1}^{b+1} m_1(\vz \vert \vqa_{l}, \vqb_{m}) K_1^{\gamma}( \vqa_{l}, \vqb_{m})\times\right.\nonumber\\
    &\left. \hspace{7cm}  p^{(a+1,b+1,c-1)}(\vqa\cup \vx,  \vq^{b+1}, \vq^{c-1}, t) \, d\vz \, d\vq^{a} \, d\vq^{b+1} \, d\vq^{c-1} \right\}\\
   &= - \int_{\R^{d}} K_1^{\gamma} \paren{\vx,\vy}\avg{A(\vx,t) B(\vy,t)} \, d\vy\\
    &\quad  - \sum_{a=0}^{\infty} \sum_{b=0}^{\infty}  \sum_{c=0}^{\infty}  \frac{1}{(a-1)! \, b!\, c!} \int_{\R^{(a-1+b + c)d}}  \sum_{l=1}^{a-1} \sum_{m=1}^b K_1^{\gamma}\paren{\vqa_{l}, \vqb_{m}} \pabc(\vq^{a-1}\cup \vx,\vqb,\vqc,t)d\vq^{a-1} \, d\vqb \, d\vqc \\
    &\quad + \sum_{a=0}^{\infty} \sum_{b=0}^{\infty}  \sum_{c=0}^{\infty}  \frac{1}{a! \, (b+1)!\, (c-1)!} \int_{\R^{(a +b+ c)d}}\! \sum_{l=1}^{a} \sum_{m=1}^{b+1} K_1^{\gamma}(\vqa_{l}, \vqb_{m})
      p^{(a+1,b+1,c-1)}(\vqa\cup \vx,  \vq^{b+1}, \vq^{c-1}, t) \, d\vq^{a} \, d\vq^{b+1} \, d\vq^{c-1} \\
  &= - \int_{\R^{d}} K_1^{\gamma} \paren{\vx,\vy}\avg{A(\vx,t) B(\vy,t)} \, d\vy.
\end{align*}
In the third equality of $(\RN{2})$, we exchanged the orders of integrals and sums using that
\begin{align*}
&\int_{\R^{bd}}\sum_{m = 1}^b K_1^{\gamma} \paren{\vx, \vqb_{m}} \pabc(\vq^{a-1}\cup \vx,\vqb,\vqc,t) \, d\vqb\\
& = \sum_{m = 1}^b \int_{\R^{bd}} K_1^{\gamma} \paren{\vx, \vqb_{m}} \pabc(\vq^{a-1}\cup \vx,(\vqb\setminus\vqb_{m})\cup\vqb_{m} ,\vqc,t)\, d\vqb_{m} \, d(\vqb\setminus\vqb_{m})\\
& = \sum_{m = 1}^b \int_{\R^{bd}} K_1^{\gamma} \paren{\vx, \vy} \pabc(\vq^{a-1}\cup \vx,\vq^{b-1}\cup\vy ,\vqc,t)\, d\vy \, d\vq^{b-1}\\
&\quad \text{(Here we replace $\vqb_m$ by $\vy$ and $\vqb\setminus\vqb_{m}$ by $\vq^{b-1}$ due to  that particles of the same type are indistinguishable)}\\
& = b\times \int_{\R^{bd}} K_1^{\gamma} \paren{\vx, \vy} \pabc(\vq^{a-1}\cup \vx,\vq^{b-1}\cup\vy ,\vqc,t)\, d\vy \, d\vq^{b-1}.
\end{align*}
Similar ideas also apply to the third term and to deriving $(\RN{3})$. In the second to last equality of $(\RN{2})$,  we used that $\int_{\R^d} m_1(\vz \vert \vx , \vy) d\vz = 1$ and the second and third term cancel by shifting indexes. Similarly
\begin{align*}
    (\RN{3})&= \sum_{a=0}^{\infty} \sum_{b=0}^{\infty}  \sum_{c=0}^{\infty}  \frac{1}{(a-1)! \, b!\, c!} \int_{\R^{(a-1+b + c)d}}  \paren{\Rm\vP}_{a,b,c}(\vq^{a-1}\cup\vx,\vqb,\vqc,t) \, d\vq^{a-1} \, d\vqb \, d\vqc\\
    &= \sum_{a=0}^{\infty} \sum_{b=0}^{\infty}  \sum_{c=0}^{\infty}  \frac{1}{(a-1)! \, b!\, c!} \int_{\R^{(a-1+b + c)d}} \left\{ - \sum_{n=1}^{c} K_2^{\gamma}(\vqc_n) \pabc(\vq^{a-1}\cup\vx, \vqb, \vqc, t)\right. \\
    &\quad + \sum_{m=1}^b  \brac{\int_{\R^d} m_2\paren{\vx,\vqb_m\vert\vz}K_2^{\gamma}(\vz)
    p^{(a-1,b-1,c+1)}\paren{\vq^{a-1}\setminus \vq^{a-1}_l\cup\vx, \vqb\setminus \vqb_m, \vqc \cup \vz, t} d\vz}\\
  &\quad +\left. \sum_{m=1}^b  \sum_{l=1}^{a-1} \brac{\int_{\R^d} m_2\paren{\vq^{a-1}_l,\vqb_m\vert\vz}K_2^{\gamma}(\vz)
    p^{(a-1,b-1,c+1)}\paren{\vq^{a-1}\setminus \vq^{a-1}_l\cup\vx, \vqb\setminus \vqb_m, \vqc \cup \vz, t} d\vz}   \right\} \, d\vq^{a-1} \, d\vqb \, d\vqc\\
   &= -\sum_{a=0}^{\infty} \sum_{b=0}^{\infty}  \sum_{c=0}^{\infty}  \frac{1}{(a-1)! \, b!\, c!} \int_{\R^{(a+b + c-2)d}} c\int_{\R^d}K_2^{\gamma}(\vz) \pabc(\vq^{a-1}\cup\vx, \vqb, \vq^{c-1}\cup\vz, t)  \,d\vz\, d\vq^{a-1} \, d\vqb \, d\vq^{c-1}\\
  &\quad+\sum_{a=0}^{\infty} \sum_{b=0}^{\infty}  \sum_{c=0}^{\infty}  \frac{1}{(a-1)! \, b!\, c!}\left\{ \int_{\R^{(a+b + c-2)d}}b\brac{\int_{\R^{2d}} m_2\paren{\vx,\vy\vert\vz}K_2^{\gamma}(\vz)
    p^{(a-1,b-1,c+1)}\paren{\vq^{a-1}, \vq^{b-1}, \vqc \cup \vz, t} d\vy d\vz} \right. \\
    &\qquad\qquad d\vq^{a-1} \, d\vq^{b-1} \, d\vqc + \int_{\R^{(a+b + c-3)d}}(a-1)b\\
     &\qquad\qquad \left. \times\brac{\int_{\R^{3d}} m_2\paren{\vxi,\veta\vert\vz}K_2^{\gamma}(\vz)
    p^{(a-1,b-1,c+1)}\paren{\vq^{a-2}\cup\vx, \vq^{b-1}, \vqc \cup \vz, t} d\vxi\, d\veta\, d\vz}  \, d\vq^{a-2} \, d\vq^{b-1} \, d\vqc\right\}\\
    &= -\sum_{a=0}^{\infty} \sum_{b=0}^{\infty}  \sum_{c=0}^{\infty}  \frac{1}{(a-1)! \, b!\, (c-1)!} \int_{\R^{(a+b + c-2)d}} \int_{\R^d}K_2^{\gamma}(\vz) \pabc(\vq^{a-1}\cup\vx, \vqb, \vq^{c-1}\cup\vz, t)  \,d\vz\, d\vq^{a-1} \, d\vqb \, d\vq^{c-1}\\
  &\quad+\sum_{a=0}^{\infty} \sum_{b=0}^{\infty}  \sum_{c=0}^{\infty}  \frac{1}{a! \, b!\, c!} \int_{\R^{(a+b + c)d}}\brac{\int_{\R^{2d}} m_2\paren{\vx,\vy\vert\vz}K_2^{\gamma}(\vz)
    p^{(a,b,c+1)}\paren{\vqa, \vqb \vqc \cup \vz, t} d\vy d\vz} d\vqa \, d\vqb \, d\vqc \\
     &\quad+\sum_{a=0}^{\infty} \sum_{b=0}^{\infty}  \sum_{c=0}^{\infty}  \frac{1}{(a-2)! \, (b-1)!\, c!}  \int_{\R^{(a+b + c-3)d}}\brac{\int_{\R^{d}} K_2^{\gamma}\paren{\vz}
    p^{(a-1,b-1,c+1)}\paren{\vq^{a-2}\cup\vx, \vq^{b-1}, \vqc \cup \vz, t} \, d\vz}  \\
    &\qquad\qquad \, d\vq^{a-2} \, d\vq^{b-1} \, d\vqc\\
    & = \int_{\R^{d}} \brac{\int_{\R^d}m_2\paren{\vx,\vy\vert\vz}K_2^{\gamma}(\vz) d\vy } \avg{C(\vz,t)} d\vz .
\end{align*}
In $(\RN{3})$, we used that $\int_{\R^{2d}} m_2(\vx , \vy \vert \vz ) d\vx \, d\vy = 1$. In the second to last line, the first and third term cancel by shifting indexes.

In summary, the average concentration of species A satisfies
\begin{align*}
    \partial_t \avg{A(\vx,t)} &= D_1 \lap_{\vx} \avg{A(\vx,t)} - \int_{\R^{d}} K_1^{\vec{\zeta}}\paren{\vx,\vy}\avg{A(\vx,t) B(\vy,t)} \, d\vy \\
    &\qquad+ \int_{\R^{d}} \brac{\int_{\R^d}m_2\paren{\vx,\vy\vert\vz}K_2^{\vec{\zeta}}(\vz) d\vy } \avg{C(\vz,t)} d\vz.
\end{align*}
Following similar arguments, one can derive equations for the average concentration of each species, given by~\eqref{Eq:Mean_Field_Model_From_Foward_Eqs0} as claimed.
This concludes the proof of the Proposition.

\end{proof}


\section{Proof of Theorem \ref{thm:regularityABC}}\label{A:ProofPDE}
\begin{proof}[Proof of Theorem \ref{thm:regularityABC}]

Due to the linearity of the equation, the proof of existence and uniqueness is standard here, so we only present a sketch of the argument for completeness.

Notice that the operator $\diffop$ defined in (\ref{eq:multipartDiffop}) generates a contractive analytic semigroup on $F$, denoted by $\{e^{t\mathcal{L}}\}_{t\geq 0}$. Now, since by Lemma \ref{lem:opBddLip}, $\Rp,\Rm$ are Lipschitz continuous, existence of a unique local mild solution to (\ref{eq:multipartABtoCEqs}), $\vP \in\mathcal{C}([0,t_{0});F)$ follows by the standard Picard-Lindel\"{o}f theorem for equations with values in Banach spaces if the initial condition satisfies $\vP_{0}\in F$.

Next we establish global existence of a unique mild solution. The boundedness of the linear operators $\Rp,\Rm$ by Lemma~\ref{lem:opBddLip}, together with the contraction property of the semigroup $t\mapsto e^{t\diffop}$, implies that
\begin{align}
\|\vP(t)\|_{F}&\leq \|\vP_{0}\|_{F}+C\int_{0}^{t}\|\vP(s)\|_{F}\,ds
\end{align}
and a subsequent Gronwall lemma yields the bound $\|\vP(t)\|_{F}\leq \|\vP_{0}\|_{F} e^{Ct}$. This bound allows us, by choosing $t_0$ small enough, to extend the solution from the interval $[0,t_0)$ to the interval $[0,\infty)$. Hence, a unique global mild solution $\vP\in\mathcal{C}([0,\infty);F)$ exists.

We actually have stronger regularity for the solution, $\vP(t)$. This is a direct consequence of the contraction and regularization properties of the semigroup $\{e^{t\diffop}\}_{t\geq 0}$. Indeed, since $\vP(t)$ is in $L^{2}(X)$, Lemma~\ref{lem:opBddLip} gives that $\Rp(\vP)$ and $\Rm(\vP)$ are both in $L^{2}(X)$. If, in addition, the initial condition $\vP_{0}\in H^1(X)$, then the mild form of the solution together with standard parabolic estimates (see estimates 3.1 in Chapter I.V, Section 3 of \cite{Ladyzenskaja}),  gives that $\vP\in\mathcal{C}([0,\infty);H^2(X))$.

Note, $\vP(t)$ can be viewed as a probability density. Indeed, we have that if $\vP_0\geq 0$,  $\vP(t)$ is, by Lemma~\ref{lem:soluNonNegative}, always non-negative for all $t\geq0$. Second, if $\vP_0$ satisfies the normalization condition,
\begin{equation*}
  \sum_{a=0}^{\infty} \sum_{b=0}^{\infty} \sum_{c=0}^{\infty} \brac{ \frac{1}{a!\, b!\, c!}
  \int_{\R^{da}} \int_{\R^{db}} \int_{\R^{dc}} \pabc_0\paren{\vqa,\vqb,\vqc} \, d\vqc \, d\vqb \, d\vqa
  } = 1,
\end{equation*}
then the same normalization condition holds for $\vP(t), t\geq0$,
\begin{equation*}
  \sum_{a=0}^{\infty} \sum_{b=0}^{\infty} \sum_{c=0}^{\infty} \brac{ \frac{1}{a!\, b!\, c!}
  \int_{\R^{da}} \int_{\R^{db}} \int_{\R^{dc}} \pabc\paren{\vqa,\vqb,\vqc,t} \, d\vqc \, d\vqb \, d\vqa
  } = 1,
\end{equation*}
by Lemma \ref{lem:Normalization}. This concludes the proof of the theorem.
\end{proof}

\begin{lemma}\label{lem:opBddLip}
We have that the operators $\Rp$ and $\Rm$ are bounded linear, Lipschitz continuous operators on $F$. Namely, for $\vG = \{g^{(a,b,c)}(\vqa,\vqb,\vqc)\}_{a,b,c=0}^{\infty} \in F$,
\begin{align}
\|\Rp (\vG)\|_F&\leq C \|\vG\|_{F},\nonumber\\
\|\Rm (\vG)\|_F&\leq C \|\vG\|_{F},\nonumber\\
\|\Rp (\vG_1)-\Rp (\vG_2)\|_F&\leq C \|\vG_1-\vG_2\|_{F},\nonumber\\
\|\Rm (\vG_1)-\Rm (\vG_2)\|_F&\leq C \|\vG_1-\vG_2\|_{F}.
\end{align}
\end{lemma}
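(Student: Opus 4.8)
The plan is to reduce everything to an operator-norm bound: since $\Rp$ and $\Rm$ are linear, the two Lipschitz estimates follow by applying the two boundedness estimates to $\vG_1-\vG_2$, so it suffices to prove $\|\Rp(\vG)\|_F\le C\|\vG\|_F$ and $\|\Rm(\vG)\|_F\le C\|\vG\|_F$. Because $a+b+2c$ is conserved (Remark~\ref{rem:finiteSum}), $F=L^2(X)$ is a finite direct sum of the slices $\R^{d(a+b+c)}$ over a bounded index set, so it is enough to estimate each component $(\Rp\vG)_{(a,b,c)}$ and $(\Rm\vG)_{(a,b,c)}$ in $L^2(\R^{d(a+b+c)})$, uniformly over that finite set, and then reassemble using the weighted inner product \eqref{eq:FockInnerProduct}, absorbing the bounded ratios of factorials into the constant. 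Each component splits into a loss term (pointwise multiplication of $g^{(a,b,c)}$ by a finite sum of kernel evaluations) and a gain term (an integral operator pulling in $g^{(a+1,b+1,c-1)}$ or $g^{(a-1,b-1,c+1)}$).

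The loss terms are immediate from Assumption~\ref{Assume:kernalBdd0}: since $K_1$ and $K_2$ are uniformly bounded, $\|K_1^\gamma(\vqa_{l},\vqb_{m})\,g^{(a,b,c)}\|_{L^2}\le\gamma^{-1}C(K)\|g^{(a,b,c)}\|_{L^2}$ and $\|K_2^\gamma(\vqc_{n})\,g^{(a,b,c)}\|_{L^2}\le C(K)\|g^{(a,b,c)}\|_{L^2}$, and there are only boundedly many index pairs $(l,m)$ (resp. indices $n$). The gain terms are the main obstacle, since a naive $L^\infty$ bound on the kernel does \emph{not} suffice: the bare ``marginalize-with-smoothing'' map $g\mapsto\int m^\eta(\,\cdot\mid\vx,\vy)\,g(\ldots\vx\ldots\vy\ldots)\,d\vx\,d\vy$ is not bounded on $L^2$. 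The resolution is to use the $L^2$ structure supplied by Assumptions~\ref{assumption:twoToOneRate} and~\ref{assumption:oneToTwoRate} together with a change of variables adapted to the explicit mollified placement densities of Assumptions~\ref{Assume:measureTwo2One} and~\ref{Assume:measureOne2Two}. For the $\Rp$ gain term I would write $m^\eta_1(z\mid x,y)\,K_1^\gamma(x,y)=\gamma^{-1}\tilde K(|x-y|)\sum_i p_i\,G_\eta(z-(\alpha_i x+(1-\alpha_i)y))$ and, for each $i$, change variables $(x,y)\mapsto(u,v)$ with $u=\alpha_i x+(1-\alpha_i)y$ the centre of mass and $v=x-y$ the separation (Jacobian one); the $u$-integral then becomes a convolution against $G_\eta$, controlled by Young's inequality with $\|G_\eta\|_{L^1}=1$, and after moving the $L^2$-norm inside the $v$-integral by Minkowski's integral inequality the remaining $v$-integral closes by the Cauchy--Schwarz inequality using $\tilde K\in L^2(\R^d)$, while Jensen's inequality disposes of the finite convex combination $\sum_i p_i$. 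This bounds the $L^2$-norm of the gain part of $(\Rp\vG)_{(a,b,c)}$ by $c\,\gamma^{-1}\|\tilde K\|_{L^2}\,\|g^{(a+1,b+1,c-1)}\|_{L^2}$. The $\Rm$ gain term is handled identically using $m^\eta_2(x,y\mid z)=\rho(|x-y|)\sum_i p_i\,G_\eta(z-(\alpha_i x+(1-\alpha_i)y))$: the same $(u,v)$ change of variables turns $\rho(|v|)$ into an $L^2$ factor absorbed by Assumption~\ref{assumption:oneToTwoRate}, and the gain part of $(\Rm\vG)_{(a,b,c)}$ gets $L^2$-norm at most $ab\,C(K)\,\|\rho\|_{L^2}\,\|g^{(a-1,b-1,c+1)}\|_{L^2}$.

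To finish, I would sum the loss- and gain-term estimates over the finitely many admissible $(a,b,c)$, converting $\tfrac{1}{a!\,b!\,c!}\|g^{(a\pm1,b\pm1,c\mp1)}\|_{L^2}^2$ into a contribution to $\|\vG\|_F^2$; this costs only the bounded combinatorial factors $\tfrac{(a+1)(b+1)}{c}$ and $\tfrac{c+1}{ab}$, which are uniformly controlled on the finite index set. The outcome is $\|\Rp\vG\|_F+\|\Rm\vG\|_F\le C\|\vG\|_F$, with $C$ depending on $\gamma$, the $L^\infty$ bounds on $K_1$ and $K_2$, the norms $\|\tilde K\|_{L^2}$ and $\|\rho\|_{L^2}$, and $a_0\vee b_0+c_0$. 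The only genuinely delicate step is the gain-term estimate; the key point there is that one must trade the $L^\infty$ kernel bound for the $L^2$ bound and exploit the translation structure of the mollified placement densities.
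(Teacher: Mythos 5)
Your proposal is correct and follows essentially the same route as the paper: reduce Lipschitz to boundedness by linearity, handle the loss terms by the $L^\infty$ kernel bound, and control the gain terms by exploiting the translation structure of the placement densities together with the $L^2$ hypotheses on $\tilde K$ and $\rho$ via a change of variables and Cauchy--Schwarz, summing over the finite index set fixed by conservation of $a+b+2c$. The only (harmless) difference is that you keep the mollifier $G_\eta$ and dispose of it by Young's inequality, whereas the paper substitutes the $\eta\to 0$ delta-function form of $m_\ell$ directly and integrates it out before applying Cauchy--Schwarz.
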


\begin{proof}
We'll only show the first two estimates hold. The Lipschitz conditions on $\Rp$ and $\Rm$ follow directly from $\Rp$ and $\Rm$ being bounded and linear. Assume that the initial number of particles are $a_0, b_0$ and $c_0$ for species A, B and C respectively. We then have that the following upper bounds hold for all times $0\leq a\leq a_0 + c_0 : = a_{max}$,  $0\leq b\leq b_0 + c_0: = b_{max}$ and  $0\leq c\leq a_0\wedge b_0 + c_0: = c_{max}$.

By definition of the norm on Fock Space and the definition of $\Rp$, we have that
\begin{align}\label{eq:RpBdd1}
\|\Rp (\vG)\|_F^2 & =  \sum_{a=0}^{\infty} \sum_{b=0}^{\infty}  \sum_{c=0}^{\infty} \frac{1}{a! \, b! \, c!} \int_{\R^{(a+b+c)d}} \left(\paren{\Rp\vP}_{a,b,c}(\vqa,\vqb, \vqc) \right)^2 \, d\vqa \, d\vqb \, d\vqc,\nonumber\\
&
 =  \sum_{a=0}^{\infty} \sum_{b=0}^{\infty}  \sum_{c=0}^{\infty} \frac{1}{a! \, b! \, c!} \int_{\R^{(a+b+c)d}} \left(-\paren{\sum_{l=1}^a \sum_{m=1}^bK_1^{\gamma} \paren{\vqa_{l}, \vqb_{m}}} \gabc(\vqa,\vqb,\vqc,t)\right. \nonumber\\
&\quad
+ \left.\sum_{n=1}^{c} \!\brac{\int_{\R^{2d}} m_1(\vqc_n \vert \vx, \vy) K_1^{\gamma} \paren{\vx, \vy}
      g^{(a+1,b+1,c-1)}(\vqa \cup \vx, \vqb \cup \vy, \vqc \setminus \vqc_n, t) d\vx d\vy} \right)^2 \, d\vqa \, d\vqb \, d\vqc,\nonumber\\
      &
\leq  \sum_{a=0}^{\infty} \sum_{b=0}^{\infty}  \sum_{c=0}^{\infty} \frac{1}{a! \, b! \, c!} \int_{\R^{(a+b+c)d}} \left( a_{max}b_{max} C(K) \gabc(\vqa,\vqb,\vqc,t)\right)^2  \, d\vqa \, d\vqb \, d\vqc \nonumber\\
&\quad
+ \sum_{a=0}^{\infty} \sum_{b=0}^{\infty}  \sum_{c=0}^{\infty} \frac{1}{a! \, b! \, c!} \int_{\R^{(a+b+c)d}} \left( \sum_{n=1}^{c} \!\brac{\int_{\R^{2d}} m_1(\vqc_n \vert \vx, \vy) K_1^{\gamma} \paren{\vx, \vy}\right.\right.\nonumber\\
 &
 \qquad  \qquad \times \left.\left.    g^{(a+1,b+1,c-1)}(\vqa \cup \vx, \vqb \cup \vy, \vqc \setminus \vqc_n, t) d\vx d\vy} \right)^2 \, d\vqa \, d\vqb \, d\vqc.
\end{align}

Without loss of generality, let us assume $0<\alpha_i\leq 1$ in Assumption \ref{Assume:measureTwo2One}. Now denote $C_1 = \left( a_{max}b_{max} C(K)\right)^2<\infty$, and substitute the specific form of $$m_1(\vqc_n \vert \vx, \vy)= \sum_{i=1}^{I}p_i\times \delta\left(\vqc_n-(\alpha_i \vx +(1-\alpha_i)\vy)\right)$$ into (\ref{eq:RpBdd1}). We obtain
\begin{align}\label{eq:RpBdd2}
&\|\Rp (\vG)\|_F^2  \leq  C_1\|\vG\|_F^2 +  \sum_{a=0}^{\infty} \sum_{b=0}^{\infty}  \sum_{c=0}^{\infty} \frac{1}{a! \, b! \, c!} \int_{\R^{(a+b+c)d}} \left( \sum_{n=1}^{c} \!\brac{\int_{\R^{2d}} \sum_{i=1}^{I}p_i \times K_1^{\gamma}\paren{\vx, \vy}\right.\right.\nonumber\\
 &
 \qquad \times \left.\left.   \delta\left(\vqc_n-(\alpha_i \vx +(1-\alpha_i)\vy)\right)\times g^{(a+1,b+1,c-1)}(\vqa \cup \vx, \vqb \cup \vy, \vqc \setminus \vqc_n, t) d\vx d\vy} \right)^2 \, d\vqa \, d\vqb \, d\vqc.\nonumber\\
 &
 \leq  C_1\|\vG\|_F^2 +  \sum_{a=0}^{\infty} \sum_{b=0}^{\infty}  \sum_{c=0}^{\infty} \frac{1}{a! \, b! \, c!} \int_{\R^{(a+b+c)d}} \left( \sum_{n=1}^{c} \!\brac{\int_{\R^{d}} \sum_{i=1}^{I}p_i \times K_1^{\gamma}\paren{\frac{1}{\alpha_i}\left(\vqc_n - (1-\alpha_i)\vy\right), \vy}\right.\right.\nonumber\\
 &
 \qquad \times \left.\left.   g^{(a+1,b+1,c-1)}(\vqa \cup \frac{1}{\alpha_i}\left(\vqc_n - (1-\alpha_i)\vy\right) , \vqb \cup \vy, \vqc \setminus \vqc_n, t) d\vy} \right)^2 \, d\vqa \, d\vqb \, d\vqc.\nonumber\\
 &
 \leq  C_1\|\vG\|_F^2 +  \sum_{a=0}^{\infty} \sum_{b=0}^{\infty}  \sum_{c=0}^{\infty} \frac{1}{a! \, b! \, c!} \int_{\R^{(a+b+c-1)d}}  c\sum_{i=1}^{I}p_i^2\sum_{n=1}^{c} \int_{\R^d}\left( \!\brac{\int_{\R^{d}}  K_1^{\gamma}\paren{\frac{1}{\alpha_i}\left(\vqc_n - (1-\alpha_i)\vy\right), \vy}\right.\right.\nonumber\\
 &
 \qquad \times \left.\left.   g^{(a+1,b+1,c-1)}(\vqa \cup \frac{1}{\alpha_i}\left(\vqc_n - (1-\alpha_i)\vy\right) , \vqb \cup \vy, \vqc \setminus \vqc_n, t) d\vy}^2 \, d\vqc_n  \right) \, d\vqa \, d\vqb \, d\vqc\setminus \vqc_n.\nonumber\\
 &
 \leq  C_1\|\vG\|_F^2 +  \sum_{a=0}^{\infty} \sum_{b=0}^{\infty}  \sum_{c=0}^{\infty} \frac{1}{a! \, b! \, (c-1)!} \int_{\R^{(a+b+c-1)d}}  \sum_{i=1}^{I}p_i^2 \sum_{n=1}^{c} \int_{\R^d} \!\left( \int_{\R^{d}} \tilde{K}_1^{\gamma}\paren{|\frac{1}{\alpha_i}\left(\vqc_n - (1-\alpha_i)\vy\right)- \vy|}^2 \, d\vy \right) \nonumber\\
 &
 \qquad \times \left( \int_{\R^{d}} \left|g^{(a+1,b+1,c-1)}\left(\vqa \cup \frac{1}{\alpha_i}\left(\vqc_n - (1-\alpha_i)\vy\right) , \vqb \cup \vy, \vqc \setminus \vqc_n, t\right)\right|^2 d\vy \right)  \, d\vqc_n \, d\vqa \, d\vqb \, d\vqc\setminus \vqc_n.\nonumber\\
 &
 \leq  C_1\|\vG\|_F^2 +  \sum_{a=0}^{\infty} \sum_{b=0}^{\infty}  \sum_{c=0}^{\infty} \frac{1}{a! \, b! \, (c-1)!} \int_{\R^{(a+b+c-1)d}}  \sum_{i=1}^{I}p_i^2 \left( \int_{\R^{d}}\alpha_i \tilde{K}_1^{\gamma}\paren{|\vz|}^2 \, d\vz \right) \nonumber\\
 &
 \qquad \times  \sum_{n=1}^{c} \int_{\R^d} \left( \int_{\R^{d}} \left|g^{(a+1,b+1,c-1)}\left(\vqa \cup \frac{1}{\alpha_i}\left(\vqc_n - (1-\alpha_i)\vy\right) , \vqb \cup \vy, \vqc \setminus \vqc_n, t\right)\right|^2 d\vy \right)  \, d\vqc_n \, d\vqa \, d\vqb \, d\vqc\setminus \vqc_n.\nonumber\\
 &
 \leq  C_1\|\vG\|_F^2 +  \sum_{a=0}^{\infty} \sum_{b=0}^{\infty}  \sum_{c=0}^{\infty} \frac{1}{a! \, b! \, (c-1)!} \int_{\R^{(a+b+c-1)d}}  \sum_{i=1}^{I}p_i^2 \left( \int_{\R^{d}}\alpha_i \tilde{K}_1^{\gamma}\paren{|\vz|}^2 \, d\vz \right) \nonumber\\
 &
 \qquad \times  \sum_{n=1}^{c}  \int_{\R^d} \left( \int_{\R^{d}}  \left|g^{(a+1,b+1,c-1)}\left(\vqa \cup \frac{1}{\alpha_i}\left(\vqc_n - (1-\alpha_i)\vy\right) , \vqb \cup \vy, \vqc \setminus \vqc_n, t\right)\right|^2 \, d\vqc_n \right)\,d\vy   \, d\vqa \, d\vqb \, d\vqc\setminus \vqc_n.\nonumber\\
  &
 \leq  C_1\|\vG\|_F^2 +  \sum_{a=0}^{\infty} \sum_{b=0}^{\infty}  \sum_{c=0}^{\infty} \frac{1}{a! \, b! \, (c-1)!} \int_{\R^{(a+b+c-1)d}}  \sum_{i=1}^{I}p_i^2 \left( \int_{\R^{d}}\alpha_i \tilde{K}_1^{\gamma}\paren{|\vz|}^2 \, d\vz \right) \nonumber\\
 &
 \qquad \times  \sum_{n=1}^{c}  \alpha_i\int_{\R^d} \left( \int_{\R^{d}}  \left|g^{(a+1,b+1,c-1)}\left(\vqa \cup \vx , \vqb \cup \vy, \vqc \setminus \vqc_n, t\right)\right|^2 \, d\vx \right)\,d\vy   \, d\vqa \, d\vqb \, d\vqc\setminus \vqc_n.\nonumber\\
 &
 \leq  C_1\|\vG\|_F^2 +  \sum_{a=0}^{\infty} \sum_{b=0}^{\infty}  \sum_{c=0}^{\infty} \frac{Iabc}{(a+1)! \, (b+1)! \, (c-1)!}\left( \int_{\R^{d}}\tilde{K}_1^{\gamma}\paren{|\vz|}^2 \, d\vz \right) \nonumber\\
 &
 \qquad  \times \int_{\R^{(a+b+c+1)d}}  \left( \left|g^{(a+1,b+1,c-1)}\left(\vq^{a+1} , \vq^{b+1}, \vq^{c-1}, t \right)\right|^2\right)  d\vq^{a+1}\, d\vq^{b+1} \, d\vq^{c-1}.\nonumber\\
 &
 \leq  C_1\|\vG\|_F^2 +  C_2\|\vG\|_F^2 = (C_1 + C_2 )\|\vG\|_F^2,\nonumber\\
\end{align}
where $C_2 = I\times a_{max}b_{max}c_{max} \left( \int_{\R^{d}}\tilde{K}_1^{\gamma}\paren{|\vz|}^2 \, d\vz \right)<\infty$ by Assumption \ref{assumption:twoToOneRate}.

Similarly, by definition of the norm on Fock Space and the definition of $\Rm$, we have that
\begin{align}\label{eq:RpBdd3}
\|\Rm (\vG)\|_F^2 & =  \sum_{a=0}^{\infty} \sum_{b=0}^{\infty}  \sum_{c=0}^{\infty} \frac{1}{a! \, b! \, c!} \int_{\R^{(a+b+c)d}} \left(\paren{\Rm\vG}_{a,b,c}(\vqa,\vqb, \vqc) \right)^2 \, d\vqa \, d\vqb \, d\vqc,\nonumber\\
&
 =  \sum_{a=0}^{\infty} \sum_{b=0}^{\infty}  \sum_{c=0}^{\infty} \frac{1}{a! \, b! \, c!} \int_{\R^{(a+b+c)d}} \left( - \paren{ \sum_{n=1}^{c} K_2^{\gamma}(\vqc_n)} \gabc(\vqa, \vqb, \vqc, t)\right. \nonumber\\
&\quad
+ \left.\sum_{l=1}^a \sum_{m=1}^b \brac{\int_{\R^d} m_2\paren{\vqa_l,\vqb_m\vert\vz}K_2^{\gamma}(\vz)
    g^{(a-1,b-1,c+1)}\paren{\vqa\setminus \vqa_l, \vqb\setminus \vqb_m, \vqc \cup \vz, t} d\vz}\right)^2 \, d\vqa \, d\vqb \, d\vqc,\nonumber\\
      &
\leq  \sum_{a=0}^{\infty} \sum_{b=0}^{\infty}  \sum_{c=0}^{\infty} \frac{1}{a! \, b! \, c!} \int_{\R^{(a+b+c)d}} \left( c_{max} C(K) \gabc(\vqa,\vqb,\vqc,t)\right)^2  \, d\vqa \, d\vqb \, d\vqc \nonumber\\
&\quad
+ \sum_{a=0}^{\infty} \sum_{b=0}^{\infty}  \sum_{c=0}^{\infty} \frac{1}{a! \, b! \, c!} \int_{\R^{(a+b+c)d}} \left(\sum_{l=1}^a \sum_{m=1}^b \brac{\int_{\R^d} m_2\paren{\vqa_l,\vqb_m\vert\vz}K_2^{\gamma}(\vz) \right.\right.\nonumber\\
 &
 \qquad  \qquad \times \left.\left.   g^{(a-1,b-1,c+1)}\paren{\vqa\setminus \vqa_l, \vqb\setminus \vqb_m, \vqc \cup \vz, t} d\vz}\right)^2  \, d\vqa \, d\vqb \, d\vqc.
\end{align}
Now denote $C_3 = \left( c_{max}C(K)\right)^2<\infty$, and substitute the specific form of
\[
m_2\paren{\vqa_l,\vqb_m\vert\vz} = \rho(|\vqa_l-\vqb_m|) \sum_{i=1}^{I}p_i\times\delta\left(\vz-(\alpha_i \vqa_i +(1-\alpha_i)\vqb_m)\right),
\]
into (\ref{eq:RpBdd3}). We obtain
\begin{align}\label{eq:RpBdd4}
&\|\Rm (\vG)\|_F^2
\leq C_3\|\vG\|_F^2  +  \sum_{a=0}^{\infty} \sum_{b=0}^{\infty}  \sum_{c=0}^{\infty} \frac{1}{a! \, b! \, c!} \int_{\R^{(a+b+c)d}} \left(\sum_{l=1}^a \sum_{m=1}^b \brac{\int_{\R^d} m_2\paren{\vqa_l,\vqb_m\vert\vz}K_2^{\gamma}(\vz) \right.\right.\nonumber\\
 &
 \qquad  \qquad \times \left.\left.   g^{(a-1,b-1,c+1)}\paren{\vqa\setminus \vqa_l, \vqb\setminus \vqb_m, \vqc \cup \vz, t} d\vz}\right)^2  \, d\vqa \, d\vqb \, d\vqc.\nonumber\\
 &
\leq C_3\|\vG\|_F^2  +  \sum_{a=0}^{\infty} \sum_{b=0}^{\infty}  \sum_{c=0}^{\infty} \frac{1}{a! \, b! \, c!} \int_{\R^{(a+b+c)d}} \left(ab\sum_{l=1}^a \sum_{m=1}^b \brac{\int_{\R^d} m_2\paren{\vqa_l,\vqb_m\vert\vz}K_2^{\gamma}(\vz) \right.\right.\nonumber\\
 &
 \qquad  \qquad \times \left.\left.   g^{(a-1,b-1,c+1)}\paren{\vqa\setminus \vqa_l, \vqb\setminus \vqb_m, \vqc \cup \vz, t} d\vz}^2\right)  \, d\vqa \, d\vqb \, d\vqc.\nonumber\\
 &
\leq C_3\|\vG\|_F^2  +  \sum_{a=0}^{\infty} \sum_{b=0}^{\infty}  \sum_{c=0}^{\infty} \frac{1}{(a-1)! \, (b-1)! \, c!} \int_{\R^{(a+b+c)d}} \left(\sum_{l=1}^a \sum_{m=1}^b C(K)^2 \brac{\int_{\R^d}\sum_{i=1}^{I}  p_i \times \rho(|\vqa_l-\vqb_m|) \right.\right.\nonumber\\
 &
 \qquad  \qquad \times\delta\left(\vz-(\alpha_i \vqa_i +(1-\alpha_i)\vqb_m)\right) \left.\left.   \times g^{(a-1,b-1,c+1)}\paren{\vqa\setminus \vqa_l, \vqb\setminus \vqb_m, \vqc \cup \vz, t} d\vz}^2\right)  \, d\vqa \, d\vqb \, d\vqc.\nonumber\\
 &
\leq C_3\|\vG\|_F^2  +  \sum_{a=0}^{\infty} \sum_{b=0}^{\infty}  \sum_{c=0}^{\infty} \frac{C(K)^2 }{(a-1)! \, (b-1)! \, c!} \int_{\R^{(a+b+c)d}} \left(\sum_{l=1}^a \sum_{m=1}^b \sum_{i=1}^{I}  p_i^2 \times \rho(|\vqa_l-\vqb_m|)^2 \right.\nonumber\\
 &
 \qquad  \qquad  \left. \times \left|g^{(a-1,b-1,c+1)}\paren{\vqa\setminus \vqa_l, \vqb\setminus \vqb_m, \vqc \cup (\alpha_i \vqa_i +(1-\alpha_i)\vqb_m), t}\right|^2\right)\, d\vqa_l \, d \vqb_m   \, d\vqa\setminus \vqa_l \, d\vqb\setminus \vqb_m \, d\vqc.\nonumber\\
 &
\leq C_3\|\vG\|_F^2  +  \sum_{a=0}^{\infty} \sum_{b=0}^{\infty}  \sum_{c=0}^{\infty} \frac{C(K)^2 ab}{(a-1)! \, (b-1)! \, c!}  \sum_{i=1}^{I}  p_i^2 \times \int_{\R^{(a+b+c)d}} \left(\rho(|\vx - \vy|)^2 \right.\nonumber\\
 &
 \qquad  \qquad  \left. \times \left|g^{(a-1,b-1,c+1)}\paren{\vq^{a-1}, \vq^{b-1}, \vqc \cup (\alpha_i \vx +(1-\alpha_i)\vy), t}\right|^2\right)\, d\vx \, d \vy   \, d\vq^{a-1} \, d\vq^{b-1} \, d\vqc.\nonumber\\
 &
\leq C_3\|\vG\|_F^2  +  \sum_{a=0}^{\infty} \sum_{b=0}^{\infty}  \sum_{c=0}^{\infty} \frac{C(K)^2 ab}{(a-1)! \, (b-1)! \, c!}  \sum_{i=1}^{I}  p_i^2 \times \int_{\R^{(a+b+c)d}} \left(\rho(|\vw|)^2 \right.\nonumber\\
 &
 \qquad  \qquad  \left. \times \left|g^{(a-1,b-1,c+1)}\paren{\vq^{a-1}, \vq^{b-1}, \vqc \cup (\alpha_i \vw +\vy), t}\right|^2\right)\, d\vw \, d \vy   \, d\vq^{a-1} \, d\vq^{b-1} \, d\vqc.\nonumber\\
 &
\leq C_3\|\vG\|_F^2  +  \sum_{a=0}^{\infty} \sum_{b=0}^{\infty}  \sum_{c=0}^{\infty} \frac{C(K)^2 ab(c+1)}{(a-1)! \, (b-1)! \, (c+1)!}  \sum_{i=1}^{I}  p_i^2 \times \left(\int_{\R^d}\rho(|\vw|)^2 \, d\vw\right) \nonumber\\
 &
 \qquad  \qquad  \left. \times\int_{\R^{(a+b+c-1)d}}   \left|g^{(a-1,b-1,c+1)}\paren{\vq^{a-1}, \vq^{b-1}, \vqc \cup\vz, t}\right|^2\right)\, \, d \vz   \, d\vq^{a-1} \, d\vq^{b-1} \, d\vqc.\nonumber\\
 &\qquad\qquad  \text{(where $\vz = \alpha_i \vw +\vy$)}\nonumber\\
 &
\leq C_3\|\vG\|_F^2  +  C_4\|\vG\|_F^2\nonumber = (C_3+C_4)\|\vG\|_F^2,\\
\end{align}
where $C_4 = I\times C(K)^2 a_{max}b_{max}(c_{max}+1)\left(\int_{\R^d}\rho(|\vw|)^2 \, d\vw\right) < \infty$ by Assumption \ref{assumption:oneToTwoRate}.
\end{proof}

\begin{lemma}\label{lem:Normalization}
Assume that the solution $\vP\in\mathcal{C}([0,\infty);H^2(X))$ to (\ref{eq:multipartABtoCEqs}) exists and is unique. If the normalization condition holds for the initial condition $\vP_0$, i.e. that
\begin{equation*}
  \sum_{a=0}^{\infty} \sum_{b=0}^{\infty} \sum_{c=0}^{\infty} \brac{ \frac{1}{a!\, b!\, c!}
  \int_{\R^{da}} \int_{\R^{db}} \int_{\R^{dc}} \pabc_0\paren{\vqa,\vqb,\vqc} \, d\vqc \, d\vqb \, d\vqa
  } = 1,
\end{equation*}
then we have that the normalization condition holds for $\vP(t)$ for all $t \geq 0$, i.e. that
\begin{equation}\label{eq:normalization}
  \sum_{a=0}^{\infty} \sum_{b=0}^{\infty} \sum_{c=0}^{\infty} \brac{ \frac{1}{a!\, b!\, c!}
  \int_{\R^{da}} \int_{\R^{db}} \int_{\R^{dc}} \pabc\paren{\vqa,\vqb,\vqc,t} \, d\vqc \, d\vqb \, d\vqa
  } = 1.
\end{equation}
\end{lemma}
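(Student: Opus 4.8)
The plan is to show that the quantity
\[
N(t) \Def \sum_{a=0}^{\infty}\sum_{b=0}^{\infty}\sum_{c=0}^{\infty}\frac{1}{a!\,b!\,c!}\int_{\R^{da}}\int_{\R^{db}}\int_{\R^{dc}}\pabc(\vqa,\vqb,\vqc,t)\,d\vqc\,d\vqb\,d\vqa
\]
is constant in $t$. By Remark~\ref{rem:finiteSum} the sum defining $N(t)$ ranges over only finitely many triples $(a,b,c)$, since $a+b+2c$ is conserved; combined with the regularity $\vP\in\mathcal{C}([0,\infty);H^2(X))$ from Theorem~\ref{thm:regularityABC} (and the integrability of the $\pabc$, which is part of the normalization hypothesis and is propagated by the linear flow), $N(\cdot)$ is differentiable and one may interchange $\tfrac{d}{dt}$ with the finite sum and the spatial integrals. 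Differentiating and substituting the forward equation~\eqref{eq:multipartABtoCEqs}, it then suffices to prove separately that the contributions of $\diffop$, $\Rp$, and $\Rm$ to $\tfrac{d}{dt}N(t)$ each vanish.

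For the diffusion term this is integration by parts: each summand of $\diffop\vP$ in~\eqref{eq:multipartDiffop} is a sum of Laplacians $\lap_{\vqa_l}\pabc$, $\lap_{\vqb_m}\pabc$, $\lap_{\vqc_n}\pabc$, and integrating any one of these over the corresponding copy of $\R^d$ produces a boundary term that vanishes using the decay of $\pabc(\cdot,t)$ and its gradient afforded by the $H^2(X)$ regularity (if needed, via a standard cutoff/approximation argument). For $\Rp$, I would use indistinguishability of like particles (the permutation symmetry of $\pabc$) to replace $\sum_{l=1}^a\sum_{m=1}^b K_1^{\gamma}(\vqa_l,\vqb_m)\pabc$ by $ab\,K_1^{\gamma}(\vqa_1,\vqb_1)\pabc$ and, in the gain term of~\eqref{eq:multipartRp}, to replace $\sum_{n=1}^c$ by the factor $c$ and integrate out the single variable $\vqc_n$ against $m^\eta_1(\vqc_n\mid\vx,\vy)$ using $\int_{\R^d}m^\eta_1(\vqc_n\mid\vx,\vy)\,d\vqc_n=1$ from Assumption~\ref{Assume:measureP}. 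After shifting the summation index $(a,b,c)\mapsto(a+1,b+1,c-1)$ and matching the factorial prefactors, the gain contribution from equation $(a,b,c)$ cancels exactly the loss contribution from equation $(a+1,b+1,c-1)$, so the total $\Rp$-contribution to $\tfrac{d}{dt}N(t)$ telescopes to zero. The argument for $\Rm$ is entirely parallel, now using $\int_{\R^{2d}}m^\eta_2(\vx,\vy\mid\vz)\,d\vx\,d\vy=1$ and the shift $(a,b,c)\mapsto(a-1,b-1,c+1)$.

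Putting these three facts together gives $\tfrac{d}{dt}N(t)=0$, hence $N(t)=N(0)=1$ for all $t\ge 0$, which is~\eqref{eq:normalization}. Conceptually this is the identity $\mathcal{T}\mathbf{1}=0$ for the generator $\mathcal{T}$ in~\eqref{eq:operT} (diffusion of a constant vanishes, and each reaction increment $g^{(a\mp1,b\mp1,c\pm1)}-\pabc$ vanishes on the constant vector $\mathbf{1}$), so that formally $\tfrac{d}{dt}(\mathbf{1},\vP(t))_F=(\mathcal{T}\mathbf{1},\vP(t))_F=0$; the computation above is the rigorous form of this pairing, since $\mathbf{1}\notin F$. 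The main obstacle is bookkeeping: carefully tracking the factorial weights and the index shifts so that loss and gain terms cancel, and justifying the interchange of differentiation, summation and integration together with the vanishing of the diffusion boundary terms from the available $H^2$ regularity.
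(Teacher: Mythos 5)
Your proposal is correct and follows essentially the same route as the paper's proof: differentiate the normalization sum in time, substitute the forward equation, kill the diffusion contribution by the divergence theorem using the $H^2$ regularity, and kill the $\Rp$ and $\Rm$ contributions by integrating out the placement densities (which are normalized to one), invoking permutation symmetry of $\pabc$ for like particles, and shifting the summation indices so that gain and loss terms cancel. The only cosmetic difference is that you make the finiteness of the sum (via conservation of $a+b+2c$) and the generator identity $\mathcal{T}\mathbf{1}=0$ explicit, which the paper leaves implicit.
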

\begin{proof}
By assumption, we have that the normalization condition holds for the initial condition. Furthermore, we have
\begin{align*}
&\frac{\partial}{\partial t}\sum_{a=0}^{\infty} \sum_{b=0}^{\infty} \sum_{c=0}^{\infty} \brac{ \frac{1}{a!\, b!\, c!}
  \int_{\R^{da}} \int_{\R^{db}} \int_{\R^{dc}} \pabc\paren{\vqa,\vqb,\vqc,t} \, d\vqc \, d\vqb \, d\vqa
  } \\
&
= \sum_{a=0}^{\infty} \sum_{b=0}^{\infty} \sum_{c=0}^{\infty} \brac{ \frac{1}{a!\, b!\, c!}
  \int_{\R^{da}} \int_{\R^{db}} \int_{\R^{dc}}  \frac{\partial}{\partial t}\pabc\paren{\vqa,\vqb,\vqc,t} \, d\vqc \, d\vqb \, d\vqa
  } \\
  &
  = \sum_{a=0}^{\infty} \sum_{b=0}^{\infty} \sum_{c=0}^{\infty} \brac{ \frac{1}{a!\, b!\, c!}
  \int_{\R^{da}} \int_{\R^{db}} \int_{\R^{dc}}  \paren{\diffop \vP}_{a,b,c}\paren{\vqa,\vqb,\vqc,t} \, d\vqc \, d\vqb \, d\vqa
  } \\
  & \quad
  + \sum_{a=0}^{\infty} \sum_{b=0}^{\infty} \sum_{c=0}^{\infty} \brac{ \frac{1}{a!\, b!\, c!}
  \int_{\R^{da}} \int_{\R^{db}} \int_{\R^{dc}}   \paren{\Rp  \vP}_{a,b,c}\paren{\vqa,\vqb,\vqc,t} \, d\vqc \, d\vqb \, d\vqa
  } \\
    & \quad
    + \sum_{a=0}^{\infty} \sum_{b=0}^{\infty} \sum_{c=0}^{\infty} \brac{ \frac{1}{a!\, b!\, c!}
  \int_{\R^{da}} \int_{\R^{db}} \int_{\R^{dc}}   \paren{\Rm \vP}_{a,b,c}\pabc\paren{\vqa,\vqb,\vqc,t} \, d\vqc \, d\vqb \, d\vqa
  }.
\end{align*}
Note, in the last equality the first term is zero using the divergence theorem and that $\vP\in\mathcal{C}([0,\infty);H^2(X))$. The second term is
\begin{align*}
& \sum_{a=0}^{\infty} \sum_{b=0}^{\infty} \sum_{c=0}^{\infty} \brac{ \frac{1}{a!\, b!\, c!}
  \int_{\R^{da}} \int_{\R^{db}} \int_{\R^{dc}}   \paren{\Rp \vP}_{a,b,cv}\paren{\vqa,\vqb,\vqc,t} \, d\vqc \, d\vqb \, d\vqa
  } \\
    &
    = \sum_{a=0}^{\infty} \sum_{b=0}^{\infty} \sum_{c=0}^{\infty} \frac{1}{a!\, b!\, c!}\brac{
  \int_{\R^{da}} \int_{\R^{db}} \int_{\R^{dc}}   -\paren{\sum_{l=1}^a \sum_{m=1}^bK_1^{\gamma} \paren{\vqa_{l}, \vqb_{m}}} \pabc(\vqa,\vqb,\vqc,t) \, d\vqc \, d\vqb \, d\vqa \right. \\
    &\quad +\left.\int_{\R^{da}} \int_{\R^{db}} \int_{\R^{dc}}  \sum_{n=1}^{c} \!\brac{\int_{\R^{2d}} m_1(\vqc_n \vert \vx, \vy) K_1^{\gamma} \paren{\vx, \vy}
      p^{(a+1,b+1,c-1)}(\vqa \cup \vx, \vqb \cup \vy, \vqc \setminus \vqc_n, t) d\vx d\vy}\!\! \, d\vqc \, d\vqb \, d\vqa
  } \\
  &
    = \sum_{a=0}^{\infty} \sum_{b=0}^{\infty} \sum_{c=0}^{\infty}\frac{1}{a!\, b!\, c!} \brac{
  \int_{\R^{d(a+b+c)}}   -\paren{\sum_{l=1}^a \sum_{m=1}^bK_1^{\gamma} \paren{\vqa_{l}, \vqb_{m}}} \pabc(\vqa,\vqb,\vqc,t) \,d\vqc \, d\vqb \, d\vqa \right. \\
    &\quad + \left. \sum_{n=1}^{c}   \int_{\R^{d(a+b+c+1)}}   \!\brac{\int_{\R^{d}} m_1(\vqc_n \vert \vx, \vy) d\vqc_n} K_1^{\gamma} \paren{\vx, \vy}
      p^{(a+1,b+1,c-1)}(\vqa \cup \vx, \vqb \cup \vy, \vqc \setminus \vqc_n, t)  \, d\vqc \setminus d\vqc_n \, d\vqb\cup d\vy, \, d\vqa \cup d\vx
  } \\
  &
    = \sum_{a=0}^{\infty} \sum_{b=0}^{\infty} \sum_{c=0}^{\infty} \frac{1}{a!\, b!\, c!} \brac{
  \int_{\R^{d(a+b+c)}}   -\paren{\sum_{l=1}^a \sum_{m=1}^bK_1^{\gamma} \paren{\vqa_{l}, \vqb_{m}}} \pabc(\vqa,\vqb,\vqc,t) \,d\vqc \, d\vqb \, d\vqa \right. \\
    &+\left.  \int_{\R^{d(a+b+c+1)}}  \frac{c}{(a+1)(b+1)}\sum_{l=1}^{a+1} \sum_{m=1}^{b+1} K_1^{\gamma} \paren{\vq^{a+1}_l, \vq^{b+1}_m}
      p^{(a+1,b+1,c-1)}(\vq^{a+1}, \vq^{b+1}, \vq^{c-1}, t)  \, d\vq^{c-1}  \, d\vq^{b+1}, \, d\vq^{a+1}  } \\
        &
    = \sum_{a=0}^{\infty} \sum_{b=0}^{\infty} \sum_{c=0}^{\infty} \brac{ \frac{1}{a!\, b!\, c!}
  \int_{\R^{d(a+b+c)}}   -\paren{\sum_{l=1}^a \sum_{m=1}^bK_1^{\gamma} \paren{\vqa_{l}, \vqb_{m}}} \pabc(\vqa,\vqb,\vqc,t) \,d\vqc \, d\vqb \, d\vqa \right. \\
    &+\left. \frac{1}{(a+1)!\, (b+1)!\, (c-1)!}  \int_{\R^{d(a+b+c+1)}}\sum_{l=1}^{a+1} \sum_{m=1}^{b+1} K_1^{\gamma} \paren{\vq^{a+1}_l, \vq^{b+1}_m}
      p^{(a+1,b+1,c-1)}(\vq^{a+1}, \vq^{b+1}, \vq^{c-1}, t)  \, d\vq^{c-1}  \, d\vq^{b+1}, \, d\vq^{a+1}  } \\
      & = 0,
\end{align*}
by using the symmetry of $p^{(a,b,c)}$ with respect to permutations of the ordering of particle positions for particles of the \emph{same} type. A similar calculation shows that the third term is zero. Thus, the time derivative of the left-hand-side (\ref{eq:normalization}) is always zero. Combining with the normalization condition for the initial condition, we have that (\ref{eq:normalization}) is true for all $t\geq0$.
\end{proof}

\begin{lemma}\label{lem:soluNonNegative}
Assuming there exists a unique solution $\vP\in\mathcal{C}([0,\infty);H^2(X))$ and $\vP_0\geq 0$, the solution components $\pabc\paren{\vqa,\vqb,\vqc,t}$ are always non-negative for all $t\geq0$.
\end{lemma}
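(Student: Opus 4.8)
The plan is to use the classical route for positivity of mild solutions to reaction–diffusion systems: the diffusion semigroup is positivity preserving, and after a scalar shift the reaction operator $\Rp+\Rm$ also preserves the non-negative cone, so non-negativity propagates through the Picard iterates that define $\vP$.

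First I would record the structural facts. The diffusion operator $\diffop$ in \eqref{eq:multipartDiffop} is block–diagonal with respect to the $(a,b,c)$ decomposition of $X$, acting on each block $\R^{d(a+b+c)}$ as a sum of Laplacians; hence $\{e^{t\diffop}\}_{t\ge0}$ acts block-wise as convolution with a Gaussian and is positivity preserving on $F$. Inspecting \eqref{eq:multipartRp}–\eqref{eq:multipartRm}, in each component the ``loss'' terms are multiplications by $-\big(\sum_{l,m}K_1^{\gamma}(\vqa_l,\vqb_m)\big)$ and $-\big(\sum_{n}K_2^{\gamma}(\vqc_n)\big)$, while the ``gain'' terms are integrals of the non-negative kernels $m^\eta_1,m^\eta_2$ and $K_1^{\gamma},K_2^{\gamma}$ against the components $p^{(a+1,b+1,c-1)}$, $p^{(a-1,b-1,c+1)}$ (with the convention that out-of-range components vanish). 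By Remark~\ref{rem:finiteSum} the conserved quantity $a+b+2c$ forces $a\le a_{max}$, $b\le b_{max}$, $c\le c_{max}$, and $K_\ell\le C(K)$ by Assumption~\ref{Assume:kernalBdd0}, so for $\gamma\ge1$ the loss coefficients are bounded above by the fixed constant
\begin{equation*}
\lambda := a_{max}\,b_{max}\,C(K) + c_{max}\,C(K).
\end{equation*}
Consequently $\Rp+\Rm+\lambda I$ maps non-negative elements of $F$ to non-negative elements: for $\vG\ge0$ the diagonal part $\big(\lambda-\sum_{l,m}K_1^{\gamma}-\sum_n K_2^{\gamma}\big)g^{(a,b,c)}\ge0$ pointwise, and every remaining integral term is an integral of non-negative kernels against non-negative components of $\vG$.

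Second, I would write \eqref{eq:multipartABtoCEqs} in shifted mild form and iterate. Using $\diffop+\Rp+\Rm=(\diffop-\lambda I)+(\Rp+\Rm+\lambda I)$ and that $e^{t(\diffop-\lambda I)}=e^{-\lambda t}e^{t\diffop}$ is positivity preserving,
\begin{equation*}
\vP(t) = e^{-\lambda t}e^{t\diffop}\vP_0 + \int_0^t e^{-\lambda(t-s)}e^{(t-s)\diffop}\big(\Rp+\Rm+\lambda I\big)\vP(s)\,ds .
\end{equation*}
Since $\Rp,\Rm$ are bounded on $F$ (Lemma~\ref{lem:opBddLip}), the Picard scheme $\vP^{(0)}(t)=e^{-\lambda t}e^{t\diffop}\vP_0$ and $\vP^{(n+1)}(t)=e^{-\lambda t}e^{t\diffop}\vP_0+\int_0^t e^{-\lambda(t-s)}e^{(t-s)\diffop}(\Rp+\Rm+\lambda I)\vP^{(n)}(s)\,ds$ converges in $\mathcal{C}([0,T];F)$ to $\vP$ for every $T<\infty$, exactly as in the proof of Theorem~\ref{thm:regularityABC}. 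By induction each $\vP^{(n)}(t)$ is non-negative: $\vP^{(0)}\ge0$ because $\vP_0\ge0$ and $e^{t\diffop}$ is positivity preserving; and if $\vP^{(n)}(s)\ge0$ for all $s$, then $(\Rp+\Rm+\lambda I)\vP^{(n)}(s)\ge0$ by the first step, so applying the positivity-preserving operators $e^{-\lambda(t-s)}e^{(t-s)\diffop}$ and integrating over $s\in[0,t]$ keeps $\vP^{(n+1)}(t)\ge0$. Passing to the $F$-limit (along a subsequence converging a.e.) gives $\vP(t)\ge0$, i.e. $p^{(a,b,c)}(\vqa,\vqb,\vqc,t)\ge0$ for almost every argument and every $t\ge0$; since by Theorem~\ref{thm:regularityABC} $\vP\in\mathcal{C}([0,\infty);H^2(X))$, this is the asserted non-negativity of the solution components.

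The only genuinely non-routine point is the first step — pinning down the shift $\lambda$ and checking that $\Rp+\Rm+\lambda I$ preserves the non-negative cone. This requires carefully matching the negative diagonal multipliers in \eqref{eq:multipartRp}–\eqref{eq:multipartRm} against $\lambda$ on the \emph{finite} index set supplied by Remark~\ref{rem:finiteSum} (so that the supremum of the loss coefficients is finite), and confirming that the gain terms only couple to components of the same iterate and carry non-negative kernels. The positivity of the heat semigroup and the convergence of the Picard iteration are standard and can be imported verbatim from the proof of Theorem~\ref{thm:regularityABC}.
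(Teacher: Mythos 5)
Your proof is correct, but it takes a genuinely different route from the paper's. The paper argues by comparison: it splits $\Rp=\Rp_1+\Rp_2$ and $\Rm=\Rm_1+\Rm_2$ into diagonal loss parts and off-diagonal gain parts, solves the decoupled auxiliary system $\partial_t\vG=(\diffop+\Rp_1+\Rm_1)\vG$ with the same initial data, concludes $\vG\ge0$ since the loss terms are just multiplication by bounded non-positive coefficients, and then uses that $\Rp_2,\Rm_2$ map non-negative data to non-negative data to see that $\vG$ is a subsolution of the full equation, whence $0\le\gabc\le\pabc$ by a comparison principle for the (quasi-monotone) semilinear system. You instead shift by a constant $\lambda$ dominating the loss coefficients, observe that $\Rp+\Rm+\lambda I$ preserves the non-negative cone, and propagate positivity through the Picard iterates of the shifted mild formulation. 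The two arguments rest on exactly the same structural facts (positivity of the heat semigroup, non-negativity of the gain kernels, and boundedness of the loss coefficients via Remark~\ref{rem:finiteSum} and Assumption~\ref{Assume:kernalBdd0}); what your version buys is self-containedness, since the comparison principle for a coupled system that the paper invokes is itself usually established by precisely the kind of monotone/shifted iteration you write out, and your argument only uses ingredients already proved in Theorem~\ref{thm:regularityABC} (boundedness of $\Rp,\Rm$ and the contraction property of $e^{t\diffop}$). Two minor points worth tightening: the bound $\lambda$ should be stated as the supremum of the loss coefficients for the \emph{fixed} $\gamma$ at hand rather than under the side condition $\gamma\ge1$ (the supremum is finite for any fixed $\gamma$ because the index set is finite); and the passage to the limit should note explicitly that the conclusion is non-negativity a.e.\ in each component, which is the correct reading of the statement since $H^2(X)$ elements need not be continuous in high dimensions.
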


\begin{proof}
First, we consider $\gabc\paren{\vqa,\vqb,\vqc,t}$ satisfying the following decoupled linear PDEs with initial condition $\gabc\paren{\vqa,\vqb,\vqc,0} = p_0^{(a,b,c)}\paren{\vqa,\vqb,\vqc}\geq 0$,
\begin{equation}\label{eq:truncatedSolu}
   \PD{}{t} \gabc(\vq^a,\vq^b,\vq^c, t) = (\diffop + \Rp_1 + \Rm_1) \gabc(\vq^a,\vq^b,\vq^c. t),
\end{equation}
where we define
\begin{equation}
      \Rp_1 \gabc(\vq^a,\vq^b,\vq^c. t) =
   -\paren{\sum_{l=1}^a \sum_{m=1}^bK_1^{\gamma} \paren{\vqa_{l}, \vqb_{m}}} \gabc(\vqa,\vqb,\vqc,t)\nonumber
   \end{equation}
and
\begin{equation} 
  \Rm_1 \gabc(\vq^a,\vq^b,\vq^c. t)= - \paren{ \sum_{n=1}^{c} K_2^{\gamma}(\vqc_n)} \gabc(\vqa, \vqb, \vqc, t).\nonumber
\end{equation}
Since $0\leq K_1^\gamma\leq C(K)$ and  $0\leq K_2^\gamma\leq C(K)$, we can then obtain via a comparison argument for semilinear equations that the solution to Eq (\ref{eq:truncatedSolu}), $\gabc\paren{\vqa,\vqb,\vqc,t} \geq 0$.

Let us further define
\begin{equation} 
      \Rp_2 \gabc(\vq^a,\vq^b,\vq^c. t) =
         \sum_{n=1}^{c} \!\brac{\int_{\R^{2d}} m_1(\vqc_n \vert \vx, \vy) K_1^{\gamma} \paren{\vx, \vy}
      g^{(a+1,b+1,c-1)}(\vqa \cup \vx, \vqb \cup \vy, \vqc \setminus \vqc_n, t) d\vx d\vy}\!\!,
  \nonumber
\end{equation}
and
\begin{equation} 
  \Rm_2 \gabc(\vq^a,\vq^b,\vq^c. t)= \sum_{l=1}^a \sum_{m=1}^b \brac{\int_{\R^d} m_2\paren{\vqa_l,\vqb_m\vert\vz}K_2^{\gamma}(\vz)
    g^{(a-1,b-1,c+1)}\paren{\vqa\setminus \vqa_l, \vqb\setminus \vqb_m, \vqc \cup \vz, t} d\vz}.\nonumber
\end{equation}
We then set $\Rp = \Rp_1 + \Rp_2$ and $\Rm = \Rm_1 + \Rm_2$. Due to the positive mapping property of the operators $\Rp_2$ and  $\Rm_2$, we shall have for the function $\gabc\paren{\vqa,\vqb,\vqc,t}$ that
\begin{equation}
   \PD{}{t} \gabc(\vq^a,\vq^b,\vq^c, t) \leq (\diffop + \Rp + \Rm) \gabc(\vq^a,\vq^b,\vq^c. t).\nonumber
\end{equation}
Hence, again utilizing the comparison principle for semilinear PDEs, we obtain that
\[
0\leq  \gabc(\vq^a,\vq^b,\vq^c,t)\leq  \pabc(\vq^a,\vq^b,\vq^c, t),
\]
i.e. the non-negativity of our solution, concluding the proof.
\end{proof}

\section{Placement density integrals in~\eqref{Eq:density_formula}}\label{A:PlacementInts}

In this appendix we expand out the formal notation used for the inner integrand in the integrals
\begin{equation*}
I = \int_{\tilde{\vec{x}} \in\mathbb{X}^{(\ell)}}  K_\ell(\tilde{\vec{x}}) \left( \int_{\vy \in \mathbb{Y}^{(\ell)}}   \delta_{x}(y_r^{(j)}) m_\ell(\vec{y}\, | \,\tilde{\vec{x}}) \,d \vec{y} \right)
 \left( \Pi_{k = 1}^J \Pi_{s = 1}^{\alpha_{\ell k}}  \rho_{k}(\tilde{x}_s^{(k)}, t)\right) \, d\tilde{\vec{x}}
\end{equation*}
for several choices of the placement density, $m_{\ell}$.

For a first order reaction of the form $S_{i} \to S_{j}$, by Assumption~\ref{Assume:measureOne2One} we have
\begin{equation*}
m_{\ell}(y | x) = \delta(x-y),
\end{equation*}
so that in the equation for $\rho_{j}(x,t)$, $I$ becomes
\begin{align*}
I &= \int_{\R^{d}}  K_\ell(\tilde{x}) \left( \int_{\R^{d}}   \delta(x-y) \delta(y-\tilde{x}) \,dy \right) \rho_{i}(\tilde{x},t) \, d\tilde{x} \\
&= \int_{\R^{d}}  K_\ell(\tilde{x})  \delta(x-\tilde{x})  \rho_{i}(\tilde{x},t) \, d\tilde{x} \\
&= K_{\ell}(x) \rho_{i}(x,t).
\end{align*}
The first order reaction term that appears in~\eqref{Eq:density_formula_birthDeath} is of this form.

For a second order bimolecular reaction of the form $S_{i} + S_{k} \to S_{j}$, by Assumption~\ref{Assume:measureTwo2One} we have
\begin{equation*}
m_{\ell}(z\,|\,x,y) = \sum_{n=1}^{N}p_n \times \delta\left(z-(\alpha_n x +(1-\alpha_n)y)\right),
\end{equation*}
so that in the equation for $\rho_{j}(x,t)$, $I$ becomes
\begin{align*}
I &= \int_{\R^{2d}} K_{\ell}(\tilde{x}_{1},\tilde{x}_{2}) \left( \int_{\R^{d}} \delta(x-y) \paren{\sum_{n=1}^{N}p_n \times \delta\left(y-(\alpha_n \tilde{x}_{1} +(1-\alpha_n)\tilde{x}_{2})\right)} \, dy \right) \rho_{i}(\tilde{x}_{1},t) \rho_{k}(\tilde{x}_{2},t) \, d\tilde{x}_{1} \, d\tilde{x}_{2} \\
&= \int_{\R^{2d}} K_{\ell}(\tilde{x}_{1},\tilde{x}_{2}) \paren{\sum_{n=1}^{N}p_n \times \delta\left(x-(\alpha_n \tilde{x}_{1} +(1-\alpha_n)\tilde{x}_{2})\right)} \rho_{i}(\tilde{x}_{1},t) \rho_{k}(\tilde{x}_{2},t) \, d\tilde{x}_{1} \, d\tilde{x}_{2} \\
&= \int_{\R^{2d}} K_{\ell}(\tilde{x}_{1},\tilde{x}_{2}) \, m_{\ell}(x | \tilde{x}_{1}, \tilde{x}_{2}) \rho_{i}(\tilde{x}_{1},t) \rho_{k}(\tilde{x}_{2},t) \, d\tilde{x}_{1} \, d\tilde{x}_{2}.
\end{align*}

The second order reaction term that appears in the equation for $\rho_{3}(x,t)$ in~\eqref{Eq:density_formula_reversible} is of this form. Note, $I$ can be further simplified to eliminate any $\delta$-function terms, giving
\begin{align*}
I &= \sum_{n=1}^{N} \frac{p_{n}}{\alpha_{n}^{d}} \int_{\R^{d}} K_{\ell}\paren{\frac{1}{\alpha_{n}}(x - (1-\alpha_{n})\tilde{x}_{2}),\tilde{x}_{2}} \rho_{i}\paren{\frac{1}{\alpha_{n}}(x - (1-\alpha_{n})\tilde{x}_{2}),t} \rho_{k}(\tilde{x}_{2},t) \, d\tilde{x}_{2}.
\end{align*}

Finally, for a two-product reaction of the form $S_{i} \to S_{j} + S_{k}$, by Assumption~\ref{Assume:measureOne2Two} we have
\begin{equation*}
m_{\ell}(x,y\,|\,z) = \rho(|x-y|) \sum_{n=1}^{N}p_n\times\delta\left(z-(\alpha_n x +(1-\alpha_n)y)\right),
\end{equation*}
so that in the equation for $S_{j}$, $I$ becomes
\begin{align*}
I &= \int_{\R^{d}} K_{\ell}(\tilde{x}) \paren{ \int_{\R^{2d}} \delta(x - y_{1})
\paren{\rho(|y_{1}-y_{2}|) \sum_{n=1}^{N}p_n\times\delta\left(\tilde{x}-(\alpha_n y_{1} +(1-\alpha_n)y_{2})\right)} \, dy_{1} dy_{2}}
\rho_{i}(\tilde{x},t) \, d\tilde{x} \\
&= \int_{\R^{d}} K_{\ell}(\tilde{x}) \paren{\int_{\R^{d}}\paren{\rho(|x-y_{2}|) \sum_{n=1}^{N}p_n\times\delta\left(\tilde{x}-(\alpha_n x +(1-\alpha_n)y_{2})\right)}dy_{2}} \rho_{i}(\tilde{x},t) \, d\tilde{x} \\
&= \int_{\R^{d}} K_{\ell}(\tilde{x}) \paren{\int_{\R^{d}} m(x,y_{2} | \tilde{x}) dy_{2}} \rho_{i}(\tilde{x},t) \, d\tilde{x}.
\end{align*}

The first order reaction term that appears in the equation for $\rho_{1}(x,t)$ in~\eqref{Eq:density_formula_reversible}
 is of this form. Note, $I$ can be further simplified to eliminate any $\delta$-function terms, giving
\begin{align*}
I = \sum_{n=1}^{N} \frac{p_{n}}{(1 - \alpha_{n})^{d}} \int_{\R^{d}} K_{\ell}(\tilde{x}) \, \rho\!\paren{\frac{x - \tilde{x}}{1 - \alpha_{n}}} \rho_{i}(\tilde{x},t) \, d\tilde{x}.
\end{align*}

\end{appendices}

\end{document}